\numberwithin{equation}{section}
\numberwithin{figure}{section}
\numberwithin{table}{section}
\def\bZ{{\mathbb Z}}
\def\bR{{\mathbb R}}
\def\sE{{\mathscr E}}
\def\sF{{\mathscr F}}
\def\sA{{\mathscr A}}
\def\sG{{\mathscr G}}
\def\bs{\mathbf{s}}
\def\br{\mathbf{r}}
\def\bN{\mathbb{N}}
\def\sS{\mathscr{S}}
\def\fm{\mathfrak{m}}
\def\${|\!|\!|}
\def\l|{\left|\!\left|\!\left|}
\def\r|{\right|\!\right|\!\right|}
\newtheorem{theorem}{Theorem}[section]
\newtheorem{lemma}[theorem]{Lemma}
\newtheorem{proposition}[theorem]{Proposition}
\newtheorem{corollary}[theorem]{Corollary}
\theoremstyle{definition}
\newtheorem{definition}[theorem]{Definition}
\newtheorem{example}[theorem]{Example}
\theoremstyle{remark}
\newtheorem{remark}[theorem]{Remark}
\numberwithin{equation}{section}
\begin{document}

\title[On diffusions with discontinuous scales]{On diffusions with discontinuous scales}

\author{Liping Li}
\address{Fudan University, Shanghai, China.  }
\address{Bielefeld University,  Bielefeld, Germany.}
\email{liliping@fudan.edu.cn}
\thanks{The author is partially supported by NSFC (No.  11931004) and Alexander von Humboldt Foundation in Germany.  }


\subjclass[2010]{Primary 31C25, 60J35,  60J45.}



\keywords{}

\begin{abstract}
It is well known that a regular diffusion on an interval $I$ without killing inside is uniquely determined by a canonical scale function $\bs$ and a canonical speed measure $\fm$.  Note that $\bs$ is a strictly increasing and continuous function and $\fm$ is a fully supported Radon measure on $I$.  In this paper we will associate a general triple $(I,\bs,\fm)$,  where $\bs$ is only assumed to be increasing and $\fm$ is not necessarily fully supported,  to certain Markov processes by way of Dirichlet forms.  Using two transformations,  called scale completion and darning respectively,  to rebuild the topology of $I$,  we will successfully regularize the triple $(I,\bs,\fm)$ and obtain a regular Dirichlet form associated with it.  The corresponding Markov process is called the regularized Markov process associated with $(I,\bs,\fm)$.  In fact,  it is the unique Markov process up to homeomorphism that can be associated with $(I,\bs,\fm)$ in the context of regular representations of Dirichlet forms.  As a byproduct of regularized Markov process,  a continuous simple Markov process,  which does not satisfy the strong Markov property,  will be also raised to be associated to $(I,\bs,\fm)$ without operating regularizing program.  Furthermore,  we will show that the regularized Markov process is  identified with a skip-free Hunt process in one dimension as well as a quasidiffusion without killing inside.  Note that the skip-free Hunt process generalizes the concept of regular diffusion and admits a scale function and a speed measure in an analogous manner. 

\end{abstract}

\maketitle
\tableofcontents

\section{Introduction}

The title is a little misleading because the scale function of a nice diffusion in one dimension is always continuous.  Actually it focuses on the problem that how to associate an increasing but not necessarily continuous function on an interval to a certain Markov process.  But it is roughly correct because the final result shows that the desirable Markov process is very similar to a diffusion,  besides that the continuity of sample paths is replaced by so-called skip-free property.  This problem has been considered by Sch\"utze \cite{S79} by way of generalizing the second order differential operator raised by Feller,  and it turns out that the resulting process thereof is closely related to a widely studied Markov process,  called \emph{quasidiffusion} in,  e.g.,  \cite{BK87,  K86},  \emph{generalized diffusion} in,  e.g.,  \cite{W74,  KW82, LM20} and \emph{gap diffusion} in,  e.g.,  \cite{K81}.  In this paper we will adopt another treatment by virtue of the theory of Dirichlet forms.  A Dirichlet form is a closed symmetric form with Markovian property  on an $L^2$-space.  Due to a series of important works by Fukushima and Silverstein in the 1970s,  the ``\emph{regularity}" of a Dirichlet form assures that it is associated with  a symmetric Markov process.  We refer readers to \cite{FOT11, CF12} for notations and terminologies in the theory of Dirichlet forms. 

What is a diffusion in one dimension? As one of the most important stochastic models,  it means a continuous strong Markov process $X=(X_t)_{t\geq 0}$ on an interval $I=\langle l, r\rangle$ where $l$ or $r$ may or may not be contained in $I$; see,  e.g., \cite{I06,  IM74,  M68}.  Due to the study in the celebrated monograph  \cite{IM74} by It\^o and McKean,  every diffusion can be decomposed into ``regular" pieces: Like the concept of irreducibility for Markov chains,  this regularity means that every point in a piece can be visited by the diffusion starting from any other point in the same piece in finite time.  So one loses little generality and gains much simplification by considering a regular diffusion.  More precisely,  $X$ is called \emph{regular} if $\mathbf{P}_x(T_y<\infty)>0$ for any $x\in \mathring{I}:=(l,r)$ and $y\in I$,  where $T_y:=\inf\{t>0: X_t=y\}$.  For simplification we further assume that $X$ has no killing inside in the sense that $X_{\zeta-}\notin I$ if $\zeta<\infty$ where $\zeta$ is the lifetime of $X$.  Then a significant characterization tells us that $X$ is uniquely determined by a \emph{canonical scale function} $\bs$ and a \emph{canonical speed measure} $\fm$; see,  e.g.,  \cite[V\S7]{RW87} and \cite[VII\S3]{RY99}.  Note that $\bs$ is a continuous and strictly increasing function on $I$ giving the hitting distributions of $X$ in the sense that for any $a,x,b\in I$ with $a<x<b$,
\begin{equation}\label{eq:02}
	\mathbf{P}_x(T_b<T_a)=\frac{\bs(x)-\bs(a)}{\bs(b)-\bs(a)}. 
\end{equation}
When $\bs(x)=x$,  $X$ is called \emph{on its natural scale}. 
The canonical speed measure $\fm$ is a fully supported Radon measure on $I$,  which is roughly defined as $-\frac{1}{2}h''_{a,b}$ on every open interval $(a,b)\subset I$,  where $h_{a,b}(x):=\mathbf{E}_xT_a\wedge T_b$ is concave and $-h''_{a,b}$ is the Radon measure induced by the second derivative of $-h_{a,b}$ in the sense of distribution.  A more comprehensible explanation for canonical speed measure is as follows. When on its natural scale,  $X$ can be expressed as a time change of Brownian motion and $\fm$ measures the speed of its movements: In regions where $\fm$ is large,  $X$ moves slowly.  Particularly,  the Brownian motion is on its natural scale and its canonical speed measure is the Lebesgue measure.  

A famous analytic treatment to reach a regular diffusion brings into play the generalized second order differential operator 
\begin{equation}\label{eq:01}
\mathscr L:=\frac{1}{2}\frac{d^2}{d\fm d\bs},
\end{equation}
and a systematic introduction is referred to in \cite{M68}.  Roughly speaking,  $\mathscr L$  generates a Feller semigroup that leads to a regular diffusion with canonical scale function $\bs$ and canonical speed measure $\fm$. 
Sch\"utze \cite{S79} as well as contributions for quasidiffusions dealt with \eqref{eq:01} for certain general pair $(\bs,\fm)$.   The treatment of quasidiffusions considers the natural scale $\bs(x)=x$ while $\fm$ is induced by a (not strictly) increasing right continuous function $m$ on $\bR$.  It turns out that \eqref{eq:01} generates a standard process,  named \emph{quasidiffusion},  and $m$ is also called the \emph{speed measure} of quasidiffusion.  Unsurprisingly a quasidiffusion moves only on the support $E_m$ of $m$,  a closed subset of $\bR$.  The continuity of sample paths is not satisfied,  and instead the so-called \emph{skip-free} property holds: A quasidiffusion jumps across only the gaps of $E_m$.  Sch\"utze \cite{S79} studied the operator \eqref{eq:01} for another interesting case that $\bs$ is only strictly increasing and $\fm$ is a fully supported Radon measure such that $\fm$ has an isolated mass at points where $\bs$ is neither right nor left continuous.  It is insightful to point out in \cite{S79} that \eqref{eq:01} still corresponds to a ``simple" Markov process on $I$,  whereas the strong Markov property fails;  see \cite[Corollary~4.7]{S79}.  To associate $(\bs, \fm)$ to a strong Markov process,  the completion of state space $I$ with respect to $\bs$,  similar to the Ray-Knight compactification,  must be applied so that a desirable Feller semigroup would be obtained on the completed space. 


Theory of Dirichlet forms provides another technique to explore a regular diffusion $X$.  The crucial fact is that $X$ is symmetric with respect to its canonical speed measure $\fm$,  and thus it is possible to associate $X$ to a certain Dirichlet form on $L^2(I,\fm)$.  More precisely,  the Dirichlet form of $X$ on $L^2(I,\fm)$ is expressed as
\begin{equation}\label{eq:13-2}
\begin{aligned}
	&\sF^{(\bs,\fm)}=\{f\in L^2(I,\fm):f\ll \bs,  df/d\bs\in L^2(I,d\bs),  \\
	&\qquad\qquad \qquad f(j)=0\text{ if } j\notin I\text{ and }|\bs(j)|<\infty  \text{ for }j=l\text{ or }r\}, \\
	&\sE^{(\bs,\fm)}(f,g)=\frac{1}{2}\int_{I}\frac{df}{d\bs}\frac{dg}{d\bs},\quad f,g\in \sF^{(\bs,\fm)},
\end{aligned}
\end{equation}
where $f\ll \bs$ stands for that $f$ is absolutely continuous with respect to $\bs$.  
This was first established in \cite{F10,  FHY10},  and a systematic summarization is also referred to in \cite[\S2.2.3]{CF12}.  With the help of the theory of Dirichlet forms,  one can go farther in related studies.  For example,  the connection between Dirichlet form and the operator \eqref{eq:02} was studied in \cite{F14}.  The Dirichlet form characterization for diffusions without regular property was accomplished in \cite{LY19,  L21}.  Rich ``singular" diffusions were found out in a series of articles concerning so-called regular Dirichlet subspaces; see,  e.g,  \cite{LY17,  LY19-2,  LSY20}.

The object of the current paper is to associate a triple $(I,\bs,\fm)$ to a certain Markov process by means of Dirichlet forms,  where $I$ is an interval,  $\bs$ is an increasing function on $I$ and $\fm$ is a positive measure on $I$ which is Radon on its interior.  Note that $\bs$ is not assumed to be strictly increasing nor left/right continuous,  and $\fm$ is not necessarily fully supported.  This triple is much general than that considered in \cite{S79}. To extend \eqref{eq:13-2} for this general triple,  the main thrust is to explain the absolute continuity with respect to a discontinuous scale function.  This step will be completed in \S\ref{SEC2}.  Roughly speaking,  by eliminating left or right discontinuities  from $\bs$,  one can obtain a continuous and increasing function.  This continuous part of $\bs$ and the gaps of $\bs$ at left/right discontinuous points induce three positive Radon measures,  each of which corresponds to a concept of absolute continuity.  Eventually the absolute continuity with respect to $\bs$ is by definition the sum of these three ones.  Given this setting,  we define a quadratic form $(\sE,\sF)$ on $L^2(I,\fm)$ as \eqref{eq:25},  called the \emph{Dirichlet form associated with} $(I,\bs,\fm)$,  which is certainly a generalization of that reaches a regular diffusion: If $\bs$ is continuous and strictly increasing and $\fm$ is fully supported,  then \eqref{eq:25} is identified with \eqref{eq:13-2}.  However,  
 $(\sE,\sF)$ is only a \emph{Dirichlet form in the wide sense},  because $\fm$ is not fully supported and every function $f\in \sF$ is constant on intervals where $\bs$ is constant; see Theorem~\ref{LM12}.  As a result we cannot associate $(\sE,\sF)$ to a certain Markov process straightforwardly.  
 
Instead our goal will be achieved in two different manners in the sequel.  The first one is in the context of \emph{regular representations} of $(\sE,\sF)$.  This concept was raised in a famous paper \cite{F71} of Fukushima.  Roughly speaking,  a regular representation of $(\sE,\sF)$ is a regular Dirichlet form $(\sE',\sF')$ on another $L^2$-space $L^2(I',\fm')$ such that there is an algebra isomorphism $\Phi$ from $\sF_b$ to $\sF'_b$,  where $\sF_b:=\sF\cap L^\infty(I,\fm)$ and $\sF'_b:=\sF'\cap L^\infty(I',\fm')$,  and $\Phi$ preserves three kinds of metrics as listed in \eqref{eq:31}.  By making use of Gelfand representations of subalgebras of $L^\infty$-space,  Fukushima \cite{F71} proved that a Dirichlet form in the wide sense always has regular representations,  and as is shown in \cite[Theorem~A.4.2]{FOT11},  all of them are quasi-homemorphic in the sense of,  e.g.,  \cite[Definition~1.4.1]{CF12}.  

Concerning the Dirichlet form $(\sE,\sF)$ associated with $(I,\bs,\fm)$,  we will single out two special but significant regular representations in Theorem~\ref{THM6} by raising a program to regularize $(I,\bs,\fm)$.  
More precisely,  two transformations are used to rebuild the topology of $I$ so that we can transform $(I,\bs,\fm)$ accordingly:  One transformation,  called \emph{scale completion} due to Sch\"utze \cite{S79},  makes the completion of $I$ with respect to $\bs$ and the other transformation,  called \emph{darning},  collapses each interval where $\bs$ is constant into an abstract ``point".  
The space obtained by ``regularizing" $I$ is denoted by $I^*$,  and $\bs, \fm$ are also transformed into new scale function $\bs^*$ and new speed measure $\fm^*$ on $I^*$ by a natural way.  For convenience we call $(I^*,\bs^*,\fm^*)$ the \emph{regularization} of $(I,\bs,\fm)$.  Finally,   transforming $(\sE,\sF)$ into another quadratic form $(\sE^*,\sF^*)$ on $L^2(I^*,\fm^*)$ in accord with the regularizing of $(I,\bs,\fm)$,  we will conclude in Theorem~\ref{THM6} that $(\sE^*,\sF^*)$ is actually a regular representation of $(\sE,\sF)$.  On the other hand,  as will be explained in Example~\ref{EXA31},  the regularized space $I^*$ might be very abstract and hence hard to handle.  Fortunately we find that $\bs^*$ is a homeomorphism mapping $I^*$ onto a \emph{nearly closed subset} $\widehat{I}$ of $\bR$,  i.e.  $\widehat{I}\cup \{\widehat{l},\widehat{r}\}$ is closed in the extended real number system $\overline{\bR}$ and $\widehat{I}\subset \bR$,  where $\widehat{l}=\inf\{\widehat{x}: \widehat{x}\in \widehat{I}\}$ and $\widehat{r}=\sup\{\widehat{x}: \widehat{x}\in \widehat{I}\}$.  
 Applying this homeomorphism,  one gets the image Dirichlet form $(\widehat{\sE},\widehat{\sF})$ of $(\sE^*,\sF^*)$ on $L^2(\widehat{I},\widehat{\fm})$, where $\widehat{\fm}$ is the image measure of $\fm^*$ under $\bs^*$.  This is the second special regular representation of $(\sE,\sF)$ that is of great interest to us. 

Due to the celebrated construction theorem by Fukushima in \cite{F71-2},  there is an $\fm^*$-symmetric (resp.  $\widehat{\fm}$-symmetric) Markov process $X^*$ (resp.  $\widehat{X}$) on $I^*$ (resp.  $\widehat{I}$) associated with $(\sE^*,\sF^*)$ (resp.  $(\widehat{\sE},\widehat{\sF})$).  
They are definitely significant,  because $X^*$ plays an analogous role to regular diffusion in this Dirichlet form characterization,  and $\widehat{X}$ is the image process of $X^*$ on its natural scale as will be shown in \S\ref{SEC33}.  In fact,  $(\widehat{\sE},\widehat{\sF})$ is a so-called \emph{trace Dirichlet form},  as leads to identifying $\widehat{X}$ with a \emph{time-changed Brownian motion} with speed measure $\widehat{\fm}$.  This fact also sheds light on the significance of $\widehat{\fm}$: Like a regular diffusion,  $\widehat{X}$ moves slowly in regions where $\widehat{\fm}$ is large.  Such processes deserve names and we call $X^*$ (resp.  $\widehat{X}$) the \emph{regularized Markov process} (resp.  \emph{image regularized Markov process}) associated with $(I,\bs,\fm)$.  


It is worth emphasising that $X^*$ and $\widehat{X}$ are linked via the homeomorphism $\bs^*$,  i.e.  $(\widehat{X})_{t\geq 0}$ is equivalent to $\bs^*(X^*_t)_{t\geq 0}$.  This connection seems to hold true only for special regular representations,  because there usually exists an essential difference between quasi-homeomorphism and homeomorphism.  However we will obtain a remarkable result in Theorem~\ref{THM311} showing that the Markov process associated with an arbitrary regular representation of $(\sE,\sF)$ must be a homeomorphic image of $\widehat{X}$.  Hence  by way of regular representations,  we have associated $(I,\bs,\fm)$ to a unique Markov process up to homeomorphisms.  

In a second manner we will give an $\fm$-symmetric continuous simple Markov process on $I$ but not satisfying the strong Markov property,  whose Dirichlet form on $L^2(I,\fm)$ is $(\sE,\sF)$; see Theorem~\ref{THM82}.  This process,  called the \emph{unregularized Markov process} associated with $(I,\bs,\fm)$,  is a byproduct of the regularized one $X^*$.  Heuristically speaking,  the transformation of scale completion in the regularizing program divides each discontinuous point of $\bs$ into distinct ones so that $\bs$ is continuous on the completed space.  The unregularized Markov process is obtained by applying the inverse transformation to $X^*$,  that is,  merging divided distinct points and hence eliminating jumps of $X^*$.  As a result,  we get a symmetric simple Markov process with continuous paths,  while the strong Markov property fails for certain stopping times like the first hitting times for sets containing discontinuous points of $\bs$.  


The rest of this paper is devoted to a probabilistic description of the Markov processes associated with $(I,\bs,\fm)$ by way of regular representations.  We begin with introducing a so-called \emph{skip-free Hunt process in one dimension}.  
It is,  by definition,  a Hunt process on a nearly closed subset $E$ of $\overline{\bR}$ satisfying the skip-free property (SF),  the regular property (SR) (like the regular property for a diffusion) and having no killing inside (SK); see Definition~\ref{DEF51}.  This concept obviously generalizes regular diffusion: When $E$ is an interval,  (SF) is identified with the continuity of sample paths and a skip-free Hunt process is actually a regular diffusion.  In \S\ref{SEC5} we will show that a skip-free Hunt process $X$ also gives a \emph{scale function} in the same sense as \eqref{eq:02},  and when on its natural scale,  $X$ admits a \emph{speed measure} roughly defined as $-\frac{1}{2}h''_{a,b}$ on $E\cap (a,b)$ for any $a,b\in E$ with $a<b$,  where $E$ must be a subset of $\bR$ and $h_{a,b}(x):=\mathbf{E}_xT_a\wedge T_b, x\in E\cap (a,b)$ can be extended to a concave function on $(a,b)$.  
In addition,  one can find in Corollary~\ref{COR64} that,  like a regular diffusion,  a skip-free Hunt process is uniquely determined by its scale function and speed measure.
After these ingredients are put forward,  the main result, Theorem~\ref{THM71}, states that $\widehat{X}$ is equivalent to either of the following with identifying speed measures:
\begin{itemize}
\item[(1)] A skip-free Hunt process on its natural scale;  
\item[(2)] A quasidiffusion with no killing inside.  
\end{itemize}
As said above,  the Markov processes associated with regular representations of $(\sE,\sF)$ are homeomorphic images of $\widehat{X}$.  Hence all of them are actually Hunt processes with the properties (SF),  (SR) and (SK) on a space homeomorphic to a nearly closed subset of $\bR$.

Throughout this paper the condition (DK) for $(I,\bs,\fm)$,  the condition (SK) for a skip-free Hunt process and the condition (QK) for a quasidiffusion are assumed to bar the possibility of killing inside.  We wish to state emphatically that without assuming (DK) or (QK),  the formulation of (image) regularized Markov process or quasidiffusion still holds true,  while there probably appears killing at endpoints contained in the state space; see Remarks~\ref{RM38} and \ref{RM73}.  The case of skip-free Hunt process is more complicated.  Without (SK),  a skip-free Hunt process may admit killing everywhere,  so a \emph{killing measure} should be introduced to characterize the killing part of the process.  We hope to make it in a further contribution. 

The paper is organized as follows.  In \S\ref{SEC2}, we will introduce the Dirichlet form $(\sE,\sF)$ associated with $(I,\bs,\fm)$, and the main result, Theorem~\ref{LM12}, shows that it is a Dirichlet form in the wide sense.  The section \S\ref{SEC3} is devoted to studying the regular representations of $(\sE,\sF)$ and their associated Markov processes.  In \S\ref{SEC5} we turn to investigate the skip-free Hunt process in one dimension.  Its scale function will be obtained in Theorem~\ref{THM58} and its speed measure will be explicitly defined in \S\ref{SEC53}.  A review of quasidiffusion will be outlined in \S\ref{SEC6}.  Note that a quasidiffusion is a semimartingale.  In particular we will distinguish Markov local times and  semimartingale local times for a quasidiffusion in Lemma~\ref{LM65},  so that the It\^o-Tanaka-Meyer formula is set up.  The correspondence between image regularized Markov process,  skip-free Hunt process and quasidiffusion will be established in \S\ref{SEC7}.  
In \S\ref{SEC9} the unregularized Markov process associated with $(I,\bs,\fm)$ will be given.  The last section \S\ref{SEC8} gives several examples of Markov processes that can be obtained by regularizing a certain triple $(I,\bs,\fm)$.  


\subsection*{Notations}
Let $\overline{\mathbb{R}}=[-\infty, \infty]$ be the extended real number system.  A set $E\subset \overline{\bR}$ is called a \emph{nearly closed subset} of $\overline{\bR}$ if $\overline E:= E\cup \{l,r\}$ is a closed subset of $\overline{\bR}$ where $l=\inf\{x: x\in E\}$ and $r=\sup\{x: x\in E\}$.  The point $l$ or $r$ is called the left or right endpoint of $E$.  Denote by $\overline{\mathscr K}$ the family of all nearly closed subsets of $\overline{\bR}$.  Set
\[
	\mathscr K:=\{E\in \overline{\mathscr K}: E\subset \bR\},
\]
and every $E\in \mathscr K$ is called a \emph{nearly closed subset} of $\bR$.  

Let $E$ be a locally compact separable metric space.  We denote by $C(E)$ the space of all real continuous functions on $E$.  In addition,  $C_c(E)$ is the subspace of $C(E)$ consisting of all continuous functions on $E$ with compact support and
\[
	C_\infty(E):=\{f\in C(E): \forall \varepsilon>0, \exists K\text{ compact},  |f(x)|<\varepsilon, \forall x\in E\setminus K\}.  
\]
The functions in $C_\infty(E)$ are said to be vanishing at infinity.  
Given an interval $J$,  $C_c^\infty(J)$ is the family of all smooth functions with compact support on $J$.  

The abbreviations CAF and PCAF stand for \emph{continuous additive functional} and \emph{positive continuous additive functional} respectively.  

\section{Dirichlet forms related to discontinuous scales}


\subsection{Discontinuous scale on an interval}\label{SEC2}

Let $\overline{\mathbb{R}}=[-\infty, \infty]$ be the extended real number system and the following triple $(I,\bs,\fm)$ is what we are concerned with:
\begin{itemize}
\item[(a)] A closed interval $I=[l,r]\subset \overline{\bR}$, where $-\infty \leq l<0<r\leq \infty$.  
\item[(b)] A non-constant increasing real valued function $\bs$ on $(l,r)$ such that $\bs(0-)=\bs(0)=\bs(0+)=0$.  Set $\bs(l):=\lim_{x\downarrow l}\bs(x)$ and $\bs(r):=\lim_{x\uparrow r}\bs(x)$.  
\item[(c)]  A positive measure $\fm$ on $I$, which is Radon on $(l,r)$,  i.e.  $\fm([a,b])<\infty$ for any $[a,b]\subset (l,r)$,  and such that $ \fm(\{0\})=0$.  
\end{itemize}
The conditions  $\bs(0-)=\bs(0)=\bs(0+)=0$ and $ \fm(\{0\})=0$ lose no generality,  and for convenience we further make the convention $\bs(l-):=\bs(l)$ and $\bs(r+):=\bs(r)$.  We emphasis that $\bs(l),\bs(r), \fm(\{l\})$ or $\fm(\{r\})$  may be infinite. 

The function $\bs$ is called a \emph{scale function} (\emph{scale} for short) on $I$. 
It is not assumed to be continuous nor strictly increasing. Set
\begin{equation}\label{eq:13}
	U:=\{x\in I: \bs\text{ is constant on }(x-\varepsilon,x+\varepsilon)\cap I\text{ for some }\varepsilon>0\}.  
\end{equation}
Clearly $U \setminus \{l,r\}$ is open and can be written as a disjoint union of open intervals 
\begin{equation}\label{eq:14}
U\setminus \{l,r\}=\bigcup_{n\geq 1}(c_n,d_n). 
\end{equation}
Let $D^\pm:=\{x\in I: \bs(x)\neq \bs(x\pm)\}$, $D^0:=D^+\cap D^-$ and $D:=D^+\cup D^-$.  Every point $x\in D^0$ is called \emph{isolated} (with respect to $\bs$),  and the interval $(c_n,d_n)$ is called \emph{isolated} (with respect to $\bs$) if $c_n,d_n\in D\cup \{l,r\}$. 

Let us give a definition classifying $l$ and $r$ in terms of the pair $(\bs,\fm)$.  
For convenience we use $\fm(r-)<\infty$ (resp.  $\fm(l+)<\infty$) to stand for that $\fm((r-\varepsilon,r))<\infty$ (resp.  $\fm((l,l+\varepsilon))<\infty$) for some $\varepsilon>0$.  Otherwise we make the convention $\fm(r-)=\infty$ (resp. $\fm(l+)=\infty$). 

\begin{definition}\label{DEF21}
\begin{itemize}
\item[(1)] The endpoint $r$ (resp.  $l$) is called \emph{approachable} if $\bs(r)<\infty$ (resp.  $\bs(l)>-\infty$).  
\item[(2)] An approachable endpoint $r$ (resp.  $l$) is called \emph{regular} if $\fm(r-)<\infty$ (resp.  $\fm(l+)<\infty$).  
\item[(3)] A regular endpoint $r$ (resp.  $l$) is called \emph{reflecting},  if $\fm(\{r\})<\infty$ (resp.  $\fm(\{l\})<\infty$).  Otherwise it is called \emph{absorbing}. 
\end{itemize}
\end{definition}

Now we have a position to present two additional conditions for the triple $(I,\bs,\fm)$ that will be assumed throughout this paper.  The first one will bar the possibility of killing inside for desirable Markov processes (see Remark~\ref{RM38}): 
\begin{itemize}
\item[(DK)] $l$ or $r$ is reflecting if it is the endpoint of an isolated interval in \eqref{eq:14}.  
\end{itemize}
Another one is related to $\fm$.
Let $E_\fm:=\text{supp}[\fm]$ be the support of $\fm$,  i.e.  the smallest closed subset of $I$ outside which $\fm$ vanishes.  Set $E_\bs:=I\setminus U$,  called the \emph{support} of $\bs$.  Actually if $\bs$ is right continuous,  then $E_\bs$ coincides with the support of the Lebesgue-Stieltjes measure induced by $\bs$.  
In place of $E_\fm=I$,  we assume
\begin{itemize}
\item[(DM)]  $E_\bs\subset E_\fm$,  $\fm(\{x\})>0$ for $x\in D^0$ and for isolated interval $(c_n,d_n)$ in \eqref{eq:14},  it holds that 
\[
	\fm\left(I_{c_n}^{d_n} \right)>0,
\]
where $I_{c_n}^{d_n}$ is an interval ended by $c_n$ and $d_n$ and $c_n\in I_{c_n}^{d_n}$ (resp.  $d_n\in I_{c_n}^{d_n}$) whenever $c_n\notin D^+$ (resp.  $d_n\notin D^-$).  
\end{itemize}
\begin{remark}\label{RM22}
This condition is only used to obtain a fully supported ``regularized" speed measure in Theorem~\ref{THM6}.  
Note that $E_\bs\subset E_\fm$ implies that $(\alpha, \beta)\subset U$ for any $\fm$-negligible open interval $(\alpha,\beta)\subset I$.  When $\bs$ is strictly increasing,  (DM) reads as $E_\fm=I$ and $\fm(\{x\})>0$ for $x\in D^0$,  as is identified with the assumption in \cite{S79}. 
\end{remark}

\subsection{Absolute continuity with respect to $\bs$}

Before moving on we prepare some elements concerning the absolute continuity with respect to $\bs$.  
 Since neither left nor right continuous,   $\bs$ cannot induce a Lebesgue-Stieltjes measure.  Instead we set 
\[
	\mu^+_d:=\sum_{x\in I}\left(\bs(x+)-\bs(x)\right)\cdot \delta_x,\quad \mu^-_d:=\sum_{x\in I}\left(\bs(x)-\bs(x-)\right)\cdot \delta_x
\]
and
\[
	\bs^+_d(x):=\left\lbrace
	\begin{aligned}
	&\mu^+_d((0,x)),\;\;\;\;\; x\geq 0,\\
	&-\mu^+_d([x,0)),\; x<0,
	\end{aligned}
	\right.  \quad
	\bs^-_d(x):=\left\lbrace
	\begin{aligned}
	&\mu^-_d((0,x]),\;\;\;\;\;\, x\geq 0,\\
	&-\mu^-_d((x,0)),\; x<0,
	\end{aligned}
	\right.  
\]
where $\delta_x$ is the Dirac measure at $x$.  
Note that $\mu^\pm_d$ are positive Radon measures on $(l,r)$ with $\mu^\pm_d(\{l,r\})=0$, and $\bs^+_d$ (resp.  $\bs^-_d$) is left (resp.  right) continuous.  Let 
\[
\bs_c:=\bs-\bs^+_d-\bs^-_d.  
\]
Clearly $\bs_c$ is increasing and continuous on $I$ and denote  its Lebesgue-Stieltjes measure by $\mu_c$.  
Hereafter we make the conventions $\int_{(0,x)}:=-\int_{[x,0)}$ and $\int_{(0,x]}:=-\int_{(x,0)}$ for $x<0$.
Define
\[
	\mathscr S^+_d:=\left\{f(\cdot)=\int_{(0,\cdot)}g(y)\mu^+_d(dy):g\in L^2(I,\mu^+_d) \right\},
\]
\[
	\mathscr S^-_d:=\left\{f(\cdot)=\int_{(0,\cdot]}g(y)\mu^-_d(dy): g\in L^2(I,\mu^-_d)\right\},
\]
and 
\[
	\mathscr S_c:=\left\{f(\cdot)=c_0+\int_{(0,\cdot)}g(y)\mu_c(dy): g\in L^2(I,\mu_c),  c_0\in \bR\right\}.  
\]
In either case write $g:=df/d\mu$ for $\mu=\mu^\pm_d$ or $\mu_c$.  We emphasis that the functions in these families are defined pointwisely on $(l,r)$.   
Let
\begin{equation}\label{eq:11-2}
\sS:=\sS_c+\sS^+_d+\sS^-_d=\{f=f^c+f^++f^-: f^c\in \sS_c,  f^\pm\in \sS^\pm_d\}
\end{equation}
and clearly,  the decomposition of $f\in \sS$ in $\sS_c+\sS^+_d+\sS^-_d$ is unique.  
For such $f\in \sS$,  define
\begin{equation}\label{eq:11}
	\int_I \left(\frac{df}{d\bs}\right)^2d\bs:=\int_I \left(\frac{df^c}{d\mu_c}\right)^2d\mu_c+\int_I \left(\frac{df^+}{d\mu^+_d}\right)^2d\mu^+_d+\int_I \left(\frac{df^-}{d\mu^-_d}\right)^2d\mu^-_d.
\end{equation}
The lemma below (cf.  \cite[Proposition~2.2]{S79}) is useful and the proof is straightforward,  so we omit it. 

\begin{lemma}\label{LM31}
$f\in \sS$ is $\bs$-continuous in the sense that $f$ has finite limits from the left and the right on $(l,r)$ and that the right or left continuity of $\bs$ at a point implies the same property of $f$.  Particularly,  when $r$ (resp. $l$) is approachable,  $f\in \sS$ admits a finite limit $f(r):=\lim_{x\uparrow r}f(x)$ (resp.  $f(l):=\lim_{x\downarrow l}f(x)$).
\end{lemma}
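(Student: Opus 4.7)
The plan is to analyze the unique decomposition $f = f^c + f^+ + f^-$ componentwise, using that each of the three measures $\mu_c$, $\mu^+_d$, $\mu^-_d$ isolates a different mode of variation of $\bs$: continuous increase, right jumps, and left jumps. The piece $f^c\in \sS_c$ is an indefinite integral against the atomless Radon measure $\mu_c$ (atomless because $\bs_c$ is continuous), so the standard Cauchy--Schwarz bound
\[
|f^c(y) - f^c(x)|^2 \leq \mu_c([x\wedge y, x\vee y])\int_I \left(\tfrac{df^c}{d\mu_c}\right)^2 d\mu_c
\]
immediately yields continuity of $f^c$ on $(l,r)$.

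Next I would read off the one-sided continuity of $f^\pm\in \sS^\pm_d$ directly from the asymmetric interval conventions $(0,\cdot)$ vs.\ $(0,\cdot]$ hardwired into the definitions of $\sS^\pm_d$. Applying $\sigma$-additivity along the nested families $(0,y)\nearrow (0,x)$ as $y\uparrow x$ and $(0,y]\searrow (0,x]$ as $y\downarrow x$ shows that $f^+$ is everywhere left continuous and $f^-$ is everywhere right continuous, with explicit jumps
\[
f^+(x+)-f^+(x)=\tfrac{df^+}{d\mu^+_d}(x)\cdot \mu^+_d(\{x\}),\qquad f^-(x)-f^-(x-)=\tfrac{df^-}{d\mu^-_d}(x)\cdot \mu^-_d(\{x\}).
\]
By construction $\mu^+_d(\{x\})=\bs(x+)-\bs(x)$ and $\mu^-_d(\{x\})=\bs(x)-\bs(x-)$, so right continuity of $\bs$ at a point $x_0$ forces $\mu^+_d(\{x_0\})=0$ and hence right continuity of $f^+$; together with the continuity of $f^c$ and the always-right-continuity of $f^-$ this gives right continuity of $f=f^c+f^++f^-$ at $x_0$. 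The symmetric argument handles left continuity. Finiteness of $f^\pm$ on $(l,r)$ and the existence of finite one-sided limits at interior points then drop out of the same Cauchy--Schwarz estimate together with $\mu^\pm_d(K)<\infty$ for compact $K\subset (l,r)$.

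For the endpoint claim, approachability of $r$ reads $\bs(r)=\bs_c(r-)+\bs^+_d(r-)+\bs^-_d(r-)<\infty$, so by monotonicity each of $\mu_c((0,r))$, $\mu^+_d((0,r))$, $\mu^-_d((0,r))$ is finite. Cauchy--Schwarz applied to each component then provides a uniform bound on the increments of $f^c$, $f^+$, $f^-$ as $x\uparrow r$ and yields the finite limit $f(r):=\lim_{x\uparrow r}f(x)$; the case $x\downarrow l$ is identical once the sign conventions $\int_{(0,x)}=-\int_{[x,0)}$ and $\int_{(0,x]}=-\int_{(x,0)}$ are invoked. The only real hurdle is keeping straight the two asymmetric interval conventions defining $\sS^\pm_d$ and matching them with the correct one-sided jump of $\bs$; no substantive obstacle is anticipated, and the statement is essentially a careful bookkeeping exercise once the componentwise decomposition is in hand.
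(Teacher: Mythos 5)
Your proof is correct and is exactly the straightforward componentwise argument the paper has in mind (it omits the proof, citing only Proposition~2.2 of Sch\"utze): continuity of $f^c$ via Cauchy--Schwarz against the atomless $\mu_c$, one-sided continuity of $f^\pm$ from the asymmetric interval conventions with jumps $g^\pm(x)\,\mu^\pm_d(\{x\})$ proportional to the corresponding gaps of $\bs$, and finiteness of $\mu_c((0,r))$, $\mu^\pm_d((0,r))$ from $\bs(r)<\infty$ for the endpoint limit. No gaps.
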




\subsection{Dirichlet form associated with $(I,\bs, \fm)$}\label{SEC14}

Let $I_0:=\langle l, r\rangle$ be the interval ended by $l$ and $r$ obtained by taking out non-reflecting endpoints from $I$.  Clearly $\fm$ is Radon on $I_0$ and we denote the restriction of $\fm$ to $I_0$ still by $\fm$.  
 Define a quadratic form on $L^2(I_0,\fm)$:
\begin{equation}\label{eq:25}
\begin{aligned}
	&\sF:= \{f\in \sS\cap L^2(I_0,\fm): f(j)=0\text{ if }j\notin I_0\text{ is approachable for }j=l\text{ or }r\},  \\
	&\sE(f,g):=\frac{1}{2}\int_I \frac{df}{d\bs}\frac{dg}{d\bs}d\bs,\quad f,g\in \sF,
\end{aligned}
\end{equation}
where $\sS$ is defined as \eqref{eq:11-2} and $\sE(f,g)$ is defined by \eqref{eq:11}.   Note incidentally that \eqref{eq:25} coincides with the Dirichlet form associated with a regular diffusion if $\bs$ is strictly increasing and  continuous; see \S\ref{SEC71}. 
However for general scale function, this quadratic form is not even a Dirichlet form because $f\in \sF$ is constant on $(c_n,d_n)$ for any $n\geq 1$,  where $(c_n,d_n)$ appears in \eqref{eq:14},  and hence $\sF$ is not necessarily dense in $L^2(I_0,\fm)$.
Instead we say $(\sE,\sF)$ is a Dirichlet form on $L^2(I_0,\fm)$ \emph{in the wide sense} provided that $(\sE,\sF)$ is a non-negative symmetric closed form satisfying the Markovian property.  For simplicity we call \eqref{eq:25} the \emph{Dirichlet form associated with} $(I,\bs,\fm)$.  

\begin{theorem}\label{LM12}
$(\sE,\sF)$ is a Dirichlet form on $L^2(I_0,\fm)$ in the wide sense.  
\end{theorem}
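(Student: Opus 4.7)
The form $\sE$ is manifestly non-negative and symmetric, since \eqref{eq:11} expresses $\sE(f,f)$ as a sum of three non-negative $L^2$-squares over the mutually singular measures $\mu_c,\mu^+_d,\mu^-_d$. What remains is closedness on $L^2(I_0,\fm)$ and the Markovian property; my plan exploits that the decomposition $\sS = \sS_c + \sS^+_d + \sS^-_d$ in \eqref{eq:11-2} is direct, because $f^\pm\in\sS^\pm_d$ carries the right/left jumps of $f\in\sS$ at $D^\pm$ while $f^c\in\sS_c$ is continuous, so the three derivatives of $f$ are each determined pointwise by $f$.

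For closedness, take an $\sE_1$-Cauchy sequence $\{f_n\}\subset\sF$ with decomposition $f_n = f_n^c+f_n^++f_n^-$ and derivatives $g_n^c := df_n^c/d\mu_c$, $g_n^\pm := df_n^\pm/d\mu^\pm_d$. Because \eqref{eq:11} is literally the sum of the three squared $L^2$-norms of these derivatives, each sequence is Cauchy in its respective $L^2$ space and converges to some $g^c\in L^2(I,\mu_c)$ and $g^\pm\in L^2(I,\mu^\pm_d)$. Integrating these limits as in the definitions of $\sS_c,\sS^\pm_d$ produces $h\in\sS$ with the prescribed derivatives and $h(0)=0$, and a Cauchy--Schwarz estimate yields $f_n - f_n(0)\to h$ uniformly on compact subsets of $(l,r)$. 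To fix the anchor, I use that $\{f_n\}$ also converges in $L^2(I_0,\fm)$ to some $\tilde f$: by (DM) and non-constancy of $\bs$, $E_\bs\cap(l,r)\subset E_\fm$ is non-empty, so there exists a compact $K\subset I_0$ with $\fm(K)>0$; passing to an $\fm$-a.e.\ convergent subsequence on $K$ and combining with uniform convergence on $K$ forces $f_n(0)\to c_0$ for some $c_0\in\bR$, whence $f := h + c_0$ satisfies $f=\tilde f$ $\fm$-a.e.\ and $\sE_1(f_n-f,f_n-f)\to 0$. When an endpoint $j\in\{l,r\}$ is approachable but excluded from $I_0$, the three measures are finite near $j$, upgrading the uniform convergence all the way to $j$, and together with $f_n(j)=0$ this gives $f(j)=0$ via Lemma~\ref{LM31}.

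For the Markovian property, fix $f\in\sF$ and put $\tilde f := (0\vee f)\wedge 1$. Normal contractions preserve one-sided limits at every point, so Lemma~\ref{LM31} ensures $\tilde f\in\sS$ with decomposition $\tilde f^c+\tilde f^++\tilde f^-$; membership in $L^2(I_0,\fm)$ and the boundary conditions are trivially inherited. At each $x\in D^\pm$ the 1-Lipschitz estimate $|\tilde f(x\pm)-\tilde f(x)|\leq|f(x\pm)-f(x)|$ immediately gives the pointwise bound $|d\tilde f^\pm/d\mu^\pm_d|\leq|df^\pm/d\mu^\pm_d|$ $\mu^\pm_d$-a.e. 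For the continuous derivative, $D$ is countable and $\mu_c$ non-atomic so $\mu_c(D)=0$; on any open interval of continuity of $\bs$ the jump components $f^\pm$ are constant, hence $f$ is $\mu_c$-absolutely continuous there with derivative $g^c$, and the classical chain rule for 1-Lipschitz contractions yields $|d\tilde f^c/d\mu_c|\leq|g^c|$ $\mu_c$-a.e. Summing the three estimates via \eqref{eq:11} gives $\sE(\tilde f,\tilde f)\leq\sE(f,f)$.

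The main obstacle is the anchor-constant step in closedness: one must ensure that the $c_0$ fixed by $L^2(I_0,\fm)$-convergence is consistent with the boundary conditions when endpoints are approachable but excluded from $I_0$. Consistency is automatic because both determinations are extracted from the same uniformly convergent sequence $f_n - f_n(0)$, but the $L^2(\fm)$ route crucially requires $\fm$ to be non-trivial on $I_0$, a point that rests on assumption (DM) and would otherwise fail.
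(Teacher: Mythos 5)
Your treatment of closedness is sound and follows the paper's route: you extract $L^2$-limits of the three derivative components, integrate them back, and pin down the additive constant. Your anchor argument (using (DM) and the non-constancy of $\bs$ to produce a compact $K\subset (l,r)$ with $\fm(K)>0$, then combining $\fm$-a.e.\ convergence with local uniform convergence of $f_n-f_n(0)$) actually supplies a detail the paper passes over when it simply ``assumes further'' that $f_n(0)$ converges. The boundary step for approachable excluded endpoints is also correct.

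The Markovian property, however, contains a genuine gap. You write that since normal contractions preserve one-sided limits, ``Lemma~\ref{LM31} ensures $\tilde f\in\sS$.'' That lemma only states that membership in $\sS$ \emph{implies} $\bs$-continuity; it is a necessary condition, not a sufficient one, so it cannot deliver $\tilde f\in\sS$. Membership in $\sS$ requires exhibiting a decomposition $\tilde f=\tilde f^c+\tilde f^++\tilde f^-$ in which $\tilde f^c$ is an indefinite $\mu_c$-integral of an $L^2(\mu_c)$ function, and this is precisely the hard part. Your route to it --- ``on any open interval of continuity of $\bs$ the jump components $f^\pm$ are constant, hence $f$ is $\mu_c$-absolutely continuous there'' --- breaks down because nothing in the standing assumptions prevents $D=D^+\cup D^-$ from being a countable \emph{dense} subset of $(l,r)$ (e.g.\ jumps at every rational), in which case there are no open intervals of continuity at all and the localization you rely on is vacuous. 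Even granting the jump-part estimates (which are fine, being a pointwise $1$-Lipschitz bound at each atom of $\mu_d^\pm$), one still has to prove that the residual $\tilde f-\tilde f^+-\tilde f^-$ is $\mu_c$-absolutely continuous with square-integrable density dominated by $|g^c|$; the contraction $\phi$ does not commute with the decomposition into continuous and jump parts, so this is not a formal consequence of $|\tilde f(x)-\tilde f(y)|\le|f(x)-f(y)|$. The paper avoids this entirely by transporting the problem to the straightened space: Lemma~\ref{LM39} identifies $\widehat{\sS}$ with restrictions of $\dot H^1_e((\widehat l,\widehat r))$ to $\widehat I$, normal contractions are known to operate on that extended Sobolev space (\cite[Theorem~6.2.1~(2)]{FOT11}), and pulling back through the bijection $\widehat\iota$ produces the required decomposition of the contracted function together with the energy inequality via \eqref{eq:38} and \eqref{eq:39-2}. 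Some substitute for this mechanism --- or a direct measure-theoretic argument handling dense $D$ --- is needed to close your proof.
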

\begin{proof}
Firstly we note that $(\sE,\sF)$ is clearly a non-negative symmetric quadratic form on $L^2(I_0,\fm)$.  To prove its closeness,  take an $\sE_1$-Cauchy sequence $$f_n=f^c_n+f^+_n+f^-_n\in \sF,\quad n\geq 1.$$
We do not lose a great deal by assuming that $f_n$ converges to $f$ both $\fm$-a.e.  and in $L^2(I_0,\fm)$,
\begin{equation}\label{eq:12}
	g^c_n:=\frac{df^c_n}{d\mu_c}\rightarrow g^c\quad \text{ in }L^2(\mu_c)\text{ and }\mu_c\text{-a.e.}, 
\end{equation}
and
\begin{equation}\label{eq:27}
	g^\pm_n:=\frac{df^\pm_n}{d\mu^\pm_d}\rightarrow g^\pm\quad \text{ in }L^2(\mu^\pm_d)\text{ and }\mu^\pm_d\text{-a.e.}
\end{equation}
Assume further that $f(0)=\lim_{n\rightarrow\infty} f_n(0)=\lim_{n\rightarrow \infty} f^c_n(0)$. 
Set for $x\in I_0$,  
\[
\begin{aligned}
	&f^c(x):=f(0)+\int_{(0,x)}g^c(y)\mu_c(dy)\in \sS_c, \\
	&f^+(x):=\int_{(0,x)}g^+(y)\mu^+_d(dy)\in \sS^+_d, \quad f^-(x):=\int_{(0,x]}g^-(y)\mu^-_d(dy)\in \sS^-_d.
\end{aligned}\] 
Since $\lim_{n\rightarrow \infty} f^c_n(0)=f(0)$,  it follows from \eqref{eq:12} and \eqref{eq:27} that $\lim_{n\rightarrow \infty} f^{c,\pm}_n(x)=f^{c,\pm}(x)$ for $x\in I_0$.  
Define for $x\in I_0$, 
\[
	\hat{f}(x):=f^c(x)+f^+(x)+f^-(x).
\]
Clearly $\hat{f}\in \sS$,  $\sE(f_n-\hat{f},f_n-\hat{f})\rightarrow 0$  and   
\begin{equation}\label{eq:28}
	f_n(x)\rightarrow \hat{f}(x),\quad x\in I_0.
\end{equation}
When $j\notin I_0$ is approachable for $j=l$ or $r$,  \eqref{eq:28} still holds for $x=j$.  Meanwhile $\hat{f}(j)=\lim_{n\rightarrow \infty}f_n(j)=0$.  
To conclude the closeness,  we only need to note that $f=\hat{f}$,  $\fm$-a.e. on $I_0$,  by means of $f_n\rightarrow f$,  $\fm$-a.e.,  and \eqref{eq:28}. 

Finally it suffices to show the Markovian property of $(\sE,\sF)$.  We adopt the notations in \S\ref{SEC32} and Lemma~\ref{LM39} will be employed in this step.  Take $f\in \sF$ and let $g$ be a normal contraction of $f$,  i.e.  
\begin{equation}\label{eq:29}
	|g(x)-g(y)|\leq |f(x)-f(y)|,\; x,y\in I_0,\quad |g(x)|\leq |f(x)|,\;x\in I_0.  
\end{equation}
It is easy to verify that $g$ is $\bs$-continuous,  so that we can define a function $\widehat{g}$ on $\widehat{I}$ as \eqref{eq:35} with $g$ in place of $f$.   Further let $\widehat{f}$ be defined as \eqref{eq:35} for this $f$.  It follows from \eqref{eq:29} that 
\begin{equation}\label{eq:210}
|\widehat g(\widehat x)-\widehat g(\widehat y)|\leq |\widehat f(\widehat x)-\widehat f(\widehat y)|,\; \widehat x,\widehat y\in\widehat  I,\quad |\widehat g(\widehat x)|\leq |\widehat f(\widehat x)|,\;\widehat x\in \widehat I.  
\end{equation}
Using \eqref{eq:210},   Lemma~\ref{LM39},  \eqref{eq:320-2} and applying \cite[Theorem~6.2.1~(2)]{FOT11},  we can obtain that $\widehat{g}\in \widehat{\sS}$.  Note that $\widehat\iota$ is a bijective between $\sS$ and $\widehat{\sS}$.  Thus $g=\widehat{\iota}^{-1}\widehat{g}\in \sS$.  By means of \eqref{eq:29},  one can also get $g\in L^2(I_0,\fm)$ and if $j\notin I_0$ is approachable for $j=l$ or $r$,  then
\[
	|g(j)|=\lim_{x\rightarrow j}|g(x)|\leq \lim_{x\rightarrow j}|f(x)|=|f(j)|=0.  
\]
Consequently $g\in \sF$ is concluded.  In addition,  \eqref{eq:210} also yields that $|\widehat{g}'|\leq |\widehat{f}'|$,  a.e.  on $\widehat{I}$.  Hence $\sE(g,g)\leq \sE(f,f)$ follows from \eqref{eq:210},  \eqref{eq:38} and \eqref{eq:39-2}.  That completes the proof. 
\end{proof}

In the next section we will regularize $(I,\bs,\fm)$ and $(\sE,\sF)$ by a certain way,  so that a Hunt process is obtained.  This process,  called (image) regularized Markov process associated with $(I,\bs,\fm)$, will be further explored in the subsequent sections.  
Then in \S\ref{SEC9} we will come back to $(\sE,\sF)$ for the special case that $\bs$ is strictly increasing.  In this case $(\sE,\sF)$ is a (not regular) Dirichlet form on $L^2(I_0,\fm)$ and there is an $\fm$-symmetric continuous simple (not strong) Markov process on $I_0$ whose Dirichlet form is $(\sE,\sF)$.

\section{Regular representations and (image) regularized Markov processes}\label{SEC3}

Following \cite{F71},  we call $(E_1,\fm_1,\sE^1,\sF^1)$ a \emph{D-space} provided that $(\sE^1,\sF^1)$ is a Dirichlet form on $L^2(E_1,\fm_1)$ in the wide sense.  The space $\sF^1_b:=\sF^1\cap L^\infty(E_1,\fm_1)$ is an algebra over $\bR$.  Let $(E_2,\fm_2,\sE^2,\sF^2)$ be another D-space.  Then $(E_1,\fm_1,\sE^1,\sF^1)$ and $(E_2,\fm_2,\sE^2,\sF^2)$ are called \emph{equivalent} if there is an algebra isomorphism $\Phi$ from $\sF^1_b$ to $\sF^2_b$ and $\Phi$ preserves three kinds of metrics: For $f\in \sF^1_b$, 
\begin{equation}\label{eq:31}
	\|f\|_\infty=\|\Phi(f)\|_\infty, \quad (f,f)_{\fm_1}=(\Phi(f),\Phi(f))_{\fm_2},\quad \sE^1(f,f)=\sE^2(\Phi(f),\Phi(f)),
\end{equation}
where $\|\cdot\|_\infty:=\|\cdot\|_{L^\infty(E_i,\fm_i)}$ and $(\cdot,\cdot)_{\fm_i}=(\cdot,\cdot)_{L^2(E_i,\fm_i)}$ for $i=1,2$.  In addition,  $(E_2,\fm_2,\sE^2,\sF^2)$ is called a \emph{regular representation} of $(E_1,\fm_1,\sE^1,\sF^1)$ if they are equivalent and $(\sE^2,\sF^2)$ is regular on $L^2(E_2,\fm_2)$.  

As obtained in Theorem~\ref{LM12},  $(I_0,\fm,\sE,\sF)$ is a D-space.  This section is 
 devoted to finding two special regular representations,  denoted by $(I^*,\fm^*,\sE^*,\sF^*)$ and $(\widehat{I},\widehat{\fm},\widehat{\sE},\widehat{\sF})$ respectively,  of it.  
Furthermore,  we will show that the quasi-homeomorphism between two regular representations of $(I_0,\fm,\sE,\sF)$ is essentially a strict homoemorphism. 

\subsection{Regularizing program}\label{SEC31}

The crucial step to obtain a regular representation of $(I_0,\fm,\sE,\sF)$ is to utilize two transformations on the state space: 
The first one,  called \emph{scale completion},  makes the completion $\bar{I}^d$ of $I$ with respect to the metric
\[
	d(x,y):=|\tan^{-1} x-\tan^{-1}y|+|\tan^{-1}\bs(x)-\tan^{-1}\bs(y)|.  
\] 
This transformation divides $x\in D^0$ into $\{x-,x,x+\}$ and $x\in D \setminus D^0$ into $\{x-,x+\}$.  Note that for a sequence $I\setminus D\ni  x_n\uparrow x\in D$ (resp.  $I\setminus D\ni x_n\downarrow x\in D$) in $I$,  we have $d(x_n,  x-)\rightarrow 0$ (resp.  $d(x_n, x+)\rightarrow 0$).  
The second is \emph{darning}.  Let $\overline{(c_n,d_n)}^d$ be the closure of $(c_n,d_n)$ in $\bar{I}^d$,  where $(c_n,d_n)$ appears in \eqref{eq:13}.  This transformation collapses each $\overline{(c_n,d_n)}^d$ into an abstract point $p^*_n$ and the neighbourhoods of $p^*_n$ in the new topological structure are determined by those of $\overline{(c_n,d_n)}^d$ in $\bar{I}^d$.  We refer readers to page 347 of \cite{CF12} for more details about this operation.  Denote by $\bar{I}^{d,*}$ the space obtained by darning $\bar{I}^d$.  


\begin{example}\label{EXA31}
This example is to explain the above two transformations.  Consider
\[
	I=[0,3], \quad \bs(x)=\left\lbrace\begin{aligned}
	&x,\quad 0\leq x< 1,\\
	&1,\quad 1\leq x< 2,\\
	&x,\quad 2\leq x\leq 3.  
	\end{aligned}  \right.  
\]
Then $U=(a_1,b_1)=(1,2)$ and $\bs$ has only one discontinuous point $2$.  The transformation of scale completion divides $2$ into $\{2-, 2+\}$ and $$\bar{I}^d=[0,2-]\cup [2+, 3],$$  where $[0,2-]$ (resp.  $[2+,3]$) is homoemorphic to the usual interval $[0,2]$ (resp.  $[2,3])$ but $2-$ and $2+$ are distinct points in $\bar{I}^d$.  The closure $\overline{(a_1,b_1)}^d$ of $(a_1,b_1)$ is $[1,2-]$ and the darning transformation collapses it into the abstract point $p^*_1$,  which can be viewed as the point $1$.  In other words,  $\bar{I}^{d,*}$ may be treated as $[0,1]\cup [2+,3]$.  

We should point out that the abstract point $p^*_n$ might not always be understood as a usual point.  For example,  consider $I=[0,1]$ and that $\bs$ is the standard Cantor function on $[0,1]$,  i.e.  $\bs(x)=\int_0^x 1_{K^c}(y)dy$ where $K\subset [0,1]$ is the standard Cantor set.  Then $\bar{I}^d=[0,1]$,  and darning transformation collapses each open interval in the decomposition of $[0,1]\setminus K$  into an abstract point $p^*_n$.  If all $p^*_n$ are viewed as usual points of zero Lebesgue measure,  $\bar{I}^{d,*}$,  not a singleton,  must be identified with a negligible interval.  This is incomprehensible. 
\end{example}

As $\bar{I}^{d,*}$ is probably very abstract and hard to handle,  we map it to an image space
\begin{equation}\label{eq:34}
	\widehat{I}_\bs=\bs(I)\cup \{\bs(x-): x\in D^-\}\cup \{\bs(x+): x\in D^+\},
\end{equation}	
 the closure of $\bs(I):=\{\bs(x):x\in I\}$ in $\overline{\bR}$,  by means of 
$\bs^*: \bar{I}^{d,*}\rightarrow [-\infty, \infty]$ defined as
\begin{equation}\label{eq:32}
	\bs^*(x^*):=\left\lbrace
	\begin{aligned}
	&\bs(x),\quad \quad\; x^*=x\in \left(I\setminus (D\setminus D^0)\right) \cap \bar{I}^{d,*},  \\
	&\bs(x\pm),\quad\;\; x^*=x\pm\in \bar{I}^{d,*}\text{ with }x\in D,\\
	&\bs(c_n+),\quad\, x^*=p^*_n\text{ for each } n\geq 1.
	\end{aligned}
	\right. 
\end{equation}
It is straightforward to verify the following.

\begin{lemma}\label{LM32}
The map $\bs^*$ is a homeomorphism between $\bar{I}^{d,*}$ and $\widehat{I}_\bs$.  
\end{lemma}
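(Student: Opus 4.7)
The plan is to verify the four properties of a homeomorphism in turn---well-definedness, bijectivity, continuity, and continuity of the inverse---by systematically exploiting the explicit structure of $\bar{I}^{d,*}$ yielded by scale completion followed by darning. I would first enumerate the points of $\bar{I}^{d,*}$: a ``surviving'' regular point $x\in I\setminus D$ that lies in no $\overline{(c_n,d_n)}^d$, the triple $\{x-,x,x+\}$ for $x\in D^0$, the pair $\{x-,x+\}$ for $x\in D\setminus D^0$ (retaining only those one-sided copies not absorbed into the relevant $\overline{(c_n,d_n)}^d$), and one abstract point $p^*_n$ for each interval in \eqref{eq:14}. A short analysis of sequences $x_k\to c_n$ under the metric $d$ shows that $p^*_n$ absorbs exactly those of $c_n, d_n, c_n\pm, d_n\pm$ whose $\bs$-value equals the common plateau value $\bs(c_n+)=\bs(d_n-)$. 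With this catalogue in hand, formula \eqref{eq:32} is manifestly well-defined, with mutually exclusive cases exhausting the space.

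Surjectivity onto $\widehat{I}_\bs$ is then immediate from \eqref{eq:34}: the first clause of \eqref{eq:32} together with the $p^*_n$-clause produces the full set $\bs(I)$, while the middle clause produces the added one-sided limits. For injectivity the essential observation is that $\bs$ is strictly increasing on $I\setminus U$, for if $\bs(x)=\bs(y)$ with $x<y$ then $\bs$ is constant on $[x,y]$, forcing $(x,y)\subset U$. Consequently distinct regular survivors map to distinct $\bs$-values by monotonicity; the strict inequalities $\bs(x-)<\bs(x)<\bs(x+)$ (with the appropriate one-sided version for $x\in D\setminus D^0$) separate the split points at any $x\in D$; the plateau value $\bs(c_n+)$ is attained in $\bar{I}^{d,*}$ only at $p^*_n$; and different constancy intervals correspond to distinct plateau values, so no cross-identifications arise.

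For continuity I would argue that $\bs$ itself extends to a Lipschitz (hence continuous) map $\bar{I}^d \to \widehat{I}_\bs$ in the $\tan^{-1}$-distance, directly from the definition of the metric $d$. This extension is constant on each $\overline{(c_n,d_n)}^d$, so it factors through the darning quotient to yield a continuous map on $\bar{I}^{d,*}$ which coincides with $\bs^*$ by inspection of \eqref{eq:32}. Both $\bar{I}^{d,*}$ and $\widehat{I}_\bs$ are compact Hausdorff ($\bar{I}^d$ is a complete, totally bounded metric space in the $\tan^{-1}$-distance, its darning quotient inherits compactness, and $\widehat{I}_\bs$ is a closed subset of $\overline{\bR}$), so a continuous bijection between them is automatically a homeomorphism; no separate analysis of $(\bs^*)^{-1}$ is needed. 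The main obstacle, though entirely routine, is the bookkeeping in the first paragraph: tracking which of $c_n, d_n$ and which of $c_n\pm, d_n\pm$ fall inside $\overline{(c_n,d_n)}^d$ and are thereby absorbed by $p^*_n$---a finite case analysis depending on whether $c_n$ (resp.\ $d_n$) lies in $D^+$, $D^-$, both, or neither.
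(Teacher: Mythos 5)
Your proposal is correct: the paper states this lemma without proof ("straightforward to verify"), and your verification supplies exactly the case-by-case bookkeeping that claim presupposes — the key injectivity point (if $\bs(x)=\bs(y)$ with $x<y$ then $(x,y)$ lies in a single constancy interval $(c_n,d_n)$, whose closure is darned to $p^*_n$) and the continuity point (the $\tan^{-1}\circ\,\bs$ component of $d$ makes $\bs$ $1$-Lipschitz, so it extends to $\bar{I}^d$ and descends through the quotient) are both sound. The observation that $\bar{I}^{d,*}$ is compact (isometric embedding of $(I,d)$ into a bounded subset of $\bigl(\bR^2,\ell^1\bigr)$, then completion and quotient) while $\widehat{I}_\bs$ is Hausdorff, so the continuous bijection is automatically a homeomorphism, is a clean way to dispense with the inverse and is exactly the right shortcut here.
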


Let us prepare another two triples,  called \emph{regularization} and \emph{image regularization},  of $(I,\bs,\fm)$,  which provide ingredients of the desirable regular representation.  To do this,  set $\widehat{l}:=\bs(l),  \widehat{r}:=\bs(r)$ and $l^*:=\bs^{*-1}(\widehat{l}),  r^*:=\bs^{*-1}(\widehat{r})$,  where $\bs^{*-1}$ is the inverse of $\bs^*$.  We call $l^*$ or $r*$ (resp.  $\widehat{l}$ or $\widehat{r}$)  \emph{reflecting} if $l$ or $r$ is reflecting.  Let 
\begin{equation}\label{eq:33-2}
	\widehat{\fm}:= \fm\circ \bs^{-1}
\end{equation}
be the image measure of $\fm$ under the map $\bs: I\rightarrow \widehat{I}_\bs$.  

\begin{definition}
The \emph{regularization} $(I^*,\bs^*,\fm^*)$ of $(I,\bs,\fm)$ is defined as follows:
\begin{itemize}
\item[(a$^*$)] $I^*$ is the subset of $\bar{I}^{d,*}$ obtained by taking out non-reflecting endpoints from $\bar{I}^{d,*}$,  i.e.  $\bar{I}^{d,*}\setminus \{l^*,r^*\}\subset I^*\subset \bar{I}^{d,*}$ and $l^*\in I^*$ (resp. $r^*\in I^*$) if and only if $l^*$ (resp. $r^*$) is reflecting. 
\item[(b$^*$)] $\bs^*$ is defined as \eqref{eq:32}.  
\item[(c$^*$)]  $\fm^*=\widehat{\fm}\circ \bs^*$,  the image measure of $\widehat{\fm}$ under $\bs^{*-1}$.  
\end{itemize}
Accordingly,  another triple $(\widehat{I},  \widehat{\bs}, \widehat{\fm})$ is called the \emph{image regularization} of $(I,\bs,\fm)$,  where $\widehat{I}=\bs^*(I^*)$,  $\widehat{\bs}(\widehat{x})=\widehat{x}$ for $\widehat{x}\in \widehat{I}$ and $\widehat{\fm}$ is defined as \eqref{eq:33-2},
\end{definition}
	
\begin{remark}\label{RM34}
We point out that if $\widehat{l}\in \widehat{I}$,  then $\widehat{l}\in \bR$.  In addition,  if $\widehat{l}\notin \widehat{I}$,  then $\widehat{l}$ can be approximated by points in $\widehat{I}$.  Analogical facts hold for $\widehat{r}$.  Particularly,   $\widehat{I}$ is a nearly closed subset of $\bR$,  i.e.  $\widehat{I}\in \mathscr K$.  
\end{remark}

\subsection{Two regular representations}\label{SEC32}



Now we have a position to derive regular representations of $(I_0,\fm,  \sE,\sF)$.  Note the the algebra isomorphism $\Phi$ in \eqref{eq:31} plays a central role in this representation.  Our object is to map functions in $\sF$ to certain ones on $I^*$,  so that the metrics in \eqref{eq:31} are preserved.   
To accomplish this,  recall that $\sF\subset \sS$ and we introduce an auxiliary map
\[
	\widehat{\iota}: \sS\rightarrow C_\mathrm{f}(\widehat{I}),\quad  f\mapsto \widehat{f},
\]
where $C_\mathrm{f}(\widehat{I})$ is the family of all continuous functions on $\widehat{I}$ such that $\widehat{f}(\widehat{j}):=\lim_{\widehat{x}\rightarrow \widehat{j}}\widehat{f}(\widehat{x})$ exists in $\bR$ whenever $\widehat{j}\notin \widehat{I}$ is finite for $\widehat{j}=\widehat{l}$ or $\widehat{r}$,  and $\widehat{f}$ is defined as
\begin{equation}\label{eq:35}
	\widehat{f}(\bs(x)):=f(x),\; x\in I_0, \quad
	\widehat{f}(\bs(x\pm)):=f(x\pm),\; x\in D^\pm.  
\end{equation}
The map $\widehat{\iota}$ is well defined due to Lemma~\ref{LM31},  and 
\begin{equation}\label{eq:36-2}
	\widehat{f}(\widehat{l})=f(l),\quad \left(\text{resp. }\widehat{f}(\widehat{r})=f(r) \right)
\end{equation}
if $\widehat{l}$ (resp.  $\widehat{r}$) is finite.  
Clearly $\widehat{\iota}$ is an injective.  Define
\[
	\widehat{\sS}:=\{\widehat{f}=\widehat{\iota}f:  f\in \sS\},\quad \sS^*:=\{f^*=\widehat{f}\circ \bs^*:  \widehat{f}\in \widehat{\sS}\}.  
\]
Set another map
\[
	\iota^*: \sS\rightarrow \sS^*,\quad f\mapsto \iota^* f:=\widehat{\iota}f\circ \bs^*.
\]
It is straightforward to verify that the maps  
\[
	\sS\leftrightarrow \widehat{\sS} \leftrightarrow \sS^*,\quad f\leftrightarrow \widehat{f}=\widehat{\iota}f \leftrightarrow f^*=\iota^* f
\]
are bijective.  With them as algebra isomorphisms,  we will work to obtain regular representations in the sequel.  To make $\iota^*$ preserve the third metric in \eqref{eq:31},  the transformed quadratic form $(\sE^*,\sF^*)$,  called the \emph{regularization} of $(\sE,\sF)$,  is defined as 
\begin{equation}\label{eq:38-2}
\begin{aligned}
	&\sF^*:=\{f^*=\iota^* f: f\in \sF\},\\
	&\sE^*(f^*,g^*):=\sE(\iota^{*-1}f^*,\iota^{*-1}g^*),\quad f^*, g^* \in \sF^*.
\end{aligned}
\end{equation}
Accordingly the quadratic form
\begin{equation}\label{eq:38-3}
\begin{aligned}
	&\widehat\sF:=\{\widehat f=\widehat\iota f: f\in \sF\},\\
	&\widehat\sE(\widehat f,\widehat g):=\sE(\widehat{\iota}^{-1} \widehat f,\widehat{\iota}^{-1} \widehat g),\quad \widehat f, \widehat g \in \widehat \sF
\end{aligned}
\end{equation}
is called the \emph{image regularization} of $(\sE,\sF)$.  

Denote by $\dot H^1_e((\widehat{l},\widehat{r}))$ the family of all absolutely continuous function $\widehat{h}$ on $(\widehat{l},\widehat{r})$ such that $\widehat h'\in L^2((\widehat{l},\widehat{r}))$.  Clearly for $\widehat{h}\in\dot H^1_e((\widehat{l},\widehat{r}))$,   $\widehat{h}(\widehat{j}):=\lim_{\widehat{x}\rightarrow \widehat{j}}\widehat{h}(\widehat{x})$ exists in $\bR$ if $\widehat{j}=\widehat{l}$ or $\widehat{r}$ is finite.  
The lemma below is crucial to proving the main result,  Theorem~\ref{THM6}, of this section.  Its proof is elementary but long,  and we put it in Appendix~\ref{APPA}.   

\begin{lemma}\label{LM39}
It holds that
\begin{equation}\label{eq:33}
\widehat{\sS}=\left\{\widehat{f}=\widehat{h}|_{\widehat{I}}: \widehat{h}\in \dot H^1_e\left((\widehat{l},\widehat{r}) \right)\right\}.
\end{equation}
\end{lemma}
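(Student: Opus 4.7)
The plan is to establish the two inclusions by exploiting that the complement of $\widehat I$ inside $(\widehat l,\widehat r)$ is exactly the countable disjoint union of open ``jump gaps''
\[
(\widehat l,\widehat r)\setminus \widehat I \;=\; \bigsqcup_{x\in D^+}(\bs(x),\bs(x+))\;\sqcup\;\bigsqcup_{x\in D^-}(\bs(x-),\bs(x)),
\]
one for each one-sided discontinuity of $\bs$. Each constancy interval $(c_n,d_n)\subset U$ collapses already inside $\widehat I$ to a single point, so it does not contribute an extra gap.

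\textbf{Inclusion $\widehat\sS\subset\{\widehat h|_{\widehat I}:\widehat h\in\dot H^1_e((\widehat l,\widehat r))\}$.} Given $f=f^c+f^++f^-\in\sS$ with densities $g^c,g^{\pm}$, I would extend $\widehat f=\widehat\iota f$ to a function $\widehat h$ on $(\widehat l,\widehat r)$ by setting $\widehat h=\widehat f$ on $\widehat I$ and declaring $\widehat h$ to be affine on each jump gap, interpolating between the boundary values $f(x)$ and $f(x\pm)$ supplied by Lemma~\ref{LM31}. A direct inspection shows $\widehat h'$ is (a.e.) equal to the constant $g^{\pm}(x)$ on the corresponding jump gap, and equal to $g^c$ pulled back via $\bs$ on the ``continuous-image'' portion of $\widehat I$, after using the change of variables $\widehat z=\bs(y)$ which pushes $\mu_c$ forward to Lebesgue measure on that portion. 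Summing the three contributions,
\[
\int_{\widehat l}^{\widehat r}(\widehat h')^2\,d\widehat z=\int_I(g^c)^2\,d\mu_c+\int_I(g^+)^2\,d\mu^+_d+\int_I(g^-)^2\,d\mu^-_d<\infty,
\]
so $\widehat h\in\dot H^1_e((\widehat l,\widehat r))$ as required.

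\textbf{Reverse inclusion.} Conversely, given $\widehat h\in\dot H^1_e((\widehat l,\widehat r))$ with $\widehat g:=\widehat h'\in L^2$, I would read off the candidate densities
\[
g^+(x):=\frac{\widehat h(\bs(x+))-\widehat h(\bs(x))}{\mu^+_d(\{x\})}\;\;(x\in D^+),\qquad g^-(x):=\frac{\widehat h(\bs(x))-\widehat h(\bs(x-))}{\mu^-_d(\{x\})}\;\;(x\in D^-),
\]
and $g^c(y):=\widehat g(\bs(y))\cdot \mathbf{1}_{I_0\setminus U}(y)$. Cauchy--Schwarz on each jump gap yields $(g^{\pm}(x))^2\mu^{\pm}_d(\{x\})\le\int_{\text{gap at }x}\widehat g^2\, d\widehat z$, while the same change of variables identifies $\int_I(g^c)^2 d\mu_c$ with the integral of $\widehat g^2$ over the continuous-image part of $\widehat I$; hence the three densities lie in $L^2(\mu_c), L^2(\mu^+_d), L^2(\mu^-_d)$ respectively. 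Setting $f^c(x):=\widehat h(0)+\int_{(0,x)}g^c d\mu_c$, $f^+(x):=\int_{(0,x)}g^+ d\mu^+_d$, $f^-(x):=\int_{(0,x]}g^- d\mu^-_d$, and $f:=f^c+f^++f^-\in\sS$, the identity $\widehat\iota f=\widehat h|_{\widehat I}$ will follow by applying the fundamental theorem of calculus to $\widehat h$ on $(0,\bs(x))$ and splitting the integral of $\widehat g$ over this interval according to the decomposition into its continuous-image part and its jump gaps: the three pieces match term-by-term with $f^c(x)$, $f^+(x)$, $f^-(x)$.

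\textbf{Main obstacle.} The technical heart of both directions is the same change-of-variables formula converting $\mu_c$ on $I_0\setminus U$ into Lebesgue measure on the continuous-image part of $\widehat I$, since $\bs$ fails to be injective on $U$ and is not onto $(\widehat l,\widehat r)$. One must also carry out the bookkeeping at the points of $D^0$ (where both left and right jumps occur simultaneously, so two gaps meet the same image point), and at the constancy intervals $(c_n,d_n)$ from \eqref{eq:14}. Finally, the endpoint cases---whether $\widehat l,\widehat r$ are finite and whether they lie in $\widehat I$---need to be treated by a separate limiting argument using \eqref{eq:36-2}, and one needs to check that the countably many jump-gap contributions sum absolutely, which is precisely what the $L^2$ hypothesis on $\widehat g$ and the absolute continuity of $\widehat h$ deliver.
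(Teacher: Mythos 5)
Your proposal is correct and follows essentially the same route as the paper's proof in Appendix~\ref{APPA}: affine interpolation of $\widehat f$ across the jump gaps $(\bs(x),\bs(x+))$, $(\bs(x-),\bs(x))$, term-by-term matching of the three densities $g^c,g^\pm$, and the change of variables identifying $\mu_c\circ\bs^{-1}$ with Lebesgue measure on $\widehat I$. The only difference is one of emphasis: the bulk of the paper's argument is devoted to making that change of variables rigorous (via $\widehat F(\widehat x)=|(0,\widehat x)\cap\widehat I|$, its generalized inverse $\widehat G$, and the negligible sets $H$, $\widehat H$, $N$, $\widehat N$ on which $\bs$ and $\widehat G\circ\bs_c$ disagree), which you correctly identify as the main obstacle but leave at the level of a sketch.
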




Before stating the main result,  we prepare some notations.  Note that $(\widehat{l},\widehat{r})\setminus \widehat{I}$ is open and thus can be written as a disjoint union of open intervals:
\begin{equation}\label{eq:15}
	(\widehat{l},\widehat{r})\setminus \widehat{I}=\bigcup_{k\geq 1}(\widehat{a}_k,\widehat{b}_k).  
\end{equation}
Both $\widehat{a}_k$ and $\widehat{b}_k$ are finite due to  Remark~\ref{RM34}.  

\begin{theorem}\label{THM6}
Let $(I^*,\bs^*,\fm^*)$ and $(\widehat{I},\widehat{\bs},\widehat{\fm})$ be the regularization and image regularization of $(I,\bs,\fm)$.  Further let $(\sE^*,\sF^*)$ and $(\widehat{\sE},\widehat{\sF})$ be the regularization and image regularization of $(\sE,\sF)$.  
\begin{itemize}
\item[\rm (1)] $\fm^*$ (resp.  $\widehat{\fm}$) is a fully supported Radon measure on $I^*$ (resp.  $\widehat{I}$).  
\item[\rm (2)] $(\sE^*,\sF^*)$ is a regular and irreducible Dirichlet form on $L^2(I^*,\fm^*)$.  Particularly,  $(I^*,\fm^*,\sE^*,\sF^*)$ is a regular representation of $(I_0,\fm,\sE,\sF)$.  
\item[\rm (3)] $(\widehat{\sE},\widehat{\sF})$ is a regular and irreducible Dirichlet form on $L^2(\widehat I,\widehat \fm)$ admitting the representation:
\begin{equation}\label{eq:321}
\begin{aligned}
	&\widehat{\sF}=\left\{\widehat f\in L^2(\widehat I,\widehat\fm)\cap \widehat \sS: \widehat f(\widehat j)=0\text{ if }\widehat j\in \bR\setminus \widehat{I}\text{ for }\widehat j=\widehat l\text{ or }\widehat r\right\},  \\
	&\widehat{\sE}(\widehat{f},\widehat{f})=\frac{1}{2}\int_{\widehat{I}} \widehat f'(\widehat x)^2d\widehat x+\frac{1}{2}\sum_{k\geq 1}\frac{\left(\widehat{f}(\widehat{a}_k)-\widehat{f}(\widehat{b}_k)\right)^2}{|\widehat{b}_k-\widehat{a}_k|},\quad \widehat{f}\in \widehat{\sF},
\end{aligned}
\end{equation}
where $\widehat{a}_k,\widehat{b}_k$ appear in \eqref{eq:15}.  Particularly,  $(\widehat{I},\widehat{\fm}, \widehat{\sE},\widehat{\sF})$ is also a regular representation of $(I_0,\fm,\sE,\sF)$.  
\end{itemize}
\end{theorem}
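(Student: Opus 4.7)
My plan is to prove (3) first, since the image regularization $(\widehat I,\widehat\fm,\widehat\sE,\widehat\sF)$ sits concretely on a nearly closed subset of $\bR$ where the energy can be computed explicitly, and then transfer everything to $(I^*,\fm^*,\sE^*,\sF^*)$ via the homeomorphism $\bs^*$ of Lemma~\ref{LM32} to obtain (2); part (1) serves as a preparation. For (1), I would argue that $\widehat\fm=\fm\circ\bs^{-1}$ is Radon on $\widehat I$ because compact subsets of $\widehat I$ pull back to bounded subsets of $I_0$ on which $\fm$ is already Radon, and full support of $\widehat\fm$ is precisely where condition (DM) is used: open subsets of $\widehat I$ coming from continuity points of $\bs$ charge $\fm$ thanks to $E_\bs\subset E_\fm$, while images of isolated points of $D^0$ and endpoints of isolated gaps $(c_n,d_n)$ carry positive $\fm$-mass by (DM) directly. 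These properties then transfer to $\fm^*=\widehat\fm\circ\bs^*$ through the homeomorphism.

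For the explicit formula \eqref{eq:321}, I would decompose $f=f^c+f^++f^-\in\sF$ and use \eqref{eq:11} to rewrite $\sE(f,f)$ as a sum of three integrals against $\mu_c,\mu^+_d,\mu^-_d$. A change of variables through $\bs$ --- legitimate because $\bs_c$ is continuous non-decreasing --- pushes $\mu_c$ forward to Lebesgue measure on $\widehat I$ and identifies the first integral with $\tfrac12\int_{\widehat I}\widehat f'^2\,d\widehat x$. Each atom of $\mu^\pm_d$ at a discontinuity $x\in D^\pm$ of $\bs$ corresponds to exactly one gap $(\widehat a_k,\widehat b_k)$ from \eqref{eq:15} with length $\widehat b_k-\widehat a_k=\mu^\pm_d(\{x\})$, and the density at that atom works out to the difference quotient $(\widehat f(\widehat b_k)-\widehat f(\widehat a_k))/(\widehat b_k-\widehat a_k)$, producing exactly the $k$-th jump term. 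The description of $\widehat\sF$ then follows by combining Lemma~\ref{LM39} (which identifies $\widehat\sS$ with restrictions of $\dot H^1_e$-functions) with the boundary identifications in \eqref{eq:36-2}.

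The step I expect to be the main obstacle is establishing the regularity of $(\widehat\sE,\widehat\sF)$ on $L^2(\widehat I,\widehat\fm)$: one must exhibit a subalgebra of $\widehat\sF\cap C_c(\widehat I)$ which is simultaneously dense in $C_c(\widehat I)$ (uniform norm) and in $\widehat\sF$ ($\widehat\sE_1$-norm). My candidate core consists of restrictions to $\widehat I$ of $C_c^\infty((\widehat l,\widehat r))$ functions, suitably truncated near non-reflecting endpoints; these lie in $\widehat\sF$ by Lemma~\ref{LM39}, separate points on $\widehat I$ and so are uniformly dense by Stone--Weierstrass, and their smooth bounds on $(\widehat l,\widehat r)$ control the jump terms in \eqref{eq:321} via the mean-value theorem, while the Markovian property from Theorem~\ref{LM12} guarantees closure under the normal contractions needed to approximate in $\widehat\sE_1$. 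Irreducibility follows because an $\widehat\sE$-invariant set must respect the topological components of $\widehat I$ (absolutely continuous extensions cannot jump within a component) and the jump penalty across each gap in \eqref{eq:321} forbids any further splitting with finite energy. Once (3) is in hand, (2) is automatic: the bijection $\iota^*=(\,\cdot\,\circ\bs^*)\circ\widehat\iota$ is an algebra isomorphism $\sF_b\to\sF^*_b$ preserving the three metrics in \eqref{eq:31} by construction, and regularity together with irreducibility transfer along the homeomorphism $\bs^*$.
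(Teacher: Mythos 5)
Your proposal follows essentially the same route as the paper: it proves (3) first by pushing the decomposition $f=f^c+f^++f^-$ through $\bs$ to get \eqref{eq:321} (atoms of $\mu^\pm_d$ matching gaps $(\widehat a_k,\widehat b_k)$, $\mu_c$ pushing to Lebesgue measure), establishes regularity via restrictions of smooth compactly supported functions with a truncation argument, derives irreducibility from the nonzero cross-energy of the gap terms, and transfers to (2) through the homeomorphism $\bs^*$. The only soft spot is attributing the $\widehat\sE_1$-density of the truncations to the Markovian property rather than to the explicit cutoff estimates the paper carries out, but the strategy is the same.
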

\begin{proof}
\begin{itemize}
\item[(1)] We only prove the assertion for $\widehat{\fm}$.  To show $\widehat{\fm}$ is fully supported on $\widehat{I}$,  argue by contradiction and suppose that  for some $\widehat a<\widehat b$ with $(\widehat a, \widehat b)\cap \widehat{I}\neq \emptyset$, 
\[
	\widehat{\fm}\left((\widehat a,\widehat b)\cap \widehat{I}\right)=0.  
\]
Note that an isolated point in $\widehat{I}$ must be $\bs(x)$ for some $x\in D^0$ or $\bs((c_n,d_n))$ for some isolated interval $(c_n,d_n)$ in \eqref{eq:14}.  Hence \text{(DM)} yields that  $(\widehat a, \widehat b)\cap \widehat{I}$ contains no isolated points.  Take $\widehat{x}\in (\widehat{a},\widehat{b})\cap \widehat{I}$.  Then there is a sequence $\widehat{x}_p=\bs(x_p)$ with $x_p\in I$ such that $\widehat{x}_p\rightarrow \widehat{x}$.  We may and do assume that $\widehat{x}_p, \widehat{x}_{p+1},  \widehat x_{p+2}, \widehat{x}_{p+3}\in (\widehat{a},\widehat{b})$ and $\widehat{x}_p< \widehat{x}_{p+1}<\widehat{x}_{p+2}<\widehat{x}_{p+3}$ for some $p$.  It follows that $x_p<x_{p+1}<x_{p+2}<x_{p+3}$ and
\[
	\{\bs(y): y\in (x_p,x_{p+3})\}\subset (\widehat{a},\widehat{b})\cap \widehat{I}.  
\]
Hence $\fm((x_p,x_{p+3}))\leq \widehat{\fm}((\widehat{a},\widehat{b})\cap \widehat{I})=0$.  By Remark~\ref{RM22},  $(x_p,x_{p+3})\subset (c_n,d_n)$ for some $(c_n,d_n)$ in \eqref{eq:14}.  Particularly,  $$\widehat{x}_{p+1}=\bs(x_{p+1})=\bs(x_{p+2})=\widehat{x}_{p+2},$$ as leads to the contradiction with $\widehat{x}_{p+1}<\widehat{x}_{p+2}$.  

Next we prove that $\widehat{\fm}$ is Radon on $\widehat{I}$.  Let $\widehat{J}:=\langle \widehat{l},  \widehat{r}\rangle$ be the interval ended by $\widehat{l}$ and $\widehat{r}$ such that $\widehat{l}\in \widehat{J}$ ($\widehat{r}\in \widehat{J}$) if and only if $\widehat{l}\in \widehat{I}$ ($\widehat{r}\in \widehat{I}$).  It suffices to show that $\widehat{\fm}([\widehat a,\widehat b])<\infty$ for any interval $[\widehat a,\widehat b]\subset \widehat{J}$.  To accomplish this,  set 
\[
	a:=\inf\{x\in I: \bs(x)\geq  \widehat{a}\},\quad b:=\sup\{x\in I: \bs(x)\leq \widehat{b}\}.  
\]
Then $\{x\in I: \bs(x)\in [\widehat{a},\widehat{b}]\}\subset [a,b]$.  Note that if $a=l$,  then $\widehat{l}\leq \widehat{a}\leq \bs(l+)=\bs(l)=\widehat{l}$.  Thus $\widehat{l}=\widehat{a}\in \widehat{I}$ must be reflecting endpoint of $\widehat{I}$.  Accordingly $l \in I$ is reflecting.  Analogously if $b=r$,  then $r\in I$ is reflecting.  As a result,  $\widehat{\fm}([\widehat{a},\widehat{b}])=\fm\circ \bs^{-1}([\widehat{a},\widehat{b}])\leq \fm([a,b])<\infty$.  
\item[(2)] This assertion is equivalent to the third one due to Lemma~\ref{LM32}.  We will only give the proof of the third assertion.  
\item[(3)] Firstly we formulate \eqref{eq:321}.  The expression of $\widehat{\sE}(\widehat{f},\widehat{f})$ is a consequence of  \eqref{eq:38} and \eqref{eq:39-2}.  It suffices to prove the first identity in \eqref{eq:321}.  Denote the family on its right hand side by $\widehat\sG$.  

Take $f\in \sF$ and let $\widehat{f}:=\widehat{\iota}f\in \widehat{\sF}$.  Then $\widehat{f}\in \widehat{\sS}\subset C_\mathrm{f}(\widehat{I})$.  Suppose $\widehat{r}\notin \widehat{I}$ and we show that $\widehat{f}|_{[0, \widehat{r})}\in L^2([0, \widehat{r}),\widehat{\fm})$,  so that $\widehat{f}\in L^2(\widehat{I},\widehat{\fm})$ can be obtained by virtue of $\widehat f\in C_\mathrm{f}(\widehat{I})$ and the first assertion.  To do this,  assume without lose of generality that $0\notin \{c_n,d_n:n\geq 1\}$.  Note that
\[
\int_{[0,\widehat{r})}\widehat{f}(\widehat{x})^2\widehat{\fm}(d\widehat{x})=\int_{\{x\in I:  \bs(x)\in [0,\widehat{r})\}} f(x)^2\fm(dx).
\]	
When $r\notin U$,  i.e.  $\bs(x)$ is strictly increasing as $x\uparrow r$,  we have
\begin{equation}\label{eq:39-3}
	\{x\in I:  \bs(x)\in [0,\widehat{r})\}=[0,  r).  
\end{equation}
Otherwise if $r=d_n$ for some $n$,  then $\bs(x)=\widehat{r}<\infty$ for $x\in (c_n,d_n]$ and 
\begin{equation}\label{eq:318}
[0, c_n)\subset 	\{x\in I:  \bs(x)\in [0,\widehat{r})\}\subset [0,c_n].  
\end{equation}
For either case we can obtain $\int_{[0,\widehat{r})}\widehat{f}(\widehat{x})^2\widehat{\fm}(d\widehat{x})<\infty$ by means of $f\in L^2(I_0,\fm)$.  On the other hand,  if $\widehat{r}\in\bR\setminus \widehat{I}$,  then $r$ is approachable but not reflecting, and thus $\fm(r-)=\infty$ or $\fm(\{r\})=\infty$.  We always have $f(r)=0$ by the definition of $\sF$,  and because of \eqref{eq:36-2},  $\widehat{f}(\widehat{r})=f(r)=0$.  Therefore $\widehat{\sF}\subset \widehat{\sG}$ is concluded.  

To the contrary,  let $\widehat{f}\in \widehat{\sG}\subset \widehat{\sS}$.   Then there exists $f\in \sS$ such that $\widehat{f}=\widehat{\iota}f$.  We need to prove that $f\in \sF$.  To show $f\in L^2(I_0,\fm)$,  we assert that $f|_{[0,r]\cap I_0}\in L^2([0,r]\cap I_0,\fm)$. ($f|_{[l,0]\cap I_0}\in L^2([l,0]\cap I_0,\fm)$ can be obtained similarly.)  This trivially holds when $r$ is reflecting.  Suppose $r$ is not reflecting,  equivalently $\widehat{r}\notin \widehat{I}$.  In the case $r\notin U$,  \eqref{eq:39-3} tells us that 
\[
\int_{[0,r)}f(x)^2\fm(dx)=\int_{[0,\widehat{r})}\widehat{f}(\widehat{x})^2\widehat{\fm}(d\widehat{x})<\infty.  
\]
In the case \eqref{eq:318},  $\widehat{r}\in \bR\setminus \widehat{I}$ and hence $f(r)=\widehat{f}(\widehat{r})=0$ due to \eqref{eq:36-2} and $\widehat{f}\in \widehat{\sG}$.  Particularly $f(x)=0$ for any $x\in (c_n,d_n]$.  As a result one can verify that $\int_{[0,r)}f(x)^2\fm(dx)<\infty$.  Finally it suffices to prove $f(r)=0$ if $r\notin I_0$ is approachable.  Since $\widehat{r}$ is finite but not reflecting,  i.e.  $\widehat{r}\in \bR\setminus \widehat{I}$,  it follows from \eqref{eq:36-2} and $\widehat{f}\in \widehat{\sG}$ that $f(r)=\widehat{f}(\widehat{r})=0$.  Eventually we obtain $f\in \sF$, and $\widehat{\sG}\subset \widehat{\sF}$ is concluded. 

Repeating the argument in the proof of Theorem~\ref{LM12},  one can easily verify that $(\widehat{\sE},\widehat{\sF})$ is a Dirichlet form on $L^2(\widehat{I},\widehat{\fm})$ in the wide sense.  We turn to prove its regularity.  This will be completed by treating several cases separately as follows.  (Another proof for regularity involving transformation of time change will be shown in Theorem~\ref{THM19}.)  

\emph{Both $\widehat l$ and $\widehat r$ are finite}.   Clearly $\widehat{\sF}\subset C_\infty(\widehat{I})$,  where $C_\infty(\widehat{I})$ stands for the family of continuous functions vanishing at each endpoint not contained in $\widehat{I}$.  
 To prove the regularity of $(\widehat{\sE},\widehat{\sF})$ on $L^2(\widehat{I},\widehat{\fm})$,  in view of \cite[Lemma~1.3.12]{CF12},  we only need to prove that $\widehat{\sF}$ separates the points in $\widehat{I}$.  Fix $\widehat{x}, \widehat{y}\in \widehat{I}$ with $\widehat{x}\neq \widehat{y}$.  Let $\widehat{J}:=\langle \widehat{l},  \widehat{r}\rangle$ be the interval as in the proof of the first assertion, and take $\widehat{f}\in C_c^\infty(\widehat{J})$ such that $\widehat f(\widehat t)=\widehat t$ for $\widehat t\in [\widehat{x}, \widehat{y}]$.  Then $\widehat{f}|_{\widehat{I}}\in \widehat{\sF}$ separates $\widehat{x}$ and $\widehat{y}$.  

\emph{Neither $\widehat l$ nor $\widehat r$ is finite}.  In this case $\widehat{l}=-\infty$,  $\widehat{r}=\infty$,  and $\widehat{J}=\bR$.  Analogical to the previous case,  we can conclude that $\widehat{\sF}\cap C_c(\widehat{I})$ is dense in $C_c(\widehat{I})$ with respect to the uniform norm.  To show the $\widehat{\sE}_1$-denseness of $\widehat{\sF}\cap C_c(\widehat{I})$ in $\widehat{\sF}$,  fix a bounded function $\widehat{f}\in \widehat{\sF}$ with $M:=\sup_{\widehat x\in \widehat{I}}|\widehat{f}(\widehat{x})|<\infty$.   Take a sequence of functions $\varphi_n\in C_c^\infty(\bR)$ such that
\begin{equation}\label{eq:311}
\begin{aligned}
&\varphi_n(\widehat{x})=1\quad \text{for }|\widehat{x}|< n;  \quad \varphi_n(\widehat{x})=0\quad \text{for } |\widehat{x}|>2n+1;\\
&|\varphi'_n(\widehat{x})|\leq 1/n,\quad n\leq |\widehat{x}|\leq 2n+1;\quad 0\leq \varphi_n(\widehat{x})\leq 1,\quad \widehat{x}\in \bR.  
\end{aligned}\end{equation}
Set $\widehat{f}_n:=\widehat{f}\cdot \varphi_n|_{\widehat{I}}$.  Since $\varphi_n|_{\widehat{I}}\in \widehat{\sF}$,  it follows that $\widehat{f}_n\in \widehat{\sF}\cap C_c(\widehat{I})$.  Clearly $\widehat{f}_n$ converges to $\widehat{f}$ in $L^2(\widehat{I},\widehat{\fm})$.  We prove that $\widehat{\sE}(\widehat{f}_n-\widehat{f},\widehat{f}_n-\widehat{f})\rightarrow 0$.  Denote by $B_R:=\{\widehat{x}: |\widehat{x}|<R\}$ for $R>0$.  In view of \eqref{eq:321},  $2\widehat{\sE}(\widehat{f}_n-\widehat{f},\widehat{f}_n-\widehat{f})$ is not greater than $A^1_n+A^2_n+A^3_n$, where
\[
	A^1_n:=\int_{\widehat{I}\cap B_{2n+1}^c} \widehat{f}'(\widehat{x})^2d\widehat{x}+\sum_{k: \widehat{b}_k>2n+1\text{ or }\widehat{a}_k<-2n-1}\frac{\left(\widehat{f}(\widehat{a}_k)-\widehat{f}(\widehat{b}_k)\right)^2}{|\widehat{b}_k-\widehat{a}_k|},
\]
\[
A^2_n:=\int_{\widehat{I} \cap (B_{2n+1}\setminus B_n)} \left(\widehat{f}'(\widehat{x})(\varphi_n(\widehat{x})-1)+\widehat{f}(\widehat{x})\varphi'_n(\widehat{x})\right)^2d\widehat{x}
\]
and 
\[
	A^3_n:=\sum_{k: n\leq |\widehat{a}_k|\leq 2n+1\text{ or }n\leq |\widehat{b}_k|\leq 2n+1}\frac{\left(\left(\widehat{f}\cdot(\varphi_n-1)\right)(\widehat{a}_k)-\left(\widehat{f}\cdot(\varphi_n-1)\right)(\widehat{b}_k)\right)^2}{|\widehat{b}_k-\widehat{a}_k|}.  
\]
Clearly $A^1_n\rightarrow 0$ as $n\rightarrow \infty$.  Regarding the second term,  we have by means of \eqref{eq:311} that
\[
	A^2_n\leq 2\int_{\widehat{I} \cap (B_{2n+1}\setminus B_n)}\left(\widehat{f}'(\widehat{x})^2+\frac{\widehat{f}(\widehat{x})^2}{n^2} \right)d\widehat{x}\rightarrow 0.  
\]
Note that $A^3_n\leq 2A^{31}_n+2A^{32}_n+2A^{33}_n$,  where
\[
	A^{31}_n:= \sum_{k: n\leq |\widehat{a}_k|\leq 2n+1\text{ or }n\leq |\widehat{b}_k|\leq 2n+1}\frac{\left(\widehat{f}(\widehat{a}_k)-\widehat{f}(\widehat{b}_k)\right)^2}{|\widehat{b}_k-\widehat{a}_k|}\rightarrow 0,
\]
\[
	A^{32}_n:=\sum_{k: n\leq |\widehat{a}_k|\leq 2n+1\text{ or }n\leq |\widehat{b}_k|\leq 2n+1}\frac{\left(\widehat{f}(\widehat{a}_k)\varphi_n(\widehat{a}_k)-\widehat{f}(\widehat{b}_k)\varphi_n(\widehat{a}_k)\right)^2}{|\widehat{b}_k-\widehat{a}_k|}\leq A^{31}_n\rightarrow 0,
\]
and 
\[
\begin{aligned}
A^{33}_n&:=\sum_{k: n\leq |\widehat{a}_k|\leq 2n+1\text{ or }n\leq |\widehat{b}_k|\leq 2n+1}\frac{\left(\widehat{f}(\widehat{b}_k)\varphi_n(\widehat{a}_k)-\widehat{f}(\widehat{b}_k)\varphi_n(\widehat{b}_k)\right)^2}{|\widehat{b}_k-\widehat{a}_k|} \\
&\leq \frac{M^2}{n^2}\sum_{k: n\leq |\widehat{a}_k|\leq 2n+1\text{ or }n\leq |\widehat{b}_k|\leq 2n+1}|\widehat{a}_k-\widehat{b}_k|\rightarrow 0.
\end{aligned}\]
Therefore $\widehat{\sE}(\widehat{f}_n-\widehat{f},\widehat{f}_n-\widehat{f})\rightarrow 0$ is concluded.  

The reminder cases can be treated analogously and we eventually conclude that $(\widehat{\sE},\widehat{\sF})$ is a regular Dirichlet form on $L^2(\widehat{I},\widehat{\fm})$.   

Next we prove the irreducibility of $(\widehat{\sE},\widehat{\sF})$.  Argue by contradiction and suppose that $\widehat A\subset \widehat{I}$ is a non-trivial $\{\widehat T_t\}$-invariant set,  i.e.  $\widehat\fm(\widehat A)\neq 0$ and $\widehat\fm(\widehat{I}\setminus \widehat A)\neq 0$,  where $\{\widehat T_t\}$ is the $L^2$-semigroup of $(\widehat{\sE},\widehat{\sF})$.  Then there exists a closed interval $0\in [\widehat l_0,\widehat r_0]\subset \widehat J$,  $\widehat{l}_0, \widehat{r}_0\in \widehat{I}$,  such that 
\begin{equation}\label{eq:320}
	\widehat\fm( [\widehat l_0,\widehat r_0] \cap \widehat A)>0,\quad \widehat \fm([\widehat l_0,\widehat r_0]\setminus \widehat A)>0.  
\end{equation}
Take another closed interval $[\widehat{l}_1,\widehat{r}_1]\subset \widehat{J}$ such that $[\widehat{l}_0,\widehat{r}_0]\subset [\widehat{l}_1,\widehat{r}_1]$ and $\widehat{l}_1<\widehat{l}_0$ (resp.  $\widehat{r}_1>\widehat{r}_0$) unless $\widehat{l}_0=\widehat{l}\in \widehat{I}$ (resp.  $\widehat{r}_0=\widehat{r}\in \widehat{I}$).  Note that $\{\widehat{h}|_{\widehat{I}}: \widehat{h}\in C_c^\infty(\widehat{J})\}\subset \widehat{\sF}$ due to Lemma~\ref{LM39}. Take a function $\widehat{f}=\widehat h|_{\widehat{I}}\in \widehat{\sF}$ with $\widehat{h}\in C_c^\infty(\widehat{J})$ and $\widehat{h}=1$ on $[\widehat l_1,\widehat r_1]$.  In view of \cite[Proposition~2.1.6]{CF12},  $\widehat{f}\cdot 1_{\widehat{A}}\in \widehat{\sF}$ and on account of Lemma~\ref{LM39},  $\widehat{f}\cdot 1_{\widehat{A}}\in \widehat{\sF}$ admits a continuous $\fm$-a.e. version denoted by $\tilde{f}_1$.  Clearly $\tilde{f}_1=0$ or $1$ pointwisely on $[\widehat l_1,\widehat r_1]\cap \widehat{I}$.  Consider the family of intervals:
\[
	\mathscr{I}:=\{(\widehat{a}_k,\widehat{b}_k)\subset [\widehat{l}_0,\widehat{r}_0]: k\geq 1\}.  
\]
When $\mathscr I$ is empty,  $\tilde{f}_1$ must be constant on $[\widehat{l}_0,\widehat{r}_0]$,  as leads to a contradiction with \eqref{eq:320}.  Now we consider $\mathscr I\neq \emptyset$ and assert that there exists $(\widehat{a}_k,\widehat{b}_k)\in \mathscr I$ such that $\tilde{f}_1(\widehat{a}_k)\neq \tilde{f}_1(\widehat{b}_k)$.  To accomplish this, set
\[
\begin{aligned}
\widehat\alpha&:=\sup\{\widehat{x}\in [\widehat{l}_0,0]\cap \widehat{I}: \tilde{f}_1(\widehat{x})\neq \tilde{f}_1(0) \}, \\
\widehat\beta&:=\inf\{\widehat{x}\in [0,\widehat{r}_0]\cap \widehat{I}: \tilde{f}_1(\widehat{x})\neq \tilde{f}_1(0)\},
\end{aligned}\]
where $\sup \emptyset:=-\infty$ and $\inf \emptyset:=\infty$.  Then \eqref{eq:320} implies that $\widehat\alpha \neq -\infty$ or $\widehat\beta \neq \infty$.  The former case leads to $\widehat\alpha=\widehat{a}_p$ for some $p$ and the latter one leads to $\widehat\beta=\widehat{b}_q$ for some $q$ by virtue of the continuity of $\tilde{f}_1$.  We have either $\tilde{f}_1(\widehat{a}_p)\neq \tilde{f}_1(\widehat{b}_p)$ or $\tilde{f}_1(\widehat{a}_q)\neq \tilde{f}_1(\widehat{b}_q)$.  Therefore the existence of such $(\widehat{a}_k,\widehat{b}_k)$ is obtained.  Without loss of generality assume that
\[
	\widehat{f}_1(\widehat{a}_k)=1,\quad \widehat{f}_1(\widehat{b}_k)=0.  
\]
Note that there exists $\widehat{\varepsilon}>0$ such that $\widehat{a}_k+\widehat{\varepsilon}<\widehat{b}_k-\widehat{\varepsilon}$ and 
\[
\begin{aligned}
	&(\widehat{a}_k-\widehat{\varepsilon},  \widehat{a}_k+\widehat{\varepsilon})\cap \widehat{I}\subset  \widehat{A},\quad \widehat{\fm}\text{-a.e.},  \\
	&(\widehat{b}_k-\widehat{\varepsilon},  \widehat{b}_k+\widehat{\varepsilon})\cap \widehat{I}\subset  \widehat{I}\setminus \widehat{A},\quad \widehat{\fm}\text{-a.e.},
\end{aligned}\]
as can be obtained by means of the continuity of $\tilde{f}_1$ and 
\[
	\{\widehat{x}\in [\widehat{l}_1,\widehat{r}_1]\cap \widehat{I}: \tilde{f}_1(\widehat{x})=1\}=\widehat{A}\cap [\widehat{l}_1,\widehat{r}_1],\quad \widehat{\fm}\text{-a.e. }
\]
When $\widehat{a}_k>\widehat{l}$ (resp.  $\widehat{b}_k<\widehat{r}$),  we may and do assume that $\widehat{a}_k-\widehat{\varepsilon}>\widehat{l}$ (resp.  $\widehat{b}_k+\widehat{\varepsilon}<\widehat{r}$).
Take another function $\widehat{g}\in \widehat{\sF}$ such that 
\[
\begin{aligned}
&\widehat{g}|_{(\widehat{a}_k-\widehat{\varepsilon}/2,  \widehat{a}_k+\widehat{\varepsilon}/2)\cap \widehat{I}}\equiv 1, \quad \widehat{g}|_{(\widehat{b}_k-\widehat{\varepsilon}/2,  \widehat{b}_k+\widehat{\varepsilon}/2)\cap \widehat{I}}\equiv 1, \\
&\widehat{g}|_{\widehat{I}\setminus \left((\widehat{a}_k-\widehat{\varepsilon},  \widehat{a}_k+\widehat{\varepsilon}) \cup (\widehat{b}_k-\widehat{\varepsilon},  \widehat{b}_k+\widehat{\varepsilon}) \right)}\equiv 0.  
\end{aligned}\]
Using \cite[Proposition~2.1.6]{CF12},  we get that $\widehat{g}_1:=\widehat{g}\cdot 1_{\widehat{A}}\in \widehat{\sF},  \widehat{g}_2:=g-\widehat{g}_1\in \widehat{\sF}$ and $\widehat{\sE}(\widehat{g}_1,\widehat{g}_2)=0$.  However,  in view of \eqref{eq:321},  a computation yields that
\[
	\widehat{\sE}(\widehat{g}_1,\widehat{g}_2)=-\frac{1}{2|\widehat{b}_k-\widehat{a}_k|}\neq 0
\]
leading to a contradiction.  

Finally it suffices to show that $(\widehat{I},\widehat{\fm},\widehat{\sE},\widehat{\sF})$ is a regular representation of $(I_0,\fm,\sE,\sF)$.  Denote by $\Phi$ the restriction of $\widehat{\iota}$ to $\sF_b$.  In view of \eqref{eq:33-2}, \eqref{eq:35} and \eqref{eq:38-3},  one may easily get that $\Phi$ is an algebra isomorphism and $\Phi \sF_b\subset \widehat{\sF}_b$.  To verify \eqref{eq:31},  we only consider the case $l\notin I_0, r\in I_0$.  The other cases can be treated analogically.  The last identity in \eqref{eq:31} is the consequence of \eqref{eq:38-3}.  If $l\neq c_n$,  then 
\[
	\widehat{I}=\bs(I_0)\cup \{\bs(x-):x\in D^-\}\cup \{\bs(x+): x\in D^+\}.  
\]
By means of $\widehat{\fm}=\fm\circ \bs^{-1}$ and \eqref{eq:35}, we can obtain the first and second identities in \eqref{eq:31}.  If $l=c_n$ for some $n$,  then (DK) implies that $d_n\notin D$.  Particularly,  
\[
	\widehat{I}= \bs(I\setminus [c_n,d_n])\cup \{\bs(x-):x\in D^-\}\cup \{\bs(x+): x\in D^+\}.  
\]
Note that $f(x)=0$ for $f\in \sF$ and $x\in [c_n,d_n]$ and $\widehat{f}(\widehat{l})=0$ for $\widehat{f}\in \widehat{\sF}$.  These facts,  together with $\widehat{\fm}=\fm\circ \bs^{-1}$ and \eqref{eq:35},  yield the first and second identities in \eqref{eq:31}. 
\end{itemize}
That completes the proof.
\end{proof}


\begin{remark}\label{RM38}
Without assuming (DK),  it might occur that 
\[
	l=c_n<d_n\in D
\]
for some $n$ while $\fm(l+)+\fm(\{l\})=\infty$.  Then $\widehat{l}$ is isolated in $\widehat{I}_\bs$ with $\widehat{\fm}(\{\widehat{l}\})=\infty$ and $\widehat{l}=\widehat{a}_k$ for some $k$.  This endpoint becomes an absorbing point and for $\widehat{f}\in \widehat{\sF}$,  we must have $\widehat{f}(\widehat{l})=0$.  As a consequence,  $\widehat{\sE}(\widehat{f},\widehat{f})$ contains a killing part $\widehat{f}(\widehat{b}_k)^2/(2|\widehat{b}_k-\widehat{a}_k|)$.  In other words,  (DK) excludes killing part from the desirable Dirichlet form. 
\end{remark}

It is well known that every regular Dirichlet form corresponds to a unique symmetric Hunt process; see,  e.g., \cite{FOT11}.  Denote by $X^*=(X^*_t)_{t\geq 0}$ (resp.  $\widehat{X}=(\widehat{X}_t)_{t\geq 0}$) the $\fm^*$-symmetric (resp. $\widehat{\fm}$-symmetric) Hunt process on $I^*$ (resp.  $\widehat{I}$) associated with $(\sE^*,\sF^*)$ (resp.  $(\widehat{\sE}, \widehat{\sF})$).  Note that $\widehat{X}$ is equivalent to $\left(\bs^*(X^*_t)\right)_{t\geq 0}$ in the sense that their transition functions coincide.  We would call $X^*$ and $\widehat{X}$ the \emph{regularized Markov process} and \emph{image regularized Markov process associated with} $(I,\bs,\fm)$ respectively if there is no risk of ambiguity.


\subsection{Image regularized Markov process}\label{SEC33}

In this subsection we will show that the image regularized Markov process $\widehat{X}$ is a time-changed Brownian motion with speed measure $\widehat{\fm}$.  

Let $\widehat{J}:=\langle \widehat{l},\widehat{r}\rangle$ be the interval as in the proof of Theorem~\ref{THM6},  i.e.  $\widehat{l}\in \widehat{J}$ ($\widehat{r}\in \widehat{J}$) if and only if $\widehat{l}\in \widehat{I}$ ($\widehat{r}\in \widehat{I}$).  Denote by $\widehat B=(\widehat B_t)_{t\geq 0}$ the Brownian motion on $\widehat{J}$ which is absorbing at each finite open endpoint and reflecting at each finite closed endpoint.  The life time of $\widehat{B}$ is denoted by $\widehat{\zeta}$.  For example,  when $\widehat{J}=[0,1)$,  $\widehat B$ is a Brownian motion on $[0,1)$, which is reflecting at $0$ while absorbing at $1$,  and its lifetime is identified with the first time when $\widehat{B}$ hits $1$. The associated Dirichlet form of $\widehat B$ on $L^2(\widehat{J})$ is 
\[
\begin{aligned}
	&H^1_0(\widehat{J}):=\bigg\{\widehat f\in L^2(\widehat{J}): \widehat f\text{ is absolutely continuous on } (\widehat{l},\widehat{r}) \text{ and }\\
	&\qquad \qquad\qquad \int_{\widehat{J}}\widehat f'(\widehat x)^2d\widehat x<\infty,  \widehat f(\widehat j)=0\text{ if }\widehat j\notin \widehat{J}\text{ is finite for }\widehat j=\widehat{l} \text{ or }\widehat{r}\},\\
	&\frac{1}{2}\mathbf{D}(\widehat f,\widehat g):=\frac{1}{2}\int_{\widehat{J}}\widehat f'(\widehat x)\widehat g'(\widehat x)d\widehat x,\quad \widehat f,\widehat g\in H^1_0(\widehat{J}).  
\end{aligned}\]
Since viewed as a zero extension to $\widehat{J}$ is Radon on $\widehat{J}$,  $\widehat{\fm}$ is smooth with respect to $(\frac{1}{2}\mathbf{D}, H^1_0(\widehat{J}))$.  Clearly the quasi support of $\widehat{\fm}$ is identified with its topological support $\widehat{I}$.   Denote the PCAF of $\widehat{\fm}$ with respect to $\widehat{B}$ by $\widehat{A}=(\widehat{A}_t)_{t\geq 0}$.  Set 
\[
\widehat \tau_t:=\left\lbrace 
	\begin{aligned}
		&\inf\{s: \widehat{A}_s>t\},\quad t<\widehat{A}_{\widehat{\zeta}-},\\
		&\infty,\qquad\qquad\qquad\;\; t\geq \widehat{A}_{\widehat{\zeta}-}  
	\end{aligned}
\right.
\]
and $\check{X}_t:=\widehat{B}_{\widehat\tau_t}, \check{\zeta}:=\widehat{A}_{\widehat{\zeta}-}$.  
Then $\check{X}$ is a right process on $\widehat{I}$ with lifetime $\check{\zeta}$,  called the \emph{time-changed Brownian motion} with speed measure $\widehat \fm$;  see,  e.g.,  \cite[Theorem~A.3.11]{CF12}.  The following result,  together with \cite[Corollary~5.2.10]{CF12},  shows that the image regularized Markov process $\widehat{X}$ is identified with $\check{X}$.  This argument involving time change also gives an alternative proof for the regularity of $(\widehat{\sE},\widehat{\sF})$.  
The terminologies concerning trace Dirichlet forms are referred to in,  e.g.,  \cite[\S5.2]{CF12}. 


\begin{theorem}\label{THM19}
$(\widehat{\sE},\widehat{\sF})$ is the trace Dirichlet form of $(\frac{1}{2}\mathbf{D}, H^1_0(\widehat{J}))$ on $L^2(\widehat{I}, \widehat{\fm})$.  
\end{theorem}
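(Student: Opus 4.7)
The plan is to invoke the trace-theoretic characterization of the time-changed Dirichlet form and then identify the resulting object with $(\widehat\sE,\widehat\sF)$ by an elementary ``harmonic interpolation'' computation on the gaps $(\widehat a_k,\widehat b_k)$.

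First I would record the extended Dirichlet space of the absorbing/reflecting Brownian motion on $\widehat J$, namely
\[
\bigl(H^1_0(\widehat J)\bigr)_e=\left\{u\in \dot H^1_e\bigl((\widehat l,\widehat r)\bigr):\,u(\widehat j)=0\text{ if }\widehat j\notin\widehat J\text{ and }\widehat j\in\bR\text{ for }\widehat j=\widehat l,\widehat r\right\},
\]
and apply the general trace theorem (see \cite[Theorem~5.2.2]{CF12}). Since the topological support of $\widehat\fm$ coincides with its quasi support $\widehat I$, the trace $(\check\sE,\check\sF)$ of $(\tfrac12\mathbf{D},H^1_0(\widehat J))$ on $L^2(\widehat I,\widehat\fm)$ has the form
\[
\check\sF=\left\{\phi\in L^2(\widehat I,\widehat\fm):\exists\,u\in\bigl(H^1_0(\widehat J)\bigr)_e,\ u|_{\widehat I}=\phi\right\},\quad \check\sE(\phi,\phi)=\tfrac12\mathbf{D}\bigl(H_{\widehat I}u,\,H_{\widehat I}u\bigr),
\]
where $H_{\widehat I}u$ denotes the hitting projection of $u$ onto $\widehat I$ with respect to $\widehat B$.

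Next I would identify $\check\sF$ with $\widehat\sF$. Given $\phi=\widehat f\in\widehat\sF$, Lemma~\ref{LM39} produces an extension $\widehat h\in \dot H^1_e((\widehat l,\widehat r))$ with $\widehat h|_{\widehat I}=\widehat f$; the boundary vanishing condition in the definition of $\widehat\sF$ places $\widehat h$ in $(H^1_0(\widehat J))_e$, so $\widehat f\in\check\sF$. Conversely, if $\phi\in\check\sF$ is represented by some $u\in(H^1_0(\widehat J))_e\subset\dot H^1_e((\widehat l,\widehat r))$, then Lemma~\ref{LM39} shows $u|_{\widehat I}\in\widehat\sS$; the boundary conditions and membership in $L^2(\widehat I,\widehat\fm)$ (from the definition of the trace) then place $u|_{\widehat I}$ in $\widehat\sF$. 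This yields $\widehat\sF=\check\sF$.

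Finally I would compute $H_{\widehat I}u$ on each gap. Because $\widehat B$ is one-dimensional Brownian motion on the interval $(\widehat a_k,\widehat b_k)$ with absorption at both endpoints, the hitting projection $H_{\widehat I}u$ agrees with $u$ on $\widehat I$ and, on each $(\widehat a_k,\widehat b_k)$, equals the linear interpolation between $u(\widehat a_k)$ and $u(\widehat b_k)$; the endpoints are finite by Remark~\ref{RM34}. Hence
\[
\tfrac12\mathbf{D}(H_{\widehat I}u,H_{\widehat I}u)=\tfrac12\int_{\widehat I}u'(\widehat x)^2\,d\widehat x+\tfrac12\sum_{k\geq 1}\frac{\bigl(u(\widehat a_k)-u(\widehat b_k)\bigr)^2}{|\widehat b_k-\widehat a_k|},
\]
which is precisely $\widehat\sE(\widehat f,\widehat f)$ as given in \eqref{eq:321}. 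Combined with the identification $\widehat\sF=\check\sF$, this proves the theorem.

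The main obstacle I expect is the careful identification of the extended Dirichlet space of $(H^1_0(\widehat J),\tfrac12\mathbf{D})$ and the verification that the extension $\widehat h$ provided by Lemma~\ref{LM39} lies in that space, with the correct boundary behavior at a non-reflecting finite endpoint; everything else reduces to the well-known harmonic-interpolation formula for hitting projections on intervals.
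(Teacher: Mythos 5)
Your proposal is correct and follows essentially the same route as the paper: identify the domain of the trace form with $\widehat{\sF}$ via the extended Dirichlet space $H^1_e(\widehat{J})$ and Lemma~\ref{LM39}, then match the trace energy with \eqref{eq:321}. The only difference is that you carry out the energy computation explicitly through the harmonic (linear) interpolation of the hitting projection on the gaps $(\widehat{a}_k,\widehat{b}_k)$, whereas the paper delegates that step to the argument of \cite[Theorem~2.1]{LY17}.
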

\begin{proof}
Denote by $(\check{\sE},\check{\sF})$ the trace Dirichlet form of $(\frac{1}{2}\mathbf{D}, H^1_0(\widehat{J}))$ on $L^2(\widehat{I}, \widehat{\fm})$.  Let $H^1_e(\widehat{J})$ be the extended Dirichlet space of $(\frac{1}{2}\mathbf{D}, H^1_0(\widehat{J}))$,  i.e. 
\begin{equation}\label{eq:320-2}
	\begin{aligned}
	&H^1_e(\widehat{J}):=\bigg\{\widehat f\text{ on }\widehat{J}:  \widehat f\text{ is absolutely continuous on }(\widehat{l},\widehat{r}) \text{ and }\\
	&\qquad \qquad\qquad \int_{\widehat{J}}\widehat f'(x)^2dx<\infty,  \widehat f(\widehat j)=0\text{ if }\widehat j\notin \widehat{J}\text{ is finite for }\widehat j=\widehat{l} \text{ or }\widehat{r}\}; 
	\end{aligned}
\end{equation}
see,  e.g.,  \cite[(3.5.10)]{CF12}.
We only need to prove
\begin{equation}\label{eq:17}
	H^1_e(\widehat{J})|_{\widehat{I}}\cap L^2(\widehat{I},\widehat{\fm})=\widehat{\sF}
\end{equation}
and for $\widehat{\varphi}\in \widehat{\sF}$,  $\check{\sE}(\widehat{\varphi},\widehat{\varphi})=\widehat{\sE}(\widehat{\varphi}, \widehat{\varphi})$.  The identity \eqref{eq:17} can be straightforwardly verified by means of Lemma~\ref{LM39} and \eqref{eq:321}.  The expression of $\check{\sE}(\widehat{\varphi},\widehat{\varphi})$ for $\widehat{\varphi}\in H^1_e(\widehat{J})|_{\widehat{I}}\cap L^2(\widehat{I},\widehat{\fm})$ can be obtained by mimicking the proof of \cite[Theorem~2.1]{LY17}.  It is identified with $\widehat{\sE}(\widehat{\varphi},\widehat{\varphi})$ expressed as \eqref{eq:321}.  That eventually completes the proof. 
\end{proof}

As obtained in Theorem~\ref{THM6},  $(\widehat{\sE},\widehat{\sF})$ is irreducible.  By virtue of Theorem~\ref{THM19},  we give a criterion for the characterizations of irreducibility and global property of $(\widehat{\sE},\widehat{\sF})$ or $\widehat{X}$. 

\begin{corollary}\label{COR310}
The following hold:
\begin{itemize}
\item[\rm (1)] $(\widehat{\sE},\widehat{\sF})$ is transient,  if and only if either $\widehat{l}\in \bR\setminus \widehat{I}$ or $\widehat{r}\in \bR\setminus \widehat{I}$.  This is also equivalent to that either $l$ or $r$ is approachable but not reflecting.  Otherwise $(\widehat{\sE},\widehat{\sF})$ is recurrent.
\item[\rm (2)] Every singleton contained in $\widehat{I}$ is of positive capacity with respect to $\widehat{\sE}$.  Particularly,  $\widehat{X}$ is pointwisely irreducible in the sense that 
\[
\widehat{\mathbf{P}}_{\widehat{x}}(\widehat{\sigma}_{\widehat{y}}<\infty)>0
\]
for any $\widehat{x},\widehat{y}\in \widehat{I}$,  where $\widehat{\mathbf{P}}_{\widehat{x}}$ is the probability measure on the sample space of $\widehat{X}$ starting from $\widehat{x}$ and $\widehat{\sigma}_{\widehat{y}}:=\inf\{t>0: \widehat{X}_t=y\}$.  
\end{itemize}
\end{corollary}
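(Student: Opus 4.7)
My plan hinges on Theorem~\ref{THM19}, which realises $\widehat{X}$ as the time change of the Brownian motion $\widehat{B}$ on $\widehat{J}$ by the PCAF of $\widehat{\fm}$, combined with the full-support property of $\widehat{\fm}$ from Theorem~\ref{THM6}(1) and the concrete description of $\widehat{\sF}$ from Lemma~\ref{LM39}.

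\textbf{For (1),} I would first unpack the endpoint conditions in terms of $(I,\bs)$: the identity $\widehat{l}=\bs(l)$ together with Definition~\ref{DEF21} yields $\widehat{l}\in \bR$ iff $l$ is approachable, while clause (a$^*$) in the definition of $I^*$ gives $\widehat{l}\in \widehat{I}$ iff $l$ is reflecting (assuming it is approachable); so $\widehat{l}\in \bR\setminus \widehat{I}$ iff $l$ is approachable but not reflecting, and symmetrically for $\widehat{r}$. This gives the equivalence of the two criteria in (1). For the dichotomy itself, since $\widehat{\fm}$ is fully supported on $\widehat{I}$ by Theorem~\ref{THM6}(1) and thus has quasi-support $\widehat{I}$ inside $\widehat{J}$, the standard time-change principle for trace Dirichlet forms (see \cite{CF12}) shows that $(\widehat{\sE},\widehat{\sF})$ is transient (resp.\ recurrent) iff $(\tfrac{1}{2}\mathbf{D},H^1_0(\widehat{J}))$ is. The latter is Brownian motion on $\widehat{J}$, and reading directly off the extended Dirichlet space \eqref{eq:320-2}, it is recurrent iff every finite endpoint of $\widehat{J}$ belongs to $\widehat{J}$ (no absorbing boundary to impose the vanishing condition), and transient otherwise. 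Since $\widehat{l}\in \widehat{J}$ iff $\widehat{l}\in \widehat{I}$ by construction of $\widehat{J}$ (and similarly for $\widehat{r}$), the transience condition coincides with $\widehat{l}\in \bR\setminus \widehat{I}$ or $\widehat{r}\in \bR\setminus \widehat{I}$.

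\textbf{For (2),} I would prove positive capacity of a singleton $\{\widehat{y}\}\subset \widehat{I}$ via a Sobolev-type bound of the form $|\widehat{f}(\widehat{y})|^2\le C\,\widehat{\sE}_1(\widehat{f},\widehat{f})$ for all $\widehat{f}\in \widehat{\sF}$. Choose a bounded closed interval $K\subset (\widehat{l},\widehat{r})$ with $\widehat{y}\in K$; full support of $\widehat{\fm}$ gives $\widehat{\fm}(K\cap \widehat{I})>0$. Using Lemma~\ref{LM39}, extend $\widehat{f}$ to the minimising $\widehat{h}\in \dot H^1_e((\widehat{l},\widehat{r}))$, which satisfies $\int_{(\widehat{l},\widehat{r})} \widehat{h}'(\widehat{x})^2 d\widehat{x}=2\widehat{\sE}(\widehat{f},\widehat{f})$ once one accounts for the gap contributions in \eqref{eq:321}; apply $\widehat{h}(\widehat{y})-\widehat{h}(\widehat{y}_0)=\int_{\widehat{y}_0}^{\widehat{y}}\widehat{h}'$ with Cauchy--Schwarz, and then average $\widehat{y}_0$ over $K\cap \widehat{I}$ against $\widehat{\fm}$; this converts the $\int \widehat{f}^2 d\widehat{\fm}$ contribution in $\widehat{\sE}_1$ into the desired pointwise bound and forces $\{\widehat{y}\}$ to have positive $1$-capacity. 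Pointwise irreducibility then follows by combining this with the irreducibility of $(\widehat{\sE},\widehat{\sF})$ from Theorem~\ref{THM6}(3): these two ingredients yield $\widehat{\mathbf{P}}_{\widehat{x}}(\widehat{\sigma}_{\widehat{y}}<\infty)>0$ for quasi-every $\widehat{x}\in \widehat{I}$; to upgrade to every $\widehat{x}$, I would invoke Theorem~\ref{THM19} once more, using that $\widehat{B}$ hits every point of $\widehat{J}$ from every start and the time change by $\widehat{A}$ keeps the set of visited points inside $\widehat{I}$ unchanged, thereby ruling out exceptional starting points.

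\textbf{Main obstacle.} The crux is the Sobolev-style estimate for point evaluation, because $\widehat{\fm}$ is only a general fully-supported Radon measure: $\widehat{\sE}_1$ controls $\int \widehat{f}^2 d\widehat{\fm}$, not any Lebesgue $L^2$-norm, so one genuinely needs the $\widehat{\fm}$-averaging trick to convert the derivative bound into a pointwise bound. Once that local step is in place, (1) is essentially a translation of known one-dimensional Brownian facts, and the pointwise-irreducibility claim is a standard consequence of irreducibility plus positive singleton capacity, supplemented by the time-change picture to remove the quasi-exceptional set.
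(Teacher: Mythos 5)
Your argument is correct, and it splits naturally into a part that mirrors the paper and a part that does not. For (1) you follow essentially the paper's route: the paper also reduces transience of $(\widehat{\sE},\widehat{\sF})$ to that of $(\tfrac{1}{2}\mathbf{D},H^1_0(\widehat{J}))$ via the trace/time-change correspondence (citing \cite[Theorem~5.2.5]{CF12}) and then settles the Brownian case by citing \cite[Theorem~2.2.11]{CF12}, where you instead read recurrence off the extended Dirichlet space \eqref{eq:320-2}; this is the same proof with one citation replaced by a short direct check (note you do implicitly use that an irreducible non-recurrent form is transient). For (2) the paper is purely a citation: singletons are non-polar for the one-dimensional Brownian motion $\widehat{B}$ by \cite[Theorem~3.5.6~(1)]{CF12}, and polarity is preserved under the time change by \cite[Theorem~5.2.8~(2)]{CF12}, which transfers non-polarity to $(\widehat{\sE},\widehat{\sF})$. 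Your alternative is a self-contained Sobolev-type point-evaluation bound $|\widehat f(\widehat y)|^2\le C\,\widehat{\sE}_1(\widehat f,\widehat f)$, obtained from Lemma~\ref{LM39}, the identity $\int_{(\widehat l,\widehat r)}\widehat h'^2\,d\widehat x=2\widehat{\sE}(\widehat f,\widehat f)$ for the linearly interpolated extension, and averaging the base point over a set of positive finite $\widehat\fm$-measure (which exists by Theorem~\ref{THM6}(1)); this is sound and has the advantage of not invoking the time-change capacity comparison at all. One small simplification you could make at the end: once every singleton has positive capacity, the only $\widehat{\sE}$-polar set is the empty set, so the standard consequence of irreducibility ($\widehat{\mathbf{P}}_{\widehat x}(\widehat\sigma_{\widehat y}<\infty)>0$ for q.e.\ $\widehat x$) already holds for every $\widehat x$; the extra appeal to the hitting behaviour of $\widehat B$ under time change is not needed.
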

\begin{proof}
In view of \cite[Theorem~5.2.5]{CF12},  $(\widehat{\sE},  \widehat{\sF})$ is transient if and only if so is $(\frac{1}{2}\mathbf{D}, H^1_0(\widehat{J}))$.  This,  together with \cite[Theorem~2.2.11]{CF12},  yields the first equivalent condition to the transience of $(\widehat{\sE},  \widehat{\sF})$.  The second equivalent condition is obvious.  Another assertion is the consequence of \cite[Theorems~3.5.6~(1) and 5.2.8~(2)]{CF12}.  That completes the proof. 
\end{proof}

\subsection{Homeomorphisms between regular representations}

In this subsection we will show that all regular representations of $(I_0,\fm,\sE,\sF)$ are essentially homoemorphic.  In other words,  a Markov process corresponding to certain regular representation must be a homeomorphic image of $X^*$ (as well as $\widehat{X}$).  The lemma below is useful for obtaining this result.  

\begin{lemma}\label{LM310}
Let $\{\widehat{F}_n: n\geq 1\}$ be an $\widehat{\sE}$-nest and $\widehat{K}$ be a compact subset of $\widehat{I}$.  Then $\widehat{K}\subset \widehat{F}_n$ for some $n\geq 1$.  
\end{lemma}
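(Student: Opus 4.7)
\medskip

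\noindent\emph{Plan.} The plan is to argue by contradiction, using the defining property of a nest (that $\bigcup_n \widehat{\sF}_{\widehat{F}_n}$ is $\widehat{\sE}_1$-dense in $\widehat{\sF}$) together with two pointwise facts about $(\widehat{\sE},\widehat{\sF})$: every element of $\widehat{\sF}$ admits a continuous representative on $\widehat{I}$ (Lemma~\ref{LM39}), and every singleton $\{\widehat{x}\}\subset \widehat{I}$ has strictly positive $\widehat{\sE}$-capacity (Corollary~\ref{COR310}(2)).

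First, suppose the conclusion fails, so that for each $n$ there is $\widehat{x}_n\in \widehat{K}\setminus \widehat{F}_n$. By compactness of $\widehat{K}$ I extract a subsequence $\widehat{x}_{n_k}\to \widehat{x}_\infty\in \widehat{K}\subset \widehat{I}$. Since the nest is increasing, $\widehat{x}_m\in \widehat{I}\setminus \widehat{F}_n$ for every $m\geq n$, hence
\[
	\widehat{x}_\infty\in \overline{\widehat{I}\setminus \widehat{F}_n}\quad \text{(closure in }\widehat{I}\text{)}\quad \text{for every }n\geq 1.
\]

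Next, using Lemma~\ref{LM39} I pick $\widehat{h}\in C_c^\infty(\widehat{J})$ with $\widehat{h}(\widehat{x}_\infty)=1$ and set $\widehat{g}:=\widehat{h}|_{\widehat{I}}\in \widehat{\sF}$, so that $\widehat{g}(\widehat{x}_\infty)=1$. By the nest density, choose $\widehat{g}_k\in \widehat{\sF}_{\widehat{F}_{m_k}}$ with $\widehat{g}_k\to \widehat{g}$ in the $\widehat{\sE}_1$-norm. Since singletons are non-polar by Corollary~\ref{COR310}(2), every $\widehat{\sE}$-polar set is empty; in particular the q.e.\ vanishing condition defining $\widehat{\sF}_{\widehat{F}_{m_k}}$ becomes pointwise vanishing on $\widehat{I}\setminus \widehat{F}_{m_k}$. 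Because $\widehat{g}_k$ has a continuous representative on $\widehat{I}$ (Lemma~\ref{LM39}), it actually vanishes on the closure $\overline{\widehat{I}\setminus \widehat{F}_{m_k}}$, and in particular $\widehat{g}_k(\widehat{x}_\infty)=0$ for every $k$.

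Finally, the positive capacity of $\{\widehat{x}_\infty\}$ gives a $1$-equilibrium potential $\widehat{p}\in \widehat{\sF}$ such that evaluation at $\widehat{x}_\infty$ is a bounded linear functional on $(\widehat{\sF},\widehat{\sE}_1^{1/2})$, namely
\[
	|\widehat{f}(\widehat{x}_\infty)|^2\leq \frac{\widehat{\sE}_1(\widehat{f},\widehat{f})}{\mathrm{Cap}_{\widehat{\sE}}(\{\widehat{x}_\infty\})},\qquad \widehat{f}\in \widehat{\sF}.
\]
Applying this to $\widehat{g}_k-\widehat{g}$ yields $\widehat{g}_k(\widehat{x}_\infty)\to \widehat{g}(\widehat{x}_\infty)=1$, contradicting $\widehat{g}_k(\widehat{x}_\infty)=0$.

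The main obstacle lies in bridging the $\widehat{\sE}_1$-convergence $\widehat{g}_k\to \widehat{g}$ coming from the nest condition with the pointwise information at the single point $\widehat{x}_\infty$, for which neither abstract functional analysis nor topological compactness suffices. This is exactly what Corollary~\ref{COR310}(2) remedies by turning pointwise evaluation into a bounded $\widehat{\sE}_1$-functional; the rest of the argument is essentially topological bookkeeping enabled by Lemma~\ref{LM39}.
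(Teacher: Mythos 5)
Your proof is correct and follows essentially the same route as the paper's: both derive a contradiction from a point of $\widehat{K}$ that is a limit of points $\widehat{x}_n\in\widehat{I}\setminus\widehat{F}_n$, using the continuity of functions in $\widehat{\sF}$ (Lemma~\ref{LM39}) to force every element of $\bigcup_k\widehat{\sF}_{\widehat{F}_k}$ to vanish there, and the positive capacity of singletons to show this contradicts $\widehat{\sE}_1$-density. The only difference is cosmetic — you negate the global statement and extract a convergent subsequence, while the paper first proves a local neighborhood statement and then invokes a finite cover, and you spell out the capacity inequality that the paper leaves implicit.
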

\begin{proof}
We first prove that for any $\widehat x\in \widehat K$,  there exists $\varepsilon>0$ such that
\begin{equation}\label{eq:76}
	(\widehat x-\varepsilon, \widehat x+\varepsilon)\cap (\widehat I\setminus \widehat{F}_n)=\emptyset,\quad \text{for some }n\geq 1.  
\end{equation}
Argue by contradiction and take $\widehat x\in \widehat K$ such that $(\widehat x-\varepsilon,\widehat x+\varepsilon)\cap (\widehat I\setminus \widehat{F}_n)\neq \emptyset$ for any $\varepsilon>0$ and $n\geq 1$.  Particularly,  there exists a sequence $\widehat x_n\in \widehat I\setminus \widehat{F}_n$ such that $\widehat x_n\rightarrow \widehat x$.   Take $\widehat{f}\in \widehat{\sF}_{\widehat{F}_k}:=\{f\in \widehat{\sF}: f=0\text{ on }\widehat I\setminus \widehat{F}_k\}$ for some $k$.  Clearly $\widehat{f}|_{\widehat I\setminus \widehat{F}_n}\equiv 0$ for $n\geq k$.  Since every function in $\widehat{\sF}$ is continuous on $\widehat{I}$,  it follows that $\widehat{f}(\widehat x)=\lim_{k<n\rightarrow \infty}\widehat{f}(\widehat x_n)=0$.  Particularly,  
\[
	\cup_{k\geq 1}\widehat{\sF}_{\widehat{F}_k}\subset \{\widehat{f}\in \widehat{\sF}: \widehat{f}(\widehat x)=0\}.  
\]
The family on the left hand side is $\widehat{\sE}_1$-dense in $\widehat{\sF}$ while the right one is not.  This leads to a contradiction.  As a result it follows from \eqref{eq:76} that for any $\widehat x\in \widehat K$,  there exists $\varepsilon>0$ such that $(\widehat x-\varepsilon, \widehat x+\varepsilon)\cap \widehat I\subset \widehat{F}_n$ for some $n$.  Using the compactness of $\widehat K$,  we can obtain that $\widehat K\subset \widehat{F}_n$ for some $n$.  That completes the proof. 
\end{proof}

Before stating the result,  we prepare some notations and terminologies.  
Let $(\sE^1,\sF^1)$ be a Dirichlet form on $L^2(E_1,\fm_1)$.  Take another measurable space $(E_2,\mathcal{B}(E_2))$ and a measurable map $j: (E_1,\mathcal{B}(E_1))\rightarrow (E_2,\mathcal{B}(E_2))$.  Define $\fm_2:=\fm_1\circ j^{-1}$,  the image measure of $\fm_1$ under $j$.  Then 
\[
	j^*: L^2(E_2,\fm_2)\rightarrow L^2(E_1,\fm_1),\quad f\mapsto j^*f:=f\circ j
	\]
 is an isometry.  Define $\sF^2:=\{f\in L^2(E_2,\fm_2): j^*f\in \sF^1\}$ and 
 \[
 	\sE^2(f,g):=\sE^1(j^*f, j^*g),\quad f,g\in \sF^1. 
 \]
 If $j^*$ maps $L^2(E_2,\fm_2)$ onto $L^2(E_1,\fm_1)$,  then $(\sE^2,\sF^2)$ is a Dirichlet form on $L^2(E_2,\fm_2)$,  which is called the \emph{the image Dirichlet form} of $(\sE^1,\sF^1)$ under $j$.  
 Particularly,  if both $E_1$ and $E_2$ are locally compact separable metric spaces and $j$ is an a.e. homeomorphism,  i.e.  there is an $\fm_1$-negligible set $N_1$ and an $\fm_2$-negligible set $N_2$ such that $j: E_1\setminus N_1\rightarrow E_2\setminus N_2$ is a homeomorphism,   then $j^*$ is surjective.  


\begin{theorem}\label{THM311}
Let $(I',\fm',\sE',\sF')$ be a regular representation of $(I_0,\fm, \sE,\sF)$.  Then there exists a unqiue $\sE'$-polar set $N'\subset I'$ and a unique homeomorphism $j': \widehat{I} \rightarrow I'\setminus N'$ such that $(\sE',\sF')$ is the image Dirichlet form of $(\widehat\sE,\widehat \sF)$ under $j'$.  
\end{theorem}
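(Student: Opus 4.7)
The plan is to use transitivity of equivalence between D-spaces together with Fukushima's quasi-homeomorphism theorem to obtain an initial quasi-homeomorphism between $(\widehat{I},\widehat{\fm},\widehat{\sE},\widehat{\sF})$ and $(I',\fm',\sE',\sF')$, and then to invoke Lemma~\ref{LM310} to upgrade it to a genuine homeomorphism defined on all of $\widehat{I}$.

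By Theorem~\ref{THM6}, $(\widehat{I},\widehat{\fm},\widehat{\sE},\widehat{\sF})$ is itself a regular representation of $(I_0,\fm,\sE,\sF)$; since $(I',\fm',\sE',\sF')$ is another one, composing the two algebra isomorphisms shows that the two regular representations are equivalent as D-spaces in the sense of \eqref{eq:31}. By the quasi-homeomorphism theorem \cite[Theorem~A.4.2]{FOT11} (see also \cite[Section~1.4]{CF12}), there exist an $\widehat{\sE}$-nest $\{\widehat{F}_n\}$ of $\widehat{I}$, an $\sE'$-nest $\{F'_n\}$ of $I'$, and a homeomorphism
\[
  j':\bigcup_{n\geq 1}\widehat{F}_n\longrightarrow \bigcup_{n\geq 1}F'_n,\qquad j'(\widehat{F}_n)=F'_n,
\]
such that $\fm'=\widehat{\fm}\circ (j')^{-1}$ on $\bigcup_n F'_n$ and the pullback along $j'$ intertwines $(\widehat{\sE},\widehat{\sF})$ with the restriction of $(\sE',\sF')$ to $\bigcup_n F'_n$.

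The crucial point is that $\bigcup_n\widehat{F}_n=\widehat{I}$. By Remark~\ref{RM34}, $\widehat{I}\in\mathscr{K}$ is locally compact, so any $\widehat{x}\in\widehat{I}$ admits a compact neighbourhood $\widehat{K}\subset\widehat{I}$; Lemma~\ref{LM310} then delivers some $n$ with $\widehat{K}\subset\widehat{F}_n$, so $\widehat{x}\in\bigcup_n\widehat{F}_n$. Consequently $j'$ is defined on all of $\widehat{I}$ and, with the subspace topologies built into the quasi-homeomorphism data, $j':\widehat{I}\rightarrow\bigcup_n F'_n$ is a homeomorphism. Setting $N':=I'\setminus\bigcup_n F'_n$, the $\sE'$-nest property gives that $N'$ is $\sE'$-polar, and the measure and form compatibilities recorded above are exactly the statement that $(\sE',\sF')$ is the image Dirichlet form of $(\widehat{\sE},\widehat{\sF})$ under $j':\widehat{I}\rightarrow I'\setminus N'$.

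Uniqueness should follow from the rigidity of the induced algebra isomorphism: if $(j'_i,N'_i)$, $i=1,2$, both satisfy the conclusion compatibly with the equivalence datum of the regular representation, then both pullbacks $(j'_i)^{*}$ realise the same algebra isomorphism $\widehat{\sF}_b\rightarrow\sF'_b$; since $\widehat{\sF}$ separates points of $\widehat{I}$ by regularity of $(\widehat{\sE},\widehat{\sF})$, the continuity of $j'_1,j'_2$ forces $j'_1=j'_2$ on $I'\setminus(N'_1\cup N'_2)$ and bijectivity then forces $N'_1=N'_2$. The principal difficulty is precisely the passage from a quasi-homeomorphism, which in general only yields compatible maps outside exceptional sets on both sides, to a homeomorphism defined on the full space $\widehat{I}$; Lemma~\ref{LM310}—a genuinely special feature of $(\widehat{\sE},\widehat{\sF})$ reflecting that every function of $\widehat{\sF}$ is honestly continuous on $\widehat{I}\subset\bR$—is what eliminates the $\widehat{I}$-side exceptional set and is the hinge of the whole argument.
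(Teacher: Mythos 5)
Your overall strategy is close to the paper's, and you have correctly identified Lemma~\ref{LM310} as the hinge: it is what removes the exceptional set on the $\widehat{I}$ side. Combined with local compactness of $\widehat{I}$ it gives $\bigcup_n\widehat{F}_n=\widehat{I}$ and the continuity of $j'$ (every point has a compact neighbourhood absorbed by some $\widehat{F}_n$, where $j'$ restricts to a homeomorphism). The uniqueness sketch is also essentially the paper's (non-polarity of singletons in $\widehat{I}$ from Corollary~\ref{COR310}~(2) forces $N'_1=N'_2$, and the induced isomorphism pins down $j'$).

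The gap is the continuity of $(j')^{-1}$, which you dispose of with the phrase ``with the subspace topologies built into the quasi-homeomorphism data''. A quasi-homeomorphism only guarantees that each restriction $j'|_{\widehat{F}_n}:\widehat{F}_n\to F'_n$ is a homeomorphism; a continuous bijection whose restrictions to the members of a nest are homeomorphisms need not have a continuous inverse on the union, because a neighbourhood of a point of $\bigcup_n F'_n$ need not be contained in any single $F'_n$. Concretely, if $x'_k\to x'$ in $\bigcup_n F'_n$ with the $x'_k$ lying in ever larger nest members, the preimages $(j')^{-1}(x'_k)$ could a priori escape to an endpoint of $\widehat{I}$ not contained in $\widehat{I}$, and quasi-continuity of the transported functions is not enough to rule this out. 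What is needed is the absorption property on the $I'$ side: every compact $K'\subset I'$ satisfies $K'\cap\bigl(\bigcup_n F'_n\bigr)\subset F'_N$ for some $N$. Lemma~\ref{LM310} only gives this for $\widehat{\sE}$-nests of $\widehat{I}$ (its proof uses that every function of $\widehat{\sF}$ is genuinely continuous on $\widehat{I}$, which is a special feature of $(\widehat{\sE},\widehat{\sF})$ and is not available for a general regular representation $(I',\fm',\sE',\sF')$). This is precisely why the paper does not compose quasi-homeomorphisms directly but routes through the auxiliary representation $(\tilde{I},\tilde{\fm},\tilde{\sE},\tilde{\sF})$ of \cite[Theorem~A.4.9, Lemma~A.4.8]{FOT11}, whose projections $\gamma':\tilde{I}\to I'$ and $\widehat{\gamma}:\tilde{I}\to\widehat{I}$ are \emph{globally} continuous and pull back $C_\infty$ functions to $C_\infty$ functions: one lifts $K'$ to a compact $\tilde{K}=\gamma'^{-1}(K')$, pushes it to the compact set $\widehat{\gamma}(\tilde{K})\subset\widehat{I}$, applies Lemma~\ref{LM310} there, and transports the conclusion back. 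Your proof needs this (or an equivalent compactness-transfer argument) to close.
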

\begin{proof}
In view of \cite[Theorem~A.4.9]{FOT11},  there exists a regular representation $(\tilde{I}, \tilde{\fm}, \tilde{\sE},\tilde{\sF})$ such that $(I',\fm',\sE',\sF')$ and $(\widehat{I},\widehat{\fm},\widehat{\sE},\widehat{\sF})$ are equivalent to it by isomorphisms $\Phi'$ and $\widehat{\Phi}$, and 
\[
	\Phi'\left(\sF'\cap C_\infty(I') \right)\subset \tilde{\sF}\cap C_\infty(\tilde{I}),\quad \widehat\Phi(\widehat\sF\cap C_\infty(\widehat I) )\subset \tilde{\sF}\cap C_\infty(\tilde{I}).
\]
Applying \cite[Lemma~A.4.8]{FOT11} to $\widehat{\Phi}$ and repeating its proof,  we can obtain a continuous map $\widehat{\gamma}: \tilde{I}\rightarrow \widehat{I}$,  an $\tilde{\sE}$-nest $\{\tilde{F}^1_n: n\geq 1\}$ and an $\widehat{\sE}$-nest $\{\widehat{F}_n\}$ such that $f\circ \widehat{\gamma}\in C_\infty(\tilde{I})$ for any $f\in C_\infty(\widehat{I})$ and
\begin{equation}\label{eq:319}
	\widehat{\gamma}_n:=\widehat{\gamma}|_{\tilde{F}^1_n}: \tilde{F}^1_n\rightarrow \widehat{F}_n,\quad n\geq 1
\end{equation}
are homeomorphisms.  In addition,  $(\widehat{\sE},\widehat{\sF})$ is the image Dirichlet form of $(\tilde{\sE},\tilde{\sF})$ under $\widehat{\gamma}$.  Analogously there is a continuous map $\gamma': \tilde{I}\rightarrow I'$,  an $\tilde{\sE}$-nest $\{\tilde{F}^2_n: n\geq 1\}$ and an $\tilde{\sE}'$-nest $\{F'_n: n\geq 1\}$ such that $f\circ \gamma'\in C_\infty(\tilde{I})$ for any $f\in C_\infty(I')$ and
\begin{equation}\label{eq:320-3}
	\gamma'_n:=\gamma'|_{\tilde{F}^2_n}: \tilde{F}^2_n \rightarrow F'_n,\quad n\geq 1
\end{equation}
are homeomorphisms.  In addition,  $(\sE',\sF')$ is the image Dirichlet form of $(\tilde{\sE},\tilde{\sF})$ under $\gamma'$.  Without loss of generality we may and do assume $\tilde{F}^1_n=\tilde{F}^2_n=:\tilde{F}_n$.  Otherwise we can replace $\tilde{F}^1_n$ and $\widehat{F}_n$ by $\tilde{F}^1_n\cap \tilde{F}^2_n$ and  $\widehat{F}_n \cap \widehat{\gamma}(\tilde{F}^2_n)$ in \eqref{eq:319}.  The maps in \eqref{eq:320-3} can be treated similarly.  On account of Corollary~\ref{COR310}~(2),  we have 
\[
	\widehat{I}=\cup_{n\geq 1}\widehat{F}_n.
\]
Set $I'_0:=\cup_{n\geq 1}F'_n$ and $\tilde{I}_0:=\cup_{n\geq 1}\tilde{F}_n$.  

Secondly,  we assert that for any compact subset $K'$ of $I'$,  it holds that $K'\cap I'_0\subset F'_n$ for some $n\geq 1$.  
To do this,  take $f\in C_\infty(I')$ such that $f=1$ on $K'$,  and set $\tilde{K}:=\gamma'^{-1}(K')$.  Since $\gamma'$ is continuous,  $\tilde{K}$ is closed in $\tilde{I}$.  In addition,  
\[
	\tilde{K}\subset \{\tilde{x}\in \tilde{I}: f\circ \gamma'(\tilde{x})=1\}
\]
and $f\circ \gamma'\in C_\infty(\tilde{I})$ yields that the right hand side is a subset of a compact set in $\tilde{I}$.  Particularly $\tilde{K}$ is compact in $\tilde{I}$.  It follows from the continuity of $\widehat{\gamma}$ that $\widehat{K}:=\widehat{\gamma}(\tilde{K})$ is compact in $\widehat{I}$.  Applying Lemma~\ref{LM310} to $\widehat{K}$ and using homeomorphisms \eqref{eq:319} and \eqref{eq:320-3},  we get that $\widehat{K}\subset \widehat{F}_n$ for some $n$ and thus 
\[
	K'\cap I'_0= \gamma'(\tilde{K}\cap \tilde{I}_0)\subset \gamma'(\widehat \gamma^{-1}(\widehat{K})\cap \tilde{I}_0)\subset \gamma'_n(\widehat{\gamma}^{-1}_n(\widehat{F}_n))=F'_n.  
\]  

Denote by $\widehat{q}$ the inverses of $\widehat{\gamma}|_{\tilde{I}_0}$.  We show that $\widehat{q}$ is continuous on $\widehat{I}$.  Particularly,   
\[
	\widehat{q}: \widehat{I}\rightarrow \tilde{I}_0
\] 
is a homeomorphism and thus $(\tilde{\sE},\tilde{\sF})$ is the image Dirichlet form of $(\widehat{\sE},\widehat{\sF})$ under $\widehat{q}$.  To this end,  take an arbitrary precompact open subset  $\widehat U$ of $\widehat{I}$.  Since $\widehat{U}\subset \widehat{F}_n$ for some $n$,  it follows that $\widehat q|_{\widehat{U}}=\widehat{\gamma}^{-1}_n|_{\widehat{U}}$ is continuous.  Consequently $\widehat{q}$ is continuous on $\widehat{I}$.  


Set $N':=I'\setminus I'_0$ and 
\begin{equation}\label{eq:321-2}
	j': \widehat{I}\rightarrow I'_0,\quad \widehat{x}\mapsto \gamma'(\widehat{q}(\widehat{x})). 
\end{equation}
Then $j'$ is a continuous bijection and its restriction $j'|_{\widehat{F}_n}: \widehat{F}_n\rightarrow F'_n$ is a homoemorphism.  We prove that $j'$ is a local homeomorphism.  Indeed,  let $\widehat{x}\in\widehat{I}$ and $x':=j'(\widehat{x})$.  Since $I'$ is locally compact,  we take a precompact open set $V$ in $I'$ with $x'\in V$ and set $V':=V\cap I'_0$.  Then $V'$ is an open neighbourhood of $x'$ in $I'_0$ and the second step yields that 
\[
	V'\subset \overline{V}\cap I'_0\subset F'_n
\]
for some $n$,  where $\overline{V}$ is the closure of $V$ in $I'$.  Since $j'$ is continuous and $j'|_{\widehat{F}_n}$ is a homoemorphism,  it follows that $\widehat U:=j'^{-1}(V')\subset \widehat{F}_n$ is an open neighbourhood of $\widehat{x}$ in $\widehat{I}$ and $j'|_{\widehat{U}}: \widehat{U}\rightarrow V'$ is a homeomorphism.  Therefore \eqref{eq:321-2} is a local homeomorphism.  Note that $j'$ is also bijective.  Eventually we conclude that $j'$ is a homeomorphism.  

By making use of the facts that both $j'$ and $\widehat{q}$ are homeomorphisms,  one can find that $\gamma'|_{\tilde{I}_0}: \tilde{I}_0\rightarrow I'_0$ is also a homeomorphism and hence easily verify that $(\sE',\sF')$ is the image Dirichlet form of $(\widehat{\sE},\widehat{\sF})$ under $j'$.  

Finally we argue the uniqueness of $(N',j')$.  Take another pair $(N'_1, j'_1)$ with the same properties.  In view of Corollary~\ref{COR310}~(2),  every singleton contained in $\widehat{I}$ is not $\widehat{\sE}$-polar.  Since $j'_1$ is a homeomorphism,  it follows that every singleton contained in $I'\setminus N'_1$ is not $\sE'$-polar.  Consequently $I'\setminus N'_1\subset I'\setminus N'$ because $N'$ is $\sE'$-polar.  The contrary $I'\setminus N'\subset I'\setminus N'_1$ also holds true by a similar argument.  Therefore $N'=N'_1$.  The identity $j'=j'_1$ is obvious by using the fact that $j'(\widehat{X})$ and $j'_1(\widehat{X})$ are the identical Markov process associated with $(\sE',\sF')$.  That completes the proof.  
\end{proof}

The following corollary is immediate from this theorem.

\begin{corollary}
Let $(I_i,\fm_i, \sE^i,\sF^i)$ be regular representations of $(I_0,\fm,  \sE,\sF)$ for $i=1,2$.  Then there are $\sE^i$-polar sets $N_i\subset I_i$ for $i=1,2$ and a homeomorphism $j: I_1\setminus N_1\rightarrow I_2\setminus N_2$ such that $(\sE^2,\sF^2)$ is the image Dirichlet form of $(\sE^1,\sF^1)$ under $j$.  
\end{corollary}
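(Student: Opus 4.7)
The plan is to derive the corollary as an immediate consequence of Theorem~\ref{THM311}, by applying that theorem to each of the two regular representations and then composing the resulting homeomorphisms. First I would apply Theorem~\ref{THM311} to $(I_1,\fm_1,\sE^1,\sF^1)$, which yields a unique $\sE^1$-polar set $N_1\subset I_1$ and a unique homeomorphism $j_1:\widehat{I}\to I_1\setminus N_1$ such that $(\sE^1,\sF^1)$ is the image Dirichlet form of $(\widehat{\sE},\widehat{\sF})$ under $j_1$. Applying the same theorem to $(I_2,\fm_2,\sE^2,\sF^2)$ produces an analogous $\sE^2$-polar set $N_2\subset I_2$ and a homeomorphism $j_2:\widehat{I}\to I_2\setminus N_2$ with the matching property.

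Next I would set $j:=j_2\circ j_1^{-1}: I_1\setminus N_1\to I_2\setminus N_2$; as a composition of two homeomorphisms it is itself a homeomorphism. The remaining task is to verify that $(\sE^2,\sF^2)$ is the image Dirichlet form of $(\sE^1,\sF^1)$ under $j$. Since pullback of functions reverses composition, one has $j^{*}=(j_1^{-1})^{*}\circ j_2^{*}$, and the two identifications $(\sE^i,\sF^i)=(\widehat{\sE},\widehat{\sF})\circ j_i^{*}$ for $i=1,2$ then chain together to give the desired relation between $(\sE^1,\sF^1)$ and $(\sE^2,\sF^2)$. On the measure side, the relations $\fm_i=\widehat{\fm}\circ j_i^{-1}$ (which hold on $I_i\setminus N_i$, the removed sets being $\fm_i$-negligible since they are $\sE^i$-polar) combine to yield $\fm_2=\fm_1\circ j^{-1}$ on $j(I_1\setminus N_1)$.

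I do not expect a substantive obstacle: all the hard work has already been done in Theorem~\ref{THM311}, which encodes the uniqueness-up-to-homeomorphism of regular representations relative to the canonical representation $(\widehat{I},\widehat{\fm},\widehat{\sE},\widehat{\sF})$. The present corollary merely transfers that statement bilaterally between two arbitrary regular representations, and the only checking required is the functoriality of the image-Dirichlet-form construction under composition, together with a routine compatibility check for the polar/negligible exceptional sets.
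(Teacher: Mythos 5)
Your proposal is correct and matches the paper exactly: the paper states this corollary is ``immediate from this theorem'' (Theorem~\ref{THM311}) and gives no further proof, and your argument --- applying Theorem~\ref{THM311} to each representation to get $j_i:\widehat{I}\to I_i\setminus N_i$, then setting $j:=j_2\circ j_1^{-1}$ and chaining the image-Dirichlet-form identifications --- is precisely the intended routine verification.
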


Denote by $X'$ the $\fm'$-symmetric Hunt process associated with $(\sE',\sF')$.  We call $N'$ obtained in Theorem~\ref{THM311} the \emph{essentially exceptional set} of $(I',\fm',\sE',\sF')$,  and $\bs':=j'^{-1}$,  the inverse of $j'$,  the \emph{scale function} of  $X'$ or $(\sE',\sF')$.  Particularly,  the essentially exceptional set of $(\widehat{I},\widehat{\fm},\widehat{\sE},\widehat{\sF})$ (resp.  $(I^*,\fm^*,\sE^*,\sF^*)$) is empty,   and the scale function of $\widehat{X}$ (resp.  $X^*$) is $\widehat{\bs}$ (resp.  $\bs^*$).  We wish to state emphatically that although $(\sE',\sF')$ is also a regular Dirichlet form on $L^2(I'\setminus N',  \fm')$,  the essentially exceptional set $N'$ of $(I',\fm', \sE',\sF')$ is not necessarily empty.  

\begin{example}
Consider that $I=[0,1]$,  $\fm$ is the Lebesgue measure on $[0,1]$ and $\bs(x)$ is continuous and strictly increasing on $I$ such that $\bs(0)=0$ and $\bs(1)=\infty$.  Then $I_0=[0,1)$ and $(I_0,\fm, \sE,\sF)$ corresponds to the regular diffusion on $I_0$ with scale function $\bs$,  speed measure $\fm$ and no killing inside.  

Making use of \cite[Theorem~2.1]{LY19},  one can get that $(\sE,\sF)$ is regular on not only $L^2(I_0,\fm)$ but also $L^2(I,\fm)$.  Particularly,  $(I,\fm,\sE,\sF)$ is a regular representation of $(I_0,\fm,  \sE,\sF)$,  and its essentially exceptional set is $\{1\}$.  
\end{example}

\section{Skip-free Hunt processes in one dimension}\label{SEC5}

In this section we will study skip-free Hunt processes in one dimension that may be of independent interest.  Unless otherwise specified we would not adopt the notations in the previous sections.

\subsection{Skip-free Hunt processes}\label{SEC51}

Let $E\in \overline{\mathscr K}$ be a nearly closed subset of $\overline{\bR}$,  i.e.  $\overline E:= E\cup \{l,r\}$ is a closed subset of $\overline{\bR}$ where $l=\inf\{x: x\in E\}$ and $r=\sup\{x: x\in E\}$.   Write $[l,r]\setminus \overline{E}$ as a disjoint union of open intervals:
\begin{equation}\label{eq:51}
	[l,r]\setminus \overline{E}=\cup_{k\geq 1}(a_k,b_k).  
\end{equation}
We add a ceremony $\partial$ to $E$ and define $E_\partial:=E\cup \{\partial\}$.  More precisely,  $\partial$ is an additional isolated point when $E=\overline{E}$.  When $E=\overline E\setminus \{l\}$ or $E=\overline{E}\setminus \{r\}$,  $\partial$ is identified with $l$ or $r$.  When $E=\overline{E}\setminus \{l,r\}$,  $E_\partial$ is the one-point compactification of $E$. 
Further let 
\[
X=\left\{\Omega,  \mathscr F_t,  \theta_t,  X_t,  \mathbf{P}_x,  \zeta\right\}
\]
be a \emph{Hunt process} on $E_\partial$,  where $\{\mathscr F_t\}_{t\in [0,\infty]}$ is the minimum augmented admissible filtration and $\zeta=\inf\{t>0: X_t=\partial\}$ is the lifetime of $X$.  
The other notations and terminologies are standard and we refer readers to,  e.g.,  \cite[Appendix~A]{CF12}. 

\begin{definition}\label{DEF51}
Let $X$ be a Hunt process on $E_\partial$.  Then $X$ is called a \emph{skip-free Hunt process} if the following are satisfied:
\begin{itemize}
\item[(SF)] \emph{Skip-free property}: $(X_{t-} \wedge X_t,  X_{t-}\vee X_t)\cap E=\emptyset$ for any $t<\zeta$,  $\mathbf{P}_x$-a.s.  and all $x\in E$.   
\item[(SR)] \emph{Regular property}: $\mathbf{P}_x(T_y<\infty)>0$ for any $x, y\in E$,  where $T_y:=\inf\{t>0: X_t=y\}$ ($\inf \emptyset:=\infty$).   
\item[(SK)] There is \emph{no killing inside} in the sense that if $\mathbf{P}_x(\zeta<\infty)>0$ for $x\in E$,  then $l$ or $r$ does not belong to $E$ and $\mathbf{P}_x(X_{\zeta-}\notin E,  \zeta<\infty)=\mathbf{P}_x(\zeta<\infty)$.  
\end{itemize}
\end{definition}
\begin{remark}
\begin{itemize}\label{RM53}
\item[(1)] When $\overline{E}$ is a closed interval,  (SF) is identified with the continuity of all sample paths and $X$ is a regular diffusion with no killing inside;  see,  e.g.,  \cite[Chapter VII,  \S3]{RY99}. 
\item[(2)] (SF) particularly implies the following: Given states $x,y,z\in E$ with $x<y<z$ (or $x>y>z$),  and times $r<s$,  if $X_r=x$ and $X_s=z$,  then there is $t\in (r,s)$ such that $X_t=y$ or $X_{t-}=y$.   
To see this,  set $t:=\inf\{u>r: X_u>y\}$.  Then $r<t\leq s$,  $X_t\geq y$ and $X_{t-}\leq y$.  If $X_t>y$ and $X_{t-}<y$,  then $y\in (X_{t-}, X_t)$ as violates (SF).   
\item[(3)] (SR) is slightly different from the regular property for a diffusion in \cite[Chapter VII,  \S3]{RY99} (see also \cite[Chapter V,  (45.2)]{RW87}).    Here we impose $\mathbf{P}_x(T_y<\infty)>0$ for all $x\in E$,  while in \cite{RY99} this condition is only assumed for $x\in E\setminus \{l,r\}$.  This stronger assumption bars the possibility that $l$ or $r$ that contained in $E$ becomes an absorbing point.  See Corollary~\ref{COR55} for further discussion. 
\item[(4)] The process $X$ is called \emph{conservative} if $\mathbf{P}_x(\zeta=\infty)=1$ for all $x\in E$.  Note that if $l,r\in E$,  then $X$ is conservative due to (SK).  This definition is equivalent to that $\mathbf{P}_x(\zeta=\infty)=1$ holds for one $x\in E$.  To see this,  consider the case $l\notin E,  r\in E$ and the other cases can be argued analogously.  Lemma~\ref{LM54}~(4) tells us that $\zeta=T_l=\lim_{n\rightarrow \infty} T_{a_n}$ with $a_n\downarrow l$.  Take $x,y\in E$ with $a_n<x<y$.  It follows from Lemma~\ref{LM54}~(1) and the strong Markov property of $X$ that
\[
	\mathbf{P}_y(T_{a_n}<\infty)=\mathbf{P}_y(T_x<\infty)\mathbf{P}_x(T_{a_n}<\infty).  
\]
Letting $n\uparrow \infty$ we get that $\mathbf{P}_y(\zeta<\infty)=\mathbf{P}_y(T_x<\infty)\mathbf{P}_x(\zeta<\infty)$.  On account of (SR),  $\mathbf{P}_y(\zeta<\infty)>0$ is equivalent to $\mathbf{P}_x(\zeta<\infty)>0$.  Particularly,  $X$ is conservative if $\mathbf{P}_x(\zeta=\infty)=1$ holds for one $x\in E$.  
\item[(5)] When $l\notin E$ or $r\notin E$,  $X_{\zeta-}=\partial$ if $\zeta<\infty$.   In abuse of notations we will also write $X_{\zeta-}=l$ or $r$ to stand for that the convergence $\lim_{t\uparrow \zeta} X_t=\partial$ is along the decreasing or increasing direction.  Loosely speaking,  $l$ or $r$ or both of them are viewed as the ceremony.   
\end{itemize}

\end{remark}

From now on we always fix a skip-free Hunt process $X$ on $E_\partial$.    When there is no risk of ambiguity we would omit the subscript of $E_\partial$ and write $E$ for the state space of $X$.  For $a\in E$, define
\[
	T_{>a}:=\inf\{t>0: X_t>a\},\quad T_{<a}:=\inf\{t>0:X_t<a\}
\]
and 
\[
	T_{\geq a}:=\inf\{t>0: X_t\geq a\},\quad T_{\leq a}:=\inf\{t>0:X_t\leq a\}. 
\]
Set $T_{\neq a}:=\inf\{t>0: X_t\neq a\}=T_{>a}\wedge T_{<a}$.  
The lemma below states some useful facts concerning $X$. 

\begin{lemma}\label{LM54}
\begin{itemize}
\item[\rm (1)] For $x\in E$,
\[
	\mathbf{P}_x\left(\bigcup_{y,z\in E \text{ s.t. }x<y<z\text{ or }z<y<x} \{T_z< T_y\} \right)=0.  
\]
\item[\rm (2)] For $x,y \in E$ with $x>y$ (resp. $x<y$),  
\[
\mathbf{P}_x(T_{\leq y}=T_y)=1\quad  (\text{resp.  } \mathbf{P}_x(T_{\geq y}=T_y)=1). 
\]   
\item[\rm (3)] Let $x<y_n\uparrow y$ or $x>y_n\downarrow y $ with $x,y_n,y\in E$.  Then $$\mathbf{P}_x(\lim_{n\rightarrow \infty}T_{y_n}=T_y)=1. $$  
\item[\rm (4)] When $j\notin E$ for $j=l$ or $r$,  it holds $\mathbf{P}_x$-a.s.  for any $x\in E$ that 
\[
	T_j:=\lim_{n\rightarrow \infty}T_{a_n}=\left\lbrace
	\begin{aligned}
		&\zeta,\quad \text{if }\zeta<\infty,  X_{\zeta-}=j,\\
		&\infty,\quad \text{otherwise},
	\end{aligned}
	\right.
\]
where $a_n\rightarrow j$ is taken to be a monotone sequence contained in $E$.  Particularly,  
\[
	\zeta=T_l,\quad  (\text{resp. }  \zeta=T_r;\;  \zeta=T_l\wedge T_r)
\]
if $l\notin E, r\in E$ (resp.  $l\in E, r\notin E$; $l,r\notin E$).   
\item[\rm (5)] It holds that
\begin{equation}\label{eq:52}
	\mathbf{P}_x(T_{>x}=0)=\left\lbrace \begin{aligned}
		1,\quad x\neq a_k, \\
		0,\quad x=a_k,
	\end{aligned} \right.  \quad x\in E\setminus \{r\},
\end{equation}
and 
\begin{equation}\label{eq:53}
	\mathbf{P}_x(T_{<x}=0)=\left\lbrace \begin{aligned}
		1,\quad x\neq b_k, \\
		0,\quad x=b_k,
	\end{aligned} \right.  \quad x\in E\setminus \{l\},
\end{equation}
where $a_k,b_k$ are given in \eqref{eq:51}.  
\item[(6)]  For any $x\in E$,  $\mathbf{P}_x(T_x=0)=1$.  
\end{itemize}
\end{lemma}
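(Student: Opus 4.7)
The plan is to establish (1)--(6) in order, since several statements follow formally from their predecessors. The tools at hand are the skip-free property (SF), the regular property (SR), c\`adl\`ag paths, quasi-left-continuity of Hunt processes, the strong Markov property, and Blumenthal's 0--1 law.

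For (1), suppose for contradiction that $\mathbf{P}_x(T_z<T_y)>0$ for some $x<y<z$ (the reverse ordering is symmetric). On this event $X_0=x$ and $X_{T_z}=z$, so Remark~\ref{RM53}(2) yields a time $t\in(0,T_z)$ with $X_t=y$ or $X_{t-}=y$. The first case immediately contradicts $T_y>T_z$. In the second case I would take a sequence $s_n\uparrow t$ with $X_{s_n}\to y$ from the left-limit definition, and exploit (SF) to rule out a terminal jump over $y$; combined with quasi-left-continuity of the Hunt process this forces $X$ to actually attain $y$ at some $s\leq t$, so $T_y\leq t<T_z$, again a contradiction. Part (2) is then immediate: on $\{T_{\leq y}<T_y\}$ the random value $Z:=X_{T_{\leq y}}<y$ satisfies $T_Z=T_{\leq y}$, while (1) applied with $z=Z$ gives $T_y\leq T_Z$, contradicting $T_{\leq y}<T_y$.

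For (3), with $x<y_n\uparrow y$, the ordering $x<y_n<y_{n+1}$ together with (1) makes $\{T_{y_n}\}$ increasing and bounded by $T_y$. Setting $T_\infty:=\lim T_{y_n}$, quasi-left-continuity at this limit of an increasing sequence of stopping times gives $X_{T_\infty}=\lim X_{T_{y_n}}=y$ on $\{T_\infty<\infty\}$, so $T_y\leq T_\infty$. The decreasing case is symmetric. For (4), the same argument applied to a monotone sequence $a_n\to j\notin E$ gives $T_\infty\leq\zeta$ (each $T_{a_n}\leq\zeta$), and on $\{T_\infty<\infty\}$ quasi-left-continuity forces $X_{T_\infty}=j\notin E$, i.e., $X_{T_\infty}=\partial$, so $T_\infty\geq\zeta$; hence $T_\infty=\zeta$ and $X_{\zeta-}=j$ on this event. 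The three stated expressions for $\zeta$ follow by combining these identifications at each non-contained endpoint.

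For (5), Blumenthal's 0--1 law gives $\mathbf{P}_x(T_{>x}=0)\in\{0,1\}$. If $x=a_k$ then $E\cap(a_k,\infty)\subset[b_k,\infty)$, so a sequence $t_n\downarrow 0$ with $X_{t_n}>x$ would force $X_{t_n}\geq b_k>x$, contradicting right-continuity; thus $T_{>x}>0$. If $x\neq a_k$, suppose $T_{>x}>0$ a.s.; then since (SR) gives $\mathbf{P}_x(T_{>x}<\infty)>0$, on a positive-probability event $0<T_{>x}<\infty$, and the analysis $X_{T_{>x}-}\leq x$, $X_{T_{>x}}>x$ combined with (SF) forces $X_{T_{>x}-}=x$ and $(x,X_{T_{>x}})\cap E=\emptyset$, i.e., $x=a_k$, contradicting the hypothesis. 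The case of $T_{<x}$ is symmetric. For (6), by (5) one distinguishes cases: if $x$ is isolated on both sides, then $T_{\neq x}=T_{>x}\wedge T_{<x}>0$ and $X\equiv x$ on $[0,T_{\neq x})$ gives $T_x=0$ at once; if $x$ is non-isolated on at least one side, the corresponding one-sided $T_{>x}$ or $T_{<x}$ vanishes, and combining oscillation with the intermediate-passage consequence of (SF) and (1) produces times arbitrarily close to $0$ at which $X=x$.

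The main obstacle will be the quasi-left-continuity step in (1) when $X_{t-}=y\in E$ while $X_t>y$: turning the left-approximation $X_{s_n}\to y$ into an actual visit to $y$ at some time $\leq t$ requires either constructing a predictable sequence of stopping times announcing $t$ and invoking the Hunt process's behavior at predictable times, or a direct argument that c\`adl\`ag left-limits at points of $E$ are attained. A related subtlety is the edge case in (6) where $x$ is only one-sidedly isolated, for which the return-to-$x$ assertion must be argued via (SR) at accumulation points and the strong Markov property rather than by direct oscillation.
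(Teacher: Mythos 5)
Your overall architecture follows the paper's, and your derivations of (2) from (1), of (3), and of the $x=a_k$ half of \eqref{eq:52} are sound. However, the three places you yourself flag as ``obstacles'' are genuine gaps, not loose ends. In (1), the case $X_{t-}=y$, $X_t\neq y$ cannot be closed by either of your suggested routes: announcing $t$ by an increasing sequence of stopping times is impossible precisely in the scenario that matters, because quasi-left-continuity says a Hunt process does not jump at announced times, and here there \emph{is} a jump at $t$; and ``c\`adl\`ag left limits at points of $E$ are attained'' is not a consequence of path regularity alone --- the process may creep up to $y$ through points of $E$ accumulating at $y$ without ever equalling $y$. What is needed is the nontrivial Hunt-process fact $T_y\leq \hat{T}_y$, where $\hat{T}_y:=\inf\{t>0:X_{t-}=y\}$, which the paper imports from \cite[Theorem~A.2.3]{FOT11}. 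Without it, (1) --- and hence your (2), which rests on (1) --- does not close; the same fact is silently used again when you convert oscillation across $x$ into an actual visit to $x$ in (6).

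Two further gaps. Your proof of \eqref{eq:52} for $x\neq a_k$ assumes $X_{T_{>x}}>x$ on $\{0<T_{>x}<\infty\}$; right-continuity only gives $X_{T_{>x}}\geq x$, and the paper in fact proves $X_{T_{>x}}=x$ a.s.\ there, so your (SF)-contradiction is vacuous on the event that actually occurs. The correct argument applies the strong Markov property at $T_{>x}$, using $X_{T_{>x}}=x$ and $\mathbf{P}_x(T_{>x}>0)=1$ to force $\mathbf{P}_x(T_{>x}<\infty)=0$, contradicting (SR). Likewise, the one-sidedly isolated case of (6) (say $x=a_k$ accumulated from the left) admits no intermediate-passage argument: the process leaves $x$ downward and must return to $x$ exactly, without crossing it. The paper's proof here is a quantitative contradiction: assuming $\mathbf{P}_x(T_x>0)=1$, one extracts $\varepsilon_0$ with $\mathbf{P}_x(T_x>\varepsilon_0)>1-c/3$ and $\mathbf{P}_x(T_{x_n}<\varepsilon_0/2)>1-c/3$, transfers these by the strong Markov property to $\mathbf{P}_{x_n}(T_x>\varepsilon_0/2)>1-2c/3$, and contradicts $\mathbf{P}_{x_0}(T_x<\infty)=c$ via part (3). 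Finally, a smaller point in (4): ``each $T_{a_n}\leq\zeta$'' is false when the process never reaches $a_n$; the needed inclusion $\{\zeta<\infty,\,X_{\zeta-}=j\}\subset\{T_{a_n}<\infty\text{ for all }n\}$ must be established through the passage property of Remark~\ref{RM53}~(2).
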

\begin{proof}
\begin{itemize}
\item[(1)] Let $\omega\in \Omega$ such that $T_z(\omega)<T_y(\omega)$ for $x,y,z\in E$ with  $x<y<z$.  We show that $(X_{t-}(\omega) \wedge X_t(\omega),  X_{t-}(\omega)\vee X_t(\omega))\cap E\neq  \emptyset$ for some $t$,  so that (SF) leads to the assertion.  To do this,  note that
\[
	T_{>y}\leq T_z<T_y\leq \hat{T}_y=\inf\{t>0:X_{t-}=y\},
\]
where $T_y\leq \hat{T}_y$ is due to,  e.g.,  \cite[Theorem~A.2.3]{FOT11}.   Set $t:=T_{>y}(\omega)$.  By means of the c\`adl\`ag property of all paths,  we have 
\[
X_t(\omega)\geq y,  \quad X_{t-}(\omega)\leq y,
\]
while $t<T_y(\omega)\leq \hat{T}_y(\omega)$.  Hence $X_t(\omega)>y$ and $X_{t-}(\omega)<y$.  Therefore $y\in (X_{t-}(\omega), X_t(\omega))\cap E$.  
\item[(2)] We only treat the case $x>y$.  Note that $T_{\leq y}\leq T_y$.  Take $\omega\in \Omega$ such that $T_{\leq y}(\omega)<T_y(\omega)$.   It suffices to show that $y \in (X_{t-}(\omega) \wedge X_t(\omega),  X_{t-}(\omega)\vee X_t(\omega))$ for some $t$.  In fact,  set 
\[
	t:=T_{\leq y}(\omega)<T_y(\omega)\leq \hat{T}_y(\omega).  
\]
Mimicking the argument in the first assertion,  we get that $X_t(\omega)< y$ and $X_{t-}(\omega)>y$.  Hence $y\in (X_t(\omega), X_{t-}(\omega))$.  
\item[(3)] We only consider the case $x<y_n\uparrow y$.  In view of the first assertion,  $T_{y_n}$ is increasing and $T_{y_n}\leq T_y$,  $\mathbf{P}_x$-a.s.  Hence  $T:=\lim_{n\rightarrow \infty}T_{y_n}\leq T_y$.  The quasi-left-continuity of $X$ implies that
\[
	X_T=X_{\lim_{n\rightarrow \infty} T_{y_n}}=\lim_{n\rightarrow\infty}  X_{T_{y_n}}=y,\quad \mathbf{P}_x\text{-a.s. on }T<\infty. 
\]
Thus $T\geq T_y$ and we eventually obtain that $T=T_y$,  $\mathbf{P}_x$-a.s.   
\item[(4)] We only treat the case $j=l\notin E$.  Fix $x\in E$ and assume without loss of generality that $a_1<x$.  It follows from the quasi-left-continuity of $X$ that
\[
	X_{T_l}=\lim_{n\rightarrow \infty} X_{T_{a_n}}=l,\quad \mathbf{P}_x\text{-a.s. on }T_l<\infty.  
\]
Hence $T_l\geq \zeta$,  $\mathbf{P}_x$-a.s.  As a result,  $T_l<\infty$ implies $T_l=\zeta$ and $X_{\zeta-}=X_{T_l-}=l$, because $T_{a_n}\leq T_l<\infty$ leads to $T_{a_n}<\zeta$.  To the contrary suppose $\zeta<\infty$ and $X_{\zeta-}=l$.  It suffices to show $T_{a_n}<\infty$, so that 
$T_{a_n}\leq \zeta$ and thus $T_l\leq \zeta$ leading to $T_l=\zeta$ by means of $T_l\geq \zeta$.  To accomplish this, note that $X_0=x>a_n$ and since $\lim_{s\uparrow \zeta}X_s=l$,  there exists $s<\zeta$ such that $X_s<a_n$.  Remark~\ref{RM53}~(2) indicates that there exists $t\in (0,s)$ such that $X_t=a_n$ or $X_{t-}=a_n$.  For the former case it holds that $T_{a_n}\leq t$.  For the latter case,  $T_{a_n}\leq \hat{T}_{a_n}\leq t$.  Hence $T_{a_n}<\infty$ is concluded.  
\item[(5)] We first prove \eqref{eq:52} and then \eqref{eq:53} can be argued similarly.  When $x=a_k$,   the second assertion implies that
\[
	T_{>x}=\inf\{t>0: X_t\geq b_k\}=T_{b_k}.  
\]
The right continuity of all paths leads to $T_{b_k}>0$,  $\mathbf{P}_{x}$-a.s.  Hence $\mathbf{P}_{x}(T_{>x}=0)=0$.  Next consider $x\neq a_k$.  We show that $X_{T_{>x}}=x$,  $\mathbf{P}_x$-a.s. on $\{T_{>x}<\infty\}$.  In fact, it follows from the right continuity of all paths that $X_{T_{>x}}\geq x$.  To the contrary,  take $E\ni x_n \downarrow x$.  Suppose $\omega\in \Omega$ such that $T_{>x}(\omega)<\infty$ and $X_{T_{>x}}(\omega)\geq x_n$.  
Set $t:=T_{>x}(\omega)$.  Then $X_{t-}(\omega)\leq x$ and $X_t(\omega)\geq x_n$.  Hence $x_{n+1}\in (X_{t-}(\omega), X_t(\omega))$.  (SF) implies that $X_{T_{>x}}< x_n$,  $\mathbf{P}_x$-a.s.  on $\{T_{>x}<\infty\}$.  Letting $n\uparrow \infty$,  we obtain that $X_{T_{>x}}\leq x$ and eventually $X_{T_{>x}}= x$ is concluded.  With this fact at hand,  we argue $\mathbf{P}_x(T_{>x}=0)=1$ by contradiction.  On account of Blumenthal 0-1 law,  $\mathbf{P}_x(T_{>x}=0)$ equals $0$ or $1$.  Suppose that it is equal to $0$.  Set
\[
	\Lambda=\{\omega\in \Omega: \exists \varepsilon>0\text{ s.t. }X_t(\omega)\leq x \text{ for all }t\leq \varepsilon\}=\{T_{>x}>0\}.  
\]
By the right continuity of all paths and the definition of $T_{>x}$,  it holds that
\[
\begin{aligned}
	0&=\mathbf{P}_x\left(T_{>x}<\infty,  \exists \varepsilon>0\text{ s.t. }X_{T_{>x}+t}\leq x\text{ for }0\leq t\leq \varepsilon \right) \\
	&=\mathbf{E}_x\left(1_\Lambda\circ \theta_{T_{>x}}; T_{>x}<\infty\right).  
\end{aligned}\]
Using strong Markov property, $X_{T_{>x}}=x$ on $\{T_{>x}<\infty\}$ and $\mathbf{P}_x(\Lambda)=1$,  we get that
\begin{equation}\label{eq:54}
	0=\mathbf{E}_x\left(\mathbf{P}_{X_{T_{>x}}}(\Lambda); T_{>x}<\infty\right)=\mathbf{P}_x(T_{>x}<\infty).  
\end{equation}
However $T_{>x}<T_{x_n}$ and (SR) implies that $\mathbf{P}_x(T_{x_n}<\infty)>0$.  Thus $\mathbf{P}_x(T_{>x}<\infty)>0$,  which contradicts with \eqref{eq:54}.  Therefore \eqref{eq:52} is concluded.  
\item[(6)] We first consider $x\in E\setminus \{l,r, a_k,b_k: k\geq 1\}$. Since  $\mathbf{P}_x( T_{<x}=0)=\mathbf{P}_x(T_{>x}=0)=1$,  it follows that for $\mathbf{P}_x$-a.s.  $\omega\in \Omega$,  there exist two sequences $t_n\downarrow 0$ and $s_n\downarrow 0$ such that
\[
	X_{t_n}(\omega)< x,\quad X_{s_n}(\omega)> x. 
\]
It suffices to show that for each $n\geq 1$,  there exists $t< t_n$ such that $X_t(\omega)=x$.  Assume without loss of generality that $s_n<t_n$.  
In view of Remark~\ref{RM53}~(2),  there exists $t'\in (s_n,t_n)$ such that $X_{t'}(\omega)=x$ or $X_{t'-}(\omega)=x$.  For the former case take $t:=t'$.  For the latter case note that $T_x(\omega)\leq \hat{T}_x(\omega) \leq t'$.   Hence there exists $t\leq t'<t_n$ such that  $X_t(\omega)=x$. 

Next we prove the assertion for the case $x=a_k\in E$ and the other cases can be treated analogically.  If $x$ is isolated in $E$,  then $\mathbf{P}_x(T_x=0)=1$ is the consequence of Corollary~\ref{COR55}.  Assume that $x_0<x_1<\cdots x_n\uparrow x$ with $x_n\in E$ for $n\geq 0$.  Argue by contradiction and suppose $\mathbf{P}_x(T_x>0)=1$.  Note that (SR) implies that $0<c:=\mathbf{P}_{x_0}(T_x<\infty)\leq 1$.  In view of 
\[
	1=\mathbf{P}_x(T_x>0)=\mathbf{P}_x(\lim_{\varepsilon\downarrow 0} \{T_x>\varepsilon\})=\uparrow \lim_{\varepsilon\downarrow 0} \mathbf{P}_x(T_x>\varepsilon),
\]
one can obtain $\varepsilon_0>0$ such that 
\begin{equation}\label{eq:45}
\mathbf{P}_x(T_x>\varepsilon_0)>1-c/3.
\end{equation}  
Set
\[
	\Lambda:=\{T_x>0\}=\{\omega\in \Omega: \exists \varepsilon>0,\text{s.t. } X_t<x,  0<t<\varepsilon\},\quad \mathbf{P}_x\text{-a.s.}
\]
Since $T_{x_n}=T_{\leq x_n}$,  $\mathbf{P}_x$-a.s.  due to the second assertion,  it follows that $T_{x_n}$ is decreasing, $\mathbf{P}_x$-a.s., and $\lim_{n\rightarrow \infty} T_{x_n}(\omega)=0$ for any $\omega\in \Lambda$.  These yield that 
\[
	1=\mathbf{P}_x(\lim_{n\rightarrow\infty}T_{x_n}<\varepsilon_0/2)=\uparrow \lim_{n\rightarrow \infty}\mathbf{P}_x(T_{x_n}<\varepsilon_0/2).  
\]
Hence there exists $N\in \bN$ such that for all $n>N$,  
\begin{equation}\label{eq:46}
\mathbf{P}_x(T_{x_n}<\varepsilon_0/2)>1-c/3.
\end{equation}
On account of \eqref{eq:45} and \eqref{eq:46},  we have
\[
	\mathbf{P}_x(T_{x_n}<\varepsilon_0/2,  T_x>\varepsilon_0)\geq \mathbf{P}_x(T_{x_n}<\varepsilon_0/2)-\mathbf{P}_x(T_x\leq \varepsilon_0)>1-\frac{2c}{3}.  
\]
On the event $\{T_{x_n}<\varepsilon_0/2,  T_x>\varepsilon_0\}$,  $T_x>T_{x_n}$ and thus $T_x=T_{x_n}+T_x\circ \theta_{T_{x_n}}$.  Using the strong Markov property,  one gets that
\[
\begin{aligned}
\mathbf{P}_x(T_{x_n}<\varepsilon_0/2,  T_x>\varepsilon_0)&\leq \mathbf{P}_x(T_{x_n}<\varepsilon_0/2,  T_x\circ \theta_{T_{x_n}}>\varepsilon_0/2) \\
&=\mathbf{P}_x(T_{x_n}<\varepsilon_0/2)\mathbf{P}_{x_n}(T_x>\varepsilon_0/2).  
\end{aligned}\]
Hence for all $n>N$,  
\begin{equation}\label{eq:47}
	\mathbf{P}_{x_n}(T_x>\varepsilon_0/2)\geq \mathbf{P}_x(T_{x_n}<\varepsilon_0/2,  T_x>\varepsilon_0)>1-\frac{2c}{3}.  
\end{equation}
On the other hand,  the third assertion indicates that
\begin{equation}\label{eq:48}
	c=\mathbf{P}_{x_0}(\uparrow \lim_{n\rightarrow \infty}T_{x_n}=T_x,  T_x<\infty)=\lim_{n\rightarrow \infty} \mathbf{P}_{x_0}(T_{x_n}>T_x-\varepsilon_0/2,  T_x<\infty).  
\end{equation}
Since $T_{x_n}\leq T_x$,  $\mathbf{P}_{x_0}$-a.s.,  it follows from the strong Markov property and \eqref{eq:47} that for all $n>N$,  
\[
\begin{aligned}
 \mathbf{P}_{x_0}(T_{x_n}>T_x-\varepsilon_0/2,  T_x<\infty)&=\mathbf{P}_{x_0}(T_{x_n}>T_x-\varepsilon_0/2,  T_{x_n}<\infty) \\
 &=\mathbf{P}_{x_0}(T_x\circ \theta_{T_{x_n}}<\varepsilon_0/2,  T_{x_n}<\infty)  \\
 &=\mathbf{P}_{x_0}(T_{x_n}<\infty) \mathbf{P}_{x_n}(T_x<\varepsilon_0/2)<\frac{2c}{3},
\end{aligned}\]
as leads to a contradiction with \eqref{eq:48}.  
\end{itemize}
That completes the proof.  
\end{proof}

Due to the strong Markov property of $X$,  there is a constant $\kappa(x)\in [0,\infty]$ depending on $x\in E$ such that 
\[
\mathbf{P}_x(T_{\neq x}>t)=e^{-\kappa(x) t};
\]
see,  e.g.,  \cite[Proposition~2.19]{RY99}.  	(SR) bars the possibility of $\kappa(x)=0$.  If $\kappa(x)=\infty$,  then the process leaves $x$ at once.   When $\kappa(x)\in (0,\infty)$,  $x$ is called a \emph{holding point} because the process stays at $x$ for an exponential holding time.  

\begin{corollary}\label{COR55}
The family of holding points consists of all isolated points in $E$.  
\end{corollary}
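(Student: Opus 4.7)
The plan is to reduce the corollary to Lemma~\ref{LM54}~(5) via a simple observation about $\kappa(x)$. Since $\mathbf{P}_x(T_{\neq x}>t)=e^{-\kappa(x)t}$, letting $t\downarrow 0$ and invoking Blumenthal's 0-1 law gives the equivalence $\kappa(x)<\infty \iff \mathbf{P}_x(T_{\neq x}>0)=1$. Combined with the fact recalled just before the corollary that (SR) rules out $\kappa(x)=0$, the task reduces to showing that $\mathbf{P}_x(T_{\neq x}>0)=1$ if and only if $x$ is isolated in $E$. Because $T_{\neq x}=T_{>x}\wedge T_{<x}$, this in turn boils down to determining when both $T_{>x}>0$ and $T_{<x}>0$ hold $\mathbf{P}_x$-almost surely.

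Next I would translate isolation of $x\in E$ into the language of the gap endpoints $a_k,b_k$ in \eqref{eq:51}: points of $E$ accumulate to $x$ from the right precisely when $x\neq r$ and $x$ is not of the form $a_k$, and from the left precisely when $x\neq l$ and $x$ is not of the form $b_k$. Hence $x$ is isolated exactly when $x\in\{r\}\cup\{a_k:k\geq 1\}$ and simultaneously $x\in\{l\}\cup\{b_k:k\geq 1\}$.

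With these two ingredients the verification is a direct case check. If $x$ is isolated, then either $x=r$ (so $T_{>x}=\infty$) or $x=a_k$ for some $k$, in which case Lemma~\ref{LM54}~(5) gives $\mathbf{P}_x(T_{>x}=0)=0$, hence $T_{>x}>0$ a.s.\ by Blumenthal's law; symmetrically $T_{<x}>0$ a.s., so $T_{\neq x}>0$ a.s.\ and $\kappa(x)<\infty$. Conversely, if $x$ is not isolated, then either an $E$-sequence $x_n\downarrow x$ exists (forcing $x\notin\{r\}\cup\{a_k\}$, so Lemma~\ref{LM54}~(5) yields $\mathbf{P}_x(T_{>x}=0)=1$) or an $E$-sequence $x_n\uparrow x$ exists (giving $\mathbf{P}_x(T_{<x}=0)=1$), and in either case $T_{\neq x}=0$ a.s., whence $\kappa(x)=\infty$. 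I do not anticipate any real obstacle here: the corollary is essentially a clean repackaging of Lemma~\ref{LM54}~(5), with the only bookkeeping being the elementary classification of isolated points in terms of the gap endpoints.
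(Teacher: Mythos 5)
Your argument is correct and follows essentially the same route as the paper: both directions reduce to Lemma~\ref{LM54}~(5), with the non-isolated case handled by the ``$=1$'' half of \eqref{eq:52}--\eqref{eq:53} and the isolated case by the ``$=0$'' half (the paper merely re-derives the latter inline by writing $T_{\neq x}$ as an explicit hitting time $T_{a_q}\wedge T_{b_p}$ via Lemma~\ref{LM54}~(2) and invoking right-continuity of paths). Your bookkeeping identifying isolated points with $\left(\{r\}\cup\{a_k\}\right)\cap\left(\{l\}\cup\{b_k\}\right)$ matches the paper's three-case enumeration, so there is nothing substantive to add.
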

\begin{proof}
Let $x\in E$ be not isolated.  Then \eqref{eq:52} and \eqref{eq:53} imply that $\mathbf{P}_x(T_{>x}=0)=1$ or $\mathbf{P}_x(T_{<x}=0)=1$.  Since $T_{\neq x}=T_{>x}\wedge T_{<x}$,  it follows that $\mathbf{P}_x(T_{\neq x}=0)=1$.  Hence $x$ is not a holding point.  Now consider an isolated point $x\in E$.  It has three possibilities: $x=l=a_p$,  $x=r=b_q$ or $x=a_p=b_q$ for some $p,q\geq 1$.  When $x=l=a_p$,  we have
\[
	T_{\neq l}=T_{\geq b_p}=T_{b_p}.  
\]
Thus $\mathbf{P}_l(T_{\neq l}=0)=\mathbf{P}_l(T_{b_p}=0)=0$ and $l$ is a holding point.  Another case $x=r=b_q$ can be argued similarly.  When $x=a_p=b_q$,  it suffices to note that  $T_{\neq x}=T_{\leq a_q}\wedge T_{\geq b_p}=T_{a_q}\wedge T_{b_p}$,  $\mathbf{P}_x$-a.s.,  and hence 
\[
\mathbf{P}_x(T_{\neq x}=0)\leq \mathbf{P}_x(T_{a_q}=0)+\mathbf{P}_x(T_{b_p}=0)=0.  
\] That completes the proof.   
\end{proof}

\subsection{Scale function of skip-free Hunt process}

Let $a,x,b\in E$ with $a<x<b$.  Set $H:=(a,b)\cap E$ and $\tau_H:=\inf\{t>0: X_t\notin H\}$.  In view of Lemma~\ref{LM54}~(2),
\[
\tau_H=T_a\wedge T_b,\quad \mathbf{P}_x\text{-a.s.}
\]
Repeating the argument in \cite[Proposition~3.1]{RY99},  we can obtain that $\mathbf{E}_x\tau_H<\infty$ and thus $\tau_H<\infty$,  $\mathbf{P}_x$-a.s.  
Particularly,  $\mathbf{P}_x(T_a<T_b)+\mathbf{P}_x(T_b<T_a)=1$.  The following result extends \cite[Proposition~3.2]{RY99}. 

\begin{theorem}\label{THM58}
There exists a continuous and strictly increasing real valued function $\bs$ on $E$ such that for any $a,x,b\in E$ with $a<x<b$,  
\begin{equation}\label{eq:55}
\mathbf{P}_x(T_b<T_a)=\frac{\bs(x)-\bs(a)}{\bs(b)-\bs(a)}. 
\end{equation}
If $\tilde{\bs}$ is another function with the same properties,  then $\tilde{\bs}=\alpha \bs +\beta$ with $\alpha>0$ and $\beta\in \bR$.  
\end{theorem}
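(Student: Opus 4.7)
The plan is to mimic the classical construction of the scale function for a regular diffusion (cf.~\cite[VII\S3]{RY99}), with Lemma~\ref{LM54} and the skip-free property (SF) replacing path continuity. The central step is to establish the multiplicative relation
$$\mathbf{P}_x(T_b<T_a)=\mathbf{P}_x(T_y<T_a)\,\mathbf{P}_y(T_b<T_a),\qquad a<x<y<b\text{ in }E,$$
which follows from the strong Markov property at $T_y$: Lemma~\ref{LM54}~(1) gives $T_y\le T_b$ on $\{T_b<T_a\}$ $\mathbf{P}_x$-a.s., so $\{T_b<T_a\}=\{T_y<T_a\}\cap\{T_b\circ\theta_{T_y}<T_a\circ\theta_{T_y}\}$ up to null sets, and $X_{T_y}=y$ together with strong Markov then delivers the product.

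Fixing reference points $x_0<y_0$ in $E$, I would set $\bs(x_0)=0$, $\bs(y_0)=1$ and, guided by what \eqref{eq:55} forces at the pairs $(x_0,y_0)$, $(x,y_0)$ and $(x_0,x)$,
$$\bs(x):=\begin{cases}\mathbf{P}_x(T_{y_0}<T_{x_0}), & x_0<x<y_0,\\ -\mathbf{P}_{x_0}(T_{y_0}<T_x)/\mathbf{P}_{x_0}(T_x<T_{y_0}), & x<x_0,\\ 1/\mathbf{P}_{y_0}(T_x<T_{x_0}), & x>y_0.\end{cases}$$
Then I would verify \eqref{eq:55} for arbitrary $a<x<b$ in $E$ by splitting into cases according to the position of $(a,x,b)$ relative to $(x_0,y_0)$ and reducing each case to a short chain of applications of the multiplicative identity above.

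Strict monotonicity of $\bs$ amounts to $\mathbf{P}_x(T_b<T_a)\in(0,1)$ strictly for $a<x<b$ in $E$: if $\mathbf{P}_x(T_b<T_a)=0$, then iterating the strong Markov property at successive returns of $X$ to $x$ (a gambler's-ruin type argument) forces $\mathbf{P}_x(T_b<\infty)=0$, contradicting (SR); the reverse strict inequality is symmetric. For continuity of $\bs$, isolated points of $E$ are trivial by Corollary~\ref{COR55}'s description, while at an accumulation point $x\in E$ with a monotone sequence $E\ni x_n\to x$ Lemma~\ref{LM54}~(3) gives $T_{x_n}\to T_x$ under $\mathbf{P}_{x_0}$ and $\mathbf{P}_{y_0}$, so bounded convergence applied to the explicit formulae yields $\bs(x_n)\to\bs(x)$. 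Uniqueness up to positive affine transformation is then immediate: evaluating \eqref{eq:55} for both $\bs$ and $\tilde\bs$ at any two reference points of $E$ determines $\tilde\bs$ as an increasing affine image of $\bs$.

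The main technical obstacle I anticipate is the strict positivity step above. (SR) only asserts $\mathbf{P}_x(T_b<\infty)>0$; ruling out that $X$ reaches $b$ from $x$ exclusively through infinitely many excursions across $a$ requires combining (SR) with an excursion decomposition that exploits the skip-free property, and this is the one place where the purely topological continuity argument available for ordinary diffusions has no direct counterpart.
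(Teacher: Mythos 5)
Your proposal is correct and follows essentially the same route as the paper: hitting-probability identities justified by the strong Markov property and Lemma~\ref{LM54}~(1), a gambler's-ruin iteration at successive returns to rule out $\mathbf{P}_x(T_b<T_a)\in\{0,1\}$ via (SR), continuity from Lemma~\ref{LM54}~(3), and affine uniqueness. The only (cosmetic) difference is organizational — you work from the one-sided factorization $\mathbf{P}_x(T_b<T_a)=\mathbf{P}_x(T_y<T_a)\mathbf{P}_y(T_b<T_a)$ together with explicit two-reference-point formulas, whereas the paper first treats $l,r\in E$ with $\bs(x)=\mathbf{P}_x(T_r<T_l)$ and the two-term first-exit decomposition, then patches nested intervals affinely; both reduce to the same strong-Markov computations.
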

\begin{proof}
We first consider the case $l,r\in E$.  Using the strong Markov property and repeating the argument in the first paragraph of the proof of \cite[Proposition~3.2]{RY99},  we can conclude that
\[
\mathbf{P}_x(T_r<T_l)=\mathbf{P}_x(T_a<T_b)\mathbf{P}_a(T_r<T_l)+\mathbf{P}_x(T_b<T_a)\mathbf{P}_b(T_r<T_l).  
\]
Setting $\bs(x)=\mathbf{P}_x(T_r<T_l)$ we get the formula \eqref{eq:55}.  To show $\bs$ is strictly increasing,  take $l\leq x<y\leq r$ and note that 
\begin{equation}\label{eq:56}
	\bs(x)=\mathbf{P}_x(T_r<T_l)=\mathbf{P}_x(T_y<T_l,  T_r\circ \theta_{T_y}<T_l\circ \theta_{T_y})=\mathbf{P}_x(T_y<T_l) \bs(y).  
\end{equation}
Hence $\bs(x)\leq \bs(y)$.  We assert that $\bs(y)>\bs(l)=0$.  Argue by contradiction and suppose $\bs(y)=0$,  i.e.  $\mathbf{P}_y(T_r<T_l)=0$.  Set $\sigma_0:=0$,  and for $n\geq 0$, 
\[
\tau_{n+1}:=\inf\{t>\sigma_n: X_t=y\},\quad \sigma_{n+1}:=\inf\{t>\tau_{n+1}: X_t=l  \text{ or }r\}.  
\]	 
Then $A_n:=\{X_{\sigma_{n}}=r,  \sigma_{n}<\infty\}=\{\tau_n<\infty,  T_r\circ \theta_{\tau_n}<T_l\circ \theta_{\tau_n}\}$ and 
\[
	0<\mathbf{P}_l(T_r<\infty)=\mathbf{P}_l(\cup_{n\geq 1}A_n)\leq \sum_{n\geq 1}\mathbf{P}_l(A_n),
\]
where the first inequality is due to (SR).  However the strong Markov property yields that 
\[
	\mathbf{P}_l(A_n)=\mathbf{P}_l(\tau_n<\infty)\mathbf{P}_y(T_r<T_l)=0,
\]
as leads to a contradiction.  Hence $\bs(y)>0$ for any $y>l$.  Now take $l<x<y$ in \eqref{eq:56} and clearly,  $\bs(x)=\bs(y)$ amounts to $\mathbf{P}_x(T_y<T_l)=1$.  We can find that this is impossible by mimicking the argument proving $\bs(y)>\bs(l)$.  As a result,  $\bs$ is strictly increasing on $E$.  Let us turn to prove that $\bs$ is continuous on $E$.  
To accomplish this,  fix $a\in E$ and let $E\ni a_n\downarrow a$.  Take $x,b\in E$ and $a_1<x<b$.  In view of Lemma~\ref{LM54}~(3) anf \eqref{eq:55},
\[
	\lim_{n\rightarrow \infty}\frac{\bs(x)-\bs(a_n)}{\bs(b)-\bs(a_n)}=\lim_{n\rightarrow \infty}\mathbf{P}_x(T_b<T_{a_n})=\mathbf{P}_x(T_b<T_a)=\frac{\bs(x)-\bs(a)}{\bs(b)-\bs(a)}.
\]
Hence $\lim_{n\rightarrow \infty}\bs(a_n)=\bs(a)$.  The left continuity can be argued analogously.  Therefore the continuity of $\bs$ on $E$ is eventually concluded.  

For the general case,  take $l_1,l_2,r_1,r_2\in E$ with $l_2<l_1<r_1<r_2$.  Set
\[
	\bs_1(x):=\mathbf{P}_x(T_{r_1}<T_{l_1}),\quad x\in (l_1,r_1)
\]
and 
\[
	\bs_2(x):=\mathbf{P}_x(T_{r_2}<T_{l_2}),\quad x\in (l_2,r_2).
\]
Mimicking the argument treating the case $l,r\in E$,  we can conclude that both $\bs_1$ and $\bs_2$ are continuous, strictly increasing and solving \eqref{eq:55} for $l_1\leq a<x<b\leq r_1$.  In addition,  it follows from the strong Markov property that for $x\in (l_1,r_1)$, 
\[
\begin{aligned}
\bs_2(x)&=\mathbf{P}_x(T_{l_1}<T_{r_1})\mathbf{P}_{l_1}(T_{r_2}<T_{l_2})+\mathbf{P}_x(T_{r_1}<T_{l_1})\mathbf{P}_{r_1}(T_{r_2}<T_{l_2}) \\
&=(c_{r_1}-c_{l_1})\bs_1(x)+c_{l_1},
\end{aligned}\]
where $c_{l_1}:=\mathbf{P}_{l_1}(T_{r_2}<T_{l_2})$ and $c_{r_1}:=\mathbf{P}_{r_1}(T_{r_2}<T_{l_2})$.  Then by a standard argument,  one can obtain a continuous and strictly increasing function $\bs$,  which is unique up to an affine transformation,  on $E$ such that \eqref{eq:55} is satisfied.  That completes the proof.  
\end{proof}
\begin{remark}\label{RM48}
Although defined up to an affine transformation,  the function $\bs$ of the preceding theorem is called the \emph{scale function} of $X$.  If $\bs$ may be taken equal to $x$ on $E$,  $X$ is said to be on its \emph{natural scale}.  Clearly,  $\tilde{X}=(\tilde{X}_t)_{t\geq 0}:=(\bs(X_t))_{t\geq 0}$ is a  skip-free Hunt process on $\tilde{E}=\bs(E):=\{\bs(x):x\in E\}$ and is on its natural scale.  It is worth pointing out that if $l\in E$ or $r\in E$,  then $\bs(l)\in \tilde{E}$ or $\bs(r)\in \tilde{E}$ is finite.  Particularly,  $\tilde{E}$ is a nearly closed subset of $\bR$ (not of $\overline{\bR}$),  i.e.  $\tilde{E}\in \mathscr K$. 
\end{remark}

We end this subsection with two corollaries concerning the scale function.  The first corollary shows that if $l\notin E$ is approachable in finite time,  $\bs(l):=\lim_{x\downarrow l}\bs(x)$ is finite.  The analogical assertion for $r$ also holds.  

\begin{corollary}\label{COR59}
If $l\notin E$ and $\mathbf{P}_x(X_{\zeta-}=l, \zeta<\infty)>0$ for some $x\in E$,  then $\bs(l)>-\infty$.  
\end{corollary}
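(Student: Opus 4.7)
The plan is to argue by contradiction. Suppose $\bs(l):=\lim_{y\downarrow l,\,y\in E}\bs(y)=-\infty$ and fix $x\in E$ with $p:=\mathbf{P}_x(X_{\zeta-}=l,\zeta<\infty)>0$; Lemma~\ref{LM54}(4) identifies this with $\mathbf{P}_x(T_l<\infty)$. Pick $E\ni a_n\downarrow l$. Applying \eqref{eq:55} on intervals with endpoints $a_n$ and $b\in E$ gives
\[
\mathbf{P}_y(T_{a_n}<T_b)=\frac{\bs(b)-\bs(y)}{\bs(b)-\bs(a_n)}\xrightarrow{n\to\infty}0,\qquad a_n<y<b,\ y\in E,
\]
and since $T_{a_n}\leq T_l$ by the skip-free property (Lemma~\ref{LM54}(1)), this yields $\mathbf{P}_y(T_l<T_b)=0$, equivalently $\mathbf{P}_y(T_b<T_l)=1$ (the case $T_b=T_l$ is excluded because $b\neq l$).

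If $r\notin E$ the conclusion is immediate: taking $E\ni b_k\uparrow r$, the events $\{T_l<T_{b_k}\}$ increase to $\{T_l<T_r\}$ where $T_r=\lim_k T_{b_k}$ by Lemma~\ref{LM54}(4), so $\mathbf{P}_x(T_l<T_r)=0$. But on the event $\{X_{\zeta-}=l,\zeta<\infty\}$, Lemma~\ref{LM54}(4) applied at $r$ forces $T_r=\infty>T_l$, placing this event inside $\{T_l<T_r\}$ and giving $p=0$, a contradiction.

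If $r\in E$, the same bound (with $b=r$) only yields $\mathbf{P}_y(T_r<T_l)=1$ for $y\in E\setminus\{r\}$, which does not by itself contradict $p>0$ since the process may visit $r$ before eventually dying at $l$; this is the hard case. The plan is first to note that by strong Markov at $T_r$ together with $\mathbf{P}_y(T_r<\infty)=1$ the map $y\mapsto\mathbf{P}_y(T_l<\infty)$ is constant on $E$ and equals $p>0$; then, using the skip-free property to force $T_b\leq T_l$ starting from $r$ (for any $b\in E$ with $b<r$), strong Markov at $T_b$ gives $p=\mathbf{P}_r(T_b<\infty)\,p$, whence $\mathbf{P}_r(T_b<\infty)=1$. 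Fixing such a $b$, noting also $\mathbf{P}_b(T_r<\infty)=1$ and (by right continuity, since $b\neq r$) that $T_b>0$ $\mathbf{P}_r$-a.s.\ and $T_r>0$ $\mathbf{P}_b$-a.s., I iterate by strong Markov to define cycling times $\rho_0:=0$, $\rho_{2k+1}:=\rho_{2k}+T_b\circ\theta_{\rho_{2k}}$, $\rho_{2k+2}:=\rho_{2k+1}+T_r\circ\theta_{\rho_{2k+1}}$, whose increments are i.i.d.\ strictly positive within each parity class. Hence $\rho_n\uparrow\infty$ $\mathbf{P}_r$-a.s., yet $X_{\rho_n}\in\{b,r\}\subset E$ means the process is alive at each $\rho_n$, forcing $\zeta=\infty$ $\mathbf{P}_r$-a.s.\ and contradicting $p>0$. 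The principal obstacle is precisely this case $r\in E$: the scale-formula bound cannot preclude a finite-time death at $l$ interspersed with visits to $r$, and the cycling construction together with positivity of the half-cycle times is needed to force unbounded survival.
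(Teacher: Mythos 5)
Your argument is correct in substance, but it takes a genuinely different route from the paper's, which avoids your case split entirely. The paper also starts from $\mathbf{P}_x(T_b<T_l)=1$ (obtained by letting $a\downarrow l$ in \eqref{eq:55}), but then runs a single renewal argument between $x$ and $b$: with $S_0:=0$, $T_{n+1}:=\inf\{t>S_n:X_t=b\}$, $S_{n+1}:=\inf\{t>T_{n+1}:X_t=x\}$, it observes that $\{X_{\zeta-}=l,\zeta<\infty\}\subset\bigcup_{n\geq 0}\{S_n<\infty,\,T_l<T_{n+1}\}$ (the process can shuttle between $x$ and $b$ only finitely often before converging to $l$, so some excursion from $x$ reaches $l$ before $b$), and each piece has probability $\mathbf{P}_x(S_n<\infty)\,\mathbf{P}_x(T_l<T_b)=0$ by the strong Markov property. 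This works whether or not $r\in E$. Your version needs the dichotomy because the limiting argument $b\uparrow r$ only closes the proof when $r\notin E$; in the hard case $r\in E$ you compensate with a shuttling-between-$b$-and-$r$ construction that in fact proves the stronger statement that $X$ is conservative under the contradiction hypothesis, at the price of the extra inputs $\mathbf{P}_r(T_b<\infty)=\mathbf{P}_b(T_r<\infty)=1$ and the a.s.\ divergence of sums of independent positive increments. Both proofs are valid; the paper's is more economical.

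One small imprecision worth fixing: your passage from $\mathbf{P}_y(T_l<T_b)=0$ to $\mathbf{P}_y(T_b<T_l)=1$ via ``the case $T_b=T_l$ is excluded because $b\neq l$'' does not rule out $T_b=T_l=\infty$. The clean way is to use the other side of the scale formula directly: $\mathbf{P}_y(T_b<T_{a_n})=\frac{\bs(y)-\bs(a_n)}{\bs(b)-\bs(a_n)}\to 1$ with $\{T_b<T_{a_n}\}$ increasing and contained in $\{T_b<T_l\}$, which gives $\mathbf{P}_y(T_b<T_l)=1$ and hence $\mathbf{P}_y(T_b<\infty)=1$ — the fact you actually need (with $b=r$) to launch the strong Markov step at $T_r$ in the hard case.
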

\begin{proof}
Argue by contradiction and suppose $\bs(l)=-\infty$.  Letting $a\downarrow l$ in \eqref{eq:55},  we have $\mathbf{P}_x(T_b<T_l)=1$.  Set
\[
	S_0:=0,\quad T_{n+1}:=\inf\{t>S_n:X_t=b\},\quad S_{n+1}:=\{t>T_{n+1}: X_t=x\},\quad n\geq 0.  
\]
Note that 
\[
\{X_{\zeta-}=l, \zeta<\infty\}\subset \cup_{n\geq 0} \{S_n<\infty,  T_l<T_{n+1}\}.  
\]
Since $T_{n+1}=S_n+T_b\circ \theta_{S_n}$ and $T_l=S_n+T_l\circ \theta_{S_n}$ on $\{S_n<\infty\}$,  it follows from the strong Markov property that
\[
	\mathbf{P}_x(S_n<\infty,  T_l<T_{n+1})=\mathbf{P}_x(S_n<\infty)\mathbf{P}_x(T_l<T_b)=0.
\]
This yields $\mathbf{P}_x(X_{\zeta-}=l,\zeta<\infty)=0$,  as violates the condition.  That completes the proof. 
\end{proof}

Another corollary extends \cite[Proposition~3.5]{RY99}.  Recall that $T_j=\inf\{t>0:X_t=j\}$ for $j=l$ or $r$ in $E$.  When $j\notin E$,  we have setted that $T_j=\lim_{a\rightarrow j}T_a$.  
 Let $T:=T_l\wedge T_r$ and define the stopped process
\[
	X^T_t:=\left\lbrace
	\begin{aligned}
	& X_t,\quad t<T,   \\
	 & X_T, \quad t\geq T,
	 \end{aligned}\right. 
\]
where $X_T:=l$ (resp.  $r$) if $T=T_l<\infty$ (resp.  $T=T_r<\infty$).  The result below states that $\bs(X^T)$ is a (not necessarily continuous) local martingale.  Particularly,  the stopped process for $X$ on its natural scale is always a local martingale. 

\begin{corollary}
For any $x\in E$,  $\tilde{X}^T=(\bs(X^T_t))_{t\geq 0}$ is an $\{\sF_t,\mathbf{P}_x\}$-local martingale.  
\end{corollary}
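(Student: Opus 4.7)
The plan is to exhibit a localizing sequence $\{\tau_n\}$ for which the stopped process $\bs(X^T_{\cdot \wedge \tau_n})$ is a uniformly integrable martingale, and then to verify $\tau_n \uparrow T$ almost surely. I would choose $l_n, r_n \in E$ with $l_n \downarrow l$ and $r_n \uparrow r$, agreeing to take $l_n = l$ (resp.\ $r_n = r$) whenever $l \in E$ (resp.\ $r \in E$), and set $\tau_n := T_{l_n} \wedge T_{r_n}$. By the monotonicity recorded in Lemma~\ref{LM54}~(1), $\{\tau_n\}$ is increasing and $\tau_n \leq T$. When $l \notin E$ (resp.\ $r \notin E$), Lemma~\ref{LM54}~(4) identifies $\lim_n T_{l_n}$ with $T_l$ (resp.\ $\lim_n T_{r_n}$ with $T_r$), whence $\tau_n \uparrow T_l \wedge T_r = T$, $\mathbf{P}_x$-a.s.

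Next, since the skip-free property (SF) forces $X_{\tau_n} \in \{l_n, r_n\}$ on $\{\tau_n < \infty\}$ and $\tau_n$ is almost surely finite (by the finiteness of $\mathbf{E}_y\tau_n$ noted above Theorem~\ref{THM58}), the exit identity \eqref{eq:55} yields
\[
\mathbf{E}_y[\bs(X_{\tau_n})] = \bs(y), \qquad y \in [l_n, r_n] \cap E,
\]
and the random variable $\bs(X_{\tau_n})$ is bounded by $\max(|\bs(l_n)|, |\bs(r_n)|)$. Hence $M^n_t := \mathbf{E}_x\!\left[\bs(X_{\tau_n}) \mid \sF_t\right]$ is a uniformly integrable $\{\sF_t, \mathbf{P}_x\}$-martingale. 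The key identity $M^n_{t \wedge \tau_n} = \bs(X_{t \wedge \tau_n})$ for each $t \geq 0$ would be obtained by applying the strong Markov property at the stopping time $t \wedge \tau_n$: since $t \wedge \tau_n \leq \tau_n$, the additivity relation $\tau_n = (t \wedge \tau_n) + \tau_n \circ \theta_{t \wedge \tau_n}$ holds, so
\[
\mathbf{E}_x\!\left[\bs(X_{\tau_n}) \mid \sF_{t \wedge \tau_n}\right] = \mathbf{E}_{X_{t \wedge \tau_n}}\!\left[\bs(X_{\tau_n})\right] = \bs(X_{t \wedge \tau_n}),
\]
where the second equality reapplies the exit formula at the point $X_{t \wedge \tau_n} \in [l_n, r_n] \cap E$ (the case $X_{t \wedge \tau_n} \in \{l_n, r_n\}$ being trivial since then $\tau_n \circ \theta_{t \wedge \tau_n} = 0$). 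The left-hand side equals $M^n_{t \wedge \tau_n}$ by optional sampling for the uniformly integrable martingale $M^n$.

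Stopping a martingale produces a martingale, so $\bs(X_{t \wedge \tau_n})$ is an $\{\sF_t, \mathbf{P}_x\}$-martingale; since $\tau_n \leq T$, this coincides with $\bs(X^T_{t \wedge \tau_n})$. Combined with $\tau_n \uparrow T$, this exhibits $\bs(X^T)$ as a local martingale with localizing sequence $\{\tau_n\}$. I expect the only real subtlety to lie in the monotone convergence $\tau_n \uparrow T$ when an endpoint is not contained in $E$, which reduces entirely to Lemma~\ref{LM54}~(4) together with the convention that $T_l$ or $T_r$ equals $\infty$ on $\{\zeta = \infty\} \cup \{X_{\zeta-} \neq l\}$ (resp.\ $\{X_{\zeta-}\neq r\}$); all other steps are routine applications of the strong Markov property and optional sampling.
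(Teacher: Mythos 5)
Your construction of the auxiliary Doob martingales via the exit identity \eqref{eq:55} and the strong Markov property is correct and coincides with the computation the paper carries out in \eqref{eq:57}--\eqref{eq:58}. The gap is in the very last inference. A localizing sequence must increase to $\infty$ almost surely, but your $\tau_n=T_{l_n}\wedge T_{r_n}$ only increases to $T=T_l\wedge T_r$, and $T$ can be finite with positive probability. Concretely, if $X$ is not conservative --- say $l\notin E$ and $\mathbf{P}_x(X_{\zeta-}=l,\,\zeta<\infty)>0$ --- then on $\{T=T_l=\zeta<\infty\}$ one has $T_{l_n}<T_l$ strictly for every $n$ (the process is alive and equal to $l_n$ at time $T_{l_n}$, whereas it is killed at $\zeta$), so $\tau_n\uparrow\zeta<\infty$ and $\{\tau_n\}$ is not a localizing sequence; replacing $\tau_n$ by $+\infty$ on $\{\tau_n\ge T\}$ does not help on this event. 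Knowing that $\bs(X^T_{\cdot\wedge\tau_n})$ is a martingale for each $n$ and that $X^T$ is constant after $T$ does not by itself give the local martingale property of $\bs(X^T)$ on $[0,\infty)$: one still has to pass to the limit in $n$, and that requires control of $\bs$ near the absorbing endpoint, i.e.\ the finiteness of $\bs(l)$, which your argument never establishes.

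This is exactly where the paper brings in Corollary~\ref{COR59}: if an endpoint $j\notin E$ is reached in finite time with positive probability, then $\bs(j)$ is finite, and in that (non-conservative) case the paper shows directly that $\bs(X^T_t)=(\bs(r)-\bs(l))\,\mathbf{P}_x(T_r<T_l\mid\sF_t)+\bs(l)$ is a genuine uniformly integrable Doob martingale, with no localization at that endpoint at all. In the conservative case, Lemma~\ref{LM54}~(4) gives $T_j=\lim_nT_{j_n}=\infty$ a.s.\ for $j\notin E$, so the hitting times used for localization really do tend to $\infty$ and your argument goes through (modulo the routine replacement of $\tau_n$ by $+\infty$ where $\tau_n\ge T$). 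To repair your proof you must run this dichotomy: localize only at endpoints that are inaccessible, where the approximating hitting times tend to $\infty$, and at an accessible endpoint outside $E$ invoke Corollary~\ref{COR59} so that the stopped process is bounded and hence a closable martingale.
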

\begin{proof}
Consider the case $l,r\in E$.  We may take $\bs(x)=\mathbf{P}_x(T_r<T_l)$.  We assert that $\tilde{X}^T_t=\mathbf{P}_x\left(T_r<T_l |\sF_t\right)$,  so that $\tilde{X}^T$ is a Doob martingale.  This trivially holds for $x=l$ or $r$.  It suffices to prove it for $x\in E\setminus \{l,r\}$.  The strong Markov property yields that
\begin{equation}\label{eq:57}
	\tilde{X}^T_t=\mathbf{P}_{X_{t\wedge T}}(T_r<T_l)=\mathbf{P}_x\left( T_r\circ \theta_{t\wedge T}<T_l\circ \theta_{t\wedge T}| \sF_{t\wedge T}\right).
\end{equation}
Since $t\wedge T<T_l$ and $t\wedge T<T_r$,  it follows that $T_l=t\wedge T + T_l\circ  \theta_{t\wedge T}$ and $T_r=t\wedge T + T_r\circ  \theta_{t\wedge T}$.  This,  together with $\{T_r<T_l\}\in \sF_T$,  implies that 
\begin{equation}\label{eq:58}
	\tilde{X}^T_t=\mathbf{P}_x\left(T_r<T_l|\sF_{t\wedge T} \right)=\mathbf{P}_x\left(T_r<T_l|\sF_{t} \right).
\end{equation}

Next,  we consider the case $l\notin E$ and $r\in E$.  In view of Lemma~\ref{LM54}~(4),  $\zeta=T_l=\lim_{E\ni l_n\downarrow l}T_{l_n}$.  When $X$ is conservative (see Remark~\ref{RM53}~(4)),  $T_{l_n}$ is a sequence of $\{\sF_t\}$-stopping times that increase to $\infty$.  Mimicking the argument treating the case $l,r\in E$,  we can obtain that $\bs(X_{t\wedge T_{l_n}\wedge T_r})$ is a Doob martingale.  Hence $\tilde{X}^T$ is a local martingale.  When $X$ is not conservative,  in view of Corollary~\ref{COR59},  $\bs(l)>-\infty$.  Letting $b=r$ and $a\downarrow l$ in \eqref{eq:55},  we get that
\[
\bs(y)=(\bs(r)-\bs(l)) \mathbf{P}_y(T_r<T_l) +\bs(l),  \quad y\in E. 
\]
When $t\wedge T\geq T_l$,  $\bs(X^T_t)=\bs(l)$.  It follows that
\[
	\bs(X^T_t)=(\bs(r)-\bs(l))\mathbf{P}_{X^T_t}(T_r<T_l) \cdot 1_{\{t\wedge T<T_l\}} +\bs(l).  
\]
The strong Markov property of $X$ yields that
\[
\begin{aligned}
	\mathbf{P}_{X^T_t}(T_r<T_l) \cdot 1_{\{t\wedge T<T_l\}}&=\mathbf{P}_x\left(T_r\circ \theta_{t\wedge T}<T_l\circ \theta_{t\wedge T},  t\wedge T<T_l |\sF_{t\wedge T }\right) \\
	&=\mathbf{P}_x(T_r<T_l,  t\wedge T<T_l| \sF_{t\wedge T}) \\
	&=\mathbf{P}_x(T_r<T_l|\sF_{t}).
\end{aligned}\]
As a result,  $\tilde{X}^T$ is a Doob martingale and hence a local martingale. 

The other cases $l\in E, r\notin E$ and $l,r\notin E$ can be treated analogously.  That completes the proof.  
\end{proof}

\subsection{Speed measure of skip-free Hunt process}\label{SEC53}

In this subsection we define the speed measure $\mu$ for a skip-free Hunt process $X$ on its natural scale,  i.e.  $\bs(x)=x$. 


\subsubsection{$l,r\notin E$}\label{SEC531}

Let us begin with the case that $l,r\notin E$.   
Take $a, b\in E$ with $a<b$ and define a function on $[a,b]$ as follows:
\[
	h_{a,b}(x):=\left\lbrace
	\begin{aligned}
	&\mathbf{E}_x(T_a\wedge T_b),\quad x\in [a,b]\cap E,  \\
	&\frac{h_{a,b}(b_k)-h_{a,b}(a_k)}{b_k-a_k}\cdot (x-a_k)+h_{a,b}(a_k),\quad x\in (a_k,b_k)\subset (a,b),  k\geq 1.  
	\end{aligned}
	\right.
\]
Recall that a real valued function $f$ defined on $(a,b)$ is called \emph{convex} if 
\[
f(tx+(1-t)y)\leq t f(x)+(1-t)f(y),\quad \forall 0\leq t\leq 1,  x,y \in (a,b).  
\]
It is called \emph{concave} if $-f$ is convex.  For a convex function $f$,  both its left derivative $f'_-$ and right derivative $f'_+$ are increasing,  respectively left and right continuous,  and the set $\{x: f'_-(x)\neq f'_+(x)\}$ is at most countable.  Particularly,  the second derivative $f''$ of $f$ in the sense of distribution is a positive Radon measure on $(a,b)$;  see, e.g.,  \cite[Appendix~\S3]{RY99}.   We have this.

\begin{lemma}\label{LM511}
$h_{a,b}$ is concave on $(a,b)$.  
\end{lemma}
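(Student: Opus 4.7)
The plan is to adapt the classical concavity argument for $\mathbf{E}_\cdot(T_a\wedge T_b)$ in a regular diffusion, accounting for the gaps of $E$ via the linear interpolation built into the definition of $h_{a,b}$. First, I would derive the central identity on $E$: for $a\le x<y<z\le b$ with $x,y,z\in E$, property (SF) together with Lemma~\ref{LM54}(1)--(2) forces the process starting from $y$ to hit $\{x,z\}$ strictly before exiting $[a,b]$, so
\[
T_a\wedge T_b=(T_x\wedge T_z)+(T_a\wedge T_b)\circ\theta_{T_x\wedge T_z}
\]
holds $\mathbf{P}_y$-a.s. Taking expectations, applying the strong Markov property together with the natural-scale hitting formula $\mathbf{P}_y(T_z<T_x)=(y-x)/(z-x)$ from Theorem~\ref{THM58}, I obtain
\[
h_{a,b}(y)=\mathbf{E}_y(T_x\wedge T_z)+\frac{z-y}{z-x}h_{a,b}(x)+\frac{y-x}{z-x}h_{a,b}(z).
\]
The nonnegativity of $\mathbf{E}_y(T_x\wedge T_z)$ then yields the concavity inequality
\[
h_{a,b}(y)\ge\frac{z-y}{z-x}h_{a,b}(x)+\frac{y-x}{z-x}h_{a,b}(z),\quad x,y,z\in E\cap[a,b].
\]

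Next I would propagate this $E$-concavity to all of $(a,b)$ using the piecewise-affine structure on the gaps. For $t\in(a,b)$ lying in a gap $(a_k,b_k)\subset(a,b)$, write $h_{a,b}(t)=\lambda_t h_{a,b}(a_k)+(1-\lambda_t)h_{a,b}(b_k)$ with $\lambda_t=(b_k-t)/(b_k-a_k)$. For arbitrary $x<y<z$ in $(a,b)$, I would replace each of them by this convex combination at its two nearest $E$-neighbours $\alpha(t):=\max\{s\in E:s\le t\}$ and $\beta(t):=\min\{s\in E:s\ge t\}$, apply the $E$-concavity from the first step to the resulting $E$-points sandwiching $y$, and combine via these affine identities to deduce the concavity inequality at $(x,y,z)$. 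The degenerate case in which $x,y,z$ all lie in one closed gap $[a_k,b_k]$ is trivial by linearity.

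The main obstacle is the bookkeeping in this extension step, particularly when pairs among $x,y,z$ lie in the same gap or when $y$ coincides with an endpoint of a gap. A cleaner alternative I would pursue if the direct reduction becomes awkward is the chord-slope characterization of concavity: $h_{a,b}$ is continuous on $[a,b]$ (continuity on $E\cap[a,b]$ follows from Lemma~\ref{LM54}(3), quasi-left-continuity and the uniform bound $\mathbf{E}_x(T_a\wedge T_b)<\infty$ recalled just before the statement) and affine on each gap, so concavity on $(a,b)$ is equivalent to the chord-slopes between consecutive $E$-points being nonincreasing, which in turn is precisely the content of the $E$-concavity established in the first step.
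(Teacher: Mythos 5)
Your proposal is correct and follows essentially the same route as the paper: the key identity $h_{a,b}(y)=\mathbf{E}_y(T_x\wedge T_z)+\frac{z-y}{z-x}h_{a,b}(x)+\frac{y-x}{z-x}h_{a,b}(z)$ obtained from the strong Markov property and the natural-scale hitting formula of Theorem~\ref{THM58}, followed by propagation to the gaps via the affine interpolation in the definition of $h_{a,b}$. The paper carries out the gap bookkeeping explicitly with nearest $E$-neighbours exactly as you sketch (and handles $x=a$, $z=b$ as separate easy cases), so your argument matches it in substance.
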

\begin{proof}
For convenience write $h$ for $h_{a,b}$.  It suffices to show that for  $x,y,z\in [a,b]$ with $x<y<z$,  
\begin{equation}\label{eq:59}
h(y)\geq \frac{z-y}{z-x}h(x)+\frac{y-x}{z-x} h(z).  
\end{equation}
Firstly take $x,y,z\in E$ with $a<x<y<z<b$ and set $T_1:=T_a\wedge T_b$ and $T_2:=T_x\wedge T_z$.  It follows from Lemma~\ref{LM54}~(2) that $T_2<T_1$,  $\mathbf{P}_y$-a.s.  Using the strong Markov property and Theorem~\ref{THM58},   we get that
\begin{equation}\label{eq:511}
\begin{aligned}
	h(y)&=\mathbf{E}_y(T_2+T_1\circ \theta_{T_2}) \\
	&=\mathbf{E}_yT_2+\frac{z-y}{z-x}h(x)+\frac{y-x}{z-x} h(z)\\
	&>\frac{z-y}{z-x}h(x)+\frac{y-x}{z-x} h(z).  
\end{aligned}\end{equation}
Hence \eqref{eq:59} holds for $x,y,z\in E$.   When $x=a$ and $z<b$,  noting $h(a)=0$ and $T_z<T_b$ we have
\[
	h(y)\geq \mathbf{E}_y(T_a\wedge T_b,  T_z<T_a)\geq \mathbf{P}_y(T_z<T_a)\mathbf{E}_z (T_a\wedge T_b)
\] 
and \eqref{eq:59} still holds.  Analogously \eqref{eq:59} holds for $z=b$ and $x>a$.  The case $x=a$ and $z=b$ is obvious.  

Now we consider the general case $x_0,y_0,z_0\in (a,b)$ with $x_0<y_0<z_0$.  Set $\dagger_-:=\sup\{a\in E: a<\dagger_0\}$ and $\dagger_+:=\inf\{a\in E: a>\dagger_0\}$,  where $\dagger$ stands for $x,y$ or $z$,  and clearly $h(\dagger_0)=t_{\dagger_0} h(\dagger_-)+(1-t_{\dagger_0})h(\dagger_+)$ for $t_{\dagger_0}:=\frac{\dagger_+-\dagger_0}{\dagger_+-\dagger_-}$ for $\dagger_0\notin E$ and $t_{\dagger_0}:=0$ for $\dagger_0\in E$.  The first step has proved that \eqref{eq:59} holds for $x=x_\pm$,  $y=y_-$ and $z=z_-$.  Using $h(x_0)=t_{x_0} h(x_-)+(1-t_{x_0})h(x_+)$,  one  can easily obtain that \eqref{eq:59} holds for $x=x_0, y=y_-$ and $z=z_-$.  Analogously \eqref{eq:59} holds for $x=x_0, y=y_-$ and $z=z_+$.  Using $h(z_0)=t_{z_0} h(z_-)+(1-t_{z_0})h(z_+)$,  we find that \eqref{eq:59} holds for $x=x_0, y=y_-$ and $z=z_0$.  Analogously it holds also for $x=x_0,  y=y_+$ and $z=z_0$. Finally by means of $h(y_0)=t_{y_0} h(y_-)+(1-t_{y_0})h(y_+)$ we eventually conclude \eqref{eq:59} for $x=x_0,y=y_0$ and $z=z_0$.  That completes the proof. 
\end{proof}

Take $a',b'\in E$ with $a'<a<b<b'$ and define $h_{a',b'}$ analogously.  It is easy to compute that
\[
	h_{a',b'}(x)=h_{a,b}(x)+\frac{b-x}{b-a}h_{a',b'}(a)+\frac{x-a}{b-a}h_{a',b'}(b),\quad x\in (a,b),
\]
so that the restriction of the Radon measure $-h''_{a',b'}$ to $(a,b)$ is identified with $-h''_{a,b}$.  Define the speed measure $\mu$ on $(l,r)$  as follows:
\begin{equation}\label{eq:510}
	\mu|_{(a,b)}:=-\frac{1}{2}h''_{a,b},\quad \forall a,b\in E\text{ s.t. } l<a<b<r.  
\end{equation}
Noting the following result,  we denote the restriction of $\mu$ to $E$ still by $\mu$.  

\begin{lemma}\label{LM512}
Let $\mu$ be defined as \eqref{eq:510}.  Then $\mu$ is a positive Radon measure with $\text{supp}[\mu]=E$.  
\end{lemma}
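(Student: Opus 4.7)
Proof plan: The argument breaks into positivity, the Radon property, and the support computation. Positivity and the Radon property come essentially for free from Lemma~\ref{LM511}: on any $(a,b)$ with $a,b\in E$, the concave function $h_{a,b}$ has a second distributional derivative that is a negative Radon measure on $(a,b)$, so $-\tfrac12 h_{a,b}''$ is a positive Radon measure; the consistency remark preceding~\eqref{eq:510} guarantees these local pieces glue into a well-defined positive measure $\mu$ on $(l,r)$. For a compact $K\subset (l,r)$, the hypothesis $l=\inf E$, $r=\sup E$ lets me pick $a,b\in E$ with $K\subset (a,b)$, so $\mu(K)$ is bounded by differences of one-sided derivatives of the concave $h_{a,b}$ at points of $(a,b)$, hence finite.

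For the support, the inclusion $\mathrm{supp}[\mu]\subset E$ is immediate: on every gap $(a_k,b_k)$ the function $h_{a,b}$ is affine by construction, hence $h_{a,b}''\equiv 0$ on $(a_k,b_k)$ and $\mu((a_k,b_k))=0$; since $\bigcup_k(a_k,b_k)=(l,r)\setminus E$, the support lies in $E$.

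For the reverse inclusion $E\subset \mathrm{supp}[\mu]$, I fix $x\in E$ and an open neighbourhood $V\subset (l,r)$ of $x$, and split into two cases. If $x$ is non-isolated in $E$, I can find $y_1<y_2<y_3$ in $E\cap V$ (taking $y_2=x$ if $x$ is a two-sided limit point, or all three on one side of $x$ if $x$ is only a one-sided limit point, possibly with $y_1=x$ or $y_3=x$). The strict inequality established inside Lemma~\ref{LM511} at equation~\eqref{eq:511} yields $h_{a,b}(y_2)>\tfrac{y_3-y_2}{y_3-y_1}h_{a,b}(y_1)+\tfrac{y_2-y_1}{y_3-y_1}h_{a,b}(y_3)$, so $h_{a,b}$ is not affine on $[y_1,y_3]$ and $-h_{a,b}''([y_1,y_3])>0$; hence $\mu(V)>0$. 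If $x$ is isolated in $E$, then necessarily $x=a_p=b_q$ for some $p,q$ (and $a_q,b_p\in E$ because $l,r\notin E$ in this subsection), so I pick $a,b\in E$ with $a\le a_q<x<b_p\le b$. Since $h_{a,b}$ is affine on $[a_q,x]$ and on $[x,b_p]$ with respective slopes $\sigma_-=\tfrac{h_{a,b}(x)-h_{a,b}(a_q)}{x-a_q}$ and $\sigma_+=\tfrac{h_{a,b}(b_p)-h_{a,b}(x)}{b_p-x}$, and the strict inequality \eqref{eq:511} applied to $(a_q,x,b_p)$ translates algebraically into $\sigma_->\sigma_+$, I conclude $-h_{a,b}''(\{x\})=\sigma_--\sigma_+>0$, so $\mu(\{x\})>0$ and \emph{a fortiori} $\mu(V)>0$.

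I expect the isolated case to be the main obstacle, because the positivity of $\mu(V)$ there must come from a point mass at $x$ itself, which in turn requires translating the chord-above-point form of strict concavity in \eqref{eq:511} into the slope-jump form $\sigma_->\sigma_+$. Once this translation is recorded, the rest of the proof is a routine combination of the local definition \eqref{eq:510}, the concavity provided by Lemma~\ref{LM511}, and the fact that $E$ is the closed complement of $\bigcup_k(a_k,b_k)$ inside $(l,r)$.
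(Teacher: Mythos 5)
Your proof is correct and rests on the same key ingredient as the paper's, namely the strict inequality \eqref{eq:511} obtained from $\mathbf{E}_y(T_x\wedge T_z)>0$. The paper packages the reverse inclusion $E\subset\mathrm{supp}[\mu]$ as a single contradiction argument (an interval of zero $\mu$-measure forces $h_{a,b}$ to be affine there, contradicting the interpolated strict inequality at any point of $E$ inside that interval), whereas you verify it pointwise with a split into non-isolated and isolated points; your isolated case, which extracts the atom $\mu(\{x\})=\tfrac12(\sigma_--\sigma_+)>0$ from the slope jump of the concave function, is a correct and slightly more explicit rendering of the same idea.
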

\begin{proof}
Clearly,  $\mu$ is a positive Radon measure and $\mu((a_k,b_k))=0$ for any $k\geq 1$.  Take $(\alpha,\beta)\subset (l,r)$ such that $\mu((\alpha,\beta))=0$.  We need to show that $$(\alpha,\beta)\cap E=\emptyset$$ so that the support of $\mu$ is $E$.  To do this take $(a,b)\supset [\alpha, \beta]\supset (\alpha,\beta)$ with $a,b\in E$ and let $h:=h_{a,b}$.  Then $h''=0$ on $(\alpha,\beta)$ implies that there is a constant $k$ depending on $\alpha,  \beta$ such that 
\begin{equation}\label{eq:512}
	h(x)-h(\alpha)=k(x-\alpha),\quad x\in [\alpha,\beta]. 
\end{equation}
Argue by contradiction and suppose $x\in (\alpha, \beta)\cap E$.  Using \eqref{eq:511} and repeating the argument in the second paragraph of the proof of Lemma~\ref{LM511},  we can conclude that
\[
	h(x)>\frac{\beta-x}{\beta-\alpha} h(\alpha)+\frac{x-\alpha}{\beta-\alpha} h(\beta).  
\]
This violates \eqref{eq:512}.  That completes the proof.  
\end{proof}

\subsubsection{$l\in E$ but $r\notin E$}\label{SEC532}

Take $b\in E$ with $l<b<r$.  Using $\mathbf{P}_l(T_b<\infty)>0$ and repeating the argument in \cite[Chapter VII,  Proposition~3.1]{RY99},   one have that $\mathbf{E}_xT_b<\infty$ for all $x\in [l,b]$ and particularly,  $\mathbf{P}_x(T_b<\infty)=1$. Define
\[
	h_b(x):=\left\lbrace
	\begin{aligned}
	&\mathbf{E}_x(T_b),\quad x\in [l,b]\cap E,  \\
	&\frac{h_{b}(b_k)-h_{b}(a_k)}{b_k-a_k}\cdot (x-a_k)+h_{b}(a_k),\quad x\in (a_k,b_k)\subset (l,b),  k\geq 1.  
	\end{aligned}
	\right.
\]
Mimicking Lemma~\ref{LM511},  we can obtain that $h_b$ is concave on $(l, b)$.  Since $h_b$ is decreasing on $[l,b)$,  the extended function $h_b$ obtained by letting $h_b(x):=h_b(l)$ for $x\leq l$ is concave on $(-\infty,  b)$.  Take another $b'\in E$ such that $b<b'$ and let $h_{b'}$ be the concave function defined on $(-\infty, b')$ analogously.  Then
\[
	h_{b'}(x)=h_b(x)+\mathbf{E}_b(T_{b'}).  
\]
Hence the restriction of $-h''_{b'}$ to $(-\infty, b)$ is identified with $-h''_b$.  Define the speed measure $\mu$ on $(-\infty, r)$ as
\begin{equation}\label{eq:513}
	\mu|_{(-\infty, b)}:=-\frac{1}{2}h''_b,\quad  r>b\in E.  
\end{equation}
As an analogue of Lemma~\ref{LM512},  $\mu$ a positive Radon measure and $\text{supp}[\mu]=E$.  The restriction of $\mu$ to $E$ is still denoted by $\mu$.  

\subsubsection{$l\notin E$ and $r\in E$}\label{SEC533}

As an analogue of $h_b$ in \S\ref{SEC532},  we set a function $h_a$ on $(a,  \infty)$ for $a\in E$ with $l<a<r$:
\[
	h_a(x):=\left\lbrace
	\begin{aligned}
	&\mathbf{E}_x(T_a),\quad x\in [a,r]\cap E,  \\
	&\frac{h_{b}(b_k)-h_{b}(a_k)}{b_k-a_k}\cdot (x-a_k)+h_{b}(a_k),\quad x\in (a_k,b_k)\subset (a,r),  k\geq 1, \\
	&\mathbf{E}_rT_a,\quad x\geq r.  
	\end{aligned}
	\right.
\]
Note that $h_a$ is concave and $-h''_a$ extends consistently to $(l, \infty)$ as $a\downarrow l$.  Define the speed measure as this limit,  i.e. 
\begin{equation}\label{eq:514}
	\mu|_{(a,\infty)}:=-\frac{1}{2}h''_a,\quad l< a\in E. 
\end{equation}
The support of $\mu$ is $E$ and we denote still by $\mu$ the restriction of $\mu$ to $E$.  

\subsubsection{$l, r\in E$}

Let $h_{l,r}:=h_{a,b}$ with $a=l, b=r$ as in \S\ref{SEC531},  $h_r:=h_b$ with $b=r$ as in \S\ref{SEC532} and $h_l:=h_a$ with $a=l$ as in \S\ref{SEC533}.  Then $-h''_{l,r},  -h''_l$ and $-h''_r$ are all positive Radon measures on $(l,r)$,  $[l, r)$ and $(l,r]$ respectively.  

\begin{lemma}
The restrictions of $-h''_{l,r},  -h''_l$ and $-h''_r$ to $(l,r)$ are all identified.  
\end{lemma}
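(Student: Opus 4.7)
The plan is to show that $h_l - h_{l,r}$ and $h_r - h_{l,r}$ are both affine on $(l,r)$, so that differentiating twice (as distributions) kills them. The bridge is the strong Markov property applied at $T_l \wedge T_r$.

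First I would work on $E \cap [l,r]$. For $x \in E \cap [l,r]$, the identity
\[
T_l = T_l \wedge T_r + T_l \circ \theta_{T_l \wedge T_r}
\]
combined with the strong Markov property and the fact that $X_{T_l \wedge T_r} \in \{l,r\}$ with $T_l \circ \theta_{T_l \wedge T_r} = 0$ on $\{T_l < T_r\}$ gives
\[
h_l(x) = h_{l,r}(x) + \mathbf{P}_x(T_r < T_l)\,h_l(r).
\]
Since $X$ is on its natural scale ($\bs(x) = x$), Theorem~\ref{THM58} yields $\mathbf{P}_x(T_r < T_l) = (x-l)/(r-l)$, so on $E \cap [l,r]$
\[
h_l(x) - h_{l,r}(x) = \frac{x-l}{r-l}\,h_l(r).
\]
An identical argument with the roles of $l$ and $r$ swapped gives $h_r(x) - h_{l,r}(x) = \frac{r-x}{r-l} h_r(l)$ on $E \cap [l,r]$.

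Next I would verify that these affine relations extend to all of $(l,r)$, not just to points of $E$. On each gap $(a_k, b_k) \subset (l,r)$, the three functions $h_l$, $h_r$, $h_{l,r}$ are \emph{defined} to be the linear interpolations of their values at $a_k$ and $b_k$ (both of which lie in $E$). The function $x \mapsto (x-l)/(r-l)\cdot h_l(r)$ is itself affine, and it agrees with $h_l - h_{l,r}$ at the endpoints $a_k$ and $b_k$ by the first step. By uniqueness of linear interpolation, the identity persists throughout $(a_k,b_k)$, and likewise for the $h_r$ identity. Hence
\[
h_l(x) - h_{l,r}(x) = \frac{x-l}{r-l}\,h_l(r),\qquad h_r(x) - h_{l,r}(x) = \frac{r-x}{r-l}\,h_r(l)
\]
as equalities of continuous functions on all of $[l,r]$.

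Finally, since both correction terms are affine in $x$, their second distributional derivatives on $(l,r)$ vanish. Therefore $h_l'' = h_r'' = h_{l,r}''$ as distributions on $(l,r)$, which is exactly the claim that the restrictions of $-h_{l,r}''$, $-h_l''$ and $-h_r''$ to $(l,r)$ coincide as positive Radon measures. The only subtlety I anticipate is the extension of the identities from $E \cap [l,r]$ across the gaps; this is handled transparently by the linear-interpolation convention used to define each $h$, so no real obstacle arises.
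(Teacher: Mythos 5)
Your proof is correct and follows essentially the same route as the paper's: decompose $\mathbf{E}_x T_l$ via the strong Markov property at the exit time of $(l,r)$, use the natural-scale hitting probability $\mathbf{P}_x(T_r<T_l)=(x-l)/(r-l)$ to see that $h_l-h_{l,r}$ (and symmetrically $h_r-h_{l,r}$) is affine, and conclude that the second distributional derivatives agree. Your explicit remark that the affine identity propagates across the gaps $(a_k,b_k)$ by the linear-interpolation convention is a point the paper leaves implicit, but it introduces no divergence in method.
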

\begin{proof}
Take $x\in (l,r)$.  We have $h_l(x)=\mathbf{E}_x(T_l; T_l<T_r)+\mathbf{E}_x(T_l; T_l>T_r)$.  Note that
\[
\begin{aligned}
	\mathbf{E}_x(T_l; T_l>T_r)&=\mathbf{E}_x(T_r+T_l\circ \theta_{T_r}; T_l>T_r) \\
	&=\mathbf{E}_r(T_r; T_l>T_r)+\mathbf{E}_rT_l \cdot \frac{x-l}{r-l}.  
\end{aligned}\]
It follows that $h_l(x)=\mathbf{E}_x(T_l\wedge T_r)+\mathbf{E}_rT_l \cdot \frac{x-l}{r-l}=h_{l,r}(x)+\mathbf{E}_rT_l \cdot \frac{x-l}{r-l}$.  Hence $-h''_l=-h''_{l,r}$ on $(l,r)$.  Another identity $-h''_r=-h''_{l,r}$ on $(l,r)$ can be obtained similarly.  That completes the proof. 
\end{proof}

Due to this lemma we define the speed measure $\mu$ on $[l,r]$ as follows:
\begin{equation}\label{eq:515}
	\mu|_{[l,r)}:=-\frac{1}{2}h''_r,\quad \mu|_{(l,r]}:=-\frac{1}{2}h''_l. 
\end{equation}
Clearly $\mu$ is a positive Radon measure with $\text{supp}[\mu]=E$. 

\subsubsection{Summary}

We summarize the definition of speed measure for a skip-free Hunt process as follows.  

\begin{definition}\label{DEF514}
\begin{itemize}
\item[(1)] Let $X$ be a skip-free Hunt process on $E$ on its natural scale, i.e. $\bs(x)=x$. The measure $\mu$ defined as \eqref{eq:510},  \eqref{eq:513},  \eqref{eq:514} and \eqref{eq:515} for the cases mentioned above respectively is called the \emph{speed measure} of $X$. 
\item[(2)] For a skip-free Hunt process $X$ on $E$ with scale function $\bs$,  let $\tilde{\mu}$ be the speed measure of $\tilde{X}=(\bs(X_t))_{t\geq 0}$.  Then the image measure $\mu:=\tilde\mu\circ \bs^{-1}$ of $\tilde\mu$ under the map $\bs$ is called the \emph{speed measure} of $X$.  
\end{itemize}
\end{definition}

Note that the speed measure of $X$ is a positive Radon measure on $E$ with full support.  As an extension of the classical characterization for regular diffusions,  it will turn out in Theorem~\ref{THM71} that a skip-free Hunt process on its natural scale can be always constructed from a Brownian motion,  by way of a time change using this speed measure.  Particularly,  a skip-free Hunt process is uniquely determined by its scale function and speed measure;  see Corollary~\ref{COR64}.  

\section{Quasidiffusions}\label{SEC6}

Before moving on we introduce a special family of skip-free Hunt processes that have been widely studied.  

\subsection{Quasidiffusions as standard processes}\label{SEC61}

Let $m$ be an extended real valued, right continuous,  (not necessarily strictly) increasing and non-constant function on $\bR$,  and set $m(\infty):=\lim_{x\uparrow \infty}m(x),  m(-\infty):=\lim_{x\downarrow -\infty}m(x)$.   Put 
\begin{equation}\label{eq:51-2}
\begin{aligned}
	&l_0:=\inf\{x\in \bR: m(x)>-\infty\},\quad r_0:=\sup\{x\in \bR: m(x)<\infty\},  \\
	&l:=\inf\{x>l_0: m(x)>m(l_0)\},\quad r:=\sup\{x<r_0: m(x)<m(r_0-)\}.  
\end{aligned}\end{equation}
To avoid trivial case assume that $l<r$.  Define
\[
E_m:=\{x\in [l,r]\cap (l_0,r_0): \exists \varepsilon>0\text{ s.t. }m(x-\varepsilon)<m(x+\varepsilon)\}.  
\]
The function $m$ corresponds to a measure on $\bR$,  still denoted by $m$ if no confusions caused.  

\begin{lemma}\label{LM51}
The set $E_m$ is a nearly closed subset of $\bR$ ended by $l$ and $r$, and the restriction of $m$ to $E_m$ is a  positive Radon measure with full support.
\end{lemma}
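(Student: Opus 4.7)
The plan is to recognize $E_m$ as the topological support of the Lebesgue–Stieltjes measure associated with the restriction of $m$ to the interval $[l,r]$; every claim in the lemma then follows from standard measure-theoretic considerations together with the specific way the endpoints $l, r$ are manufactured from $l_0, r_0$ and $m$. Reading the defining condition on $E_m$ as saying that every small two-sided neighborhood of $x$ carries nontrivial $m$-mass, a point lying in a gap of $m$ is excluded from $E_m$, while the explicit restriction $x \in [l,r] \cap (l_0, r_0)$ removes the trivially constant pieces of $m$ outside $[l, r]$.

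To verify that $E_m$ is nearly closed, I would first observe that $E_m \subset (l_0, r_0) \subset \bR$, since the open interval $(l_0, r_0)$ of $\overline\bR$ never contains $\pm\infty$. For closedness of $E_m \cup \{l, r\}$ in $\overline\bR$, I would take $x_n \in E_m$ with $x_n \to x$ in $\overline\bR$ and argue by cases. If $x \in (l, r)$, then $x \in (l_0, r_0) \cap [l, r]$, so $x$ meets the ambient hypothesis of the definition of $E_m$; and if $x$ failed to belong to $E_m$ there would be an open neighborhood of $x$ on which $m$ is constant, from which, for large $n$, one concludes the same for a neighborhood of $x_n$, contradicting $x_n \in E_m$. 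The cases $x \le l$ and $x \ge r$ either force $x \in \{l, r\}$ or contradict $E_m \subset [l, r]$. Finally, $\inf E_m = l$ and $\sup E_m = r$ follow from the definitions of $l, r$: for any $\varepsilon > 0$ there is some $y \in (l, l + \varepsilon)$ with $m(y) > m(l_0)$, and the right-continuity of $m$ together with the fact that the support of an increasing function's induced measure is nonempty in any interval of strict increase locates a point of $E_m$ within $\varepsilon$ of $l$; the symmetric argument handles $r$.

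For the measure claims, any compact $K \subset E_m$ is contained in a closed subinterval $[a, b]$ with $a, b \in E_m \subset (l_0, r_0)$, where $m$ is finite by the defining properties of $l_0$ and $r_0$, giving $m(K) \le m([a, b]) < \infty$, so $m|_{E_m}$ is Radon. Full support is direct: for $x \in E_m$ and a relative open neighborhood $U$ of $x$, shrinking to an $\varepsilon$-neighborhood of $x$ inside $\bR$ gives $m(U) \ge m((x-\varepsilon, x+\varepsilon)) > 0$, where the positivity comes from the defining condition at $x$, and the complement $(x-\varepsilon, x+\varepsilon) \setminus E_m$ consists only of $m$-null points of local constancy. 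I expect the closure argument in the second paragraph to be the main obstacle, because reconciling the defining condition with the support-theoretic reading requires careful use of the one-sided continuity of $m$ and of the precise manufacture of $l$ and $r$ from $m$; once that is in hand, the remaining assertions are routine.
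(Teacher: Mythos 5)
Your proof is correct and follows essentially the same route as the paper's: you show the complement of $\overline{E}_m$ in $[l,r]$ is open via local constancy of $m$, identify $l$ and $r$ as the endpoints using the definitions of $l_0,r_0$ together with right-continuity, and verify the Radon and full-support claims by routine estimates (the paper dismisses these last points as "obvious" and merely asserts the endpoint approximation, so you are supplying details it omits). Note that both you and the paper tacitly read the defining condition of $E_m$ with "for all $\varepsilon>0$" rather than the "$\exists\,\varepsilon>0$" literally printed in \S\ref{SEC61}, which is clearly the intended (support-theoretic) meaning.
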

\begin{proof}
Let $\overline{E}_m:=E_m\cup \{l,r\}$.  For $x\in [l,r]\setminus \overline E_m$,  $m(x-\varepsilon)=m(x+\varepsilon)$ for some $\varepsilon>0$.  Hence $(x-\varepsilon,x+\varepsilon)\subset [l,r]\setminus \overline{E}_m$ whenever $x-\varepsilon>l$ and $x+\varepsilon<r$.  As a result,  $\overline{E}_m$ is closed in $\overline{\bR}$.   If $l\notin E_m$,  then we must have $l=l_0$ and $l$ can be approximated by points in $E_m$ from the right.  Analogical fact holds for $r$.  Since $E_m\subset \bR$ we can eventually conclude that $E_m\in \mathscr K$ and $E_m$ is ended by $l$ and $r$.  The second assertion for $m$ is obvious.  That completes the proof. 
\end{proof}
\begin{remark}
There may appear various cases for the positions of $r_0$ and $r$:
\begin{itemize}
\item[(1)] $r=r_0=\infty$ whenever $m(\infty)\leq  \infty$ and $m(x)<m(\infty)$ for any $x<\infty$.
\item[(2)] $r<r_0=\infty$ whenever $m(x)=m(\infty)<\infty$ for $x\in [r, \infty)$ and $m(x)<m(r)$ for $x<r$.  In this case $r\in E_m$.  
\item[(3)] $r=r_0<\infty$ whenever $m(x)=\infty$ for $x\in [r,\infty)$ and $m(x)<m(r-)$ for $x<r$.  In this case $r\notin E_m$.  
\item[(4)] $r<r_0<\infty$ whenever $m(x)=\infty$ for $x\in [r_0,\infty)$,  $m(x)=m(r)<\infty$ for $x\in [r, r_0)$ and $m(x)<m(r-)$ for $x<r$.  In this case $r\in E_m$.  
\end{itemize}
The positions of $l_0$ and $l$ can be argued similarly.  
\end{remark}

Let $W=(W_t, \mathscr{F}^W_t, \mathbf{P}_x)$ be a Brownian motion on $\bR$ and $\ell^W(t,x)$ be its local time normalized such that for any bounded Borel measurable function $f$ on $\bR$ and $t\geq 0$, 
\[
	\int_0^t f(W_s)ds=2\int_\bR \ell^W(t,x)f(x)dx.  
\]
Define $S_t:=\int_{\bR} \ell^W(t,x)m(dx)$ for $t\geq 0$ and
\[
	T_t:=\inf\{u>0: S_u>t\},\quad t\geq 0.  
\]
Then $T=(T_t)_{t\geq 0}$ is a strictly increasing (before $\zeta$ defined as below), right continuous family of $\sF_t$-stopping times with $T_0=0$,  $\mathbf{P}_x$-a.s.   Define
\[
	\sF_t:=\sF^W_{T_t},\quad \zeta:=\inf\{t>0: W_{T_t}\notin (l_0,r_0)\},\quad X_t:=W_{T_t}, \; 0\leq t<\zeta.  
\]
Then $\{X_t, \sF_t, (\mathbf{P}_x)_{x\in E_m},\zeta\}$,  called \emph{a quasidiffusion with speed measure $m$},  is a standard process with state space $E_m$ and lifetime $\zeta$; see \cite{K86}.   The definition of standard process is referred to in,  e.g.,  \cite{BG68}.  

\begin{lemma}\label{LM515}
Let $\tau:=\inf\{t>0: W_t\notin (l_0,r_0)\}$.  Then $\zeta=S_{\tau-}$,  $\mathbf{P}_x$-a.s.,  $x\in E_m$.  Furthermore,  the following hold:
\begin{itemize}
\item[(1)] When $\tau=\infty$,  i.e.  $|l_0|=|r_0|=\infty$, 
\[
\begin{aligned}
	 &S_u<\infty \;  (u<\infty),  \quad \zeta=\lim_{u\uparrow \infty}S_u=\infty;  \\
	 &T_t<\infty\;  (t<\infty), \quad T_\infty:=\lim_{t\uparrow \infty}T_t=\infty.  
\end{aligned}\]
\item[(2)] When $\tau<\infty$,  i.e.  $|l_0|<\infty$ or $|r_0|<\infty$,
\[
\begin{aligned}
	& S_u<\infty\; (u<\tau),\quad S_u=\infty\; (u>\tau),\quad \zeta=S_{\tau-}\leq \infty;  \\
	& T_t<\tau\;  (t<\zeta),\quad T_t=\tau\;  (t\geq \zeta\text{ if }\zeta<\infty).  
\end{aligned}
\]
\end{itemize}
\end{lemma}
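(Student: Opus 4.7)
The plan is to reduce every assertion to three elementary facts about Brownian local time together with the defining properties of $l_0, r_0$. First I would record that $u\mapsto S_u$ is non-decreasing and continuous on its region of finiteness: continuity and monotonicity of $u\mapsto \ell^W(u,x)$ for each $x$, combined with monotone convergence in $S_u=\int \ell^W(u,x)\,m(dx)$, yield both properties, which in turn make the inversion to $T_t$ well-behaved.

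Next I would handle the finiteness claims. For $u<\tau$ the path $(W_s)_{s\leq u}$ stays in $(l_0,r_0)$, so $\ell^W(u,\cdot)$ is compactly supported inside $(l_0,r_0)$; since the distribution function $m$ is real-valued there, the induced measure is Radon on $(l_0,r_0)$, and continuity of $\ell^W(u,\cdot)$ in $x$ forces $S_u<\infty$. In the case $\tau=\infty$ this covers every finite $u$, and recurrence of one-dimensional Brownian motion gives $\ell^W(u,x)\uparrow\infty$ pointwise as $u\uparrow\infty$, so monotone convergence yields $S_u\uparrow\infty$; the properties of $T_t$ in part (1) are then immediate from the inversion $T_t=\inf\{u:S_u>t\}$.

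The main obstacle is proving $S_u=\infty$ for $u>\tau$ when $\tau<\infty$. My approach: say $W_\tau=r_0\in\bR$; by the very definition of $r_0$ we have $m(x)=\infty$ for every $x>r_0$, so $m\bigl((r_0-\delta,r_0+\delta)\bigr)=\infty$ for every $\delta>0$. For $u>\tau$, path continuity of $W$ guarantees that $(W_s)_{\tau\leq s\leq u}$ sweeps a full neighborhood of $r_0$, and joint continuity of $\ell^W$ in $(u,x)$ then yields constants $c,\delta>0$ with $\ell^W(u,x)\geq c$ on $(r_0-\delta,r_0+\delta)$, whence $S_u\geq c\cdot m\bigl((r_0-\delta,r_0+\delta)\bigr)=\infty$. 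Once this blow-up is in hand, the identification $\zeta=S_{\tau-}$ and the $T$-formulas in (2) follow by a routine inversion: for $t<S_{\tau-}$, continuity of $S$ on $[0,\tau)$ furnishes some $u<\tau$ with $S_u>t$, so $T_t<\tau$ and $X_t=W_{T_t}\in(l_0,r_0)$, forcing $\zeta\geq S_{\tau-}$; for $t\geq S_{\tau-}$, monotonicity combined with the blow-up gives $T_t=\tau$ and $W_{T_t}=W_\tau\notin(l_0,r_0)$, forcing $\zeta\leq S_{\tau-}$.
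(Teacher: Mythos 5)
Your proof is correct and follows essentially the same route as the paper's: finiteness of $S_u$ for $u<\tau$ from the Radon property of $m$ inside $(l_0,r_0)$, blow-up of $S_u$ for $u>\tau$ from strict positivity of the local time on a neighbourhood of the exit point combined with the infinite $m$-mass there (note that by right continuity of $m$ this infinite mass always lies in $[r_0-\delta,r_0]$, resp.\ $[l_0,l_0+\delta]$, so no interpretation of $m(dx)$ outside $[l_0,r_0]$ is ever needed), and the routine inversion yielding the $T$- and $\zeta$-identities. The only place you deviate is the finiteness step, where your observation that $\ell^W(u,\cdot)$ is supported in a compact subset of $(l_0,r_0)$ for $u<\tau$ is a mild simplification of the paper's argument, which instead cites a corollary of Revuz--Yor and, when the zero extension of $m|_{(l_0,\infty)}$ fails to be Radon, approximates $\tau$ by the exit times $\tau_n$ from $(l_n,\infty)$ with $l_n\downarrow l_0$.
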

\begin{proof}
Consider first that $\tau=\infty$.  Clearly $m$ is a Radon measure on $\bR$.  Hence \cite[Chapter X,  Corollary~2.10]{RY99} yields that $S_u<\infty$ for $u<\infty$.  Since $\ell^W(\infty,x)=\infty$ for any $x\in \bR$,  it follows that $\lim_{u\uparrow \infty}S_u=\infty$.  Particularly $T_t<\infty$ for $t<\infty$ and $T_\infty=\infty$.  Note that $\zeta=\inf\{t>0: T_t=\infty\}$ by its definition.  As a result,  $\zeta=\infty$.  

Now consider $\tau<\infty$.  Without loss of generality we only treat the case $l_0>-\infty,  r_0=\infty$ and $W_\tau=l_0$.  The other cases can be argued similarly.  
The fact $S_u<\infty$ for $u<\tau$ can be obtained by means of \cite[Chapter X, Corollary~2.10]{RY99} when the zero extension of $m|_{(l_0,\infty)}$ to $\bR$ is a Radon measure.  Otherwise set $\tau_n:=\inf\{t>0: W_t\notin (l_n,\infty)\}$,  where $l_n\downarrow l_0$,  and  \cite[Chapter X, Corollary~2.10]{RY99} implies that $S_u<\infty$ for $u<\tau_n$.  Since $\tau_n\uparrow \tau$,  it follows that $S_u<\infty$ for $u<\tau$.  
To prove $S_{\tau+\varepsilon}=\infty$ for $\varepsilon>0$,  note that $l_0>-\infty$ must lead to $m(\{l_0\})=\infty$ or $m((l_0, l_0+\delta))=\infty$ for $\delta>0$.   Since $W_0=x,  W_\tau=l_0$,  it follows that $\ell^W(\tau+\varepsilon, y)>0$ for any $y\in [l_0,  x)$.  Using the continuity of $\ell^W(\tau+\varepsilon, \cdot)$,  we get that
\[
	S_{\tau+\varepsilon}\geq \int_{[l_0,l_0+\delta)} \ell^W(\tau+\varepsilon, y)m(dy)=\infty.  
\]
Particularly $T_t<\tau$ for $t<S_{\tau-}$ and for $t\geq S_{\tau-}$ (if $S_{\tau-}<\infty$),  $T_t=\tau$.  Finally it suffices to prove $\zeta=S_{\tau-}$.  In fact,  $T_t<\tau$ for $t<S_{\tau-}$ implies that $\zeta\geq S_{\tau-}$.  To the contrary we have $T_t=\tau$ if $t:=S_{\tau-}<\infty$.  Hence $W_{T_t}=l_0$, as leads to $\zeta\leq S_{\tau-}$.  Therefore $\zeta=S_{\tau-}$ is eventually concluded.  
That completes the proof. 
\end{proof}

A quasidiffusion satisfies (SF) and (SR) in Definition~\ref{DEF51}; see \cite{G75, K86,  BK87}.  
Attaching the ceremony $\partial$ to $E_m$ by the same way as in \S\ref{SEC51},  one can find in Theorem~\ref{THM71} that a quasidiffusion is,  in fact,  a Hunt process on $E_m$. 
However it admits killing at $l$ or $r$ whenever $-\infty<l_0<l$ or $r<r_0<\infty$; see Remark~\ref{RM73}.  To bar this possibility we will assume in the next section that 
\begin{itemize}
\item[(QK)] If $l_0>-\infty$ (resp.  $r_0<\infty$),  then $l=l_0$  (resp.  $r=r_0$).  
\end{itemize}
In other words,  under the assumption \text{(QK)},  a quasidiffusion is a skip-free Hunt process on $E_m$. 

\subsection{Markov local times}

Define 
\[
\begin{aligned}
&\ell(t,x):=\ell^W(T_t,x),\quad 0\leq t<\zeta, x\in E_m,  \\
&\ell(t,x):=\lim_{s\uparrow \zeta}\ell(s,x),\quad t\geq \zeta,  x\in E_m,
\end{aligned}
\]
which are called the \emph{local times} of $X$ in \cite{BK87}.  To avoid ambiguity with semimartingale local times,  we call them the \emph{Markov local times} of $X$.  Note that $\ell$ is jointly continuous in $t$ and $x$, and $\ell(\cdot, x)$ increases at $t$ if and only if $T_t<T_\infty$ and $X_t=x$ or $X_{t-}=x$.  Moreover,  by virtue of \cite[Chapter V,  Proposition~1.4]{RY99},  it holds a.s.  that for any bounded measurable function $f$ and $0\leq t<\zeta$,
\begin{equation}\label{eq:61}
\int_0^t f(X_s)ds=\int_{E_m}\ell(t,x)f(x)m(dx);
\end{equation}
see also \cite[(2.1)]{BK87}.  In view of Lemma~\ref{LM515},  $T_t\leq \tau$ for any $t\geq 0$.  We set $\ell(t,j):=\ell^W(T_t,j)\equiv 0$ for $j=l_0$ or $r_0$ whenever $l_0$ or $r_0$ is finite.  

\begin{remark}
When $\tau<\infty$ and $\zeta<\infty$,  we have $\ell(\zeta,x)=\ell^W(T_\zeta, x)$ for $x\in E_m$ even if $T_{\zeta-}<T_\zeta$.  In fact,  for $u\in (T_{\zeta-}, T_\zeta)$,  $S_u$ is constant and hence $W_u\notin E_m$.  This yields $\ell^W(T_{\zeta-},x)=\ell^W(T_\zeta, x)$ for $x\in E_m$.  Particularly,  $\ell(\zeta,x)=\ell^W(T_\zeta,x)$.  
\end{remark}

As a time change of Brownian motion,  $X$ is always a semimartingale by letting $X_t:=W_{T_\zeta}$ for $t\geq \zeta$ if $\zeta<\infty$; see,  e.g., \cite{M78}.  In what follows we present its semimartingale decomposition obtained in \cite{BK87}. 
Let $\tilde{E}_m:=E_m\cup \left(\{l_0,r_0\}\cap \bR\right)$,  which is closed in $\bR$.  Then $\bR\setminus \tilde{E}_m$ is the union of mutually disjoint open intervals $I_k:=(a_k,b_k)$,  where $k$ runs through a subset $K$ of $\bN$ such that $0\in K$ (resp.  $1\in K$) if and only if the left (resp.  right) endpoint point of some interval is $-\infty$ (resp.  $\infty$).   Set $I_0:=(-\infty,  b_0)$ (resp.  $I_1:=(a_1,\infty)$) whenever $0\in K$ (resp.  $1\in K$) and $K':=K\setminus \{0,1\}$.  Note that $0\in K$ amounts to either $l_0>-\infty$ or $l>l_0=-\infty$.  In the former case $b_0=l_0$ and in the latter case $b_0=l$.  This holds analogously for $1\in K$.  For convenience we make the convention $b_0=-\infty$ or $a_1=\infty$ if $0\notin K$ or $1\notin K$,  and $\ell(\cdot, \pm\infty)=0$.  
 Let $A_{k,i}$ (resp.  $B_{k,i}$) be the time of the $i$-th jump of $X$ from $a_k$ to $b_k$ (resp. from $b_k$ to $a_k$) if this jump occurs,  otherwise let it be equal to $\infty$.  For $k\in K'$ put
\[
	A_k(t):=\sum_{i=1}^\infty 1_{[A_{k,i},\infty)}(t),\quad B_k(t):=\sum_{i=1}^\infty 1_{[B_{k,i},\infty)}(t),\quad t\geq 0,
\]
i.e.  the number of jumps from $a_k$ to $b_k$ or from $b_k$ to $a_k$ before time $t$.  The following result is due to \cite{BK87}.  

\begin{theorem}\label{THM517}
\begin{itemize}
\item[\rm(1)] For $k\in K'$,  $A_k(t)-\lambda_k \ell(t,a_k)$ and $B_k(t)-\lambda_k \ell(t,b_k)$ are local $\sF_t$-martingales,  where $\lambda_k:=(b_k-a_k)^{-1}$.  
\item[\rm(2)] $X$ is an $\sF_t$-semimartingale and admits the representation $X=M^c+M^d+A$,  where 
\[
	M^c_t:=\int_0^{T_t} 1_{\tilde{E}_m}(W_s)dW_s
\]
is a continuous local martingale,  
\[
	M^d_t:=\sum_{k\in K'}\left([(b_k-a_k)A_k(t)-\ell(t,a_k)]-[(b_k-a_k)B_k(t)-\ell(t,b_k)] \right)
\]
is a purely discontinuous local martingale and $A_t=\ell(t,b_0)-\ell(t,a_1)$ is the adapted continuous process with locally integrable variation.  
\end{itemize}
\end{theorem}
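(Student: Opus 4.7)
The plan is to follow the Bass--Khoshnevisan approach in \cite{BK87}, built on the representation $X_t = W_{T_t}$ as a time-changed Brownian motion. Since $\{T_t\}$ is an increasing family of $\sF^W$-stopping times (by Lemma~\ref{LM515}, with $T_t\leq \tau$), the process $W_{T_t}$ is automatically an $\sF_t$-semimartingale by optional sampling; the task is to identify its canonical decomposition and verify it matches the stated form.

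For Part (1), I would invoke It\^o's excursion theory of $W$ at the level $a_k$. Brownian excursions from $a_k$ form a Poisson point process in the local-time clock $u\mapsto \ell^W(u,a_k)$ with intensity the It\^o excursion measure $n_{a_k}$. Under the normalization $\int_0^t f(W_s)ds = 2\int \ell^W(t,x)f(x)dx$ used in the paper, the scale relation for Brownian motion on $[a_k,b_k]$ gives $n_{a_k}(\text{excursion reaches } b_k) = \lambda_k$, so in the Brownian clock the count of such excursions is a counting process with compensator $\lambda_k\ell^W(\cdot,a_k)$. Since the $i$-th jump of $X$ from $a_k$ to $b_k$ is precisely the $i$-th such excursion (the time change $T$ collapses the interior of the gap because $m$ vanishes there), a time change by $T$ together with $\ell(t,a_k)=\ell^W(T_t,a_k)$ yields that $A_k(t) - \lambda_k\ell(t,a_k)$ is a local $\sF_t$-martingale; the claim for $B_k$ is symmetric.

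For Part (2), I would start from $X_t-X_0 = \int_0^{T_t}dW_s$ and split the integrand along $\bR = \tilde E_m \sqcup \bigsqcup_{k\in K}I_k$. The $\tilde E_m$ piece gives $M^c_t$, a time-changed continuous local martingale and hence an $\sF_t$-continuous local martingale. For each bounded gap $I_k=(a_k,b_k)$, $k\in K'$, the It\^o--Tanaka formula applied to $F_k(x)=(x-a_k)^+-(x-b_k)^+$, whose distributional second derivative equals $\delta_{a_k}-\delta_{b_k}$, combined with the fact that $W_0,W_{T_t}\in\tilde E_m$ forces $F_k(W_{T_t})-F_k(W_0) = (b_k-a_k)(A_k(t)-B_k(t))$ via the net-crossing count, yields
\[
	\int_0^{T_t} 1_{I_k}(W_s)\,dW_s = (b_k-a_k)(A_k(t)-B_k(t)) + \ell(t,b_k) - \ell(t,a_k).
\]
Substituting the identity $(b_k-a_k)(A_k-B_k) = [(b_k-a_k)A_k-\ell(\cdot,a_k)] - [(b_k-a_k)B_k-\ell(\cdot,b_k)] + \ell(\cdot,a_k) - \ell(\cdot,b_k)$ from Part (1), the local-time corrections cancel exactly, and summing over $k\in K'$ produces $M^d_t$. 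For the unbounded gaps $I_0=(-\infty,b_0)$ and $I_1=(a_1,\infty)$, applying Tanaka to $F_0(x)=-(b_0-x)^+$ and $F_1(x)=(x-a_1)^+$ and exploiting $W_0,W_{T_t}\in\tilde E_m\subset[b_0,a_1]$, so that the $F_0$- and $F_1$-boundary terms vanish, yields contributions $\ell(t,b_0)$ and $-\ell(t,a_1)$ respectively, giving the drift $A_t = \ell(t,b_0) - \ell(t,a_1)$.

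The principal technical obstacle is the precise cancellation of local-time corrections: verifying that the Tanaka term $\ell(\cdot,b_k)-\ell(\cdot,a_k)$ produced on each interior gap matches the drift $\ell(\cdot,a_k)-\ell(\cdot,b_k)$ extracted via Part (1) under the specific normalization of $\ell^W$ used here. Care is also needed to control convergence of the series defining $M^d_t$ (via an $L^2$ or ucp argument on compacts, using the orthogonality of the martingales associated with distinct gaps together with the finiteness of total crossings), and to handle the case $\zeta<\infty$ by localization, using $T_t\leq\tau$ from Lemma~\ref{LM515} so that all stochastic integrals remain well-posed along a single Brownian trajectory up to its exit from $(l_0,r_0)$.
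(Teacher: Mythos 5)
The paper does not actually prove this statement: it is quoted with the attribution ``The following result is due to \cite{BK87}'', so there is no in-house argument to compare against. Your sketch is a correct reconstruction of the standard proof, and it is consistent with the machinery the paper itself deploys immediately afterwards in Lemma~\ref{LM65}, where the same ingredients reappear: Tanaka's formula applied to $\varphi(t)=\int_0^t 1_{I_k}(u)\,du$, the net-crossing identity $\varphi(X_t)-\varphi(X_0)=(b_k-a_k)(A_k(t)-B_k(t))$, and the $T$-continuity argument for time-changed stochastic integrals. Your normalization check is right: with $\int_0^t f(W_s)\,ds=2\int_{\bR}\ell^W(t,x)f(x)\,dx$ one has $L^W=2\ell^W$, and the downcrossing representation of Brownian local time gives the upcrossing count of $[a_k,b_k]$ the compensator $\tfrac{1}{2(b_k-a_k)}L^W(\cdot,a_k)=\lambda_k\ell^W(\cdot,a_k)$, which after the time change yields Part (1); your algebraic cancellation of the Tanaka terms $\ell(\cdot,b_k)-\ell(\cdot,a_k)$ against the compensators then produces exactly $M^d$, and the one-sided Tanaka identities on the unbounded gaps give $A_t=\ell(t,b_0)-\ell(t,a_1)$. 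Two small points you should make explicit rather than leave implicit: (i) the continuity of $M^c$ in $t$ is not automatic from time change alone --- it follows because $\langle \int_0^\cdot 1_{\tilde E_m}(W_s)\,dW_s\rangle$ is constant on each interval $(T_{t-},T_t)$ (where $W$ lives in a gap), which is precisely the $T$-continuity criterion of \cite[Chapter V, Propositions 1.3 and 1.5]{RY99} used in the paper's proof of Lemma~\ref{LM65}; and (ii) in the degenerate cases $a_k=l_0$ or $b_k=r_0$ permitted when (QK) fails, the paper's convention $\ell(\cdot,l_0)=\ell(\cdot,r_0)\equiv 0$ makes the corresponding terms vanish identically, so your argument still goes through. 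With those additions the proposal is a complete proof of the cited result.
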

\begin{remark}\label{RM64}
\begin{itemize}
\item[(1)]  We should emphasize that,  as a standard process,  $X_t$ is defined as the ceremony $\partial$ for $t\geq \zeta$.  However viewed as a semimartingale in this theorem,  $X_t$ is by definition equal to $W_{T_\zeta}$ for $t\geq \zeta$.  To see this difference,  consider the special case $-\infty<l_0<l<r<r_0<\infty$.  The lifetime $\zeta$ is the first time of $(W_{T_t})_{t\geq 0}$ leaving $(l_0,r_0)$ and $W_{T_t}=W_{T_\zeta}=l_0$ or $r_0$ for $t\geq \zeta$ corresponding to $W_{T_{\zeta-}}=l$ or $r$.  As a standard process,  $X_{\zeta-}=W_{T_{\zeta-}}=l$ or $r$ while $X$ jumps to the ceremony $\partial$ at lifetime $\zeta$.  To be more exact,  the semimartingale in this theorem should be $(W_{T_t})_{t\geq 0}$ and the quasidiffusion $X$ is obtained by killing it at $\zeta$.  
\item[(2)] Note that $0\notin K$ (resp.  $1\notin K$) amounts to $l_0=l=-\infty$ (resp.  $r=r_0=\infty$).  
Suppose $0\in K$ (resp.  $1\in K$).  Then $\ell(\cdot, b_0)\equiv 0$ (resp.  $\ell(\cdot, a_1)\equiv 0$) if and only if $b_0=l_0>-\infty$ (resp.  $a_1=r_0<\infty$).  Clearly $l_0>-\infty$ (resp. $r_0<\infty$) implies $b_0=l_0$ (resp.  $a_1=r_0$).   Particularly,  $X$ is a local martingale,  if and only if 
\[
\text{neither }-\infty=l_0<l \text{ nor }r<r_0=\infty. 
\]
 For $k\in K'$,  $\ell(\cdot, a_k)\equiv 0$ (resp.  $\ell(\cdot,b_k)\equiv 0$),  if and only if $-\infty<l_0=a_k<b_k=l$ (resp.  $r=a_k<b_k=r_0<\infty$);  hence this is impossible if \text{(QK)} is assumed.  
\item[(3)] Recall that in \eqref{eq:51} we use another index set $\{k: k\geq 1\}$ to decompose the open set $[l,r]\setminus \overline{E}$.  When \text{(QK)} is assumed and $E=E_m$,  the sequence of open intervals in \eqref{eq:51} is identified with $\{I_k, k\in K'\}$.  In addition,  $0\in K$ (resp.  $1\in K$) if and only if $l$ (resp.  $r$) is finite.  Meanwhile $b_0=l$  (resp.  $a_1=r$).  
\end{itemize}
\end{remark}

\subsection{Semimartingale local times}

Now we turn to prepare the It\^o-Tanaka-Meyer formula (see,  e.g.,  \cite[Chapter VIII,  (29)]{DM82} and \cite[Theorem~9.46]{HWY92}) for the semimartingale $X$.  
More precisely,  let $F$ be a convex function on $\bR$,  $F'_-$ be its left derivative and $F''$ be the Radon measure corresponding to the second derivative of $F$ in the sense of distribution.  Then the It\^o-Tanaka-Meyer formula states that $F(X)$ is also a semimartingale and admits the representation:
\begin{equation}\label{eq:516}
\begin{aligned}
	F(X_t)&=F(X_0)+\int_0^t F'_-(X_{s-})dX_s \\
	&+\sum_{0<s\leq t}\left[F(X_s)-F(X_{s-})-F'_-(X_{s-})\Delta X_s \right]+\frac{1}{2}\int_{\bR}L(t,x)F''(dx),
\end{aligned}\end{equation}
where $\Delta X_s:=X_s-X_{s-}$ and for any $x\in \bR$, $L(\cdot, x)$,  called the \emph{semimartingale local time} of $X$ at $x$,  is a continuous adapted increasing process with $L(0,x)=0$.  The expression of semimartingale local times is formulated in the following lemma.  For convenience we make the convention $L(\cdot,  \pm \infty)\equiv 0$.  

\begin{lemma}\label{LM65}
The semimartingale local time of $X$ at $x\in \bR$ is
\[
	L(t,x)=\left\lbrace
	\begin{aligned}
	&2\ell(t,x),\quad x\in E_m\setminus \{a_k: k\in K\};\\
	&0,\qquad\quad\;\; \text{otherwise}. 
	\end{aligned}
	\right.
\]
\end{lemma}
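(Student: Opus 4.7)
The plan is to identify $L(t,x)$ by applying the It\^o-Tanaka-Meyer formula \eqref{eq:516} to the semimartingale $X = W_{T_\cdot}$ (cf.\ Remark~\ref{RM64}(1)) with the convex function $F(y) := (y-x)^+$, whose left derivative is $F'_-(y) = 1_{\{y>x\}}$ and whose distributional second derivative is $F'' = \delta_x$. This yields
\begin{equation*}
(X_t-x)^+ = (X_0-x)^+ + \int_0^t 1_{\{X_{s-}>x\}}\,dX_s + J_t(x) + \tfrac{1}{2}L(t,x),
\end{equation*}
where $J_t(x) := \sum_{0<s\leq t}\bigl[(X_s-x)^+ - (X_{s-}-x)^+ - 1_{\{X_{s-}>x\}}\Delta X_s\bigr]$. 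The classical Tanaka formula for $W$ evaluated at the stopping time $T_t$, together with $X_t = W_{T_t}$ and $X_0 = W_0$, produces after subtraction the pathwise identity
\begin{equation*}
\tfrac{1}{2}L(t,x) - \ell^W(T_t,x) = \int_0^{T_t}\!\!1_{\{W_s>x\}}\,dW_s - \int_0^t 1_{\{X_{s-}>x\}}\,dX_s - J_t(x),
\end{equation*}
so the entire task reduces to evaluating the right-hand side.

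To that end, I would substitute the semimartingale decomposition $X = M^c + M^d + A$ from Theorem~\ref{THM517}. The time-change rule identifies $\int_0^t 1_{\{X_{s-}>x\}}\,dM^c_s$ with $\int_0^{T_t} 1_{\{W_s>x\}} 1_{\tilde{E}_m}(W_s)\,dW_s$, so the difference of Brownian stochastic integrals collapses to $\sum_{k\in K} \int_0^{T_t} 1_{\{W_s>x\}} 1_{I_k}(W_s)\,dW_s$, namely the contribution from excursions of $W$ into the gaps $I_k = (a_k,b_k)$. On each maximal excursion interval $[\sigma,\tau]$ of $W$ into the closed gap, Tanaka applied to the continuous semimartingale $W|_{[\sigma,\tau]}$ gives
\begin{equation*}
\int_\sigma^\tau 1_{\{W_u>x\}}\,dW_u = (W_\tau-x)^+ - (W_\sigma-x)^+ - \bigl[\ell^W(\tau,x) - \ell^W(\sigma,x)\bigr].
\end{equation*}
Summing over excursions, the boundary values $W_\sigma, W_\tau \in \{a_k,b_k\}$ pair with the jump counts $A_k(t), B_k(t)$ from Theorem~\ref{THM517} to reassemble $J_t(x)$ together with $\int_0^t 1_{\{X_{s-}>x\}}(dM^d_s + dA_s)$, while the residual Brownian local times at $x$ collect into $-\sum_k \int_0^{T_t} 1_{I_k}(W_s)\,d\ell^W(s,x)$.

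The main obstacle is verifying the dichotomy produced by this residual term, which I would carry out by a case analysis. When $x$ lies in the interior of $E_m$ or at a right gap-endpoint $b_k$, no Brownian excursion into any $I_k$ visits $x$ and the boundary terms cancel completely, yielding $\tfrac{1}{2}L(t,x) = \ell^W(T_t,x) = \ell(t,x)$. When $x = a_k$ for some $k \in K$, the left-continuous convention $1_{\{X_{s-}>a_k\}} = 0$ at $X_{s-} = a_k$ suppresses the upward-jump contribution in $\int 1_{\{X_{s-}>x\}}\,dX_s$, while the matching term $(b_k - a_k)A_k(t)$ persists in $J_t(a_k)$; a direct bookkeeping of the gap-$k$ excursion sum combined with the drift $-\ell(t,a_k) + \ell(t,b_k)$ in $M^d$ shows the net imbalance equals $\ell(t,a_k)$, forcing $L(t,a_k) = 0$. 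When $x \in I_k$ lies strictly inside a gap, every excursion of $W$ that covers $x$ contributes a local-time term that cancels $\ell^W(T_t,x)$ in full, while $J_t(x)$ and the residual boundary terms balance the $M^d + A$ integrals exactly, yielding $L(t,x) = 0$. Finally, for $x \notin [l_0,r_0]$ all contributions vanish trivially because $W$ never reaches $x$.
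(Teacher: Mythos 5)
Your strategy is sound and, after checking the bookkeeping in representative cases, it does reproduce the dichotomy of the lemma; but it is a genuinely different route from the paper's. The paper never applies the It\^o--Tanaka--Meyer formula to a single level: it always tests against $\varphi(\cdot)=\int_0^\cdot 1_{(c,d]}(u)\,du$, whose distributional second derivative is $\delta_c-\delta_d$, so each application only produces the \emph{difference} $\tfrac12 L(\cdot,c)-\tfrac12 L(\cdot,d)$. It then identifies that difference not by computing the gap-excursion stochastic integrals explicitly, but by recognizing (via Theorem~\ref{THM517} and the $T$-continuity of $\int_0^\cdot 1_{\tilde E_m}(W_s)\,dW_s$) that a certain continuous adapted process is simultaneously a local martingale and of locally bounded variation, hence zero, and finally separates $L(\cdot,c)$ from $L(\cdot,d)$ using the mutual singularity of $dL(\cdot,c)$ and $dL(\cdot,d)$. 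Your single-level computation with $F(y)=(y-x)^+$ avoids the martingale and singularity arguments entirely and replaces them by a direct pathwise cancellation of the three terms $(\mathrm{I})=\sum_k\int_0^{T_t}1_{\{W_s>x\}}1_{I_k}(W_s)\,dW_s$, $(\mathrm{II})=\int_0^t 1_{\{X_{s-}>x\}}(dM^d_s+dA_s)$ and $(\mathrm{III})=J_t(x)$; this is more elementary and treats all interior cases (including the paper's delicate ``gaps accumulating at $x$'' case) uniformly, since for fixed $t$ only finitely many gaps are ever crossed and only finitely many of the $\ell(t,a_j),\ell(t,b_j)$ are nonzero, so all your sums are pathwise finite. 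What the paper's two-level trick buys is that it never has to evaluate $\int_0^{T_t}1_{I_k}(W_s)\,dW_s$ at a level interior to a gap, whereas your method must (it is handled by one more application of Tanaka for $W$ and produces the cancelling $-\ell^W(T_t,x)$).

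Two loose ends you should tie up. First, the identification $\int_0^t 1_{\{X_{s-}>x\}}\,dM^c_s=\int_0^{T_t}1_{\{W_s>x\}}1_{\tilde E_m}(W_s)\,dW_s$ is not automatic: it requires the $T$-continuity of $u\mapsto\int_0^u 1_{\tilde E_m}(W_v)\,dW_v$ (constant on each $(T_{s-},T_s)$ because $W\notin\tilde E_m$ there) together with \cite[Chapter~V, Proposition~1.5]{RY99}, and the replacement of the time-changed integrand $1_{\{X_{(S_u)-}>x\}}$ by $1_{\{W_u>x\}}$, which holds $d\langle M^c\rangle$-a.e. Second, your four cases omit the levels $x=l_0$ and $x=r_0$ when these are finite; there the semimartingale of Theorem~\ref{THM517} is $W_{T_\zeta}$ for $t\geq\zeta$ and may sit at $l_0$ or $r_0$ forever, so the statement $L(\cdot,l_0)\equiv 0$ needs the separate (easy) observation that $X_{t-}=l_0$ forces $t\geq\zeta$ and the process is constant from then on, so that $dL(\cdot,l_0)$ charges nothing --- this is exactly how the paper disposes of these levels.
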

\begin{proof}
Note that $W_{T_{t-}}\in \tilde{E}_m$ and thus \cite[Theorem~9.44]{HWY92} yields that $L(\cdot, x)\equiv 0$ for $x\notin \tilde{E}_m$.  For $x=l_0>-\infty$,  we assert that if $X_{t-}=l_0$ then $t\geq \zeta$ and $X_s=l_0$ for any $s\geq \zeta$.  This fact tells us that,  in view of \cite[Theorem~9.44]{HWY92},  $dL(\cdot, l_0)$ does not charge $(0,\zeta)\cup (\zeta, \infty)$,  as leads to $L(\cdot, l_0)\equiv 0$.  To prove the assertion,  note that $T_{t-}\leq T_t\leq \tau=\inf\{s>0:W_s\notin (l_0,r_0)\}$ by Lemma~\ref{LM515}.  Hence $W_{T_{t-}}=X_{t-}=l_0$ implies $T_{t-}=T_t=\tau<\infty$ and $W_\tau=l_0$.   We have shown in the proof of Lemma~\ref{LM515} that $T_\zeta=\tau$.  Since $T_s$ is strictly increasing before $\zeta$,  it follows that $t\geq \zeta$.  Particularly $X_s=l_0$ for any $s\geq \zeta$.  We eventually obtain $L(\cdot, l_0)\equiv 0$.  Analogously for $x=r_0<\infty$ we have $L(\cdot,  r_0)\equiv 0$.  

Next consider $x=a_k$ for some $k\in K$.  When $k=1\in K$,  it holds that $a_1=r_0<\infty$ or $a_1=r<r_0=\infty$.  In the former case we have already proved that $L(\cdot, a_1)\equiv 0$.  In the latter case $X_t\leq a_1$ and hence $X_{t-}\leq a_1$ for any $t\geq 0$.  Using Tanaka-Meyer formula (see,  e.g.,  \cite[Theorem~9.43]{HWY92}),   we get that $L(\cdot, a_1)\equiv 0$.  Now let $k\in K'$.  Set $\varphi(t):=\int_0^t 1_{I_k}(u)du$, which is the difference of two convex functions.  Applying \eqref{eq:516} to $\varphi$,  we have
\begin{equation}\label{eq:517}
\begin{aligned}
	\varphi&(X_t)-\varphi(X_0)=\int_0^t 1_{(a_k,b_k]}(X_{s-})dX_s \\ &+\sum_{0<s\leq t}\left[\varphi(X_s)-\varphi(X_{s-})-1_{(a_k,b_k]}(X_{s-})\Delta X_s \right] +\frac{1}{2}L(t,a_k)-\frac{1}{2}L(t,b_k).  
\end{aligned}\end{equation}
It is easy to verify that $\varphi(X_t)-\varphi(X_0)=(b_k-a_k)(A_k(t)-B_k(t))$;  see also \cite[page 243]{BK87}.  Since $\ell(\cdot,  b_0)$ increases only at $t$ such that $X_t=b_0$ or $X_{t-}=b_0$,  it follows that
\begin{equation}\label{eq:518}
\int_0^t 1_{(a_k,b_k]}(X_{s-})d\ell(s, b_0)\leq 1_{\{a_k=b_0\}}\int_0^t 1_{\{X_{s-}=b_k, X_s=a_k\}}d\ell(s,b_0).
\end{equation}
Note that $\{0<s\leq t: X_{s-}=b_k, X_s=a_k\}$ contains only finite elements and $\ell(\cdot, b_0)$ is continuous.  Hence the left hand side of \eqref{eq:518} is equal to $0$.  Analogously one can compute that if $b_k=a_1$ then
\[
	\int_0^t 1_{(a_k,b_k]}(X_{s-})d\ell(s, a_1)=\ell(t,a_1);
\]
otherwise it is equal to $0$.  
If $b_k=a_p$ for some $p\in K'$,  the second term on the right side of \eqref{eq:517} is equal to 
\[
	(b_k-a_k)A_k(t)-(b_p-a_p)A_p(t);
\]
otherwise it is equal to $(b_k-a_k)A_k(t)$.  Using these computations and applying Theorem~\ref{THM517},  we can obtain that if $b_k=a_1$ or $a_p$,  then $\frac{1}{2}L(\cdot,a_k)-\frac{1}{2}L(\cdot, b_k)$ is a local martingale;  otherwise $\ell(\cdot, b_k)+\frac{1}{2}L(\cdot,a_k)-\frac{1}{2}L(\cdot, b_k)$ is a local martingale.  Hence either $L(\cdot, a_k)-L(\cdot, b_k)\equiv 0$ or $\ell(\cdot, b_k)+\frac{1}{2}L(\cdot,a_k)-\frac{1}{2}L(\cdot, b_k)\equiv 0$.  Note that $dL(\cdot, x)$ does not charge $\{t: X_{t-}\neq x\}$ for any $x\in \bR$;  see,  e.g.,  \cite[Theorem~9.44]{HWY92}.  Particularly,  the positive measures $dL(\cdot, a_k)$ and $dL(\cdot, b_k)$ on $[0,\infty)$ are singular.  
Therefore one can easily conclude that $L(\cdot, a_k)\equiv 0$.  In addition,  we also get that if $b_k\neq a_1, a_p$,  then $L(\cdot, b_k)=2\ell(\cdot, b_k)$.  

Thirdly consider $x\in E_m$ such that there are $y_0>y>x$ such that $[x,y]\subset [x, y_0)\subset E_m$.  Put $\varphi(t):=\int_0^t 1_{(x,y]}(u)du$.  Applying \eqref{eq:516} to $\varphi$ we have
\begin{equation}\label{eq:519}
	\varphi(X_t)-\varphi(X_0)=\int_0^t 1_{(x,y]}(X_{s-})dX_s +\frac{1}{2}L(t,x)-\frac{1}{2}L(t,y).  
\end{equation}
The first term on the right hand side is clearly a local martingale.  
On the other hand,  applying Tanaka's formula for Brownian motion we get that
\[
	\varphi(W_t)-\varphi(W_0)=\int_0^t 1_{(x,y]}(W_s)dW_s+\ell^W(t,x)-\ell^W(t,y).  
\]
Write $U_t:=\int_0^t1_{(x,y]}(W_s)dW_s$.  
It follows that
\begin{equation}\label{eq:520}
	\varphi(X_t)-\varphi(X_0)=U_{T_t}+\ell(t,x)-\ell(t,y).  
\end{equation}
Note that $W_u\notin E_m$ for $u\in (T_{t-},T_t)$.  The bracket $\langle U\rangle$ and thus also $U$ is constant on $(T_{t-},T_t)$.  This implies the $T$-continuity of $U$ in the sense of \cite[Chapter V,  Definition~1.3]{RY99}.  In view of \cite[Chapter V, Proposition~1.5]{RY99},   we obtain that $U_{T_t}$ is a continuous local martingale.  As a result,  \eqref{eq:519} and \eqref{eq:520} yield that 
\[
	\frac{1}{2}L(t,x)-\frac{1}{2}L(t,y)=\ell(t,x)-\ell(t,y).  
\]
Note that $dL(\cdot, x)$ and $dL(\cdot, y)$ are singular.  So are $d\ell(\cdot, x)$ and $d\ell(\cdot, y)$.  Particularly we must have $L(t,x)=2\ell(t,x)$ and $L(t,y)=2\ell(t,y)$.  

Finally let $x\in E_m$ such that there exists a sequence of intervals in $\{I_k: k\in K'\}$ decreasing to $x$.  Without loss of generality we assume that $x<b_2$ and $(b_2, a_1)\cap E_m\neq \emptyset$.  It may appear that the intervals decreasing to $x$ are all ended by points in $\{a_p: p\in K \}$.  We ignore this case at first and suppose that $b_2\notin \{a_p: p\in K\}$.  Put $\varphi(t):=\int_0^t 1_{(x, b_2]}(u)du$.  Applying \eqref{eq:516} to $\varphi$ we get that
\begin{equation}\label{eq:521}
	\varphi(X_t)-\varphi(X_0)=\int_0^t 1_{(x,b_2]}(X_{s-})dX_s+\frac{1}{2}L(t,x)-\frac{1}{2}L(t,b_2),
\end{equation}
where the first term on the right hand side is a local martingale due to $(b_2,a_1)\cap E_m\neq \emptyset$.  
Mimicking \eqref{eq:520} we also have
\begin{equation}\label{eq:522}
	\varphi(X_t)-\varphi(X_0)=\int_0^{T_t}1_{(x,b_2]}(W_s)dW_s+\ell(t,x)-\ell(t,b_2).  
\end{equation}
Put $\varphi_k(t):=\int_0^t 1_{(a_k,b_k]}(u)du$ for $k\in K$ such that $I_k\subset (x, b_2)$.  Denote by $K''$ the set of all these $k$.  Then 
\begin{equation}\label{eq:523}
\varphi_k(X_t)-\varphi_k(X_0)=(b_k-a_k)(A_k(t)-B_k(t))
\end{equation}
and on account of Tanaka's formula for Brownian motion, 
\begin{equation}\label{eq:524}
	\varphi_k(X_t)-\varphi_k(X_0)=\int_0^{T_t}1_{(a_k,b_k]}(W_s)dW_s + \ell(t,a_k)-\ell(t,b_k).  
\end{equation}
Repeating the argument in the previous step,  one can conclude that
\[
	t\mapsto \int_0^{T_t} 1_{(x,b_2]\setminus \cup_{k\in K''}(a_k,b_k]}(W_s)dW_s
\]
is a continuous local martingale.  Therefore \eqref{eq:521}, \eqref{eq:522}, \eqref{eq:523} and \eqref{eq:524} tell us that
\[
\frac{1}{2}L(t,x)-\frac{1}{2}L(t,b_2)=\ell(t,x)-\ell(t,b_2).  
\]
Since we have obtained $L(t,b_2)=2\ell(t,b_2)$ when treating the case $x=a_k$ for $k\in K$,  it follows that $L(t,x)=2\ell(t,x)$.  At last we consider the ignored case and meanwhile $b_2=a_k$ for some $k\in K'$.  Currently there is an additional term $-(b_k-a_k)A_k(t)$ in \eqref{eq:521}.  The above argument leads to $\frac{1}{2}L(t,x)-\frac{1}{2}L(t,b_2)=\ell(t,x)$,  while $L(t,b_2)=L(t,a_k)\equiv 0$ as proved in the second step.  Therefore we still have $L(t,x)=2\ell(t,x)$.  That completes the proof. 
\end{proof}

\section{Equivalence between image regularized Markov processes,  skip-free Hunt processes and quasidiffusions}\label{SEC7}

Three stochastic models in one dimension have been argued in previous sections:
\begin{itemize}
\item[(i)]  The first one is the image regularized Markov process $\widehat X$ associated with $(I,\bs,\fm)$ obtained in Theorem~\ref{THM6}.  Its speed measure is $\widehat{\fm}$.  
\item[(ii)] The second is a skip-free Hunt process,  which can be transformed into another skip-free Hunt process on its natural scale by virtue of the scale function obtained in Theorem~\ref{THM58}.  Its speed measure $\mu$ is defined for various cases in Definition~\ref{DEF514}. 
\item[(iii)]  The third is a quasidiffusion with speed measure $m$,  as reviewed in \S\ref{SEC6}. 
\end{itemize}
 In this section,  we will prove that when are on natural scales and have identifying speed measures,  they are all \emph{equivalent} in the sense that their transition functions coincide.   

\subsection{Equivalence of Markov processes on natural scales}

Note that the state spaces of image regularized Markov process $\widehat{X}$,  skip-free Hunt process on its natural scale and quasidiffusion are all nearly closed subsets of $\bR$; see Remarks~\ref{RM34} and \ref{RM48} and Lemma~\ref{LM51}.  Let $E\in \mathscr K$ be a nearly closed subset of $\bR$ ended by $l$ and $r$,  and we add a ceremony $\partial$  attached to $E$ by the same way as in \S\ref{SEC51}.  
The main result is as follows.

\begin{theorem}\label{THM71}
Let $E\in \mathscr K$ and $\mu$ be a fully supported positive Radon measure on $E$.  The following Markov processes are equivalent:
\begin{itemize}
\item[\rm(i)] The image regularized Markov process $\widehat{X}$ associated with $(I,\bs,\fm)$ whose image regularization $(\widehat{I},  \widehat{\bs}, \widehat{\fm})$ satisfies $\widehat{I}=E$ and $\widehat{\fm}=\mu$;
\item[\rm(ii)] A skip-free Hunt process on $E$ on its natural scale whose speed measure is equal to $\mu$;
\item[\rm(iii)] A quasidiffusion with speed measure $m$ such that $E_m=E$,  $m|_{E}=2\mu$ and \text{(QK)} holds for $m$.
\end{itemize}
\end{theorem}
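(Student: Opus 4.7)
The plan is to establish the equivalences by pinning all three constructions to a single canonical object: a Brownian motion time-changed by $\mu$. Theorem~\ref{THM19} already identifies $\widehat{X}$ as the time change of the reflecting/absorbing Brownian motion on $\widehat{J}=\langle \widehat{l},\widehat{r}\rangle$ by the PCAF of $\widehat{\fm}$; the quasidiffusion in (iii) is by construction the time change of Brownian motion on $\bR$ via $S_t=\int_{\bR}\ell^W(t,x)m(dx)$. The skip-free Hunt process of (ii) will be handled by showing any such process must coincide with this canonical construction, relying on the uniqueness implicit in the data $(\bs(x)=x,\mu)$ together with the Markov property.

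First I would prove (i)$\Leftrightarrow$(iii). Given the hypotheses $\widehat{I}=E$, $\widehat{\fm}=\mu$, $m|_E=2\mu$ and (QK), extend $\mu$ to a function $m$ on $\bR$ by assigning zero mass to the gaps of $E$ in $(l,r)$ and putting $m\equiv\pm\infty$ beyond $l_0,r_0$; (QK) guarantees $\{l_0,r_0\}\cap \bR\subset \overline{E}$, so the resulting Brownian-motion exit time $\tau=\inf\{t:W_t\notin(l_0,r_0)\}$ realises the same boundary regime (reflecting at $l\in E$, absorbing at $l\notin E$, etc.) as $\widehat{B}$ on $\widehat{J}$. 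The factor $2$ between $m|_E$ and $\widehat{\fm}$ reconciles the two normalisations of local time: the semimartingale convention $\int_0^t f(W_s)ds=2\int \ell^W(t,x)f(x)dx$ used in the quasidiffusion definition, versus the Revuz correspondence tied to the Dirichlet form $(\tfrac12\mathbf{D},H^1_0(\widehat{J}))$. Under this identification $S_t=2\widehat{A}_{t}$ (modulo the finite-endpoint matching), so the time-changed processes $W_{T_t}$ and $\widehat{B}_{\widehat{\tau}_t}$ have the same transition function.

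Next I would show (iii)$\Rightarrow$(ii) and then uniqueness (ii)$\Rightarrow$(iii). For the forward direction, the quasidiffusion inherits (SF) and (SR) from the cited literature; (SK) is exactly the translation of (QK) through the identification above, and the Hunt property follows because time-changing a Hunt process by a PCAF preserves quasi-left-continuity. Its scale function is $x$ since hitting distributions are inherited from Brownian motion through the (strictly increasing) time change, and its speed measure, computed from $\mathbf{E}_x[T_a\wedge T_b]=\int_{[a,b]}G^{(a,b)}(x,y)m(dy)$ and the definition in \S\ref{SEC53}, equals $\tfrac12 m|_E=\mu$. For the converse, let $Y$ be any process satisfying (ii). I would show $Y$ is $\mu$-symmetric and its Dirichlet form on $L^2(E,\mu)$ coincides with $(\widehat{\sE},\widehat{\sF})$, forcing equivalence with $\widehat{X}$. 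The symmetry follows from the scale relation \eqref{eq:55} together with the $h_{a,b}''=-2\mu$ identity, which pins down the Green function; the form identification then uses Theorem~\ref{THM6}~(3).

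The main obstacle will be the last step: showing that a skip-free Hunt process is uniquely determined by its natural scale and speed measure. Unlike the continuous-diffusion case where Feller's generator theory directly applies, here one must juggle jump conditions at isolated points of $E$ (handled by (SF) and Corollary~\ref{COR55}) simultaneously with continuous motion on accumulation points of $E$, and simultaneously track the four boundary cases $l,r\in E$ or $l,r\notin E$. Running the uniqueness through the Dirichlet form appears cleaner than through the generator, because the form \eqref{eq:321} already encodes both the continuous piece ($\int \widehat{f}'^2 d\widehat{x}$) and the discrete jump piece ($\sum(\widehat{f}(\widehat{a}_k)-\widehat{f}(\widehat{b}_k))^2/|\widehat{b}_k-\widehat{a}_k|$) in a uniform way; verifying $\mu$-symmetry of $Y$ for arbitrary skip-free $Y$ is where the bulk of the work will go.
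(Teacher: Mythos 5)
Your outline of (i)$\Leftrightarrow$(iii) matches the paper: both processes are realized as time changes of the absorbing Brownian motion on $V=(l^0,r^0)$ by the PCAF with Revuz measure $\mu$, with the factor $2$ absorbed by the local-time normalization, and the paper carries this out via Lemma~\ref{LM515} and the trace-form identification. For (iii)$\Rightarrow$(ii) you propose reading the speed measure off a Green-function formula $\mathbf{E}_x[T_a\wedge T_b]=\int G^{(a,b)}(x,y)\,m(dy)$; the paper instead applies the It\^o--Tanaka--Meyer formula to the concave function $h_{a,b}$ and uses the semimartingale decomposition of Theorem~\ref{THM517} together with Lemma~\ref{LM65} to show $\int_E\ell(t\wedge T,x)h''(dx)+t\wedge T$ is a vanishing local martingale, then concludes by uniqueness of the Revuz correspondence. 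Your route is viable if the Green-function identity for quasidiffusions is available with the correct normalization, but it is not a free step.

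The genuine gap is in (ii)$\Rightarrow$(i). You plan to show that an arbitrary skip-free Hunt process $Y$ on its natural scale with speed measure $\mu$ is $\mu$-symmetric with Dirichlet form $(\widehat\sE,\widehat\sF)$, and you assert that symmetry ``follows from the scale relation \eqref{eq:55} together with the $h_{a,b}''=-2\mu$ identity, which pins down the Green function.'' This does not go through as stated: the pair $(\bs,\mu)$ determines hitting probabilities of two-point sets and expected exit times, but symmetry is a property of the full transition semigroup, and you cannot speak of ``the Dirichlet form of $Y$'' before symmetry is established --- so the argument as sketched is circular at the decisive point. You correctly identify this as where the bulk of the work lies, but you supply no mechanism to do it. The paper's mechanism is the Blumenthal--Getoor--McKean theorem: it computes the hitting distributions $P_K(x,\cdot)$ of $Y$ for all compact $K$ directly from Theorem~\ref{THM58} and Lemma~\ref{LM54}, checks they coincide with those of the canonical process $\widehat X$, and concludes that $Y$ is a time change of $\widehat X$ by a strictly increasing CAF; identifying the Revuz measure of that CAF with $\mu$ (by rerunning the speed-measure computation) then forces $Y\sim\widehat X$. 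Without this device, or an equivalent one converting ``same hitting distributions'' into ``time change of one another,'' your closing step does not close. A secondary, smaller issue: in (iii)$\Rightarrow$(ii) the Hunt property of the time-changed process is not automatic from preservation of quasi-left-continuity; one needs the quasi-support of $\mu$ to equal its topological support so that the trace Dirichlet form is regular on $E$, which is the content of the citation to \cite[Corollary~5.2.10]{CF12} in the paper.
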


The reminder of this subsection is devoted to proving this theorem.  Since involved and long,  the proof will be divided into several parts.  

\subsubsection{Quasidiffusion and time-changed Brownian motion}\label{SEC711}

Define an open interval
\[
V:=(l^0, r^0),
\]
where $l^0:=l$ if $l\notin E$ and $l^0:=-\infty$ otherwise,  $r^0:=r$ if $r\notin E$ and $r^0:=\infty$ otherwise. 
Let $\widehat{W}$ be the absorbing Brownian motion on $V$ associated with the Dirichlet form $(\frac{1}{2}\mathbf{D}, H^1_0(V))$ where $H^1_0(V)$ is the closure of the family of smooth functions with compact support on $V$ in the Sobolev space of order $1$ over $V$ and for $f,g\in H^1_0(V)$,  $\frac{1}{2}\mathbf{D}(f,g)=\frac{1}{2}\int_V f'(x)g'(x)dx$; see,  e.g.,  \cite[Example~3.5.7]{CF12}.  We first prove that the Dirichlet form $(\widehat{\sE},\widehat{\sF})$ of $\widehat{X}$ is the trace Dirichlet form of $(\frac{1}{2}\mathbf{D}, H^1_0(V))$ on $L^2(E,\mu)$.  In other words,  $\widehat{X}$ is the time-changed process of $\widehat{W}$ by the PCAF corresponding to $\mu$ (cf.  Theorem~\ref{THM19}).

\begin{lemma}
Let $\widehat{X}$ be in (i).  Then its Dirichlet form $(\widehat{\sE},\widehat{\sF})$ is the trace Dirichlet form of $(\frac{1}{2}\mathbf{D}, H^1_0(V))$ on $L^2(E,\mu)$.  
\end{lemma}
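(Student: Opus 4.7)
The plan is to apply the general trace theory of Dirichlet forms from \cite[\S5.2]{CF12} to the absorbing Brownian motion on $V$, closely paralleling the strategy of Theorem~\ref{THM19}. I view $\mu$ on $E$ as its zero extension to a Borel measure on $V$; since $E$ is closed in $V$ (because $\overline{E}=E\cup\{l,r\}$ is closed in $\overline{\bR}$ and the only points of $\overline{E}\setminus E$ are endpoints of $V$), and $\mu$ is Radon on $V$, it is a smooth measure for $(\frac{1}{2}\mathbf{D},H^1_0(V))$ whose quasi-support coincides with $E$. Hence the trace Dirichlet form $(\check{\sE},\check{\sF})$ on $L^2(E,\mu)$ is well defined.

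First, I would identify the extended Dirichlet space
\[
H^1_e(V)=\left\{h\text{ absolutely continuous on }V:\ \int_V (h')^2\,dx<\infty,\ h(j)=0\text{ for }j\in\{l^0,r^0\}\cap\bR\right\},
\]
and then show $\check{\sF}=\widehat{\sF}$. Concretely, by Lemma~\ref{LM39} and the representation \eqref{eq:321}, an element $\widehat{f}\in\widehat{\sF}$ is exactly the restriction to $\widehat I=E$ of some $\widehat{h}\in\dot H^1_e((\widehat{l},\widehat{r}))$ satisfying the vanishing condition at any finite non-reflecting endpoint. Such a $\widehat h$ can be extended to an element of $H^1_e(V)$ by constant extension past any finite reflecting endpoint, and conversely any $h\in H^1_e(V)$ restricts to the interval $(\widehat l,\widehat r)$ to give a member of $\dot H^1_e((\widehat l,\widehat r))$; the vanishing condition at a finite point of $\{l^0,r^0\}\setminus E$ is precisely the condition $\widehat f(\widehat j)=0$ for $\widehat j\in\bR\setminus\widehat I$ appearing in \eqref{eq:321}. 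This yields $\check{\sF}=\widehat{\sF}$.

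Second, for $\widehat{\varphi}\in\widehat{\sF}$ I would evaluate $\check{\sE}(\widehat{\varphi},\widehat{\varphi})=\frac{1}{2}\int_V(\widehat{h}^*)'(x)^2\,dx$, where $\widehat h^*\in H^1_e(V)$ is the energy-minimizing extension with $\widehat h^*|_E=\widehat\varphi$, i.e.\ the one harmonic off $E$ relative to absorbing Brownian motion on $V$. Decompose $V\setminus E$ into the internal gaps $(\widehat a_k,\widehat b_k)$ from \eqref{eq:15} and the at most two external components $(-\infty,l)$ (present iff $l\in E$ is finite) and $(r,\infty)$ (present iff $r\in E$ is finite). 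On each internal gap the minimizer is affine, contributing $(\widehat\varphi(\widehat a_k)-\widehat\varphi(\widehat b_k))^2/|\widehat b_k-\widehat a_k|$; on each external component, since $H^1_e(V)$ imposes no boundary condition at $\pm\infty$, the minimum is attained at the constant extension and contributes zero. Adding the intrinsic piece $\int_E(\widehat\varphi')^2\,d\widehat x$ reproduces the expression \eqref{eq:321}, so $\check{\sE}=\widehat{\sE}$ on $\widehat{\sF}$.

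The main obstacle is precisely the handling of those external components when $V$ strictly extends past a reflecting endpoint of $E$: one must justify that the optimal extension to the half-line is constant, so that the absorbing Brownian motion on this enlarged $V$ yields the same trace Dirichlet form on $L^2(E,\mu)$ as the reflecting Brownian motion on $\widehat{J}$ would via Theorem~\ref{THM19}. This reduces to the elementary variational fact that, for a function in $H^1$ on a half-line with a prescribed value at the finite endpoint and no boundary constraint at infinity, the Dirichlet energy is minimized uniquely by the constant extension, with minimum equal to zero.
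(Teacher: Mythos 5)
Your proposal is correct and follows essentially the same route as the paper: both identify the domain of the trace form via Lemma~\ref{LM39} and the representation \eqref{eq:321}, with your constant-extension argument past a reflecting endpoint being exactly the paper's observation that $H^1_e(V)|_{\widehat{I}}=H^1_e(\widehat{J})|_{\widehat{I}}$, and both identify the energy with \eqref{eq:321}. The only difference is that you carry out the harmonic-extension computation (affine on the internal gaps $(\widehat{a}_k,\widehat{b}_k)$, constant with zero energy on the external half-lines) explicitly, where the paper appeals to Theorem~\ref{THM19} and the computation of \cite[Theorem~2.1]{LY17}.
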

\begin{proof}
Adopt the same notations as in the proof of Theorem~\ref{THM19}.
Further let $(\sA,\sG)$ be the trace Dirichlet form of $(\frac{1}{2}\mathbf{D}, H^1_0(V))$ on $L^2(E,\mu)$.  Denote by $H^1_e(V)$ the extended Dirichlet space of $(\frac{1}{2}\mathbf{D}, H_0^1(V))$; see \eqref{eq:320-2}.  Note that $H^1_e(V)|_{\widehat{I}}=H^1_e(\widehat{J})|_{\widehat{I}}$.  It follows from \eqref{eq:17} that $\sG=H^1_0(V)|_{\widehat{I}}\cap L^2(\widehat{I},\widehat{\fm})=\widehat{\sF}$.  Repeating the argument in the proof of \cite[Theorem~2.1]{LY17},  we can still obtain that $\sA(\widehat\varphi,\widehat\varphi)=\widehat{\sE}(\widehat{\varphi},\widehat{\varphi})$ for $\widehat{\varphi}\in \sG$.  That completes the proof.  
\end{proof}

We turn to the quasidiffusion $X$ in (iii) and will prove that $X$ is actually a time-changed process of $\widehat{W}$ by the PCAF corresponding to $\mu$,  so that the equivalence between $X$ and $\widehat{X}$ is obtained.  

\begin{proof}[Proof of the equivalence between (i) and (iii)]
Adopt the same notations as in \S\ref{SEC6}.  Then $\widehat{W}$ is identified with the killed process of Brownian motion $W$ at $\tau_V:=\{t>0: W_t\notin V\}$.  The assumption \text{(QK)} implies that $V=(l_0,r_0)$ and thus $\tau_V=\tau$ where $\tau$ is defined in Lemma~\ref{LM515}. 

Note that for any $x\in V$,  $\ell^{\widehat W}(t,x):=\ell^W(t\wedge \tau, x), t\geq 0$,  is a PCAF of $\widehat{W}$ whose Revuz measure is $\frac{1}{2}\delta_x$; see, e.g.,  \cite[Chapter X,  Proposition~2.4]{RY99} and \cite[Proposition~4.1.10]{CF12}.  Set
\[
	\widehat{S}_t:=\int_{E_m} \ell^{\widehat{W}}(t,x)m(dx),\quad t\geq 0.  
\]	
Clearly $\widehat{S}$ is a PCAF of $\widehat{W}$ corresponding to the Revuz measure $\mu$.  Define
\[
	\widehat{T}_t:=\left\lbrace
	\begin{aligned}
		&\inf\{u: \widehat{S}_u>t\},\quad \text{if }t<\widehat{S}_{\tau-},\\
		&\infty,\qquad\qquad\qquad\;\;\, \text{if }t\geq \widehat{S}_{\tau-}. 
	\end{aligned}
	\right.
\]
We let 
\[
	\check{X}_t:=\widehat{W}_{\widehat{T}_t},\; t\geq 0,\quad \check{\zeta}:=\widehat{S}_{\tau-}.  
\]
Then $\check{X}$ is the time-changed process of $\widehat{W}$ by the PCAF $\widehat{S}$ and $\check{\zeta}$ is its lifetime; see,  e.g.,  \cite[Theorem~A.3.9]{CF12}.  

We need to show the identification of $X$ and $\check{X}$.  In fact,  for $u<\tau$,  we have $\widehat{S}_u=S_u$.  It follows from Lemma~\ref{LM515} that $\zeta=S_{\tau-}=\widehat{S}_{\tau-}=\check{\zeta}$.  For $t<\zeta=\check{\zeta}$,  since $T_t=\inf\{u: S_u>t\}<\tau$ as obtained in Lemma~\ref{LM515},  it follows that
\[
	\widehat{T}_t=\inf\{u: \widehat{S}_u>t\}=\inf\{u<\tau: S_u>t\}=T_t.  
\]
As a result $\check{X}_t=W_{\widehat{T}_t\wedge \tau}=W_{T_t\wedge \tau}=W_{T_t}=X_t$ for $t<\zeta=\check{\zeta}$.   Therefore $X$ and $\check{X}$ are identified.  That completes the proof. 
\end{proof}
\begin{remark}\label{RM73}
When \text{(QK)} is not assumed,  the argument in this proof still holds by replacing $\widehat{W}$ with the absorbing Brownian motion on $(l_0,r_0)$,  where $l_0$ and $r_0$ are defined in \eqref{eq:51-2}.  In other words,  $X$ is the time-changed process of absorbing Brownian motion on $(l_0,r_0)$ by the PCAF corresponding to $\mu$.  In the representation of Dirichlet form associated with $X$,  there appears additional killing term
\[
	\frac{\widehat{f}(l)^2}{2|l_0-l|} \quad \left(\text{resp. }\frac{\widehat{f}(r)^2}{2|r_0-r|} \right)
\]
whenever $-\infty<l_0<l$ (resp.  $r<r_0<\infty$),  
except for $\widehat{\sE}(\widehat{f},\widehat{f})$ expressed as \eqref{eq:321}.  That means $X$ admits killing at $l$ or $r$ if $-\infty<l_0<l$ or $r<r_0<\infty$,  and the killing ratio is $1/(2|l-l_0|)$ or $1/2(|r-r_0|)$.  When $l_0\downarrow -\infty$ or $r_0\uparrow \infty$,  the killing disappears and when $l_0\uparrow l$ or $r\downarrow r_0$,  the killing occurs so frequently that $l$ or $r$ becomes an absorbing endpoint.  
\end{remark}

\subsubsection{Quasidiffusion as a skip-free Hunt process}\label{SEC712}

As noted in \S\ref{SEC6},  a quasidiffusion $X$ is a standard process on $E$ satisfying (SF).  Due to the argument in \S\ref{SEC711},  it is the time-changed process of $\widehat{W}$ by the PCAF corresponding to $\mu$.  Note that the quasi support of $\mu$ with respect to $\widehat{W}$ coincides with its topological support $\text{supp}[\mu]=E$ in $V$.  In view of \cite[Corollary~5.2.10]{CF12},  the associated Dirichlet form of $X$ on $L^2(E,\mu)$ is regular.  Particularly,  $X$ is actually a Hunt process on $E$.  In addition,  applying \cite[Theorems~5.2.8~(2) and Theorem~3.5.6~(1)]{CF12},  we can obtain (SR) for $X$.   On account of \cite[Theorem~3.1.10]{CF12},  a semipolar set for $X$ must be empty.  Thus Hunt's hypothesis holds for $X$.  Since $\widehat{X}$ in (i) admits no killing inside as formulated in Theorem~\ref{THM6} and $X$ is equivalent to $\widehat{X}$,  it follows that (SK) holds true for $X$.  Therefore we can eventually conclude that $X$ is a skip-free Hunt process on $E$.  

The fact that $X$,  as a skip-free Hunt process,  is on its natural scale can be seen by virtue of \cite[(6,7)]{K86}.  In what follows we prove that the speed measure of $X$, as a skip-free Hunt process, is equal to $\mu$.  


\begin{proof}
Adopt the same notations as in \S\ref{SEC53}. 

Let us first treat the case $l,r\notin E$.  Take $a,b\in E$ with $a<b$.  Set $T:=T_a\wedge T_b$ and let $h:=h_{a,b}$ defined as in \S\ref{SEC531}. 
We assert that for any $x\in E$ with $a<x<b$,  
\begin{equation}\label{eq:72}
M_t:=h(X_{t\wedge T})+t\wedge T=\mathbf{E}_x(T|\sF_t)
\end{equation} 
is a $\mathbf{P}_x$-martingale.  In fact,  $T<\infty$,  $\mathbf{P}_x$-a.s.  and we have
\[
M_t=\mathbf{E}_x \left(T\circ \theta_{t\wedge T}|\sF_{t\wedge T} \right)+t\wedge T=\mathbf{E}_x (T|\sF_{t\wedge T})=\mathbf{E}_x(T|\sF_t).  
\]	
Hence \eqref{eq:72} holds.  Note that $h$ is concave as proved in Lemma~\ref{LM511}.  Applying It\^o-Tanaka-Meyer formula \eqref{eq:516} to $h$,  we get that
\begin{equation}\label{eq:71}
\begin{aligned}
h&(X_{t\wedge T})-h(X_0)=\int_0^{t\wedge T}h'_-(X_{s-})dX_s  \\
&+\sum_{0<s\leq t\wedge T} \left[h(X_s)-h(X_{s-})-h'_-(X_{s-})\Delta X_s\right]+\frac{1}{2}\int_\bR L(t\wedge T,  x)h''(dx).  
\end{aligned}
\end{equation}
In view of Remark~\ref{RM64}~(2),  the first term on the right hand side is a local martingale.  Note that $\Delta X_s\neq 0$ if and only if $X_s=a_k, X_{s-}=b_k$ or $X_s=b_k,  X_{s-}=a_k$ for some $k\in K'$,  where $K'$ is given before Theorem~\ref{THM517}.  Using the expression of $h$,  one can compute that the second term on the right hand side of \eqref{eq:71} is equal to 
\[
\sum_{k\in K'}(h'_+(a_k)-h'_-(a_k))(b_k-a_k) A_k(t\wedge T).  
\]
By virtue of Theorem~\ref{THM517} and Lemma~\ref{LM65},  it follows from \eqref{eq:72} and \eqref{eq:71} that
\begin{equation}\label{eq:73}
	t\mapsto \int_{E} \ell(t\wedge T,x)h''(dx)+t\wedge T
\end{equation}
is a local martingale.   On account of $T_t<\tau$ for $t\leq T<\zeta$,  we have that $\ell(t\wedge T, x)\leq \ell^W(T_t, x)<\infty$ for $x\in (a,b)$ and $\ell(t\wedge T, x)=0$ for $x\notin (a,b)$.  Note that $-h''((a,b))<\infty$.  Hence \cite[Corollary~2.10]{RY99} yields that \eqref{eq:73} is also a continuous process locally of bounded variation and must be equal to $0$ for all $t\geq 0$.  Letting $a\downarrow l$ and $b\uparrow r$ we get that
\begin{equation}\label{eq:74}
	-2\int_E \ell(t,x)\tilde{\mu}(dx)+t=0,\quad 0\leq t<\zeta,
\end{equation}
where $\tilde{\mu}$ defined as \eqref{eq:510} is the speed measure of $X$ as a skip-free Hunt process.  
Further let $t:=S_u<\zeta$ for $u<\tau$.  Since $W_v\notin E$ for $v\in (u, T_{t})$ if $u<T_t$,  it follows that $\ell(t,x)=\ell^W(T_t,x)=\ell^W(u,x)$.  Thus \eqref{eq:74} reads as
\begin{equation}\label{eq:75}
	2\int_E \ell^W(u\wedge \tau,x)\tilde{\mu}(dx)=S_u=2\int_E\ell ^W(u\wedge \tau ,x)\mu(dx),\quad u\geq 0,
\end{equation}
where the second identity is due to \eqref{eq:61}.  Note that the first  (resp.  second) integration in \eqref{eq:75} is a PCAF of $\widehat{W}$ whose Revuz measure is $\tilde{\mu}$ (resp.  $\mu$).  Therefore the uniqueness of Revuz correspondence leads to $\mu=\tilde{\mu}$.  

Next consider the case $l\in E,  r\notin E$.  Take $l< b\in E$ and set $T:=T_b$.  Let $h:=h_b$ defined as in \S\ref{SEC532}.  Then \eqref{eq:72} still holds for $x\in E$ with $x<b$ and particularly,  $M_t$ is a $\mathbf{P}_x$-martingale.   Applying the It\^o-Tanaka-Meyer formula to this $h$ we still get \eqref{eq:71}.  But now the first term on the right hand side of \eqref{eq:71} is the sum of a local martingale and
\[
	\int_0^{t\wedge T}h'_-(X_{s-})d\ell(s,l).  
\]
Note that $\ell(\cdot, l)$ increases only at $X_{s}=l$ or $X_{s-}=l$.  If $X_{s}=l$ but $X_{s-}\neq l$,  then $l=a_k<b_k=X_{s-}$ for some $k\in K'$ due to (SF).  The number of these times is finite before $t\wedge T$.  Since $\ell(\cdot, l)$ is continuous,  it follows that
\[
	\int_0^{t\wedge T}h'_-(X_{s-})d\ell(s,l)=h'_-(l)\ell(t\wedge T,l)=0.
\]
Repeating the argument for the first case,  we can still obtain \eqref{eq:75} for $\tilde{\mu}$ defined as \eqref{eq:513}.  Eventually $\tilde{\mu}=\mu$ can be concluded.  

Another two cases $l\notin E, r\in E$ and $l,r\in E$ can be argued analogously.  The proof is completed.  
\end{proof}

\subsubsection{Skip-free Hunt process and time-changed Brownian motion}

Finally let $X$ be a skip-free Hunt process on $E$ on its natural scale whose speed measure is equal to $\mu$.  We are to prove that $X$ is equivalent to the time-changed Brownian motion $\widehat{X}$ by the PCAF corresponding to $\mu$.  

\begin{proof}
As proved in \S\ref{SEC711} and \S\ref{SEC712},  $\widehat{X}$ is also a skip-free Hunt process on $E$ on its natural scale whose lifetime is denoted by $\widehat{\zeta}$.  Set $\widehat{T}_x:=\inf\{t>0:\widehat{X}_t=x\}$ for any $x\in E$.  

For any non-empty compact set $K\subset E\cup \{\partial\}$,  define the hitting distributions for $X$ and $\widehat{X}$ as follows: For $A\subset K$, 
\[
	P_K(x,A):=\mathbf{P}_x(X_{T_K}\in A, T_K<\zeta),\quad \widehat{P}_K(x,A):=\mathbf{P}_x(\widehat{X}_{\widehat{T}_K}\in A,  \widehat{T}_K<\widehat{\zeta}),
\]
where $T_K:=\inf\{t>0:X_t\in K\}$ and $\widehat{T}_K:=\inf\{t>0:\widehat{X}_t\in K\}$.  We assert that 
\begin{equation}\label{eq:77}
	P_K(x,\cdot)=\widehat{P}_K(x,\cdot),\quad \forall x\in E.
\end{equation}
Only the case $l\in E,  r\notin E$ will be treated and the others can be argued analogously.  Note that $\partial$ is identified with $r$,  and in view of Lemma~\ref{LM54}~(4), 
\begin{equation*}
\zeta=T_r:=\lim_{b\uparrow r}T_b,\quad \widehat{\zeta}=\widehat{T}_r:=\lim_{b\uparrow r}\widehat{T}_b.  
\end{equation*}
 When $x\in K$,  $T_K\leq T_x=0$ and $\widehat{T}_K\leq \widehat{T}_x=0$.  Hence $P_K(x,\cdot)=\widehat{P}_K(x,\cdot)=\delta_x$.  When $x\notin K$,  set $K_-:=\{y\in K: y<x\}$ and $K_+:=\{y\in K: y>x\}$.  We verify \eqref{eq:77} for different cases as follows:
\begin{itemize}
\item[(1)] $K_-\neq \emptyset,  K_+=\emptyset$: Set $a:=\sup\{y: y\in K_-\}$.  On account of Lemma~\ref{LM54}~(2),  $T_K=T_a$ and $\widehat{T}_K=\widehat{T}_a$.  Then both $P_K(x,\cdot)$ and $\widehat{P}_K(x,\cdot)$ are concentrated on $\{a\}$.  It follows from Theorem~\ref{THM58} that
\[
P_K(x,\{a\})=\mathbf{P}_x(T_a<\zeta)=\lim_{b\uparrow r}\mathbf{P}_x(T_a<T_b)=\lim_{b\uparrow r}\frac{b-x}{b-a},
\]
which is equal to $(r-x)/(r-a)$ if $r<\infty$ and $1$ if $r=\infty$.  Analogously one can also compute that 
\[
	\widehat P_K(x,\{a\})=\lim_{b\uparrow r}\frac{b-x}{b-a}.
\]
Thus \eqref{eq:77} is verified.
\item[(2)] $K_-=\emptyset,  K_+\neq \emptyset$: Set $b:=\inf\{y: y\in K_+\}$.  If $b=r$,  then $T_K=\zeta$ and $\widehat{T}_K=\widehat{\zeta}$.  Hence $P_K(x,\cdot)=\widehat{P}_K(x,\cdot)\equiv 0$.  If $b<r$,  then $T_K=T_b$ and $\widehat{T}_K=T_b$.  Both $P_K(x,\cdot)$ and $\widehat{P}_K(x,\cdot)$ are concentrated on $\{b\}$.  Note that $\mathbf{P}_x(T_b<\infty)=\mathbf{P}_x(\widehat{T}_b<\infty)=1$;  see \S\ref{SEC532}.  One can easily obtain that $P_K(x, \{b\})=\widehat{P}_K(x,\{b\})=1$.  Hence \eqref{eq:77} holds true.
\item[(3)] $K_-,K_+\neq \emptyset$: Set $a:=\sup\{y: y\in K_-\}$ and $b:=\inf\{y: y\in K_+\}$.   If $b=r$,  then \eqref{eq:77} has been verified in the case $K_-\neq \emptyset,  K_+=\emptyset$.  Now suppose $b<r$.  Then both $P_K(x,\cdot)$ and $\widehat{P}_K(x,\cdot)$ are concentrated on $\{a,  b\}$.  Applying Theorem~\ref{THM58},  we get that
$P_K(x,\{a\})=\mathbf{P}_x(T_a<T_b)=\frac{b-x}{b-a}$ and $P_K(x,\{b\})=\frac{x-a}{b-a}$.  Similarly $\widehat P_K(x,\{a\})=\frac{b-x}{b-a}$ and $\widehat P_K(x,\{b\})=\frac{x-a}{b-a}$.  Therefore \eqref{eq:77} is verified.  
\end{itemize}

With \eqref{eq:77} at hand,  we apply the Blumenthal-Getoor-McKean theorem (see,  e.g.,  \cite[Chapter V,  Theorem~5.1]{BG68}) to obtain a CAF $\widehat A=(\widehat A_t)_{t\geq 0}$ of $\widehat{X}$ which is strictly increasing and finite on $[0,\widehat{\zeta})$ such that $X$ is equivalent to the time-changed process of $\widehat X$ by $\widehat A$.  In view of \cite[Chapter V, Theorem~2.1]{BG68} (The symmetric measure $\mu$ of $\widehat{X}$ is clearly a reference measure in the sense of \cite[Chapter V, Definition~1.1]{BG68} due to \cite[Chapter V, Proposition~1.2]{BG68}),  $\widehat{A}$ is a perfect CAF in the sense of \cite[Chapter IV,  Definition~1.3]{BG68}.  Hence $\widehat{A}$ is a PCAF of $\widehat{X}$ in the sense of,  e.g.,  \cite[Definition~A.3.1]{CF12}.  Denote the Revuz measure of $\widehat{A}$ by $\widehat{\mu}$.  Since $\widehat{A}$ is strictly increasing and the polar set for $\widehat{X}$ must be empty,  it follows that the quasi support of $\widehat{\mu}$ is identified with its topological support $\text{supp}[\widehat{\mu}]=E$.  

We prove that $\widehat{\mu}$ is a Radon measure on $E$.  In fact,  there is an $\widehat{\sE}$-nest $\{\widehat{F}_n:n\geq 1\}$ such that $\widehat{\mu}(\widehat{F}_n)<\infty$.  For any compact set $K\subset E$,   it follows from Lemma~\ref{LM310} that $K\subset \widehat{F}_n$ for some $n$.  Particularly,  $\widehat{\mu}(K)<\infty$.   

Now applying \cite[Corollary~5.2.12]{CF12},  $X$,  as a time-changed process of $\widehat{X}$ by $\widehat{A}$,  is also a time-changed Brownian motion by the PCAF corresponding to $\widehat{\mu}$.  Repeating the argument in \S\ref{SEC712},  we can conclude that its speed measure in the sense of skip-free Hunt process is also equal to $\widehat{\mu}$.  Therefore $\widehat{\mu}=\mu$ and particularly,  $X$ is equivalent to $\widehat{X}$.  That completes the proof.  
\end{proof}

\subsection{Existence and uniqueness}

As a corollary of Theorem~\ref{THM71},  we present the correspondence between a skip-free Hunt process and its scale function and speed measure.  The uniqueness in this corollary means that all processes satisfying the desirable conditions are equivalent.  

\begin{corollary}\label{COR64}
\begin{itemize}
\item[(1)] Let $E\in \mathscr K$ and $\mu$ be a fully supported positive Radon measure on $E$.  Then there exists a unique skip-free Hunt process on $E$ on its natural scale whose speed measure is $\mu$.  
\item[(2)] Let $E\in \overline{\mathscr K}$,  $\mu$ be a fully supported positive Radon measure on $E$ and $\bs$ be a strictly increasing and continuous real valued function on $E$.  Then there exists a unique skip-free Hunt process on $E$ with scale function $\bs$ and speed measure $\mu$. 
\end{itemize}
\end{corollary}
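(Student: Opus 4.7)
The plan is to reduce both parts of the corollary to the equivalence (i)$\Leftrightarrow$(iii) in Theorem~\ref{THM71}, which identifies skip-free Hunt processes on their natural scale with quasidiffusions having the matching speed measure.

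For part~(1), I would first construct an extended real valued, right continuous, non-constant increasing function $m$ on $\bR$ such that $E_m=E$, $m|_E=2\mu$, and \text{(QK)} holds. The natural definition is to let $m$ be the distribution function of the measure that agrees with $2\mu$ on $E$ and places no mass on the gaps $(a_k,b_k)$ of $\overline{E}\setminus E$ in \eqref{eq:51}; the free choice is at the outer endpoints. If the left endpoint $l$ of $E$ is finite and lies in $E$, I extend $m$ as a finite constant on $(-\infty,l)$, forcing $l_0=-\infty$ so that \text{(QK)} is vacuous on the left; if $l\in\bR\setminus E$, I set $m\equiv -\infty$ on $(-\infty,l)$, which gives $l_0=l$ and again verifies \text{(QK)}; the case $l=-\infty$ is automatic. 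The right endpoint is handled symmetrically. A routine check then confirms $E_m=E$, $m|_E=2\mu$, and the quasidiffusion $X$ associated with this $m$ satisfies (SF), (SR), (SK). By Theorem~\ref{THM71}, $X$ is a skip-free Hunt process on $E$ on its natural scale with speed measure~$\mu$. Uniqueness follows because any other such process is, by Theorem~\ref{THM71} again, equivalent to a quasidiffusion with the same speed function $m$, and hence to $X$.

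For part~(2), I would reduce to part~(1) via the homeomorphism $\bs:E\to\tilde{E}:=\bs(E)$. By Remark~\ref{RM48} we have $\tilde{E}\in\mathscr K$, so we may set $\tilde{\mu}$ to be the pushforward of $\mu$ under $\bs$ (a fully supported positive Radon measure on $\tilde{E}$) and invoke part~(1) to obtain a unique skip-free Hunt process $\tilde{X}$ on $\tilde{E}$ on its natural scale with speed measure $\tilde{\mu}$. Define $X_t:=\bs^{-1}(\tilde{X}_t)$. Since $\bs^{-1}$ is a homeomorphism onto $E$, the transferred Hunt-process structure inherits (SF) (gaps of $\tilde{E}$ correspond bijectively to the intervals determined by $D$ in $E$), (SR) (hitting-time positivity is preserved) and (SK) (no killing inside). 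The hitting-distribution identity \eqref{eq:55} applied to $\tilde{X}$, together with $T_y^{X}=T_{\bs(y)}^{\tilde{X}}$, shows that $\bs$ is the scale function of $X$, and Definition~\ref{DEF514}(2) identifies the speed measure of $X$ with $\mu$. For uniqueness, if $X'$ is another candidate, then $\bs(X')$ is on natural scale on $\tilde{E}$ with speed measure $\tilde{\mu}$, so $\bs(X')\equiv\tilde{X}=\bs(X)$ by part~(1), and hence $X'\equiv X$.

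The main obstacle I anticipate is the endpoint bookkeeping in part~(1): the three subcases ($l\in E$ finite, $l\in\bR\setminus E$, $l=-\infty$), together with the interaction between the quasidiffusion's intrinsic endpoints $l_0,l$ from \eqref{eq:51-2} and the \text{(QK)} condition, must be reconciled so that the constructed $m$ gives precisely $E_m=E$, including the correct treatment of isolated points that are endpoints of gaps (as analyzed around Remark~\ref{RM64}(3)). A secondary subtlety in part~(2) is verifying that quasi-left continuity and the skip-free property transport through the homeomorphism $\bs$, which requires that $\bs$ and $\bs^{-1}$ preserve the order-theoretic structure of the gaps and of hitting times; this is exactly the content we inherit from $\bs$ being a strictly increasing continuous map between nearly closed subsets.
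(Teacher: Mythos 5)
Your proposal is correct and follows essentially the same route as the paper: existence via the explicit construction of the speed function $m$ (agreeing with $2\mu$ on $E$, with the same endpoint extensions forcing $l_0=-\infty$ or $l_0=l$ so that \text{(QK)} holds) followed by the equivalence in Theorem~\ref{THM71}, uniqueness from the fact that Theorem~\ref{THM71} identifies any such process with the time-changed Brownian motion determined by $\mu$, and part~(2) by transporting part~(1) through the homeomorphism $\bs$ to $\tilde{E}=\bs(E)$, $\tilde{\mu}=\mu\circ\bs^{-1}$.
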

\begin{proof}
To prove the first assertion,  note that a time-changed Brownian motion is uniquely determined by its speed measure,  because so is its associated Dirichlet form.  Hence the uniqueness holds true by means of Theorem~\ref{THM71}.  It suffices to show the existence.  In view of the equivalence between skip-free Hunt process and quasidiffusion,  we only need to find a function $m$ as in \S\ref{SEC6} such that $E_m=E,  m|_E=2\mu$ and \text{(QK)} holds for $m$.  To accomplish this,  assume without loss of generality that $0\in (l,r)$ where $l$ and $r$ are endpoints of $E$.  Set
\begin{equation}\label{eq:78}
	m(x):=2\mu( (0,x]),\; x\in [0, r),\quad m(x):=-2\mu((x,0]),\; x\in [l,0).
\end{equation}
When $r<\infty$,  set
\[
\begin{aligned}
	&m(x)=\infty,  \;  x\geq r, \qquad\qquad  \text{if }r\notin E, \\
	&m(x)=2\mu((0,r]),  \;  x\geq r, \quad \text{if }r\in E. 
\end{aligned}
\]
When $l>-\infty$,  define
\[
\begin{aligned}
	&m(x)=-\infty,  \;  x< l,   \qquad\qquad  \text{if }l \notin E, \\
	&m(x)=-2\mu([l,0]),  \;  x< l, \quad\; \text{if }l\in E. 
\end{aligned}
\]
It is straightforward to verify that $m$ satisfies all these conditions.  Therefore the first assertion is concluded.  The second assertion can be obtained by applying the first one to $\tilde{E}:=\bs(E)$ and $\tilde{\mu}:=\mu\circ \bs^{-1}$.  That completes the proof.  
\end{proof}
\begin{remark}
The desirable function $m$ in this proof is unique (up to a constant) for a given pair $(E,\mu)$.  However if \text{(QK)} is not required,  then the uniqueness of $m$ does not hold.  To obtain $m$ uniquely without assuming \text{(QK)},  one need an additional function $k: \{l,r\}\cap E\rightarrow [0,\infty)$ to determine the killing ratio at $l\in E$ or $r\in E$.  More precisely,  let $m$ be defined as \eqref{eq:78} for $x\in [l,r)$.  When $r\in E$,  set
\[
	m(x):=2\mu((x,r]),\; x\in \left[r,  r+\frac{1}{2k(r)}\right),\quad m(x):=\infty,  \; x\geq r+\frac{1}{2k(r)}.
\]
When $r\notin E$ and $r<\infty$,  set $m(x):=\infty$ for $m\geq r$.  The definition of $m(x)$ for $x\leq l$ is analogical.  This $m$ leads to the unique quasidiffusion for the given triple $(E,\mu,k)$.  
\end{remark}

Theorem~\ref{THM71} also tells us that the image regularized Markov process associated with a triple $(I,\bs,\fm)$ as in \S\ref{SEC2} is always a skip-free Hunt process on its natural scale (as well as a quasidiffusion).  However different triples may have the same image regularization.   Thus there does not exist a one-to-one correspondence between $(I,\bs,\fm)$ and  skip-free Hunt process (or quasidiffusion).  Instead we have the following.  

\begin{corollary}
Let $E\in \mathscr K$ and $\mu$ be a fully supported positive Radon measure on $E$.  Further let $X$ be a  skip-free Hunt process on $E$ on its natural scale whose speed measure is $\mu$. Then there exists a triple $(I,\bs,\fm)$ such that the image regularized Markov process $\widehat{X}$ associated with it is equivalent to $X$.  
\end{corollary}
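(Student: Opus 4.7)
The plan is to invoke the equivalence (i)$\Leftrightarrow$(ii) of Theorem~\ref{THM71}: it suffices to exhibit a triple $(I,\bs,\fm)$ whose image regularization satisfies $\widehat I=E$ and $\widehat \fm=\mu$. Write $l_E:=\inf E$, $r_E:=\sup E$ and decompose $[l_E,r_E]\setminus\bar E=\bigcup_k(a_k,b_k)$. A key preliminary remark is that if $l_E\notin E$ then no gap of the decomposition begins at $l_E$, for such a gap $(l_E,b_1)$ would force $\inf E\ge b_1>l_E$; the analogous statement holds at $r_E$, and in particular every $a_k,b_k$ lies in $E$.

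Take $I:=[l_E,r_E]\subset\overline{\bR}$; define $\bs(x):=x$ for $x\in E\cap(l_E,r_E)$ and $\bs(x):=a_k$ for $x\in(a_k,b_k)$, extended to endpoints by one-sided limits; and let $\fm$ coincide with $\mu$ on $E\cap(l_E,r_E)$, vanish on every gap interior, take value $\mu(\{l_E\})$ at $l_E$ when $l_E\in E$ but $+\infty$ at $l_E$ when $l_E\notin E$, and analogously at $r_E$ (the infinite atoms force those endpoints to be non-reflecting). Then $\bs$ is right-continuous with $D^-=\{b_k:b_k\in E\}$, $D^+=D^0=\emptyset$, and $U$ equals the union of the gap interiors possibly together with $l_E,r_E$ when they adjoin a gap. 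Conditions (a)--(c) of the triple are immediate; (DK) holds because any endpoint of $I$ bordering an isolated gap must lie in $E$ by the preliminary remark, and then carries finite $\fm$-mass; and (DM) reduces to $E_\bs\subset\bar E\subset E_\fm$ together with $\fm(I_{c_n}^{d_n})=\fm([c_n,d_n))=\mu(\{c_n\})>0$ for each isolated gap $(c_n,d_n)$, since $c_n$ is then forced to be an isolated (or endpoint) point of $E$ at which the fully supported measure $\mu$ puts positive mass.

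Tracing the regularizing procedure of \S\ref{SEC31}: scale completion splits each $b_k\in D^-$ into $\{b_k-,b_k\}$ with $\bs^*(b_k-)=a_k$ and $\bs^*(b_k)=b_k$; darning collapses each closure $[a_k,b_k-]$ into an abstract point $p_k^*$ with $\bs^*(p_k^*)=a_k\in E$; and the non-reflecting endpoints removed from $\bar I^{d,*}$ are exactly those of $\{l_E,r_E\}$ outside $E$. Consequently $\widehat I=\bs^*(I^*)=\bar E\setminus(\{l_E,r_E\}\setminus E)=E$, and the pushforward $\widehat\fm=\fm\circ\bs^{-1}$ satisfies $\widehat\fm(\{x\})=\mu(\{x\})$ for every $x\in E$ (the preimage at $x=a_k$ picks up the adjoining gap, which carries no $\fm$-mass), so $\widehat\fm=\mu$. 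Applying Theorem~\ref{THM71} then identifies $\widehat X$ with $X$. The main technical obstacle is the endpoint bookkeeping: coordinating whether $l_E,r_E$ belong to $E$ and whether they adjoin a gap, so that (DK) is fulfilled and exactly the correct points are removed when forming $\widehat I$.
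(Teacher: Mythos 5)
Your proposal is correct and follows essentially the same route as the paper: the same triple (identity scale on $E$, constant on the gaps; $\fm=\mu$ on $E$, zero on gap interiors, infinite atoms at endpoints not in $E$), followed by an appeal to Theorem~\ref{THM71}. The only difference is that you spell out the verification of (DK), (DM) and the tracing of the regularization, which the paper leaves as "easy to verify"; your endpoint bookkeeping is accurate.
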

\begin{proof}
Let $I:=[l,r]$ where $l$ and $r$ are endpoints of $E$.  Write $I\setminus \overline{E}$ as a union of disjoint open intervals
\[
	I\setminus \overline{E}=\cup_{k\geq 1}(a_k,b_k),
\]
where $\overline{E}:=E\cup\{l,r\}$.  Define $\bs(x):=x$ for $x\in E$ and $\bs(x):=\bs(a_k)$ for $x\in (a_k,b_k)$ and $k\geq 1$.  Further set $\fm|_E:=\mu$,  $\fm|_{(l,r)\setminus E}:=0$,  and $\fm(\{l\})=\infty$ (resp.  $\fm(\{r\})=\infty$)  if $l\notin E$ (resp.  $r\notin E$).  It is easy to verify that $(I,\bs,\fm)$ satisfies (DK) and (DM),  and the image regularization of $(I,\bs,\fm)$ is $\widehat{I}=E$,  $\widehat{\bs}(x)=x$ and $\widehat{\fm}=\mu$.  Applying Theorem~\ref{THM71} we can conclude that the image regularized Markov process $\widehat{X}$ associated with $(I,\bs,\fm)$ is equivalent to $X$.  That completes the proof. 
\end{proof}


\subsection{Boundary classification in Feller's sense}\label{SEC73}

Let $X$ be a  skip-free Hunt process on $E\in \mathscr K$ on its natural scale whose speed measure is $\mu$ as in Theorem~\ref{THM71}.  
We turn to classify the endpoints $l$ and $r$ for $X$ in Feller's sense.  The endpoints can be classified for a general  skip-free Hunt process or the regularized Markov process $X^*$ associated with $(I,\bs,\fm)$ accordingly.  

Without loss of generality assume that $0\in (l,r)$.  
Set for ${x}\in ({l},{r})$,  
\[
	{\sigma}({{x}}):=\left\lbrace
	\begin{aligned}
	&\int_0^{{x}} {\mu}\left((0, {y}] \right) d{y}, \quad {x}\geq 0,\\
	& \int_{{x}}^0 {\mu}(({y},0])d{y},\quad {x}<0;
	\end{aligned} \right.  \quad 
	{\lambda}({{x}}):=\left\lbrace
	\begin{aligned}
	&\int_{(0,  {x}]} {y} {\mu}(d{y}),\quad {x}\geq 0,\\
	&\int_{({x},0]}(-{y}) {\mu}(d{y}),\quad {x}<0. 
	\end{aligned}
	\right.
\]
For ${j}={l}$ or ${r}$,  define ${\sigma}({j}):=\lim_{{x}\rightarrow {j}}{\sigma}({x})$ and ${\lambda}({j}):=\lim_{{x}\rightarrow {j}}{\lambda}({x})$. 
The following classification in Feller's sense is very well known.

\begin{definition}
The endpoint ${r}$ (resp.  ${l}$) for $X$ is called
\begin{itemize}
\item[(1)] \emph{regular},  if ${\sigma}({{r}})<\infty,  {\lambda}({{r}})<\infty$ (resp.  ${\sigma}({{l}})<\infty,  {\lambda}({{l}})<\infty$); 
\item[(2)] \emph{exit},  if ${\sigma}({{r}})<\infty,  {\lambda}({{r}})=\infty$ (resp.  ${\sigma}({{l}})<\infty,  {\lambda}({{l}})=\infty$); 
\item[(3)] \emph{entrance},  if ${\sigma}({{r}})=\infty,  {\lambda}({{r}})<\infty$ (resp.  ${\sigma}({{l}})=\infty,  {\lambda}({{l}})<\infty$); 
\item[(4)] \emph{natural},  if ${\sigma}({{r}})= {\lambda}({{r}})=\infty$ (resp.  ${\sigma}({{l}})= {\lambda}({{l}})=\infty$).  
\end{itemize}
\end{definition}
\begin{remark}
Note that ${\sigma}({{r}}),  {\lambda}({{r}})<\infty$ amounts to that ${r}$ is finite and ${\mu}((0,{r}))<\infty$.   When $r\in E$,  $r$ is clearly regular.  In accord with Definition~\ref{DEF21},  we call $r$ absorbing (resp.  reflecting)  if $r\notin E$ (resp. $r\in E$) is regular.  If $r$ is exit,  then $r<\infty$ and $\mu((0,r))=\infty$.  If $r$ is entrance,  then $r=\infty$ and $\mu((0,r))<\infty$.  If $r$ is natural,  then $r+\mu((0,r))=\infty$.  The left endpoint $l$ can be argued similarly. 
\end{remark}

For $j=l$ or $r$,  recall that $T_j:=\inf\{t>0:X_t=j\}$ when $j\in E$ and $T_j:=\lim_{x\rightarrow j}T_x$ when $j\notin E$.  The endpoint $j=l$ or $r$ is called \emph{inaccessible} if $\mathbf{P}_x(T_j<\infty)=0$ for all (equivalently,  one) $x\in (l,r)\cap E$.  Otherwise it is called \emph{accessible}.  The following result characterizes this property by means of the parameter $\sigma$.  Particularly,  $l$ or $r$ is inaccessible,  if and only if it is entrance or natural. 

\begin{proposition}\label{PRO69}
The endpoint $l$ (resp.  $r$) is inaccessible,  if and only if $\sigma(l)=\infty$ (resp.  $\sigma(r)=\infty$).  Particularly,  $X$ is conservative if and only if $\sigma(l)=\infty$ whenever $l\notin E$ and $\sigma(r)=\infty$ whenever $r\notin E$.  
\end{proposition}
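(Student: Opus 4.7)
The plan is to translate the accessibility question into the time-change representation of $X$ provided by Theorem~\ref{THM71} and then exploit a Feller-type criterion. If $l\in E$, then property (SR) automatically yields $\mathbf{P}_x(T_l<\infty)>0$ for every $x\in E$, so $l$ is accessible; the regular nature of $l$ as an endpoint of $E$ then forces $l>-\infty$ and $\mu((l,0])<\infty$, whence $\sigma(l)\le |l|\,\mu((l,0])<\infty$. Both sides of the equivalence are then trivial, so from now on I assume $l\notin E$.

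By Theorem~\ref{THM71}, $X$ is equivalent to a time-changed Brownian motion $\widehat{B}_{\widehat{\tau}_t}$, where $\widehat{B}$ lives on $\widehat{J}=\langle l,\widehat{r}\rangle$ absorbed at $l$ (when $l>-\infty$) and $\widehat{\tau}$ is the right-continuous inverse of the PCAF $\widehat{A}$ of Revuz measure $\mu$. Writing $\tau_l^B$ for the hitting time of $l$ by $\widehat{B}$ (or its natural limiting version when $l=-\infty$), one has $T_l=\widehat{A}_{\tau_l^B-}$ on $\{\tau_l^B<\widehat{\zeta}\}$ and $T_l=\infty$ otherwise, so $l$ is accessible iff $\mathbf{P}_x(\tau_l^B<\widehat{\zeta},\,\widehat{A}_{\tau_l^B-}<\infty)>0$. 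For $l_n\in E$ with $l_n\downarrow l$ and $b\in E$ with $b>x$, the Revuz correspondence and the Green function of absorbing BM on $(l_n,b)$ identify $\mathbf{E}_x\widehat{A}_{\tau_{l_n}^B\wedge\tau_b^B}$ with $h_{l_n,b}(x)$, and a Fubini computation shows that $\lim_n h_{l_n,b}(x)$ is finite precisely when $\sigma(l)<\infty$ (which in particular forces $l>-\infty$).

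For the direction $\sigma(l)<\infty\Rightarrow l$ accessible, monotone convergence gives $\mathbf{E}_x(T_l\wedge T_b)<\infty$, so $T_l\wedge T_b<\infty$ a.s.; combined with the scale-function identity $\mathbf{P}_x(T_l\le T_b)=(b-x)/(b-l)>0$ obtained by passing to the limit in Theorem~\ref{THM58}, this yields $\mathbf{P}_x(T_l<\infty)\ge (b-x)/(b-l)>0$. The reverse direction is the main obstacle. When $l=-\infty$ one proceeds directly: taking $a_n\downarrow-\infty$ in $E$, either $\widehat{B}$ is recurrent on $\bR$ (so $\tau_{a_n}^B\to\infty$ a.s., and $\widehat{A}_{\tau_{a_n}^B}\to\widehat{A}_\infty=\infty$ since the fully supported $\mu$ forces the PCAF to grow without bound) or $\widehat{r}\in\bR$ (so $\mathbf{P}_x(\tau_{a_n}^B<\tau_{\widehat{r}}^B)\to 0$), concluding $T_{a_n}\to\infty$ a.s.\ in both subcases. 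When $l>-\infty$ with $\sigma(l)=\infty$, one has $\mu((l,l+\varepsilon])=\infty$ for every $\varepsilon>0$, and I plan to upgrade the mean identity $\mathbf{E}_x\widehat{A}_{\tau_l^B-}=\infty$ to an almost-sure statement via the first Ray--Knight theorem: conditional on $\tau_l^B<\infty$, the local-time field $a\mapsto\ell^B(\tau_l^B,l+a)$ on $[0,x-l]$ is a squared Bessel process of dimension two starting from $0$ at $a=0$. Brownian scaling of squared Bessel then makes the rescaled process $t\mapsto\ell^B(\tau_l^B,l+e^t)/e^t$ stationary, and Birkhoff's ergodic theorem converts the divergence of $\int_l^{l+\varepsilon}(y-l)\,\mu(dy)$ into $\widehat{A}_{\tau_l^B-}=\int_{(l,\widehat{r})}\ell^B(\tau_l^B,y)\,\mu(dy)=\infty$ almost surely, so $T_l=\infty$ a.s.

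The analogous argument handles $r$. The conservativity assertion is then immediate: by (SK) the lifetime of $X$ can be finite only by reaching an endpoint outside $E$, and by the first part each such endpoint $j\notin E$ is inaccessible if and only if $\sigma(j)=\infty$.
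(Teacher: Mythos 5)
Your architecture is genuinely different from the paper's: you pass to the time-change representation of Theorem~\ref{THM71} and read accessibility off the PCAF $\widehat{A}$ evaluated at the Brownian hitting time of $l$, whereas the paper stays on the process side and builds the increasing $\alpha$-harmonic function $u=\sum_n\alpha^nu^n$, uses the bounds $1+\alpha\sigma(x)\le u(x)\le e^{\alpha\sigma(x)}$ and the identity $\mathbf{E}_x\bigl(e^{-\alpha T_{r_1}};T_{l_1}>T_{r_1}\bigr)=u_-(x)/u_-(r_1)$ to compute the Laplace transform of the hitting time directly. Your treatment of the case $j\in E$, the easy implication ($\sigma(l)<\infty\Rightarrow$ accessible, via the Green function, monotone convergence and the limiting scale identity), and the case $l=-\infty$ are all sound.

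There is, however, a genuine gap in the remaining case $-\infty<l\notin E$ with $\sigma(l)=\infty$. First, a repairable point: the first Ray--Knight theorem describes $a\mapsto\ell^{W}(T_l,l+a)$ for the unkilled Brownian motion; for $\widehat{B}$ you should not ``condition on $\tau^B_l<\widehat{\zeta}$'' (conditioning changes the law of the field) but rather work with the full Brownian motion and restrict the resulting almost-sure statement to the positive-probability event on which the two paths, hence the two local-time fields, coincide up to the hitting time of $l$; the reflecting-$\widehat{r}$ case needs a separate comparison. Second, and substantively: Birkhoff's ergodic theorem does not deliver the conclusion. Stationarity of $t\mapsto e^{t}\,\ell^{W}(T_l,l+e^{-t})$ controls Lebesgue averages in logarithmic time, but the quantity you must show diverges is $\int\ell^{W}(T_l,y)\,\mu(dy)=\int\frac{\ell^{W}(T_l,y)}{y-l}\,(y-l)\mu(dy)$, an integral against the measure $(y-l)\mu(dy)$, which may be purely atomic or singular and is in no way comparable to Lebesgue measure in log-scale; an ergodic time-average says nothing about such an integral. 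The correct tool is Jeulin's lemma (equivalently, a first-moment estimate on the event of finiteness combined with the Blumenthal zero-one law for the germ field of the squared Bessel process at $0$): since the variables $Y_a:=\ell^{W}(T_l,l+a)/a$, $0<a\le x-l$, are identically distributed with a law not charging $\{0\}$, the integral $\int Y_a\,\nu(da)$ is almost surely infinite whenever $\nu$ is an infinite measure. With that substitution your argument closes; as written, the step ``Birkhoff converts divergence of $\int(y-l)\mu(dy)$ into $\widehat{A}_{\tau^B_l-}=\infty$ a.s.'' does not follow.
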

\begin{proof}
We only prove that $r$ is accessible if and only if $\sigma(r)=\infty$.  When $r\in E$,  we have noted that $r$ is regular and hence $\sigma(r)<\infty$.  On the other hand,  (SR) implies that $r$ is accessible.  

Now suppose $r\notin E$.  
Take $l_1\in E$ such that $l<l_1<0$.  Fix a constant $\alpha>0$ and set for $x\in (l_1,r)$, 
\[
	u^0({x})\equiv 1,\quad u^{n}({x}):=\int_0^{ x} \int_{(0, y]} u^{n-1}(\xi){\mu}(d\xi)d{y},\; n\geq 1,
\]
where $\int_0^{{x}}:=-\int_{{x}}^0$ and $\int_{(0, y]}:=-\int_{({y},0]}$ for ${x}<0$ and ${y}<0$.  
Define 
\[
u({x}):=\sum_{n=0}^\infty \alpha^n u^n({x}),\quad u_-({x})=u({x})\int_{l_1}^{{x}} u({y})^{-2}d{y},\quad {x}\in (l_1,r). 
\]
Note that $\int_{{l_1}}^{{r}}u({y})^{-2}d{y}<\infty$ and $1+\alpha {\sigma}({x})\leq u({x})\leq \exp\{\alpha {\sigma}({x})\}$ for $x\in (l_1,r)$; see,  e.g.,  \cite[II, \S2]{M68}.  
 Thus $u_-(r):=\lim_{x\uparrow {r}} u_-(x)<\infty$ if and only if  ${\sigma}({r})<\infty$.
 
 Using (SF) and mimicking the arguments in \cite[Theorems~3.1 and 4.1]{L22},  we can obtain that for a fixed $x\in (l_1,r)\cap E$,  
 \[
 	\frac{u_-(x)}{u_-(r_1)}=\mathbf{E}_x\left(e^{-\alpha T_{r_1}}; T_{l_1}>T_{r_1} \right),\quad x<r_1\in E;
 \]
 see also \cite[(7)]{K86}.  Letting $r_1\uparrow r$ and applying Lemma~\ref{LM54}~(4),  we get that
\[
	\mathbf{E}_x\left(e^{-\alpha T_{r}}; T_{l_1}>T_{r} \right)=\frac{u_-(x)}{u_-(r)}.  
\]
If $\sigma(r)<\infty$,  then $u_-(r)<\infty$ and particularly $\mathbf{E}_x\left(e^{-\alpha T_{r}}; T_{l_1}>T_{r} \right)>0$.  We must have $\mathbf{P}_x(T_r<\infty)>0$,  because otherwise $T_r=\infty$,  $\mathbf{P}_x$-a.s., leading to $\mathbf{E}_x\left(e^{-\alpha T_{r}}; T_{l_1}>T_{r} \right)=0$.  If $\sigma(r)=\infty$,  then $\mathbf{E}_x\left(e^{-\alpha T_{r}}; T_{l_1}>T_{r} \right)=0$.  Argue by contradiction and suppose $r$ is accessible.  It follows from Corollary~\ref{COR59} that $r<\infty$.  In view of Theorem~\ref{THM58},  we have
\[
	\mathbf{P}_x(T_{l_1}>T_r)=\frac{x-l_1}{r-l_1}>0.  
\]
Note that $T_r<\infty$ on $\{T_{l_1}>T_r\}$.  Hence $\mathbf{E}_x\left(e^{-\alpha T_{r}}; T_{l_1}>T_{r} \right)>0$,  which leads to a contradiction.  

The characterization of conservativeness is a consequence of Lemma~\ref{LM54}~(4).  That completes the proof.  
\end{proof}

\section{Unregularized Markov processes with discontinuous scales}\label{SEC9}

Now we come back to a triple $(I,\bs,\fm)$ defined in \S\ref{SEC2} but assume further that $\bs$ is strictly increasing.  Then (DK) trivially holds and (DM) reads as that $\fm$ is fully supported on $I$ and $\fm(\{x\})>0$ for $x\in D^0$; see Remark~\ref{RM22}.  Recall that $(\sE,\sF)$ defined as \eqref{eq:25} turns to be a Dirichlet form on $L^2(I_0, \fm)$ in the wide sense in Theorem~\ref{LM12}.  

\begin{lemma}\label{LM81}
\begin{itemize}
\item[(1)] If $\bs$ is strictly increasing,  then $(\sE,\sF)$ is a Dirichlet form on $L^2(I_0,\fm)$.
\item[(2)] Assume that $\bs$ is strictly increasing.  Then $(\sE,\sF)$ is regular on $L^2(I_0,\fm)$ if and only if $\bs$ is continuous. 
\end{itemize}
\end{lemma}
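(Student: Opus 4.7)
For (1), the only missing ingredient beyond Theorem~\ref{LM12} is $L^2$-density of $\sF$. I would exploit the strict monotonicity of $\bs$, which forces $U \setminus \{l, r\} = \emptyset$ in \eqref{eq:14} and thereby removes the obstruction to density noted before Theorem~\ref{LM12}. Concretely, $\bs : I_0 \to \bs(I_0)$ is a bijection and $\widehat{\fm} = \fm \circ \bs^{-1}$ assigns no mass to $\widehat I \setminus \bs(I_0)$; the pullback by $\bs$ is therefore an isometric isomorphism $L^2(\widehat I, \widehat\fm) \to L^2(I_0, \fm)$ which, via \eqref{eq:35}, restricts to $\widehat\iota^{-1} : \widehat\sF \to \sF$. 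Density of $\widehat\sF$ in $L^2(\widehat I, \widehat\fm)$ (granted by Theorem~\ref{THM6}~(3)) then transfers to density of $\sF$ in $L^2(I_0, \fm)$.

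For the backward direction of (2), additional continuity of $\bs$ yields $D = \emptyset$ on top of $U \setminus \{l, r\} = \emptyset$, so the scale completion and darning transformations of \S\ref{SEC31} act trivially and $\bs$ descends to a homeomorphism $I_0 \to \widehat I$ (up to reflecting endpoints). The form $(\sE, \sF)$ then coincides with the image Dirichlet form of $(\widehat\sE, \widehat\sF)$ under $\bs^{-1}$, and since regularity is invariant under homeomorphisms, Theorem~\ref{THM6}~(3) delivers regularity of $(\sE, \sF)$.

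For the forward direction of (2), I plan to argue by contradiction. Suppose $(\sE, \sF)$ is regular and $\bs$ has a discontinuity; by the conventions $\bs(l) = \bs(l+)$ and $\bs(r) = \bs(r-)$, this discontinuity must occur at some interior $x_0 \in D^+ \cup D^-$. I treat $x_0 \in D^-$, the case $x_0 \in D^+$ being symmetric, and set $c := \bs(x_0) - \bs(x_0-) > 0$. Then $(\widehat a_k, \widehat b_k) := (\bs(x_0-), \bs(x_0))$ appears as one of the gaps in \eqref{eq:15} with length $c$. Choose $\widehat h \in C^\infty_c((\widehat l, \widehat r))$ supported in a small neighborhood of $[\widehat a_k, \widehat b_k]$ disjoint from the other gaps in \eqref{eq:15}, with $\widehat h(\widehat a_k) = 0$ and $\widehat h(\widehat b_k) = 1$. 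By Lemma~\ref{LM39} (compact support of $\widehat h$ takes care of the $L^2$ and endpoint conditions in \eqref{eq:321}), $\widehat\phi := \widehat h|_{\widehat I} \in \widehat\sF$ and hence $\phi := \widehat\iota^{-1}\widehat\phi \in \sF$. For any $f \in \sF \cap C(I_0)$, the identities $\widehat f(\widehat a_k) = f(x_0-)$ and $\widehat f(\widehat b_k) = f(x_0)$ from \eqref{eq:35}, combined with left continuity of $f$ at $x_0$, give $\widehat f(\widehat a_k) = \widehat f(\widehat b_k)$. Substituting into the expression \eqref{eq:321} for $\widehat\sE(\widehat\phi - \widehat f, \widehat\phi - \widehat f)$, the contribution from the $k$-th gap alone is
\[
\frac{(\widehat\phi(\widehat a_k) - \widehat\phi(\widehat b_k))^2}{2\,|\widehat b_k - \widehat a_k|} = \frac{1}{2c},
\]
so $\sE_1(\phi - f, \phi - f) \geq \widehat\sE(\widehat\phi - \widehat f, \widehat\phi - \widehat f) \geq 1/(2c)$. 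The same bound holds a fortiori for $f \in \sF \cap C_c(I_0)$, which contradicts the $\sE_1$-density of $\sF \cap C_c(I_0)$ in $\sF$ that regularity demands. The main technical obstacle is this forward direction: one must select $\widehat h$ so that $\widehat\phi \in \widehat\sF$ (verifying the boundary behavior at non-reflecting endpoints) and so that no other gap terms spoil the $1/(2c)$ lower bound, and one must carefully invoke \eqref{eq:35} to convert continuity of $f$ at $x_0$ into the identity $\widehat f(\widehat a_k) = \widehat f(\widehat b_k)$.
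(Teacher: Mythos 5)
Your proposal is correct. Part (1) and the ``if'' direction of (2) follow essentially the same route as the paper: the paper also transfers $L^2$-density from the regularized form (it phrases this on $I^*$, noting that only scale completion acts, that $\fm^*|_I=\fm$ and $\fm^*(I^*\setminus I)=0$; your formulation on $\widehat{I}$ via the isometry induced by the injective map $\bs$ is the same argument transported through the homeomorphism $\bs^*$), and for continuous $\bs$ it likewise observes that $(\sE,\sF)=(\sE^*,\sF^*)$ and invokes Theorem~\ref{THM6}.

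The ``only if'' direction is where you genuinely diverge, and your argument works. The paper first establishes a local Sobolev-type bound $\sup_{x\in[\alpha,\beta]}|f(x)|^2\leq C_{\alpha,\beta}\,\sE_1(f,f)$ for $\alpha,\beta\in(l,r)\setminus D$, deduces that $\sE_1$-convergence implies uniform convergence on $[\alpha,\beta]$, and concludes that any $\sE_1$-limit of functions in $\sF\cap C_c(I_0)$ is continuous there, contradicting the existence of a discontinuous element of $\sF$. You instead produce an explicit witness $\phi\in\sF$ and a quantitative obstruction: using \eqref{eq:321} and the identification \eqref{eq:35}, every $f\in\sF\cap C_c(I_0)$ satisfies $\widehat f(\widehat a_k)=\widehat f(\widehat b_k)$ at the gap created by the discontinuity, so the single $k$-th jump term already gives $\sE(\phi-f,\phi-f)\geq 1/(2c)$. (Your worry about ``other gap terms spoiling the bound'' is moot: all terms in \eqref{eq:321} are nonnegative, so discarding them only weakens a lower bound.) Each approach buys something: the paper's embedding \eqref{eq:81} is reusable (it shows $\sF$ consists of locally ``quasi-continuous-in-$\bs$'' functions and that point evaluations are $\sE_1$-bounded), while yours gives an explicit uniform positive distance from the continuous compactly supported functions and makes transparent exactly which term of the regularized form encodes the failure of regularity. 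One small point worth making explicit in your write-up is that the paper's version must also justify the existence of a discontinuous $f\in\sF$; your $\phi$ in fact supplies such a function, since $\phi(x_0-)=\widehat h(\bs(x_0-))=0\neq 1=\phi(x_0)$.
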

\begin{proof}
\begin{itemize}
\item[(1)] Since $\bs$ is strictly increasing,  only the transformation of scale completion need be operated to obtain the regularization $(I^*,\bs^*,\fm^*)$ of $(I,\bs,\fm)$.  Meanwhile $I^*$ can be treated as the union of $I$ and another set of at most countably many points,  $\fm^*|_I=\fm$ and $\fm^*(I^*\setminus I)=0$.  In view of the regularity of $(\sE^*,\sF^*)$ obtained in Theorem~\ref{THM6},  one can easily conclude the denseness of $\sF$ in $L^2(I_0,\fm)$.  
\item[(2)]  We first assert that for $\alpha,\beta\in (l,r)\setminus D$ with $\alpha<\beta$,  there is a constant $C_{\alpha,\beta}$ depending on $\alpha$ and $\beta$ such that
\begin{equation}\label{eq:81}
	\sup_{x\in [\alpha, \beta]}|f(x)|^2\leq C_{\alpha,\beta}\sE_1(f,f),\quad f\in \sF.
\end{equation}
In fact,  write $f=f^c+f^++f^-\in \sF$ and let $g^c=df^c/d\mu_c,  g^\pm=df^\pm/d\mu^\pm_d$. Take $x,y\in [\alpha, \beta]$ with $x<y$.  We have
\[
	|f^c(x)-f^c(y)|^2\leq \mu_c((x,y)) \int_x^y \left(g^c\right)^2d\mu_c \leq 2|\bs(y)-\bs(x)| \sE(f,f).   
\]
Analogous assertion holds for $f^\pm$.  Thus
\[
	|f(x)-f(y)|^2\leq 6|\bs(y)-\bs(x)|\sE(f,f).  
\]
It follows that for any $x,y\in [\alpha,\beta]$, 
\[
f(x)^2\leq 2f(y)^2+2|f(x)-f(y)|^2\leq 2f(y)^2 +12|\bs(\beta)-\bs(\alpha)|\sE(f,f),
\]
and hence for any $y\in [\alpha,\beta]$,
\[
\sup_{x\in [\alpha,\beta]}f(x)^2\leq 2f(y)^2 +12|\bs(\beta)-\bs(\alpha)|\sE(f,f).  
\]
Integrating both sides by $\fm$ on $[\alpha,\beta]$ we arrive at \eqref{eq:81}.  Now we prove the assertion.  If $\bs$ is continuous,  then $(\sE,\sF)=(\sE^*,\sF^*)$,  $I_0=I^*$ and $\fm=\fm^*$.  In view of Theorem~\ref{THM6},  $(\sE,\sF)$ is regular on $L^2(I_0,\fm)$.  To the contrary,  argue by contradiction.  Assume that $\bs$ is not continuous at $x\in (l,r)$ while $(\sE,\sF)$ is regular.   Then we may take a function $f\in \sF$ such that $f$ is not continuous at $x$.  By means of the regularity of $(\sE,\sF)$,  there exists a sequence $f_n\in \sF\cap C_c(I_0)$ such that $\sE_1(f_n-f,f_n-f)\rightarrow 0$.   Take $\alpha,\beta\in (l,r)\setminus D$ such that $\alpha<x<\beta$.   Applying \eqref{eq:81} to $f_n-f$,  we get that $f_n$ converges to $f$ uniformly on $[\alpha,\beta]$.  Hence $f$ is continuous on $[\alpha,\beta]$,  as violates  the discontinuity of $f$ at $x$.  
\end{itemize}
That completes the proof. 
\end{proof}

Consider that $\bs$ is strictly increasing  but not continuous.  This lemma tells us that $(\sE,\sF)$ is not regular on $L^2(I_0,\fm)$.  
In what follows we give an $\fm$-symmetric continuous simple Markov process $\dot X$ on $I_0$ whose Dirichlet form is $(\sE,\sF)$.  Here a simple Markov process means that $\dot X$ satisfies the Markov property but does not satisfy the strong Markov property.  Let
\[
	\widehat X=\left\{\widehat{\Omega},\widehat{\sF}_t, \widehat{X}_t, (\widehat{\mathbf{P}}_{\widehat{x}})_{\widehat{x}\in \widehat{I}}, \widehat \zeta\right\}
\]
be the image regularized Markov process associated with $(I,\bs,\fm)$.   Note that
\begin{equation}\label{eq:80}
	\widehat{I}=\bs(I_0)\cup \{\bs(x+): x\in D^+\}\cup \{\bs(x-):x\in D^-\},
\end{equation}
and the second and third sets on the right hand side are of zero $\widehat\fm$-measure.  Set a function $\mathbf{r}: \widehat{I}\rightarrow I_0$ by $\br((\bs(x)):=x$ for $x\in I_0$ and $\br(\bs(x\pm)):=x$ for $x\in D^\pm$.  Define $\dot \Omega:=\left\{\omega\in \widehat{\Omega}: \widehat{X}_0(\omega)\in \bs(I_0)\right\}\in \widehat\sF_0$ and 
\[
\begin{aligned}
	&\dot\sF_t:=\widehat{\sF}_t\cap \dot \Omega=\{A\cap \dot \Omega: A\in \widehat{\sF}_t\},\quad \dot X_t(\omega):=\br(\widehat{X}_t(\omega)), \; \omega\in \dot\Omega,    \\
	&\dot{\mathbf{P}}_x:=\widehat{\mathbf{P}}_{\bs(x)}|_{\dot \Omega},\; x\in I_0, \quad \dot \zeta(\omega):=\widehat{\zeta}(\omega),\; \omega\in \dot\Omega.  
\end{aligned}\]
Clearly $(\dot\sF_t)_{t\geq 0}$ is a right continuous filtration,  and $(\dot X_t)_{t\geq 0}$ is a family of random variables on $\dot \Omega$ adapted to $(\dot\sF_t)_{t\geq 0}$.  The following result is inspired by \cite[Theorem~3.6]{S79}. 

\begin{theorem}\label{THM82}
Assume that $\bs$ is strictly increasing but not continuous. 
The stochastic process 
\[
	\dot X=\{\dot \Omega,  \dot\sF_t, \dot X_t, (\dot{\mathbf{P}}_x)_{x\in I_0}, \dot\zeta\}
\]
is an $\fm$-symmetric continuous simple Markov process on $I_0$ whose Dirichlet form on $L^2(I_0,\fm)$ is $(\sE,\sF)$.  
\end{theorem}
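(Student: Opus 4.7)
The plan is to transfer the Markov structure of the image regularized process $\widehat{X}$ to $\dot X$ through the retraction $\br:\widehat I\to I_0$, which acts as the inverse of $\bs$ on $\bs(I_0)$ and identifies each split point $\bs(x\pm)$ (for $x\in D^\pm$) with $x$. Because $\bs$ is strictly increasing, no darning enters the regularization, and $\widehat I\setminus \bs(I_0)\subset \{\bs(x-):x\in D^-\}\cup\{\bs(x+):x\in D^+\}$ is at most countable; moreover each such $\bs(x\pm)$ satisfies $\widehat\fm(\{\bs(x\pm)\})=\fm(\bs^{-1}\{\bs(x\pm)\})=0$, since strict monotonicity forces $\bs^{-1}\{\bs(x\pm)\}=\emptyset$. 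Hence $\widehat I\setminus\bs(I_0)$ is Lebesgue-null in $\bR$, a fact that will be used repeatedly.

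First I would verify path continuity on $\dot\Omega$. By the skip-free property, $\widehat{X}$ jumps only across gaps $(\widehat a_k,\widehat b_k)$, and because $\bs$ is strictly increasing every such gap is bounded by $\bs(x-)$ and $\bs(x)$, or $\bs(x)$ and $\bs(x+)$, or $\bs(x-)$ and $\bs(x+)$, for some $x\in D$, all three of which are mapped by $\br$ to the same $x$. Every jump of $\widehat{X}$ is therefore collapsed by $\br$, so $\dot X=\br\circ\widehat{X}$ has continuous sample paths $\dot{\mathbf P}_x$-a.s.\ for every $x\in I_0$.

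The main step, and the chief obstacle, is the Markov property: since $\br$ is not injective on $\{\bs(y\pm):y\in D\}$, the path of $\widehat{X}$ genuinely carries more information than $\dot X$, which is exactly why the strong Markov property fails (at $T_y$ for $y\in D$ the side of approach matters for $\widehat X$ but is invisible to $\dot X$). The rescue at deterministic times comes from Theorem~\ref{THM19}: $\widehat X$ is a time-changed Brownian motion on $\widehat J$, so for every $t>0$ and every starting point the law of $\widehat X_t$ assigns zero mass to any Lebesgue-null subset of $\widehat J$, in particular to the countable set $\widehat I\setminus\bs(I_0)$. Consequently $\widehat X_t=\bs(\br(\widehat X_t))$ almost surely for each $t>0$, and applying the Markov property of $\widehat{X}$ gives, for bounded measurable $f$ on $I_0$,
\[
\dot{\mathbf E}_x\!\left[f(\dot X_{t+s})\,\big|\,\widehat{\sF}_t\right]=\widehat{\mathbf E}_{\widehat X_t}\!\left[f(\br(\widehat X_s))\right]=\widehat{\mathbf E}_{\bs(\dot X_t)}\!\left[f(\br(\widehat X_s))\right]=\dot P_s f(\dot X_t),
\]
where $\dot P_s f(y):=\widehat{\mathbf E}_{\bs(y)}[f(\br(\widehat X_s))]$. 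Projecting onto $\dot{\sF}_t\subset \widehat{\sF}_t\cap\dot\Omega$ then delivers the simple Markov property of $\dot X$.

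Finally, $\bs:I_0\to \bs(I_0)$ is a bijection with a.e.\ inverse $\br$, and pushes $\fm$ forward to $\widehat\fm|_{\bs(I_0)}$, which coincides with $\widehat\fm$ modulo the null set $\widehat I\setminus\bs(I_0)$. Therefore $f\mapsto f\circ\br=\widehat\iota^{-1}\widehat f$ is a unitary isomorphism $L^2(\widehat I,\widehat\fm)\to L^2(I_0,\fm)$ conjugating the semigroup $\widehat P_s$ to $\dot P_s$, so the $\widehat\fm$-symmetry of $\widehat X$ transfers to $\fm$-symmetry of $\dot X$. The Dirichlet form on $L^2(I_0,\fm)$ associated with $\dot X$ is then the image of $(\widehat\sE,\widehat\sF)$ under this isomorphism, which by the definition \eqref{eq:38-3} of the image regularization is exactly $(\sE,\sF)$.
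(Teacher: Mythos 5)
Your overall route is the paper's: collapse $\widehat X$ through the retraction $\br$, use the skip-free property to see that every jump of $\widehat X$ is across a gap whose two endpoints have the same image under $\br$, use absolute continuity of the one-dimensional laws to show the process never sits on the split points at a fixed time, and transfer symmetry and the Dirichlet form through the a.e.\ isometry induced by $\bs$. Two points need repair, one of which is a genuine gap.

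First, the justification of the key step is wrong as stated. You claim that, because $\widehat X$ is a time-changed Brownian motion, ``for every $t>0$ the law of $\widehat X_t$ assigns zero mass to any Lebesgue-null subset of $\widehat J$.'' That is false in general: a time-changed Brownian motion has transition densities with respect to its speed measure $\widehat\fm$, not with respect to Lebesgue measure. If $\widehat\fm$ has an atom at $\widehat x_0$ (which is allowed here, e.g.\ when $\fm(\{x\})>0$ at a point of $D^0$, as (DM) in fact requires), then $\widehat{\mathbf P}_{\widehat x_0}(\widehat X_t=\widehat x_0)>0$ even though $\{\widehat x_0\}$ is Lebesgue-null. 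What saves you is that you have already checked $\widehat\fm\bigl(\widehat I\setminus\bs(I_0)\bigr)=0$ (via $\bs^{-1}\{\bs(x\pm)\}=\emptyset$ by strict monotonicity), so the correct statement --- $\widehat P_t(\widehat x,\cdot)\ll\widehat\fm$, which is the fact the paper cites from the quasidiffusion literature --- gives $\widehat P_t(\widehat x,\widehat I\setminus\bs(I_0))=0$. Replace the appeal to Lebesgue measure by this.

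Second, and more seriously, you have not proved that $\dot X$ fails the strong Markov property. The paper explicitly defines ``simple Markov process'' to mean that the Markov property holds \emph{and} the strong Markov property fails, so this is part of the assertion. Your parenthetical remark that ``the side of approach matters for $\widehat X$ but is invisible to $\dot X$'' is a heuristic, not an argument: it does not by itself exhibit a stopping time at which the strong Markov property breaks down. The paper closes this by contradiction: a continuous strong Markov process on $I_0$ symmetric with respect to the fully supported Radon measure $\fm$ would, by the cited result on one-dimensional diffusions, have a \emph{regular} Dirichlet form on $L^2(I_0,\fm)$, whereas Lemma~\ref{LM81}~(2) shows $(\sE,\sF)$ is not regular precisely because $\bs$ is discontinuous. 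You need this (or an explicit counterexample at a hitting time of a point of $D$) to complete the proof.
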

\begin{proof}
Firstly we note that for any $A\in \dot \sF\subset \widehat{\sF}$,  $\dot{\mathbf{P}}_x(A)=\widehat{\mathbf{P}}_{\bs(x)}(A)$ is Borel measurable in $x$.  Secondly,  let us prove the continuity of all paths of $\dot X$.  The right continuity of all paths is due to that for $\widehat{X}$.  If $\widehat{X}_{t-}=\widehat{X}_t$,  then obviously $\dot{X}_{t-}=\dot{X}_t$.  If $\widehat{X}_{t-}\neq \widehat{X}_t$,  then (SF) for $\widehat{X}$ implies that $(\widehat X_{t-} \wedge \widehat X_t,  \widehat X_{t-}\vee \widehat X_t)=(\widehat{a}_k,\widehat{b}_k)$ for some interval $(\widehat{a}_k,\widehat{b}_k)$ in \eqref{eq:15}.  Note that $\br(\widehat{a}_k)=\br(\widehat{b}_k)$.  It follows that $\dot X_{t-}=\dot X_t$.  Hence $\dot X$ is a continuous stochastic process.

Next we turn to verify the Markov property of $\dot X$.  Denote by $\widehat{P}_t$ the transition functions of $\widehat{X}$,  i.e.  $\widehat{P}_t(\widehat{x},  \widehat{\Gamma})=\widehat{\mathbf{P}}_{\widehat{x}}(\widehat{X}_t\in \widehat{\Gamma})$ for $\widehat{x}\in \widehat{I}$ and $\widehat{\Gamma}\in \mathcal{B}(\widehat{I})$.  As obtained in Theorem~\ref{THM71},  $\widehat{X}$ is equivalent to a quasidiffusion.  Then $\widehat{P}_t$ admits a transition density with respect to $\widehat{\fm}$,  as mentioned in, e.g.,  \cite{K75}.  Particularly,  
\begin{equation}\label{eq:82}
	\widehat{P}_t(\widehat{x},  \widehat{I}\setminus \bs(I_0))=\widehat{\mathbf{P}}_{\widehat{x}}(\widehat{X}_t\in \widehat{I}\setminus \bs(I_0))=0. 
\end{equation}
Define $\dot P_t(x,\Gamma):=\dot{\mathbf{P}}_x(\dot{X}_t\in \Gamma)$ for $t\geq 0$, $x\in I_0$ and $\Gamma\in \mathcal{B}(I_0)$.  By the definition of $\dot X$,  we have $\dot P_t(x,\Gamma)=\widehat{\mathbf{P}}_{\bs(x)}(\br(\widehat{X}_t)\in \Gamma)=\widehat{\mathbf{P}}_{\bs(x)}(\widehat{X}_t\in \br^{-1}\Gamma)$. 
In view of \eqref{eq:82},  one gets 
\begin{equation}\label{eq:83}
	\dot P_t(x,\Gamma)=\widehat{\mathbf{P}}_{\bs(x)}\left(\widehat{X}_t\in \bs(\Gamma)\right)=\widehat{P}_t\left(\bs(x), \bs(\Gamma) \right).  
\end{equation}
To prove the Markov property of $\dot X$,  it suffices to show that for $\Gamma\in \mathcal{B}(I_0)$ and $A\in \dot\sF_t\subset \widehat{\sF}_t$,  
\begin{equation}\label{eq:84}
	\dot{\mathbf{P}}_x\left(\dot X_{t+s}\in \Gamma; A \right)=\int_A \dot P_s(\dot X_t,  \Gamma) d\dot{\mathbf{P}}_x.  
\end{equation}
In fact,  on account of the Markov property of $\widehat{X}$,   the left hand side of \eqref{eq:84} equals
\[
	\widehat{\mathbf{P}}_{\bs(x)}\left(\widehat{X}_{t+s}\in \br^{-1}\Gamma; A \right)=\int_{A} \widehat{P}_s(\widehat{X}_t, \br^{-1}\Gamma) d\widehat{\mathbf{P}}_{\bs(x)}.
\]
Using \eqref{eq:82} and \eqref{eq:83},  we get that $\dot{\mathbf{P}}_x(\dot X_{t+s}\in \Gamma; A)$ equals
\[
	\int_{A \cap \{\widehat{X}_t\in \bs(I_0)\}} \widehat{P}_s(\widehat{X}_t, \bs(\Gamma)) d\widehat{\mathbf{P}}_{\bs(x)}=\int_A \dot P_s(\dot X_t,  \Gamma) d\dot{\mathbf{P}}_x.  
\]
Hence \eqref{eq:84} is concluded. 

Fourthly we derive the symmetry of $\dot{X}$ with respect to $\fm$.  Note that \eqref{eq:83} gives the transition functions of $\dot X$.  Then the symmetry of $\dot X$ can be easily obtained by using the symmetry of $\widehat{X}$ with respect to $\widehat{\fm}$,  \eqref{eq:80} and $\widehat{\fm}=\fm\circ \bs^{-1}$.  In addition,  applying \cite[(1.3.17)]{FOT11} to $\dot P_t$ and noting \eqref{eq:35}, \eqref{eq:38-3},  one can verify that the Dirichlet form of $\dot X$ on $L^2(I_0,\fm)$ is $(\sE,\sF)$.  

Finally it suffices to show that $\dot X$ does not satisfy the strong Markov property.  In fact, otherwise $\dot X$ is a diffusion on $I_0$ which is symmetric with respect to a fully supported Radon measure $\fm$.  By virtue of \cite[Theorem~3]{L21},  its Dirichlet form $(\sE,\sF)$ on $L^2(I_0,\fm)$ must be regular,  as violates the second assertion of Lemma~\ref{LM81} because $\bs$ is not continuous.  That completes the proof. 
\end{proof}
\begin{remark}
When $\bs$ is neither strictly increasing nor continuous,  we may operate only the transformation of darning on $I$,  as stated in \S\ref{SEC31},  to obtain a new ``interval" $I^*$.  Then $(\sE^*,\sF^*)$ defined as \eqref{eq:38-2} is a Dirichlet form on $L^2(I^*,\fm^*)$ but not regular.  Mimicking Theorem~\ref{THM82},  one may still find an $\fm^*$-symmetric continuous simple Markov process on $I^*$ whose Dirichlet form is $(\sE^*,\sF^*)$.  
\end{remark}


\section{Examples}\label{SEC8}

In this section we present various classical Markov processes that can be recovered from Theorem~\ref{THM6}.  

\subsection{Regular diffusions}\label{SEC71}

Consider that $\bs$ is continuous and strictly increasing on $I$ and that $\fm$ is fully supported on $I$.  The assumptions (DK) and \text{(DM)} trivially hold. 

Now neither scale completion nor darning operates on $I$.  The regularization of $(I,\bs,\fm)$ is as follows:
\[
	I^*=I_0, \quad \bs^*=\bs,  \quad \fm^*=\fm|_{I^*},
\]
where $I_0$ is obtained by getting rid of all non-reflecting endpoints from $I$.  
Clearly $(\sE,\sF)=(\sE^*,\sF^*)$.  More precisely, 
\begin{equation}\label{eq:91}
\begin{aligned}
	&\sF^* =\sF=\{f\in L^2(I_0,\fm):f\ll \bs,  df/d\bs\in L^2(I,d\bs),  \\
	&\qquad\qquad \qquad f(j)=0\text{ if } j\notin I_0\text{ and }|\bs(j)|<\infty  \text{ for }j=l\text{ or }r\}, \\
	&\sE^*(f,g)=\sE(f,g)=\frac{1}{2}\int_{I_0}\frac{df}{d\bs}\frac{dg}{d\bs},\quad f,g\in \sF,
\end{aligned}
\end{equation}
where $f\ll \bs$ stands for that $f$ is absolutely continuous with respect to $\bs$.  
The regularized Markov process $X^*$ is a regular diffusion with canonical scale function $\bs$ and canonical speed measure $\fm$ and no killing inside.  We refer readers to,  e.g.,  \cite[\S2.2.3]{CF12} and \cite{F14,  L21, LY19} for related studies. 

\subsection{Snapping out diffusions}

Let $I=[-\infty, \infty]$.  Consider a strictly increasing scale function $\bs$,  which is continuous on $(-\infty,  \infty)\setminus \{0\}$,  right continuous but not left continuous at $0$.  Assume further that $\fm$ is fully supported on $(-\infty,\infty)$ and $\fm(\{-\infty\})=\fm(\{\infty\})=\infty$.  
Although $\bs$ is not continuous at $0$ as assumed in \S\ref{SEC2},  we may take another point in place of $0$.  
Clearly (DK) and \text{(DM)} hold true.  

Only scale completion need be operated and the regularization of $(I,\bs,\fm)$ is
\begin{itemize}
\item[(a$^*$)] $I^*=(-\infty, 0-]\cup [0+,\infty)$;
\item[(b$^*$)] $\bs^*(x)=\bs(x)$ for $x\neq 0$ and $\bs^*(0-)=\bs(0-), \bs^*(0+)=\bs(0)$;
\item[(c$^*$)] $\fm^*|_{I^*\setminus \{0\pm\}}=\fm|_{(-\infty,\infty)\setminus \{0\}}$, $\fm^*(\{0+\})=\fm(\{0\})$ and $\fm^*(\{0-\})=0$.  
\end{itemize} 
(If wants $\fm^*$ having positive mass at both $0+$ and $0-$,  one may take a right continuous $\bs$ such that $\bs(x)=\bs(0)$ for $x\in [0,1)$,  $\bs(1)>\bs(0)$ and $\bs$ is strictly increasing and continuous elsewhere.)  Set
\[
I^*_+:=[0+,\infty),\quad I^*_-:=(-\infty, 0-],\quad  \bs^*_\pm:=\bs^*|_{I^*_\pm},  \quad \fm^*_\pm:=\fm^*|_{I^*_\pm}.
\] 
For $f^*\in L^2(I^*,\fm^*)$,  set $f^*_+:=f^*|_{I^*_+}$ and  $f^*_-=f^*|_{I^*_-}$.
The Dirichlet form $(\sE^*,\sF^*)$ on $L^2(I^*,\fm^*)$ is 
\[
\begin{aligned}
	&\sF^*=\{f^*\in L^2(I^*,\fm^*): f^*_\pm\ll \bs^*_\pm,  df^*_\pm/d\bs^*_\pm \in L^2(I^*_\pm, d\bs^*_\pm),   \\
	&\qquad\qquad \qquad\qquad\qquad  f^*_\pm(\pm\infty)=0\text{ whenever }|\bs^*(\pm \infty)|<\infty \},  \\
	&\sE^*(f^*,f^*)=\frac{1}{2}\sum_{\dagger=+,-}\int_{I^*_\dagger}\left(\frac{df^*_\dagger}{d\bs^*_\dagger}\right)^2d\bs^*_\dagger+\frac{(f^*_+(0+)-f^*_-(0-))^2}{2(\bs^*(0+)-\bs^*(0-))},\quad f^*\in \sF^*. 
\end{aligned}
\]
The regularized Markov process $X^*$ is called a \emph{snapping out diffusion process} as studied in \cite[\S3.4]{LS20}.  It behaves like a regular diffusion on $I^*_\pm$,  and may jump to the dual splitting point and start as a new regular diffusion on another component of $I^*$ by chance when hits $0-$ or $0+$.  

Note that $I_0=\bR$ and $(\sE,\sF)=(\sE^*,\sF^*)$.  In view of \S\ref{SEC9},  $(\sE,\sF)$ is not regular on $L^2(\bR)$ but there is a simple Markov process $\dot X$ whose Dirichlet form on $L^2(\bR)$ is $(\sE,  \sF)$.  

\begin{example}
We consider a more explicit example with $\bs(x)=x$ for $x<0$,  $\bs(x)=\frac{2}{\kappa}+x$ for $x\geq 0$,  and $\fm$ being the Lebesgue measure,  where $\kappa>0$ is a given constant.  Denote by $H^1(J)$ the Sobolev space of order $1$ over an interval $J$.  Then
\[
\begin{aligned}
	&\sF^*=\sF=\left\{f\in L^2(\bR): f|_{(0,\infty)} \in H^1((0,\infty)),  f|_{(-\infty, 0)}\in H^1((-\infty, 0))\right\},   \\
	&\sE^*(f,f)=\sE(f,f)=\frac{1}{2}\int_{\bR\setminus \{0\}}f'(x)^2dx+\frac{\kappa}{4}(f(0+)-f(0-))^2,\quad f\in \sF.  
\end{aligned}\]
Note that $(\sE,\sF)$ is a regular Dirichlet form on $L^2((-\infty, 0-]\cup [0+,\infty))$ and its associated Markov process $X^*$ is called a \emph{snapping out Brownian motion} with parameter $\kappa$,  which was raised in \cite{L16}.    

However $(\sE,\sF)$ is a Dirichlet form but not regular on $L^2(\bR)$.  Theorem~\ref{THM82} gives a continuous simple Markov process $\dot X$ whose Dirichlet form on $L^2(\bR)$ is $(\sE,\sF)$.  Before hitting $0$,  $\dot X$ moves as a Brownian motion,  and the excursions of $\dot X$ at $0$ can be described as follows: There exists a sequence of i.i.d.  random times $\{\tau_n: n\geq 1\}$ such that 
\[
S_n=\tau_0+\cdots+\tau_n\in Z:=\{t: \dot{X}_t=0\}, \quad \forall n\geq 0,
\]
where $\tau_0=\inf\{t>0: \dot X_t=0\}$,  and for $k\in \bN$,  
\begin{itemize}
\item[(1)] When $\dot X_0>0$,  the excursion intervals contained in $(S_{2k}, S_{2k+1})$ are on the right axis while those contained in $(S_{2k+1}, S_{2k+2})$ are on the left axis.
\item[(2)] When $\dot X_0<0$,  the excursion intervals contained in $(S_{2k}, S_{2k+1})$ are on the left axis while those contained in $(S_{2k+1}, S_{2k+2})$ are on the right axis.
\item[(3)] When $\dot X_0=0$,  the above two cases occur with equal probability.  
\end{itemize}
Particularly,  the strong Markov property at $\tau_0$ fails for $\dot X$.  
It is worth pointing out that the sequence of times $\{S_n: n\geq 1\}$ consists of the successive jumping times of snapping out Brownian motion,  and $\tau_n\overset{d}{=} \inf\{t>0: L_t>\xi\}$ is equal to the killing time of an elastic Brownian motion in distribution,  where $L_t$ is the local time of a certain reflecting Brownian motion on $[0,\infty)$ at $0$ and $\xi$ is an independent exponential random variable with parameter $\kappa$.  More details about elastic Brownian motion and snapping out Brownian motion are referred to in \cite{L16,  LS20}.  

\end{example}

\subsection{Random walk in one dimension}\label{SEC43}

Take $p,q\in \bN\cup \{\infty\}$ and an increasing sequence of constants indexed by $\bZ_{-p,q}:=\{-p,1-p,\cdots,  -1, 0, 1,\cdots,  q\}\cap \mathbb{Z}$: $$c_{-p}<\cdots <c_{-1}<c_0<c_1<\cdots<c_q.$$  
Consider the following:
\begin{itemize}
\item[(a)] $I=[-p, q+1]$;
\item[(b)] $\bs(x)=c_k$ for $x\in [k, k+1)$ and $k\in \mathbb{Z}_{-p,q}$.  
\item[(c)] $\fm$ is fully supported, Radon on $I\cap \bR$ and $\fm(\{-\infty\})=\infty$ or $\fm(\{\infty\})=\infty$ whenever $p=\infty$ or $q=\infty$.  

\end{itemize}
Clearly (DK) and \text{(DM)} hold true.  

The regularization of $(I,\bs,\fm)$ is as follows:
\begin{itemize}
\item[(a$^*$)] $I^*$ is identified with the discrete space $\bZ_{-p,q}$.   
\item[(b$^*$)] $\bs^*(k)=c_k$ for $k\in I^*$;
\item[(c$^*$)] $\fm^*(\{k\})=\fm([k, k+1))$ for $k\in \bZ$ with $-p\leq k\leq q-1$.  When $q\in \bN$,  $\fm^*(\{q\})=\fm([q, q+1])$.  
\end{itemize}
Set $\mu_{k,k+1}:=\frac{1}{2(c_{k+1}-c_k)}$ for $k\in \bZ$ with $-p\leq k \leq q-1$.  Write $\mu_{-p-1, -p}=\mu_{q,q+1}=0$ for convenience.  Define $\mu_k:=\mu_{k-1,k}+\mu_{k,k+1}$ for $k\in \bZ$ with $-p\leq k\leq q$.  Then the Dirichlet form $(\sE^*,\sF^*)$ on $L^2(I^*,\fm^*)$ is 
\begin{equation}
\begin{aligned}
&\sF^*=\{f^*\in L^2(I^*,\fm^*): \sE^*(f^*,f^*)<\infty, \\
&\qquad\qquad f^*(j^*)=0\text{ if } |j^*|=\infty, \lim_{k\rightarrow j^*}|c_k|<\infty\text{ for  }j^*=-p\text{ or }q \},\\
&\sE^*(f^*,f^*)=\sum_{-p\leq k\leq q-1} \mu_{k,k+1}\cdot (f^*(k+1)-f^*(k))^2,\quad f^*\in \sF^*. 
\end{aligned}\end{equation}
The regularized Markov process $X^*$ is a continuous time random walk on $I^*$.  More precisely,  for each $k\in I^*$,  $X^*$ waits at $k$ for a (mutually independent) exponential time with mean $\fm^*(\{k\})/\mu_k$ and then jumps to $k-1$ or $k+1$ according to the distribution
\[
	P_k(k-1)=\frac{\mu_{k-1,k}}{\mu_k},\quad P_k(k+1)=\frac{\mu_{k,k+1}}{\mu_k},\quad P_k(j)=0,\; j\neq k-1,  k+1. 
\]
When $\fm^*(\{k\})=\mu_k$,  $X^*$ is called \emph{in the constant speed}.  That means the holding exponential times are independent and identically distributed; see,  e.g.,  \cite[\S2.1]{K14}.  

Particularly,  when $p=q=\infty$, $c_k=k$ for $k\in \bZ$,  and $\fm|_\bR$ is the Lebesgue measure,  $X^*$ is identified with the continuous time simple random walk on $\bZ$.

\subsection{Birth-death processes}

Take a birth-death $Q$-matrix
\[
Q=\left(\begin{array}{ccccc}
-q_0 &  b_0 & 0 & 0 &\cdots \\
a_1 & -q_1 & b_1 & 0 & \cdots \\
0 & a_2 & -q_2 & b_2 & \cdots \\
\cdots & \cdots &\cdots &\cdots &\cdots 
\end{array}  \right),
\]
where $a_k>0$,  $k \geq 1$,  $b_k>0$,  $k\geq 0$ and $q_k=a_k+b_k$ (Set $a_0=0$ for convenience).  We are to obtain a birth-death process  corresponding to $Q$,  also called a \emph{birth-death $Q$-process} for simplicity,  on $\bN$ by virtue of Theorem~\ref{THM6}.  The terminologies about birth-death $Q$-processes are referred to in \cite{C04}.  

To do this,  consider a triple $(I,\bs,\fm)$ as in \S\ref{SEC43} and assume further that $p=0, q=\infty$ and
\[
\begin{aligned}
	&c_0=0,  \quad c_1=\frac{1}{2b_0}, \quad c_k=\frac{1}{2b_0}+\sum_{i=2}^k \frac{a_1a_2\cdots a_{i-1}}{2b_0b_1\cdots b_{i-1}}, \;k\geq 2; \\
	&\fm([0,1))=1,  \quad \fm([k,k+1))=\frac{b_0b_1\cdots b_{k-1}}{a_1a_2\cdots a_k}, \; k\geq 1,\quad \fm(\{\infty\})=\infty.  
\end{aligned}\]
A straightforward computation yields that 
\[
\begin{aligned}
	&\mu_{0,1}=\mu_0=b_0,  \quad \mu_{k,k+1}=\frac{b_0 b_1\cdots b_{k}}{a_1a_2\cdots a_{k}},\mu_k=\frac{b_0 \cdots b_{k-1} q_{k}}{a_1a_2\cdots a_{k}},\;  k\geq 1,  \\
	&\fm^*(\{0\})=1,\quad \fm^*(\{k\})=\frac{b_0b_1\cdots b_{k-1}}{a_1a_2\cdots a_k}, \; k\geq 1.  
\end{aligned}\]
The regularized Markov process $X^*$ waits at $k\geq 0$ for an exponential time with mean $1/q_k$ and then jumps to $k-1$ or $k+1$ with probability $a_k/q_k$ or $b_k/q_k$.  In other words,  $X^*$ is a birth-death $Q$-process.   

There may exist other birth-death $Q$-processes that are even not right processes.  In fact,  $X^*$ is the so-called \emph{minimal birth-death $Q$-process} in the sense of \cite[Theorem~2.21]{C04} and $(\sE^*,\sF^*)$ is the so-called \emph{minimal Dirichlet form}; see \cite[\S6.8]{C04}.  Define
\[
\begin{aligned}
	&\overline{\sF}^*:=\{f^*\in L^2(I^*,\fm^*): \sE^*(f^*,f^*)<\infty\},\\
	&\overline{\sE}^*(f^*,f^*):=\sE^*(f^*,f^*),\quad f^*\in \overline{\sF}^*,
\end{aligned}
\]
which is called the \emph{basic Dirichlet form} in \cite[Definition~6.53]{C04}.  Although is not regular on $L^2(I^*,\fm^*)$ if $\overline{\sF}^*\neq \sF^*$,  $(\overline{\sE}^*,\overline{\sF}^*)$ corresponds to another birth-death $Q$-process,  a maximal symmetric one in the sense of \cite[Theorem~6.61]{C04}: For any other $\fm^*$-symmetric birth-death Q-process whose Dirichlet form is $(\sE',\sF')$,  it holds
\[
\sF^*\subset \sF'\subset \overline{\sF}^*,\quad \sE'(f,f)=\sE^*(f,f),\;\forall f\in \sF^*.  
\] 
The construction of other birth-death $Q$-processes is referred to in,  e.g.,  \cite[Chapter 17]{WZ18}.  Instead we present a corollary for the uniqueness of birth-death $Q$-processes.

\begin{corollary}
\begin{itemize}
\item[\rm (1)] The symmetric birth-death $Q$-processes are unique,  if and only if $\infty$ is not regular,  i.e.  
\[
\text{either }\lim_{k\rightarrow \infty} c_k=\infty \text{ or }\sum_{k\geq 0} \fm^*(\{k\})=\infty.  
\]
\item[\rm (2)] Each of the following conditions is equivalent to the uniqueness of birth-death $Q$-processes:
\begin{itemize}
\item[\rm (i)] $X^*$ is conservative;
\item[\rm (ii)] $\infty$ is inaccessible for $X^*$;
\item[\rm (iii)] $\sum_{k\geq 0}|c_{k+1}-c_k| \sum_{i=0}^k \fm^*(\{i\})=\infty$.  
\end{itemize}
\end{itemize}
\end{corollary}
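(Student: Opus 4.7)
For part (1), the strategy is to exploit the sandwich $\sF^* \subset \sF' \subset \overline{\sF}^*$ stated in the excerpt for every symmetric birth-death $Q$-process $(\sE',\sF')$, which reduces symmetric uniqueness to the equality $\sF^* = \overline{\sF}^*$. These two spaces differ only by a boundary condition at $q = \infty$, which by the definition of $\sF^*$ is active solely when $c_\infty := \lim_k c_k < \infty$. I would split into two cases. If $c_\infty = \infty$, the condition is vacuous and equality is automatic. If $c_\infty < \infty$, a Cauchy--Schwarz estimate using $\sum_k (c_{k+1}-c_k)^{-1}(f(k+1)-f(k))^2 < \infty$ (which is just $2\sE^*(f,f) < \infty$) together with $\sum_k (c_{k+1}-c_k) = c_\infty < \infty$ shows that every $f \in \overline{\sF}^*$ admits a limit $L := \lim_k f(k)$; hence $\sF^*$ is precisely the subspace where $L = 0$. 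The vanishing $L = 0$ is then forced for every $f \in \overline{\sF}^*$ exactly when the constant function fails to be square integrable, i.e.\ iff $\sum_k \fm^*(\{k\}) = \infty$: one direction follows since $f \equiv 1 \in \overline{\sF}^* \setminus \sF^*$ is a counterexample otherwise, and the other since under $L \neq 0$ the tail $\sum_{k \geq N} \fm^*(\{k\}) f(k)^2$ diverges.

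For part (2), the plan is to first prove the chain $\text{(i)} \Leftrightarrow \text{(ii)} \Leftrightarrow \text{(iii)}$ using the Feller classification of \S\ref{SEC73} applied to the image regularized process $\widehat X$, whose state space on natural scale is $\widehat I = \{c_k\}$ with speed measure $\widehat \fm(\{c_k\}) = \fm^*(\{k\})$. Since $\widehat l = c_0 = 0 \in \widehat I$ is reflecting, Proposition~\ref{PRO69} gives directly that $\text{(i)} \Leftrightarrow \sigma(\widehat r) = \infty \Leftrightarrow \text{(ii)}$. A direct computation yields
\[
\sigma(\widehat r) = \sum_{k \geq 0} (c_{k+1} - c_k) \sum_{j=1}^k \fm^*(\{j\}),
\]
whereupon the sum in (iii) equals $\sigma(\widehat r) + \fm^*(\{0\}) \cdot \widehat r$; since the extra term $\fm^*(\{0\}) \widehat r$ is finite exactly when $\widehat r < \infty$, while both sums are automatically infinite when $\widehat r = \infty$ (as $\widehat \fm((0,y])$ is eventually bounded below by $\fm^*(\{1\}) > 0$), a short case analysis gives $\text{(ii)} \Leftrightarrow \text{(iii)}$. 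It remains to relate (i) to uniqueness among \emph{all} birth-death $Q$-processes, which is Feller's classical theorem: the minimal $Q$-process is the unique $Q$-process iff it is honest. I would cite \cite{C04} for this and observe that (iii) coincides with Reuter's criterion $\sum_n (b_n \pi_n)^{-1} \sum_{k \leq n}\pi_k = \infty$ under the correspondence $\pi_n = \fm^*(\{n\})$ and $(b_n \pi_n)^{-1} = 2(c_{n+1}-c_n)$.

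The main obstacle will be the limit-at-infinity analysis for $\overline{\sF}^*$-functions in part (1) under $c_\infty < \infty$, namely establishing existence of the limit via Cauchy--Schwarz and then pinpointing that its forced vanishing is equivalent to non-integrability of the constant function. Once this is done, the remainder reduces to a direct computation of the Feller parameter $\sigma(\widehat r)$ and a citation of the Feller--Reuter uniqueness theorem from \cite{C04}.
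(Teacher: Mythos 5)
Your proposal is correct, but it takes a noticeably more self-contained route than the paper, especially in part (1). The paper disposes of (1) entirely by citing \cite[Corollary~6.62]{C04}, whereas you reduce symmetric uniqueness to the identity $\sF^*=\overline{\sF}^*$ via the sandwich $\sF^*\subset\sF'\subset\overline{\sF}^*$ and then prove directly that this identity holds iff $\infty$ is not regular: the Cauchy--Schwarz bound $|f(m)-f(n)|^2\leq\bigl(\sum_{k=n}^{m-1}(c_{k+1}-c_k)\bigr)\cdot 2\sE^*(f,f)$ over tails gives existence of $L=\lim_k f(k)$ when $c_\infty<\infty$, and the dichotomy on $\sum_k\fm^*(\{k\})$ (constant function in $L^2$ versus forced vanishing of $L$) is exactly right. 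This buys a proof readable without Chen's book, at the cost of still needing the sandwich itself (Theorem~6.61 there) and the existence of the maximal symmetric process. For part (2) the content is essentially the paper's: the paper gets (i)$\Leftrightarrow$(ii) directly from the absence of killing inside (via Lemma~\ref{LM54}~(4), $\zeta=T_r$), while you route both (i)$\Leftrightarrow$(ii) and (ii)$\Leftrightarrow$(iii) through the Feller parameter $\sigma(\widehat r)$ of Proposition~\ref{PRO69}; your explicit computation $\sigma(\widehat r)=\sum_k(c_{k+1}-c_k)\sum_{j=1}^k\fm^*(\{j\})$ and the observation that the discrepancy $\fm^*(\{0\})(c_\infty-c_0)$ with the sum in (iii) is harmless (both quantities being infinite whenever $c_\infty=\infty$, since $\fm^*(\{1\})>0$) makes explicit what the paper leaves implicit in its appeal to Proposition~\ref{PRO69}. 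The final link between conservativeness and uniqueness among all (not necessarily symmetric) $Q$-processes is a citation to \cite{C04} in both treatments, with you invoking the Feller honesty criterion where the paper cites \cite[Corollary~3.18]{C04} for the equivalence with (iii); these are interchangeable given (i)$\Leftrightarrow$(iii).
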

\begin{proof}
The first assertion is due to \cite[Corollary~6.62]{C04}.  Now we prove the second one.  The equivalence between (i) and (ii) is clear because $X^*$ has no killing inside $\bN$.  The equivalence between (ii) and (iii) is a consequence of Proposition~\ref{PRO69}.  The equivalence between the uniqueness of birth-death $Q$-processes and (iii) can be obtained by means of \cite[Corollary~3.18]{C04}.  That completes the proof.  
\end{proof}
\begin{remark}
When $\infty$ is exit (equivalently,  inaccessible but not regular),  the symmetric birth-death $Q$-processes are unique but there exist other birth-death $Q$-processes that are not symmetrizable.  
\end{remark}

\subsection{Time-changed Brownian motions related to regular Dirichlet subspaces}\label{SEC45}

Let $K\subset [0,1]$ be a generalized Cantor set (see, e.g.,  \cite[page 39]{F99}) and write $[0,1]\setminus K$ as a disjoint union of open intervals:
\begin{equation}\label{eq:42}
	[0,1]\setminus K=\cup_{k\geq 1}(a_k,b_k).  
\end{equation}
Assume that  the Lebesgue measure of $K$ is positive. 
Consider the following triple:
\begin{itemize}
\item[(a)]  $I=[0,1]$;
\item[(b)]  $\bs(x)=x$ for $x\in K$ and $\bs(x):=a_k$ for $x\in (a_k,b_k)$ and $k\geq 1$;
\item[(c)] $\fm(dx)=1_K(x)dx$. 
\end{itemize}
It is easy to verify that (DK) and \text{(DM)} hold true. 

The transformation of scale completion in \S\ref{SEC31} divides each $b_k$ into $\{b_k+, b_k-\}$ and then darning transformation collapses $[a_k, b_k-]$ into an abstract point $p^*_n$.  By regarding $p^*_n$ and $b_k+$ as $a_k$ and $b_k$ respectively,  one may identify the regularization $(I^*,\bs^*,\fm^*)$ with its image $(\widehat{I}, \widehat{\bs},  \widehat{\fm})$.  More precisely,
\[
	I^*=\widehat{I}=K,\quad \bs^*(x)=\widehat{\bs}(x)=x,\; x\in K,\quad \fm^*=\widehat{\fm}=\fm.  
\]
Let 
\begin{equation}\label{eq:43}
\sS^*:=\left\{f^*=h^*|_{K}: h^*\text{ is absolutely continuous on }(0,1)\text{ and }\frac{dh^{*}}{dx} \in L^2((0,1))\right\}. 
\end{equation}
The Dirichlet form $(\sE^*,\sF^*)$ on $L^2(I^*,\fm^*)$ is expressed as
\[
\begin{aligned}
&\sF^*=L^2(I^*,\fm^*)\cap \sS^*,\\
&\sE^*(f^*,f^*)=\frac{1}{2}\int_{K} \left(\frac{df^*}{dx}\right)^2dx+\frac{1}{2}\sum_{k\geq 1}\frac{\left(f^*(a_k)-f^*(b_k) \right)^2}{|b_k-a_k|},\quad f^*\in \sF^*.  
\end{aligned}
\]
Its associated Hunt process $X^*$ is a time-changed Brownian motion on $K$ with the speed measure $\fm^*$.  We point out that $(\sE^*,\sF^*)$ and $X^*$ have been utilized in \cite{LY17} to study the regular Dirichlet subspaces of 1-dimensional Brownian motion.  


\subsection{Brownian motion on Cantor set}\label{SEC46}

Let $K\subset [0,1]$ be a generalized Cantor set as in \S\ref{SEC45} but we do not impose that $K$ has positive Lebesgue measure.  Set $K_n:=n+K=\{n+x: x\in K\}$ for $n\in \bZ$ and 
\[
\mathbf{K}:=\cup_{n\in \bZ}K_n.
\]
Write $[0,1]\setminus K$ as \eqref{eq:42} and let $a^n_k:=n+a_k,  b^n_k:=n+b_k$ for $n\in \bZ$.  Consider the following triple:
\begin{itemize}
\item[(a)] $I=[-\infty, \infty]$; 
\item[(b)] $\bs(x):=x$ for $x\in \mathbf{K}$ and $\bs(x):=a^n_k$ for $x\in (a^n_k,b^n_k)$ and $k\geq 1, n\in \bZ$; 
\item[(c)] $\fm(dx)=1_{\mathbf{K}}(x)dx+\sum_{k\geq 1,  n\in \bZ} \frac{|b^n_k-a^n_k|}{2}\left(\delta_{a^n_k}+\delta_{b^n_k}\right)$,  where $\delta_{a^n_k}, \delta_{b^n_k}$ are Dirac measures at $a^n_k,b^n_k$.  
\end{itemize}
Clearly (DK) and \text{(DM)} hold true.  
Mimicking \S\ref{SEC45},  one can obtain that the regularization of $(I,\bs,\fm)$ is 
\[
	I^*=\mathbf{K}, \quad \bs^*(x)=x,\; x\in \mathbf{K},\quad \fm^*=\fm.  
\]
Let $\sS^*$ be defined as \eqref{eq:43} with $\mathbf{K}$ and $\bR$ in place of $K$ and $(0,1)$ respectively.  Then the Dirichlet form $(\sE^*,\sF^*)$ on $L^2(I^*,\fm^*)$ is
\[
\begin{aligned}
&\sF^*=L^2(I^*,\fm^*)\cap \sS^*,\\
&\sE^*(f^*,f^*)=\frac{1}{2}\int_{\mathbf K} \left(\frac{df^*}{dx}\right)^2dx+\frac{1}{2}\sum_{k\geq 1,  n\in \bZ}\frac{\left(f^*(a^n_k)-f^*(b^n_k) \right)^2}{|b^n_k-a^n_k|},\quad f^*\in \sF^*.  
\end{aligned}
\]
Particularly,  when $K$ is a standard Cantor set,  $\mathbf{K}$ is of zero Lebesgue measure and the local part in the expression of $\sE^*(f^*,f^*)$ vanishes.  

The regularized Markov process $X^*$  is called a \emph{Brownian motion on Cantor set} in,  e.g.,  \cite{BEPP08}.  More precisely,  it is claimed in \cite{BEPP08} that $X^*$ is identified with the unique (in distribution) skip-free c\`adl\`ag process $\xi=(\xi_t)_{t\geq 0}$ (not necessarily a Markov process) on $\mathbf K$ such that both $\xi$ and $(\xi^2_t-t)_{t\geq 0}$ are martingales.  

\appendix

\section{Proof of Lemma~\ref{LM39}}\label{APPA}

\begin{proof}[Proof of Lemma~\ref{LM39}]
Recall that $\bs(0)=0$.  
Set 
\[
\begin{aligned}
	\widehat{F}(\widehat{x})&:=|(0,\widehat x)\cap \widehat{I}|,\quad 0\leq \widehat{x}\leq  \widehat{r},\\
	\widehat{F}(\widehat{x})&:=-|(\widehat x,0)\cap \widehat{I}|,\quad \widehat{l}\leq \widehat{x}< 0,
\end{aligned}\]
where $|\cdot|$ stands for the Lebesgue measure.  Clearly $\widehat{F}$ is continuous and increasing,  $\widehat{F}(\bs(x))=\bs_c(x)$ for $x\in (l,r)$.  Set $\widehat{F}^{-1}(\widehat{t}):=\widehat{F}^{-1}(\{\widehat{t}\})=\{\widehat{x}: \widehat{F}(\widehat{x})=\widehat{t}\}$ for $\widehat{t}\in [\widehat{F}(\widehat{l}),\widehat{F}(\widehat{r})]$.  Note that except for at most countably many $\widehat{t}$,  $\widehat{F}^{-1}(\widehat{t})$ is a singleton.  Denote by this exceptional set by $\{\widehat{t}_p: p\geq 1\}$.  Particularly,  $\widehat{F}^{-1}(\widehat{t}_p)$ is an interval and 
\[
\widehat{H}:=\left(\cup_{p\geq 1} \widehat{F}^{-1}(\widehat{t}_p)\right)\cap \widehat{I}
\]
is of zero Lebesgue measure.  Set further 
\[
\widehat{G}(\widehat s):=\inf\{\widehat{x}\in [\widehat{l},\widehat{r}]:  \widehat{F}(\widehat{x})>\widehat s\}
\]
with the convention $\inf \emptyset=\infty$.  For $\widehat{t}\in  [\widehat{F}(\widehat{l}),\widehat{F}(\widehat{r}))\setminus \{\widehat{t}_p: p\geq 1\}$,  $\widehat{G}(\widehat{t})$ is the unique element in $\widehat{F}^{-1}(\widehat{t})$,  and in a little abuse of notation, we write $\widehat{G}(\widehat{t})=\widehat{F}^{-1}(\widehat{t})$.  Let
\[
	H:=\left\{x\in (l,r): \bs(x)\neq \widehat{G}(\bs_c(x))\right\}.  
\]
Then
\begin{equation}\label{eq:A1}
\bs(H):=\{\bs(x): x\in H\}=\{\widehat{x}\in \widehat{I}: \widehat{x}\neq \widehat{G}(\widehat{F}(\widehat{x}))\}=\widehat{H},
\end{equation} 
where the second and third identities hold up to a set of at most countably many points.  More precisely,  both $\bs(H)\setminus \{\widehat{x}\in \widehat{I}: \widehat{x}\neq \widehat{G}(\widehat{F}(\widehat{x}))\}$ and $\{\widehat{x}\in \widehat{I}: \widehat{x}\neq \widehat{G}(\widehat{F}(\widehat{x}))\}\setminus \bs(H)$ are subsets of $\{\widehat{l},\widehat{r}\}$,  $\{\widehat{x}\in \widehat{I}: \widehat{x}\neq \widehat{G}(\widehat{F}(\widehat{x}))\}\subset \widehat{H}$ and $\widehat{H}\setminus \{\widehat{x}\in \widehat{I}: \widehat{x}\neq \widehat{G}(\widehat{F}(\widehat{x}))\}$ contains at most countably many points.  We further assert
\begin{equation}\label{eq:A2}
\mu_c(H)=0.  
\end{equation}
To accomplish this,  note that for any $x\in H$,  $\bs_c(x)=\widehat{F}(\bs(x))\in \{\widehat{t}_p: p\geq 1\}$ and hence $|\bs_c(H)|=0$.  It follows that $\mu_c(H)\leq \mu_c\circ \bs_c^{-1}(\bs_c(H))=|\bs_c(H)|=0$,  where the image measure $\mu_c\circ \bs_c^{-1}$ of $\mu_c$ under $\bs_c$ is actually the Lebesgue measure;  see,  e.g.,  \cite[\S3.5, Exercise 36]{F99}.  


Let $\widehat{f}=\widehat{h}|_{\widehat{I}}$ with $\widehat{h}\in \dot{H}^1_e\left((\widehat{l},\widehat{r})\right)$.  We are to prove that $f(\cdot):=\widehat{f}(\bs(\cdot))=\widehat{h}(\bs(\cdot))\in \sS$,  so that $\widehat{f}\in \widehat{\sS}$.  To do this, set
\[
\begin{aligned}
	&g^c(x):=\widehat{h}'(\widehat G(\bs_c(x))), \quad x\in (l,r);  \\
	&g^\pm(x):=\left\lbrace
	\begin{aligned}
	&\frac{\widehat{h}(\bs(x))-\widehat{h}(\bs(x\pm))}{\bs(x)-\bs(x\pm)},\quad x\in D^\pm, \\
	&0,\qquad\qquad\qquad\qquad\quad x\notin D^\pm. 
	\end{aligned}
	\right.
\end{aligned}\]
Since $\widehat{h}'\in L^2((\widehat{l},\widehat{r}))$ and $|\widehat{h}(\bs(x))-\widehat{h}(\bs(x\pm))|^2\leq \left|\int_{\bs(x)}^{\bs(x\pm)}\widehat{h}'(\widehat{t})^2d\widehat{t}\right| \cdot |\bs(x)-\bs(x\pm)|$,  it follows that $g^\pm\in L^2(I, \mu^\pm_d)$.  We assert that $g^c\in L^2(I, \mu_c)$.  In fact,  $\bs_c(l)=\widehat{F}(\widehat{l})$,  $\bs_c(r)=\widehat{F}(\widehat{r})$,  and using \cite[Chapter 0, Proposition~4.10]{RY99} with $A(\widehat t)=\widehat t$ and $u=\bs_c$ or $\widehat{F}$,  we get that $\int_l^r g^c(x)^2\mu_c(dx)$ is equal to 
\begin{equation}\label{eq:36}
	\int_{\bs_c(l)}^{\bs_c(r)}\widehat{h}'(\widehat{G}(\widehat t))^2d\widehat t=\int_{\widehat{l}}^{\widehat{r}} \widehat{h}' (\widehat{G}(\widehat{F}(\widehat{x})))^2d\widehat{F}(\widehat{x}) =\int_{\widehat{I}}  \widehat{h}' (\widehat{G}(\widehat{F}(\widehat{x})))^2d\widehat{x}.
\end{equation}
Since $\widehat{G}(\widehat{F}(\widehat{x}))=\widehat{x}$ for $\widehat{x}\in \widehat{I}\setminus \widehat{H}$ (see \eqref{eq:A1}) and $|\widehat{H}|=0$,  it follows that
\begin{equation}\label{eq:37}
	\int_l^r g^c(x)^2\mu_c(dx)=\int_{\widehat{I}}  \widehat{h}' (\widehat{x})^2d\widehat{x}<\infty.
\end{equation}
Hence $g^c\in L^2(I,\mu_c)$ is concluded.  
Now define for $x\in (l,r)$, 
\[
	f^+(x):=\int_{(0,x)}g^+(y)\mu^+_d(dy),\quad f^-(x):=\int_{(0,x]}g^-(y)\mu^-_d(dy).
\]
It suffices to show $f^c:=f-f^+-f^-\in \sS_c$.  Indeed,  take $0<x<r$ and we have
\begin{equation*}
\begin{aligned}
	f^c(x)-f^c(0)&=\widehat{f}(\bs(x))-\widehat{f}(0)-f^+(x)-f^-(x)=\int_{(0,\bs(x))\cap \widehat{I}} \widehat{h}'(\widehat{t})d\widehat{t}. 
\end{aligned}
\end{equation*}
Mimicking \eqref{eq:36} and \eqref{eq:37},  we get that
\[
f^c(x)-f^c(0)=\int_0^x g^c(y)\mu_c(dy).  
\]
On account of $g^c\in L^2(I,\mu_c)$,  we obtain that $f^c\in \sS_c$.  Therefore $f\in \sS$ is concluded. 

To the contrary,   take $f=f^c+f^++f^-\in \sS$ and let $\widehat{f}:=\widehat{\iota}f\in \widehat{\sS}$.  Define a function $\widehat{h}$ as follows:
\begin{equation}\label{eq:A5}
\begin{aligned}
	&\widehat{h}(\bs(x)):=\widehat{f}(\bs(x))=f(x),\quad x\in (l,r),  \\
	&\widehat{h}(\bs(x\pm)):=\widehat{f}(\bs(x\pm))=f(x\pm),\quad x\in D^\pm. 
\end{aligned}
\end{equation}
Note that $|\widehat{a}_k|+|\widehat{b}_k|<\infty$ (see \eqref{eq:15}).  
For $\widehat{x}\in (\widehat{a}_k,\widehat{b}_k),  k\geq 1$,  define further
\begin{equation}\label{eq:A6}
\widehat{h}(\widehat{x}):= \frac{\widehat{h}(\widehat b_k)-\widehat{h}(\widehat a_k)}{\widehat{b}_k-\widehat{a}_k}\cdot (\widehat{x}-\widehat{a}_k)+\widehat{h}(\widehat{a}_k). 
\end{equation}
We need to prove that $\widehat{h}\in \dot H^1_e((\widehat{l},\widehat{r}))$.  To do this,  define a function $\widehat{\varphi}$ as follows:
\begin{equation*}
	\widehat{\varphi}(\widehat{x}):=\left\lbrace 
	\begin{aligned}
	&\frac{df^c}{d\mu^c}(x),\quad \widehat{x}=\bs(x)\text{ with }x\in (l,r)\setminus H \text{ and the derivative exists}, \\
	&\frac{\widehat{h}(\widehat b_k)-\widehat{h}(\widehat a_k)}{\widehat{b}_k-\widehat{a}_k},\quad \widehat{x}\in (\widehat{a}_k,\widehat{b}_k), k\geq 1.  
	\end{aligned}
	\right. 
\end{equation*}
Note that $|\bs(H)|=|\widehat{H}|=0$ due to \eqref{eq:A1}.  
Let 
\[
	N:=\{x\in (l,r)\setminus H: df^c/d\mu^c(x)\text{ does not exist}\}, \quad \widehat{N}:=\bs(N).
	\]  
We point out that $|\widehat{N}|=0$,  so that $\widehat{\varphi}$ is defined a.e.  on $(\widehat{l},\widehat{r})$.  In fact,  $\widehat{N}\subset [\widehat{l},\widehat{r}]\setminus \widehat{H}$ and for $x\in N$,  $\bs(x)=\widehat{G}(\bs_c(x))=\widehat{F}^{-1}(\bs_c(x))$ because $\bs(x)\notin \widehat{H}$.  It follows that $\widehat{N}=\widehat{F}^{-1}(\bs_c(N))$ which is a subset of $\widehat{I}_\bs$,  and hence
\begin{equation}\label{eq:A8}
|\widehat N|=\int_{\widehat{F}^{-1}(\bs_c(N))}d\widehat{F}=|\bs_c(N)|=\mu_c\circ \bs_c^{-1}(\bs_c(N)),
\end{equation}
where the second and the third equalities hold because $(d\widehat{F})\circ \widehat{F}^{-1}$ and $\mu_c\circ \bs^{-1}_c$,  i.e.  the image measures of $d\widehat{F}$ and $\mu_c$ under the maps $\widehat{F}$ and $\bs_c$ respectively,  are actually the Lebesgue measure.  Except for at most countably many points $\widehat t=\bs_c(x)\in \bs_c(N)$,  $\bs_c^{-1}(\{\widehat t\})$ is a singleton equalling $\{x\}$.   If is not a singleton,  $\bs_c^{-1}(\{\widehat t\})$ is an interval of zero $\mu_c$-measure.  Consequently,  \eqref{eq:A8} yields that $|\widehat{N}|=\mu_c(N)=0$.  
 Next,  we prove that $\widehat{\varphi}\in L^2((\widehat{l}, \widehat{r}))$.  
A straightforward computation yields that
\begin{equation}\label{eq:38}
\begin{aligned}
	\int_{\cup_{k\geq 1}(\widehat{a}_k,\widehat{b}_k)} \widehat{\varphi}(\widehat{x})^2d\widehat{x}&=\sum_{k\geq 1}\frac{\left(\widehat{h}(\widehat b_k)-\widehat{h}(\widehat a_k)\right)^2}{|\widehat{b}_k-\widehat{a}_k|} \\ 
	&=\sum_{x\in D^-}\frac{\left(f(x)-f(x-)\right)^2}{\bs(x)-\bs(x-)}+\sum_{x\in D^+}\frac{\left(f(x+)-f(x)\right)^2}{\bs(x+)-\bs(x)}  \\
	&=\int_I \left(\frac{df^-}{d\mu^-_d}\right)^2d\mu^-_d+\int_I \left(\frac{df^+}{d\mu^+_d}\right)^2d\mu^+_d<\infty.  
\end{aligned}
\end{equation}
Mimicking \eqref{eq:36} and \eqref{eq:37} and using \eqref{eq:A2}, we get that
\begin{equation}\label{eq:39-2}
\begin{aligned}
&\int_{(\widehat{l},\widehat{r})\setminus  \cup_{k\geq 1}(\widehat{a}_k,\widehat{b}_k)}   \widehat{\varphi}(\widehat{x})^2d\widehat{x}  \\  &\qquad = \int_l^r  \widehat{\varphi}(\widehat{G}(\bs_c(x)))^2d\bs_c(x) =\int_{I\setminus H} \widehat{\varphi}(\widehat{G}(\bs_c(x)))^2d\bs_c(x)\\
	&\qquad=\int_{I\setminus H} \widehat{\varphi}(\bs(x)))^2d\bs_c(x)  =\int_{I} \left(\frac{df^c}{d\mu^c}\right)^2d\mu^c<\infty. 
\end{aligned}\end{equation}
Hence $\widehat{\varphi}\in L^2((\widehat{l},\widehat{r}))$ is concluded.   Finally it suffices to show
\begin{equation}\label{eq:39}
	\widehat{h}(\widehat{x})-\widehat{h}(0)=\int_0^{\widehat{x}} \widehat{\varphi}(\widehat{t})d\widehat{t},\quad \widehat{x}\in (\widehat{l},\widehat{r}). 
\end{equation}
Consider first $\widehat{x}=\bs(x)$ for $x\in (0,r)\setminus D$.  The left hand side of \eqref{eq:39} is equal to
\begin{equation}\label{eq:314}
f(x)-f(0)=f^c(x)-f^c(0)+f^+(x)+f^-(x).  
\end{equation}
Note that 
\[
	f^\pm(x)=\pm\sum_{y\in (0,x)\cap D^\pm} (f(y\pm)-f(y))
\]
and 
\[
	\left(\cup_{y\in (0,x)\cap D^+} \left (\bs(y),\bs(y+)\right)\right) \cup \left(\cup_{y\in (0,x)\cap D^-} \left(\bs(y-),\bs(y)\right)\right) =\cup_{k: 0\leq  \widehat{a}_k<\widehat{b}_k\leq \bs(x)}(\widehat{a}_k,\widehat{b}_k).  
\]
It follows from \eqref{eq:A6} that
\begin{equation}\label{eq:315}
	f^+(x)+f^-(x)=\sum_{k: 0\leq \widehat{a}_k<\widehat{b}_k\leq \bs(x)}\int_{\widehat{a}_k}^{\widehat{b}_k} \widehat{\varphi}(\widehat{t})d\widehat{t}.  
\end{equation}
Mimicking \eqref{eq:36} and \eqref{eq:37},  we have
\[
	\int_{(0,\bs(x))\setminus \cup_{k\geq 1}(\widehat{a}_k,\widehat{b}_k)} \widehat{\varphi}(\widehat{t})d\widehat{t}=\int_{(0,x)}\widehat{\varphi}(\widehat{G}(\bs_c(y)))\mu_c(dy). 
\]
Since $\widehat{G}(\bs_c(y))=\bs(y)$ for $y\notin H$ and \eqref{eq:A2},    the last term is equal to
\begin{equation}\label{eq:316}
	\int_0^x \frac{df^c}{d\mu^c}d\mu^c=f^c(x)-f^c(0).  
\end{equation}
In view of \eqref{eq:314},  \eqref{eq:315} and \eqref{eq:316},  we obtain \eqref{eq:39} for $x\in (0,r)\setminus D$.  Analogously \eqref{eq:39} holds for $x\in (l,0)\setminus D$.  Due to the $\bs$-continuity of $f$ as stated in Lemma~\ref{LM31},  $\widehat{h}$ is continuous on $(\widehat{l},\widehat{r})$.  By this continuity and $\widehat{\varphi}\in L^2((\widehat{l},\widehat{r}))$,  we can obtain \eqref{eq:39} for $\widehat{x}=\bs(x\pm)$ with $x \in D^\pm$.  Then the identity \eqref{eq:39} for $\widehat{x}\in \cup_{k\geq 1}(\widehat{a}_k,\widehat{b}_k)$ is obvious by means of \eqref{eq:A6}.  Eventually we conclude that $\widehat{h}\in \dot H^1_e((\widehat{l},\widehat{r}))$.  That completes the proof. 
\end{proof}

\section*{Acknowledgment}

The idea of the proof of Lemma~\ref{LM54}~(6) was inspired by the discussion with a PhD student Dongjian Qian.  The author also would like to thank Professor Jiangang Ying for reminding me of exploring the connection between $(\widehat{\sE},\widehat{\sF})$ and the regular representations of $(\sE,\sF)$.  


\bibliographystyle{siam} 
\bibliography{DisScale4} 

\begin{thebibliography}{10}

\bibitem{BEPP08}
{\sc S.~Bhamidi, S.~N. Evans, R.~Peled, and P.~Ralph}, {\em {Brownian motion on
  disconnected sets, basic hypergeometric functions, and some continued
  fractions of Ramanujan}}, in Probability and statistics: essays in honor of
  David A. Freedman, Inst. Math. Statist., Beachwood, OH, 2008, pp.~42--75.

\bibitem{BG68}
{\sc R.~M. Blumenthal and R.~Getoor}, {\em {Markov processes and potential
  theory}}, Pure and Applied Mathematics, Vol. 29, Academic Press, New
  York-London, 1968.

\bibitem{BK87}
{\sc G.~Burkhardt and U.~K\"uchler}, {\em {The semimartingale decomposition of
  one-dimensional quasidiffusions with natural scale}}, Stochastic Process.
  Appl., 25 (1987), pp.~237--244.

\bibitem{C04}
{\sc M.-F. Chen}, {\em {From Markov chains to non-equilibrium particle
  systems}}, World Scientific, Mar. 2004.

\bibitem{CF12}
{\sc Z.-Q. Chen and M.~Fukushima}, {\em {Symmetric Markov processes, time
  change, and boundary theory}}, vol.~35 of London Mathematical Society
  Monographs Series, Princeton University Press, Princeton, NJ, 2012.

\bibitem{DM82}
{\sc C.~Dellacherie and P.-A. Meyer}, {\em {Probabilities and potential. B}},
  vol.~72 of North-Holland Mathematics Studies, North-Holland Publishing Co.,
  Amsterdam, 1982.

\bibitem{FHY10}
{\sc X.~Fang, P.~He, and J.~Ying}, {\em {Dirichlet forms associated with linear
  diffusions}}, Chin. Ann. Math. Ser. B, 31 (2010), pp.~507--518.

\bibitem{F99}
{\sc G.~B. Folland}, {\em {Real analysis}}, Pure and Applied Mathematics (New
  York), John Wiley {\&} Sons, Inc., New York, second~ed., 1999.

\bibitem{F71-2}
{\sc M.~Fukushima}, {\em {Dirichlet spaces and strong Markov processes}},
  Trans. Amer. Math. Soc., 162 (1971), pp.~185--224.

\bibitem{F71}
\leavevmode\vrule height 2pt depth -1.6pt width 23pt, {\em {Regular
  representations of Dirichlet spaces}}, Trans. Amer. Math. Soc., 155 (1971),
  pp.~455--473.

\bibitem{F10}
\leavevmode\vrule height 2pt depth -1.6pt width 23pt, {\em {From one
  dimensional diffusions to symmetric Markov processes}}, Stochastic Process.
  Appl., 120 (2010), pp.~590--604.

\bibitem{F14}
\leavevmode\vrule height 2pt depth -1.6pt width 23pt, {\em {On general boundary
  conditions for one-dimensional diffusions with symmetry}}, J. Math. Soc.
  Japan, 66 (2014), pp.~289--316.

\bibitem{FOT11}
{\sc M.~Fukushima, Y.~Oshima, and M.~Takeda}, {\em {Dirichlet forms and
  symmetric Markov processes}}, vol.~19 of de Gruyter Studies in Mathematics,
  Walter de Gruyter {\&} Co., Berlin, extended~ed., 2011.

\bibitem{G75}
{\sc J.~Groh}, {\em {\"Uber eine Klasse eindimensionaler Markovprozesse}},
  Math. Nachr., 65 (1975), pp.~125--136.

\bibitem{HWY92}
{\sc S.~W. He, J.~G. Wang, and J.~A. Yan}, {\em {Semimartingale theory and
  stochastic calculus}}, Kexue Chubanshe (Science Press), Beijing; CRC Press,
  Boca Raton, FL, 1992.

\bibitem{I06}
{\sc K.~It{\^o}}, {\em {Essentials of stochastic processes}}, vol.~231 of
  Translations of Mathematical Monographs, American Mathematical Society,
  Providence, RI, 2006.

\bibitem{IM74}
{\sc K.~It{\^o} and H.~P. McKean~Jr}, {\em {Diffusion processes and their
  sample paths}}, Springer-Verlag, Berlin-New York, 1974.

\bibitem{K75}
{\sc Y.~Kasahara}, {\em {Spectral theory of generalized second order
  differential operators and its applications to Markov processes}}, Japan. J.
  Math. (N.S.), 1 (1975), pp.~67--84.

\bibitem{K81}
{\sc F.~B. Knight}, {\em {Characterization of the L\'evy measures of inverse
  local times of gap diffusion}}, in Seminar on Stochastic Processes, 1981
  (Evanston, Ill., 1981), Birkh\"auser, Boston, Mass., 1981, pp.~53--78.

\bibitem{KW82}
{\sc S.~Kotani and S.~Watanabe}, {\em {Krein's spectral theory of strings and
  generalized diffusion processes}}, in Functional analysis in Markov processes
  (Katata/Kyoto, 1981), Springer, Berlin-New York, 1982, pp.~235--259.

\bibitem{K86}
{\sc U.~K\"uchler}, {\em {On sojourn times, excursions and spectral measures
  connected with quasidiffusions}}, J. Math. Kyoto Univ., 26 (1986),
  pp.~403--421.

\bibitem{K14}
{\sc T.~Kumagai}, {\em {Random walks on disordered media and their scaling
  limits}}, vol.~2101 of Lecture Notes in Mathematics, Springer, Cham, Cham,
  2014.

\bibitem{L16}
{\sc A.~Lejay}, {\em {The snapping out Brownian motion}}, Ann. Appl. Probab.,
  26 (2016), pp.~1727--1742.

\bibitem{L21}
{\sc L.~Li}, {\em {Hunt's hypothesis and symmetrization for 1-dimensional
  diffusions}}, (preprint) arXiv: 2107.06163.

\bibitem{L22}
\leavevmode\vrule height 2pt depth -1.6pt width 23pt, {\em {On structure of
  $L^2$-harmonic functions for one-dimensional diffusions}}, (preprint) arXiv:
  2203.15444.

\bibitem{LS20}
{\sc L.~Li and W.~Sun}, {\em {On stiff problems via Dirichlet forms}}, Ann.
  Inst. H. Poincar\'e Probab. Statist., 56 (2020), pp.~2051--2080.

\bibitem{LSY20}
{\sc L.~Li, W.~Sun, and J.~Ying}, {\em {Effective intervals and regular
  Dirichlet subspaces}}, Stochastic Process. Appl., 130 (2020), pp.~6064--6093.

\bibitem{LY17}
{\sc L.~Li and J.~Ying}, {\em {On structure of regular Dirichlet subspaces for
  one-dimensional Brownian motion}}, Ann. Probab., 45 (2017), pp.~2631--2654.

\bibitem{LY19}
\leavevmode\vrule height 2pt depth -1.6pt width 23pt, {\em {On symmetric linear
  diffusions}}, Trans. Amer. Math. Soc., 371 (2019), pp.~5841--5874.

\bibitem{LY19-2}
\leavevmode\vrule height 2pt depth -1.6pt width 23pt, {\em {Regular Dirichlet
  extensions of one-dimensional Brownian motion}}, Ann. Inst. H. Poincar\'e
  Probab. Statist., 55 (2019), pp.~1815--1849.

\bibitem{LM20}
{\sc Y.~Li and Y.~H. Mao}, {\em {Construction of generalized diffusion
  processes: the resolvent approach}}, Acta Math. Sin. (Engl. Ser.), 36 (2020),
  pp.~691--710.

\bibitem{M68}
{\sc P.~Mandl}, {\em {Analytical treatment of one-dimensional Markov
  processes}}, Die Grundlehren der mathematischen Wissenschaften, Band 151,
  Academia Publishing House of the Czechoslovak Academy of Sciences, Prague;
  Springer-Verlag New York Inc., New York, 1968.

\bibitem{M78}
{\sc I.~Monroe}, {\em {Processes that can be embedded in Brownian motion}},
  Ann. Probab., 6 (1978), pp.~42--56.

\bibitem{RY99}
{\sc D.~Revuz and M.~Yor}, {\em {Continuous martingales and Brownian motion}},
  vol.~293 of Grundlehren der Mathematischen Wissenschaften [Fundamental
  Principles of Mathematical Sciences], Springer-Verlag, Berlin, Berlin,
  Heidelberg, third~ed., 1999.

\bibitem{RW87}
{\sc L.~C.~G. Rogers and D.~Williams}, {\em {Diffusions, Markov processes, and
  martingales. Vol. 2}}, Wiley Series in Probability and Mathematical
  Statistics: Probability and Mathematical Statistics, John Wiley {\&} Sons,
  Inc., New York, 1987.

\bibitem{S79}
{\sc D.~Sch\"utze}, {\em {One-dimensional diffusions with discontinuous
  scale}}, Z. Wahrsch. Verw. Gebiete, 49 (1979), pp.~97--104.

\bibitem{WZ18}
{\sc Z.~Wang}, {\em {General theory of stochastic processes and their
  applications, Vol 2 (in Chinese)}}, Beijing Normal University Press, 2018.

\bibitem{W74}
{\sc S.~Watanabe}, {\em {On time inversion of one-dimensional diffusion
  processes}}, Z. Wahrsch. Verw. Gebiete, 31 (1974), pp.~115--124.

\end{thebibliography}

\end{document}